\documentclass[12pt]{amsart}

\usepackage{amsmath,amsthm,amssymb,amsfonts,mathrsfs,mathtools}
\usepackage{thmtools}
\usepackage{enumitem}
\usepackage{xcolor}
\usepackage[colorlinks=true,citecolor=blue,linkcolor=teal, pagebackref=true]{hyperref}
\usepackage{cleveref}
\usepackage{fullpage}

\newcommand{\kk}{k}
\newcommand{\NN}{\mathbb{N}}
\newcommand{\ZZ}{\mathbb{Z}}
\newcommand{\QQ}{\mathbb{Q}}
\newcommand{\RR}{\mathbb{R}}
\newcommand{\CC}{\mathbb{C}}
\newcommand{\PP}{\mathbb{P}}
\renewcommand{\AA}{\mathbb{A}}

\newcommand{\m}{\mathfrak m}
\newcommand{\n}{\mathfrak n}
\newcommand{\p}{\mathfrak p}
\newcommand{\q}{\mathfrak q}

\newcommand{\len}{\operatorname{\mathrm{length}}}

\newcommand{\Supp}{\operatorname{\mathrm{Supp}}}
\newcommand{\ord}{\operatorname{\mathrm{ord}}}

\newcommand{\ov}{\overline}

\renewcommand{\a}{\mathfrak{a}}
\renewcommand{\b}{\mathfrak{b}}
\newcommand{\fc}{\mathfrak{c}}
\newcommand{\cI}{\mathcal{I}}
\newcommand{\cJ}{\mathcal{J}}
\newcommand{\cO}{\mathcal{O}}

\newcommand{\cQ}{\mathcal{Q}}
\newcommand{\cG}{\mathcal{G}}

\newcommand{\R}{\mathcal{R}}

\newcommand{\lct}{\operatorname{\mathrm{lct}}}

\newcommand{\Loj}{\mathcal{L}} 

\newcommand{\ip}[2]{{\langle #1,#2\rangle}}

\DeclareMathOperator{\conv}{conv}

\DeclareMathOperator{\Val}{Val}
\DeclareMathOperator{\Frac}{Frac}
\DeclareMathOperator{\Hom}{Hom}
\DeclareMathOperator{\argmax}{argmax}

\DeclareMathOperator{\Max}{Max}
\DeclareMathOperator{\Proj}{Proj}
\DeclareMathOperator{\Spec}{Spec}
\DeclareMathOperator{\NP}{NP}

\DeclareMathOperator{\tor}{Tor}

\newcommand{\abul}{{\a_\bullet}}
\newcommand{\bbul}{{\b_\bullet}}

\theoremstyle{plain}
\newtheorem{theorem}{Theorem}[section]
\newtheorem{proposition}[theorem]{Proposition}
\newtheorem{lemma}[theorem]{Lemma}
\newtheorem{corollary}[theorem]{Corollary}

\theoremstyle{definition}
\newtheorem{definition}[theorem]{Definition}
\newtheorem{remark}[theorem]{Remark}
\newtheorem{example}[theorem]{Example}
\newtheorem{question}[theorem]{Question}

\newtheorem{notation}[theorem]{Notation}

\numberwithin{equation}{section}

\crefname{theorem}{Theorem}{Theorems}
\crefname{proposition}{Proposition}{Propositions}
\crefname{lemma}{Lemma}{Lemmas}
\crefname{corollary}{Corollary}{Corollaries}
\crefname{definition}{Definition}{Definitions}
\crefname{remark}{Remark}{Remarks}
\crefname{question}{Question}{Questions}
\crefname{conjecture}{Conjecture}{Conjectures}
\crefname{hypothesis}{Hypothesis}{Hypotheses}

\title[Algebraic \L ojasiewicz exponents]{An algebraic theory of \L ojasiewicz exponents}
	\author{T\`ai Huy H\`a}
\address{Tulane University \\ Mathematics Department \\
	6823 St. Charles Ave. \\ New Orleans, LA 70118, USA}
\email{tha@tulane.edu}

\keywords{\L{}ojasiewicz exponent, singularity, integral closure, valuation, ideal containment}

\subjclass[2020]{14B05, 32B10, 32S05, 13H15, 58K30}

\begin{document}
	\maketitle
	
\begin{abstract}
	We develop a unified algebraic and valuative theory of \L{}ojasiewicz
	exponents for pairs of graded families and filtrations of ideals.
	Within this framework, local \L{}ojasiewicz exponents, gradient
	exponents, and exponents at infinity are all realized as asymptotic
	containment thresholds between filtrations, governed by integral
	closure.
	This reformulation shows that \L{}ojasiewicz exponents are fundamentally
	valuative optimization problems.
	
	The central structural contribution of the paper is a finite--max
	principle.
	Under verifiable algebraic hypotheses, the a priori infinite
	valuative supremum bounding the \L{}ojasiewicz exponent reduces to a
	finite maximum, and computes the \L{}ojasiewicz exponent precisely.
	We identify two complementary mechanisms leading to this phenomenon:
	finite testing arising from normalized blowups and Noetherian Rees algebras, and attainment via compactness of normalized valuation spaces under linear boundedness assumptions.
	
	This finite--max framework yields strong structural consequences.
	We prove rigidity results showing that common extremal valuations force
	equality of \L{}ojasiewicz ratios, and we establish stratification and
	stability phenomena for \L{}ojasiewicz exponents in families, including
	fractional linearity and wall-chamber behavior along natural one--parameter deformations.
	
	The theory recovers and explains classical results in toric
	and Newton--polyhedral settings, particularly, for Newton nondegenerate case, where the \L{}ojasiewicz exponent is computed by
	finitely many toric divisorial valuations corresponding to facet data.
	
	Finally, we illustrate why the hypotheses underlying the finite--max
	principle are essential, delineating the precise scope of the theory.
\end{abstract}

	\tableofcontents

\part{Motivations, Goals and Statement of Main Results} \label{part:Intro}

We start the paper by describing motivation, conceptual background, and an overview of the main results.  


\section{Introduction} \label{sec:intro}

The \L ojasiewicz exponent is a fundamental invariant measuring the rate at which one
analytic object dominates another near a singular locus.  More precisely, given analytic function germs
\[
f_1,\dots,f_s,\; g_1,\dots,g_t \in \mathcal O_{\mathbb C^d,0},
\]
the classical \L ojasiewicz inequality \cite{Lojasiewicz1959} asserts that there exist constants $C>0$, $\theta>0$,
and a neighborhood $U$ of the origin such that
\[
\max_{1\le j\le t} |g_j(x)|^\theta \;\le\; C \max_{1\le i\le s} |f_i(x)|
\quad \text{for all } x\in U.
\]
The infimum of such exponents $\theta$ is called the (analytic) \L ojasiewicz exponent of
$(f_1,\dots,f_s)$ with respect to $(g_1,\dots,g_t)$.

Introduced originally by \L ojasiewicz in the study of real and complex analytic sets,
this exponent captures subtle asymptotic information about singularities: it quantifies
relative vanishing rates, governs gradient inequalities for isolated critical points,
controls stability and equisingularity phenomena, and plays a central role in the study
of global polynomial mappings and optimization problems.  Several important variants
arise naturally, including the \emph{gradient \L ojasiewicz exponent} of hypersurfaces with isolated singularities (cf. \cite{ Boguslawska, DK2005, Gwo99, Lenarcik}) 
and the \emph{\L ojasiewicz exponent at infinity} for polynomial
mappings (cf. \cite{Dimca,VuiDoat,VuiDuc,VuiSon}).  Despite their close conceptual relationship, these invariants have traditionally
been studied using markedly different techniques, ranging from analytic arc methods and
curve selection to Newton polyhedra and projective compactifications.

A key unifying insight already appears in the classical work of
Lejeune-Jalabert and Teissier \cite{LT}.  They showed that analytic
\L ojasiewicz inequalities are equivalent to integral--closure containments between powers
of the corresponding ideals.  Particularly, if
$\mathfrak a=(f_1,\dots,f_s)$ and $\mathfrak b=(g_1,\dots,g_t)$, then inequalities of the
form above hold if and only if there exist integers $p,q$ such that
\[
\mathfrak b^q \subseteq \overline{\mathfrak a^p}.
\]
This result places the analytic \L ojasiewicz exponent squarely within the framework of
ideal containment problems and reveals its intrinsic valuative nature; see also \cite{BochnakRisler,Teissier2012}.  This viewpoint has been generalized by Bivi\`a-Ausina \cite{Aus09}, Bivi\`a-Ausina--Encinas \cite{BE09, BE13} and Bivi\`a-Ausina--Fukui \cite{BF16} in their studies of \L{}ojasiewicz exponent of ideals via integral closures and mixed multiplicites.

Independently, ideal containment problems have played a central role in commutative
algebra and algebraic geometry; notably, the uniform symbolic--ordinary containment theorems of
Ein--Lazarsfeld--Smith \cite{ELS} and Hochster--Huneke \cite{HochsterHuneke},
together with generalization by Ma--Schwede \cite{MaSchwede}.  Motivated by the
fact that such containments are often not sharp, Harbourne and Huneke
\cite{HarbourneHuneke} proposed improved containment conjectures, leading to a large body of work during the last fifteen years; particularly, the introduction of numerical invariants measuring failure of containment, namely, the \emph{resurgence}, introduced by Bocci--Harbourne \cite{BocciHarbourne} and further developed by Guardo--Harbourne--Van~Tuyl \cite{GuardoHarbourneVanTuyl}.  Morerecently, these ideas have been extended to \emph{graded families and filtrations of ideals}, allowing symbolic powers, integral closures, and other asymptotic constructions
to be treated in a unified way; see \cite{HaKumarNguyenNguyen}.

The purpose of this paper is to bring these two threads together.
We develop a unified algebraic and valuative theory of \L{}ojasiewicz
exponents for pairs of graded filtrations of ideals, in which local,
gradient, and at--infinity exponents all arise as \emph{asymptotic
	containment thresholds}.  This reformulation places classical
\L{}ojasiewicz inequalities into a single containment framework and
reveals that \L{}ojasiewicz exponents are governed by valuative
optimization problems.  
In general, \L{}ojasiewicz exponent is bounded by a supremum of asymptotic valuative
ratios associated to the underlying filtrations.  While such variational
descriptions are a priori infinite, a central theme of this paper is
that, under verifiable algebraic or geometric hypotheses, this
supremum is in fact a maximum attained by finitely many 
valuations and computes the \L{}ojasiewicz exponent.  Identifying when this finite--max phenomenon occurs and
exploiting structural consequences, including rigidity, stability, stratification and wall-chamber behaviors, is the main objective of this paper.

Our approach also resonates with the valuation-theoretic framework for multiplier ideals, plurisubharmonic singularities, and interpolation developed by Boucksom–Favre–Jonsson \cite{BFJ2008}, Favre–Jonsson \cite{FJ04,FJ05,FJ2005}, and more recently by Bao–Guan–Mi–Yuan \cite{BGMY2026a,BGMY2026b,BGMY2025}. While our setting is algebraic and centers on ideal containment, a common underlying theme is that singularity exponents can be described as optimization problems over suitable spaces of valuations. In this sense, the present algebraic theory of \L{}ojasiewicz exponents runs parallel to the valuative descriptions of multiplier ideals, analytic singularities, and interpolation phenomena studied in the above works.


\section{Goals and statement of main results}\label{sec:mainResults}

This paper develops a unified algebraic and valuative theory of \L{}ojasiewicz exponents. It has two overarching aims.
The first aim is \emph{conceptual}:
to introduce a general notion of algebraic \L{}ojasiewicz exponent attached to a pair of graded filtrations of ideals and show that all classical analytic \L{}ojasiewicz exponents (local, gradient, and at infinity) arise uniformly as instances of this invariant. From this perspective, \L{}ojasiewicz exponents are interpreted as \emph{asymptotic containment thresholds} between filtrations, placing them within the framework of \emph{integral closure} and \emph{ideal containment} theory. 
The second aim is \emph{structural}:
to show that valuations are the computational mechanism governing these thresholds, and
to isolate verifiable hypotheses under which the resulting valuative optimization problems
reduce to finite maxima over explicit candidate sets.
This \emph{finite--max} philosophy underlies the rigidity, stability, and polyhedral phenomena
developed later in the paper.

\subsection{From analytic inequalities to algebraic thresholds}

After fixing analytic conventions in Section~\ref{sec:analytic}, we introduce in
Section~\ref{sec:algebraicDef} the algebraic \L{}ojasiewicz exponent
$\Loj_{\bbul}(\abul)$ attached to a pair of filtrations
$\abul=\{\a_p\}_{p\ge1}$ and $\bbul=\{\b_q\}_{q\ge1}$.
By definition, $\Loj_{\bbul}(\abul)$ records the smallest asymptotic slope at which the
containments
\[
\b_{qt}\subseteq \overline{\a_{pt}}
\quad\text{hold for } t\gg1 .
\]

The key point is that, for families of ordinary powers of two ideals, this algebraic invariant exactly recovers the analytic one (\Cref{thm:analytic-agreement,thm:analytic-algebraic-local-infty}).
Consequently, local, gradient, and at--infinity \L{}ojasiewicz exponents all arise as
instances of $\Loj_{\bbul}(\abul)$, distinguished only by the choice of filtrations, by the class of valuations allowed to appear, and possibly by inverting the invariant.

\subsection{Valuative formulas and the finite--max principle}

A central feature of the algebraic \L{}ojasiewicz exponent
$\Loj_{\bbul}(\abul)$ is that it admits a valuative description (\Cref{sec:valuative}).
Using the valuative criterion for integral closure, we show that
$\Loj_{\bbul}(\abul)$ is bounded below by a supremum of ratios
\[
\Loj_{\bbul}(\abul)
\;\ge\;
\sup_{v}\frac{v(\abul)}{v(\bbul)},
\]
where $v$ ranges over valuations centered at $\m$
(\Cref{thm:valuative-filtrations}).

A fundamental question is whether this supremum is in fact a maximum,
and whether it computes the \L{}ojasiewicz exponent.
We isolate two independent mechanisms that force such a finite--max phenomenon.
The first is a \emph{finite testing principle}: under suitable
Noetherian hypotheses, containment of integral closures can be tested
on a fixed finite set of divisorial valuations arising from a
normalized Rees algebra or a fixed birational model
(\Cref{thm:finite-testing-implies-max,thm:finite-testing-fingen}).
The second mechanism relies on compactness properties of normalized
valuation spaces together with uniform convergence of the functions
$v\mapsto v(\a_p)/p$ and $v\mapsto v(\b_q)/q$, which ensures attainment
of the supremum even in the absence of finite generation
(\Cref{thm:attainment}). We also illustrate that considering only divisorial valuations may not be enough to realize the supremum in general (\Cref{thm:nondivisorial-attainment-final}).

\subsection{Rigidity, stability, and stratification}

When the finite--max principle applies, the valuative formula for $\Loj_{\bbul}(\abul)$ provides an effective description of the invariant, and the extremal valuation(s) attaining the maximum become geometrically meaningful data.
A further contribution of the paper is to show that these extremal valuations govern the
behavior of \L{}ojasiewicz exponents in families.

We prove that common extremal valuations force rigidity of \L{}ojasiewicz ratios and that,
under finite--max hypotheses, parameter spaces admit a natural stratification according to
the valuation computing the exponent
(\Cref{thm:common-valuation,thm:stratification}).
Along one--parameter families, this yields fractional linearity of $1/\Loj$ and shows that
affineness forces persistence of the extremal valuation
(\Cref{thm:concavity},
\Cref{cor:geodesic-rigidity}).
These results provide a unified algebraic explanation for stability and wall--chamber behavior
observed in analytic and polyhedral settings.

\subsection{Polyhedral models and explicit computations}

We describe explicit formulas and finite computation methods in combinatorial situations.  For monomial and toric ideals, the exponent is determined by polyhedral data and toric valuations (\Cref{sec:toric}).  
In these situations, integral--closure containments are encoded by linear inequalities,
toric divisorial valuations correspond to linear functionals, and the valuative optimization
problem reduces to a finite maximization over facet data of Newton polyhedra
(\Cref{thm:toric-filtration-formula,thm:facet-formula}).

We also discuss in more details how \L{}ojasiewicz exponent at infinity behaves in our algebraic framework.  We prove valuative and Rees--algebraic formulas at infinity (\Cref{thm:meta-infty-rees}), and obtain finite computation and Newton polyhedral formulas in the nondegenerate case (\Cref{thm:newton-finite-infty,cor:meta-infty-nondeg-finite}).

\subsection{Uniformity and necessity}

The final part of the paper illustrates why the hypotheses used to obtain finite testing and
finite--max formulas are essential (\Cref{thm:toric-simultaneous-principalization,thm:wall-chamber}). 
Outside situations admitting verifiable finite testing, the set of valuations computing
$\Loj_{\bbul}(\abul)$ need not admit any a priori finite reduction, delineating the precise
scope of the theory developed here (\Cref{thm:sharpness-monomial}).


\medskip
\noindent\textbf{Acknowledgment.}
The author is especially grateful to Huy Vui H\`a for many stimulating discussions on the \L{}ojasiewicz exponent. 
The author thanks Mattias Jonsson for suggesting several interesting references on the valuative framework. 
Thanks also go to Th\'ai Nguy$\tilde{\text{\^e}}$n, Vinh Pham, and Aniketh Sivakumar for reading preliminary drafts of the paper and for helpful comments. 
The author was partially supported by a Simons Foundation grant.

\part{\L{}ojasiewicz Exponents and the Algebraic Framework} \label{part:algebraicFramework}

In this part, we develop the algebraic framework underlying \L{}ojasiewicz exponents.
We begin by recalling analytic \L{}ojasiewicz exponents and their standard variants, and
then generalize these invariants to a purely algebraic notion attached to graded families and
filtrations of ideals.  This perspective allows \L{}ojasiewicz exponents to be considered in a more general context, interpreted as asymptotic containment thresholds, and sets the stage for the valuative analysis carried out in the subsequent parts of the paper.


\section{Analytic \L{}ojasiewicz exponents} \label{sec:analytic}

\subsection{Background}

In this section we briefly fix the analytic setting and conventions for
\L{}ojasiewicz exponents that will be used throughout the paper.  Our goal is not to
survey the analytic theory, but to record a formulation that is compatible with the
algebraic containment framework developed later.

Given two systems of analytic function germs near a point, \L{}ojasiewicz-type
inequalities compare their relative rates of vanishing.  Several equivalent formulations
appear in the literature, differing mainly by normalization.  We adopt a convention that
aligns the analytic exponent with integral--closure containments and the algebraic
\L{}ojasiewicz exponent defined in Section~\ref{sec:algebraicDef}.

\subsection{Analytic definition}

Let
\[
\a=(f_1,\ldots,f_s),\quad \b=(g_1,\ldots,g_t)\subseteq \cO_{\CC^d,0},
\]
where $0\in f_i^{-1}(0)\cap g_j^{-1}(0)$ for all $i,j$. Define
\[
\|f(x)\|:=\max_{1\le i\le s}|f_i(x)|,\quad
\|g(x)\|:=\max_{1\le j\le t}|g_j(x)|.
\]

\begin{definition}[Analytic \L{}ojasiewicz exponent]\label{def:analytic-L}
	The analytic \L{}ojasiewicz exponent of $\a$ with respect to $\b$ is
	\[
	\Loj^{an}_\b(\a)
	:=\inf\left\{
	\theta>0 :
	\exists\,U\ni0,\ \exists\,C>0\ \text{such that}\
	C\|f(x)\|\ge \|g(x)\|^{\theta}\ \forall x\in U
	\right\}.
	\]
\end{definition}

\begin{remark}[Convention] \label{rem:convention} 
The inequality is written with ``$\ge$'', so that larger values of
		$\Loj^{an}_\b(\a)$ correspond to stronger domination of (the vanishing of) $\a$ by $\b$ near the
		origin. Other formulations in the literature may also differ by inversion of the exponent. 
\end{remark}

\subsection{The gradient \L{}ojasiewicz exponent}

Let $f:(\CC^d,0)\to(\CC,0)$ be a holomorphic function with an isolated critical
point at the origin, and let $J(f)$ denote its Jacobian ideal. The
\emph{gradient} \L{}ojasiewicz exponent of $f$ at 0, usually denoted by $L(f,0)$, is the optimal (infimum) value $\eta>0$ such that
\[
\|\nabla f(x)\|\ge C\| x\|^{\eta}
\]
for $x$ near $0$. In light of Definition~\ref{def:analytic-L}, the gradient \L{}ojasiewicz exponent of $f$ at 0 is given by $L(f,0) = \Loj^{an}_{\m}(J(f)).$

\subsection{The \L{}ojasiewicz exponent at infinity}

In addition to local exponents, one also studies \L{}ojasiewicz inequalities at
infinity for polynomial mappings
\[
F:\CC^n\to\CC^m,
\]
typically of the form
\[
\|x\|^{\alpha}\le C\|F(x)\|\quad\text{for }\|x\|\gg1,
\]
or variants involving $\mathrm{dist}(x,F^{-1}(0))$. The \emph{optimal} (i.e., supremum) exponent $\alpha$ in such
an inequality is called the\emph{ \L{}ojasiewicz exponent of $F$ at infinity}, and denoted by $L_\infty(F)$.

Although inequalities at infinity are global on $\CC^n$, their meaning
is local after projective compactification: the condition $\|x\|\to\infty$ corresponds to approaching
the divisor at infinity, and the growth of $F$ is encoded by the relative vanishing orders of the
source and target defining functions along the boundary.

\section{Algebraic \L ojasiewicz exponents of filtrations} \label{sec:algebraicDef}

In this section we introduce the algebraic \L{}ojasiewicz exponent $\Loj_{\bbul}(\abul)$ attached to a pair of filtrations $\abul=\{\a_p\}_{p\ge1}$ and $\bbul=\{\b_q\}_{q\ge1}$ of ideals in a Noetherian ring, viewed as the asymptotic containment threshold for $\b_{qt}\subseteq \overline{\a_{pt}}$ for $t\gg 1$ (\Cref{def:L-families}).  We show that this invariant recovers the classical analytic exponent in the local analytic setting (\Cref{thm:analytic-agreement}).  Finally, we package \L{}ojasiewicz exponents at infinity into the same algebraic framework by working with the canonical growth filtrations induced along the boundary divisor of a compactification, leading to an equivalence for the local and global “at infinity” invariants in \Cref{thm:analytic-algebraic-local-infty}.

\subsection{Graded families and \L ojasiewicz exponent} 
The algebraic definition naturally involves graded families and filtrations of
ideals, encompassing powers, integral closures, symbolic powers, and growth
filtrations arising in geometric contexts. Throughout this section, $R$ denotes a Noetherian ring.

\begin{definition}[Graded family and filtration]\label{def:graded-family}
	A \emph{graded family of ideals} $\abul=\{\a_p\}_{p\ge1}$ in $R$ is a collection of proper ideals
	satisfying
	\[
	\a_p\a_{p'}\subseteq \a_{p+p'} \quad \text{for all } p,p'\ge1.
	\]
A graded family $\abul=\{\a_p\}_{p\ge1}$ is called a \emph{filtration} if
	\[
	\a_{p+1}\subseteq \a_p \quad \text{for all } p\ge1.
	\]
\end{definition}

\begin{remark}\label{rem:monotone}
	All graded families appearing naturally in this paper — including powers of ideals,
	integral closures of powers, symbolic powers — are filtrations. By \emph{ideals}, we always mean proper (non-unit) ideals.
\end{remark}

The \L{}ojasiewicz exponent records the smallest asymptotic slope at which one
filtration is eventually contained in another.


\begin{definition}[Algebraic \L ojasiewicz exponent for filtrations]
	\label{def:L-families}
	Let $\abul=\{\a_p\}_{p\ge1}$ and $\bbul=\{\b_q\}_{q\ge1}$ be filtrations
	of ideals in $R$.
	The \emph{algebraic \L ojasiewicz exponent of $\abul$ with respect to $\bbul$} is
	\[
	\Loj_{\bbul}(\abul)
	\;:=\;
	\inf\left\{\frac{q}{p}\in\QQ_{>0}\ :\ \b_{qt}\subseteq \overline{\a_{pt}} \text{ for } t \gg 1\right\}
	\ \in\ \RR_{\ge0} \cup \{\infty\},
	\]
	with the convention that $\inf \emptyset = \infty$. For ideals $\a,\b\subseteq R$ we write $\Loj_{\b}(\a)$ for $\Loj_{\{\b^q\}_{q \ge 1}}\bigl(\{\a^p\}_{p\ge1}\bigr)$.
\end{definition}

\subsection{Algebraic and analytic \L ojasiewicz exponents}
We now compare the algebraic \L{}ojasiewicz exponent with its analytic
counterpart in the local analytic setting.
As before, assume $R=\mathcal O_{\CC^d,0}$, $\a=(f_1,\dots,f_s),  \b=(g_1,\dots,g_t)$
where $0 \in f_i^{-1}(0) \cap g_j^{-1}(0)$ for any $i$ and $j$, and
$\|f(x)\|:=\max_i|f_i(x)|,\quad \|g(x)\|:=\max_j|g_j(x)|.$

The comparison rests on the following analytic criterion for integral closure stated by Lejeune-Jalabert--Teissier \cite{LT}.

\begin{lemma}[Lejeune-Jalabert--Teissier]\label{lem:analytic-IC}
	Let $p,q\in\mathbb N$. The following are equivalent:
	\begin{enumerate}
		\item[(1)] $\b^q\subseteq \overline{\a^p}$.
		\item[(2)] There exist a neighborhood $U$ of $0$ and a constant $C>0$ such that
		\[
		\|g(x)\|^q\le C\|f(x)\|^p
		\quad\text{for all }x\in U.
		\]
	\end{enumerate}
\end{lemma}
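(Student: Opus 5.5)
I will reduce the lemma to the classical analytic characterization of integral closure of a single ideal. Concretely, I will show that for an ideal $I=(h_1,\dots,h_r)\subseteq\cO_{\CC^d,0}$ and a germ $\phi$, the condition $\phi\in\overline{I}$ is equivalent to the existence of a neighbourhood $U$ and $C>0$ with $|\phi(x)|\le C\max_k|h_k(x)|$ on $U$. I will then apply this with $I=\a^p$, generated by the monomials $f^\alpha$ with $|\alpha|=p$. Since $\max_{|\alpha|=p}|f^\alpha(x)|=\|f(x)\|^p$ and $\b^q$ is generated by the $g^\beta$ with $|\beta|=q$, satisfying $\max_{|\beta|=q}|g^\beta(x)|=\|g(x)\|^q$, the lemma reduces to two statements. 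First, $\b^q\subseteq\overline{\a^p}$ if and only if every generator $g^\beta$ lies in $\overline{\a^p}$; this holds because $\overline{\a^p}$ is an ideal. Second, finitely many pointwise bounds $|g^\beta|\le C_\beta\|f\|^p$ combine into one bound with $C=\max_\beta C_\beta$, on a common neighbourhood, and conversely. It therefore suffices to prove the single-ideal criterion.

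For (1)$\Rightarrow$(2), I take an equation of integral dependence
\[
\phi^n+a_1\phi^{n-1}+\cdots+a_n=0,\qquad a_k\in I^k .
\]
Write $M(x)=\max_k|h_k(x)|$. On a small neighbourhood we have $|a_k(x)|\le C_kM(x)^k$, since each $a_k$ is a combination of degree-$k$ monomials in the $h$'s with bounded holomorphic coefficients. At a point $x$ with $|\phi(x)|>2C'M(x)$, where $C'=\max_k C_k^{1/k}\cdot(\text{const})$, the leading term $|\phi|^n$ dominates the sum of the remaining terms. This contradicts the equation, and it gives $|\phi|\le CM$. This direction is elementary.

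For (2)$\Rightarrow$(1), which I expect to be the main obstacle, I will use the valuative criterion: $\phi\in\overline I$ if and only if $\mathrm{ord}_E(\phi)\ge\mathrm{ord}_E(I)$ for every divisor $E$ on the normalized blowup $\pi:X\to(\CC^d,0)$ of $I$. Equivalently, I will use the curve criterion: $\phi\circ\gamma\in(I\circ\gamma)$ for every analytic arc $\gamma:(\CC,0)\to(\CC^d,0)$. The curve version is quickest. For an arc $\gamma(\tau)$, the bound gives
\[
|\phi(\gamma(\tau))|\le C\max_k|h_k(\gamma(\tau))|,
\]
and comparing orders in $\tau$ yields $\ord_\tau(\phi\circ\gamma)\ge\min_k\ord_\tau(h_k\circ\gamma)$. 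The remaining step is the implication from "order condition along all arcs" to "integral dependence". I would derive it from the divisorial criterion as follows. Each exceptional divisor $E$ of the normalized blowup is detected by an arc whose lift meets $E$ transversally at a general point, and the order along such an arc equals $\ord_E$. Finiteness of the set of Rees valuations, which follows from the Noetherianity of the normalized Rees algebra, together with the standard fact that $\overline I=\{\phi:v(\phi)\ge v(I)\ \forall v\text{ Rees}\}$, then closes the argument. Alternatively, one can simply cite Lejeune-Jalabert--Teissier \cite{LT}, as the paper does; the argument above is the route I would write out.
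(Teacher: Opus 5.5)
Your proposal is correct. It ends where the paper's appendix proof ends: the Rees valuative criterion on the normalized blowup $\pi\colon Y\to U_0$ of $I=\a^p$, preceded by the same reduction to the generators $g^\beta$. However, you obtain both analytic implications differently.

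For (1)$\Rightarrow$(2), the paper stays on $Y$. It uses $\phi\circ\pi\in I\cO_Y$ near $\pi^{-1}(0)$, the two-sided comparison $A^{-1}\|f\circ\pi\|^p\le |s|\le B\|f\circ\pi\|^p$ for a local generator $s$ of $I\cO_Y$, and properness of $\pi$. Your argument from an equation of integral dependence needs no blowup and is more elementary. Take $K=2\max_k C_k^{1/k}$, so that $\sum_k C_kK^{-k}<1$; if $M(x)=0$, the equation forces $\phi(x)=0$.

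For (2)$\Rightarrow$(1), the paper proves the stronger statement $\phi\circ\pi\in I\cO_Y$: the quotient $(\phi\circ\pi)/s$ is bounded, hence holomorphic by the Riemann extension theorem on the normal space $Y$. Only then does it read off $\ord_E(\phi)\ge\ord_E(I)$. You extract just this divisorial inequality, from one-variable order comparisons along arcs $\pi\circ\tilde\gamma$ with $\tilde\gamma$ transversal to $E$. This avoids the extension theorem but puts the weight on the choice of the point $y\in E$. It must satisfy three conditions:
\begin{itemize}
\item it is a smooth point of $Y$ and of $E$, which is possible because $Y$ is normal;
\item it lies on no other component of $V(s)$ or $V(\phi\circ\pi)$;
\item it avoids the zeros of the residual factor $u$ in $\phi\circ\pi=z_1^{\ord_E(\phi)}u$.
\end{itemize}
Then $s=z_1^{\ord_E(I)}\cdot(\text{unit})$ near $y$, some $w_k$ in $h_k\circ\pi=s\,w_k$ is a unit at $y$, and both orders along the arc equal the divisorial ones.

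Two small adjustments are needed. First, the lemma allows $I$ to be non-$\m$-primary, and then some Rees divisors do not lie over $0$. So your test arcs must be allowed to start at points of $U$ other than $0$; this is harmless, since (2) holds on all of $U$. Second, finiteness of the set of Rees valuations plays no role in your argument, because each divisor is handled separately.
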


\begin{proof}
	This is the Lejeune-Jalabert--Teissier analytic criterion for integral closure; see \cite[Th\'eor\`eme~7.2]{LT}.
	For the reader's convenience and to fix normalization conventions, we include a self-contained argument in the appendix.
\end{proof}

Our first result shows that, for two ideals $\a$ and $\b$, the analytic and algebraic \L{}ojasiewicz exponents agree.

\begin{theorem}\label{thm:analytic-agreement}
	We have
	$\mathcal L^{an}_{\b}(\a)=\Loj_{\b}(\a).$
\end{theorem}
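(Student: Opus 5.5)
We compare the two infima directly, using \Cref{lem:analytic-IC} to pass between containments and inequalities. For ideals, $\Loj_{\b}(\a)$ is the infimum of $q/p$ such that $\b^{qt}\subseteq\overline{\a^{pt}}$ for $t\gg1$. Since the condition in \Cref{lem:analytic-IC}(2) is invariant under raising both sides to the $t$-th power, $\b^{q}\subseteq\overline{\a^{p}}$ is equivalent to $\b^{qt}\subseteq\overline{\a^{pt}}$ for every (equivalently, some) $t\ge1$. Indeed, for $t\ge1$ the inequality $\|g\|^{qt}\le C^t\|f\|^{pt}$ is equivalent to $\|g\|^q\le C\|f\|^p$. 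Hence
\[
\Loj_{\b}(\a)=\inf\Bigl\{\tfrac qp\in\QQ_{>0} : \exists\,U\ni0,\ C>0 \text{ with } \|g(x)\|^q\le C\|f(x)\|^p \ \forall x\in U\Bigr\},
\]
and the inequality condition depends only on the ratio $\theta=q/p$, since it is equivalent to $\|g\|^{q/p}\le C^{1/p}\|f\|$.

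\emph{Step 1: $\Loj^{an}_{\b}(\a)\le\Loj_{\b}(\a)$.} Take any admissible rational $q/p$. By the reformulation above, $\|g\|^{q/p}\le C'\|f\|$ on $U$. Therefore $\theta=q/p$ is admissible in \Cref{def:analytic-L}, after adjusting the constant so that it reads $C\|f\|\ge\|g\|^\theta$. Taking infima gives the inequality.

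\emph{Step 2: $\Loj_{\b}(\a)\le\Loj^{an}_{\b}(\a)$.} This is the step needing care, because the analytic infimum runs over real $\theta$, while the algebraic one runs over rationals. Let $\theta$ be analytically admissible, so $\|g\|^\theta\le C\|f\|$ on $U$. Given any rational $r>\theta$, we shrink $U$ so that $\|g(x)\|\le1$ there, which is possible since all $g_j(0)=0$. Then $\|g\|^{r}\le\|g\|^\theta\le C\|f\|$ on $U$, so $r$ is admissible. Writing $r=q/p$ and raising to the $p$-th power gives $\|g\|^q\le C^p\|f\|^p$. By \Cref{lem:analytic-IC} this yields $\b^q\subseteq\overline{\a^p}$, and hence $\b^{qt}\subseteq\overline{\a^{pt}}$ for all $t$. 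Thus $\Loj_{\b}(\a)\le r$ for every rational $r>\theta$, and so $\Loj_{\b}(\a)\le\theta$. Taking the infimum over admissible $\theta$ gives the claim.

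Finally, we treat the case where one side is $\infty$. If no $\theta$ is admissible, Step 1 shows no rational $q/p$ is admissible, so both sides are $\infty$. The converse is contained in Step 2. The only genuinely analytic input is \Cref{lem:analytic-IC}. The only subtlety is the monotonicity in $\theta$, which uses $\|g\|\le1$ near $0$ and lets us pass from real to rational exponents; I expect this to be the one point requiring attention.
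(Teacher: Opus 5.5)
Your proof is correct and follows essentially the same route as the paper: both directions go through \Cref{lem:analytic-IC}, taking $pt$-th roots to get $\Loj^{an}_{\b}(\a)\le\Loj_{\b}(\a)$, and using $\|g\|\le 1$ near $0$ to round a real admissible exponent up to a rational one for the reverse inequality. The differences are cosmetic. The paper rounds via $q=\lceil p\theta\rceil$ and lets $p\to\infty$ rather than taking an arbitrary rational $r>\theta$. It also passes from $\b^q\subseteq\overline{\a^p}$ to $\b^{qt}\subseteq\overline{\a^{pt}}$ using $(\overline{\a^p})^t\subseteq\overline{\a^{pt}}$ rather than the lemma.
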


\begin{proof} Let $\theta>0$ be admissible for $\mathcal L^{an}_{\b}(\a)$ (see \Cref{def:analytic-L}).
	Then, there exist a neighborhood $U$ of $0$ and a constant $C>0$ such that
	\[
	\|g(x)\|^{\theta}\le C\|f(x)\|\quad \text{for all }x\in U.
	\]
	Shrinking $U$ if necessary, we may assume $\|g(x)\|\le 1$ on $U$.
	Fix $p\in\NN$ and set $q=\lceil p\theta\rceil$.
	Raising the above inequality to the $p$-th power yields
	\[
	\|g(x)\|^{p\theta}\le C^{p}\|f(x)\|^{p}\quad (x\in U).
	\]
	Since $q\ge p\theta$ and $\|g(x)\|\le 1$, we have $\|g(x)\|^{q}\le \|g(x)\|^{p\theta}$, and so
	\[
	\|g(x)\|^{q}\le C^{p}\|f(x)\|^{p}\quad (x\in U).
	\]
	By \Cref{lem:analytic-IC}, this implies $\b^{q}\subseteq \overline{\a^{p}}.$
	It follows that, for any $t \in \NN$, we have 
	\[
	\b^{qt} \subseteq \left(\overline{\a^p}\right)^t \subseteq \overline{\a^{pt}}.
	\]
	Thus, $\frac{q}{p}$ is admissible for $\Loj_{\b}(\a)$. Therefore, 
	$$\Loj_{\b}(\a)\le \frac{q}{p} < \theta+\frac1p.$$
	By replacing $\frac{q}{p}$ with $\frac{qt}{pt}$, which is also admissible for $\Loj_{\b}(\a)$, and letting $t \rightarrow \infty$ (so $pt \rightarrow \infty$), we conclude that $\Loj_{\b}(\a)\le \theta$. Taking the infimum over admissible $\theta$ gives 
	$$\Loj_{\b}(\a)\le \Loj^{an}_{\b}(\a).$$
	
	Conversely, let $\frac{q}{p}\in\QQ_{>0}$ be admissible for $\Loj_{\b}(\a)$. Then, for $t \gg 1$, we have $\b^{qt}\subseteq \overline{\a^{pt}}.$
	By \Cref{lem:analytic-IC}, there exist a neighborhood $U$ of $0$ and a constant $C>0$ such that
	\[
	\|g(x)\|^{qt}\le C\|f(x)\|^{pt}\quad \text{for all }x\in U.
	\]
	Taking $(pt)$-th roots gives
	\[
	\|g(x)\|^{q/p}\le C^{1/(pt)}\|f(x)\|\quad (x\in U).
	\]
	Hence, $\theta=\frac{q}{p}$ is admissible for $\mathcal L^{an}_{\b}(\a)$, and so
	$\mathcal L^{an}_{\b}(\a)\le \frac{q}{p}$.
	Taking the infimum over admissible $\frac{q}{p}$ yields
	$$\mathcal L^{an}_{\b}(\a)\le \Loj_{\b}(\a).$$
This completes the proof.
\end{proof}

Observe, in particular, that when $\a$ and $\b$ are $\m$--primary, both the analytic and algebraic \L{}ojasiewicz exponents are finite and governed by integral-closure containments.

\begin{example} \label{ex.analytic=algebraic}
	Let $R = \CC[[x_1, \dots, x_n]]$, $\m = (x_1, \dots, x_n)$ and consider
	$$\a = (x_1^{\alpha_1}, \dots, x_n^{\alpha_n}) \text{ and } \b = \m.$$
	It can be seen that the inequality
	$$\|(x_1, \dots, x_n)\|^\theta \le C \max(|x_1|^{\alpha_1}, \dots, |x_n|^{\alpha_n})$$
	fails for $\theta < \max \{\alpha_1, \dots, \alpha_n\}$ along the coordinate axis, and holds for $\theta = \max\{\alpha_1, \dots, \alpha_n\}$. Thus, by definition,
	$\Loj^{an}_{\b}(\a) = \max\{\alpha_1, \dots, \alpha_n\}.$
	
	On the other hand, one can check on pure powers of the $x_i$'s that
	$$\m^{qt} \subseteq \overline{\a^{pt}} \text{ for } t \gg 1 \Longleftrightarrow \frac{q}{p} \ge \max\{\alpha_1, \dots, \alpha_n\}.$$
	That is,
	$$\Loj_{\b}(\a) = \max\{\alpha_1, \dots, \alpha_n\} = \Loj^{an}_{\b}(\a).$$
\end{example}

\subsection{Exponents at infinity in the algebraic framework}\label{subsec:algebraic-infinity}

We shall explore an algebraic formulation of \L{}ojasiewicz exponents at infinity, understand precisely how the analytic condition $\|x\|\gg 1$ is encoded by divisors at infinity on a suitable compactification, and  show that the resulting analytic invariants agree with the algebraic containment invariants used in our
valuative framework.

Let $F=(f_1,\dots,f_m):\AA^n_\CC\to\AA^m_\CC$ be a polynomial mapping.
We fix the standard projective compactifications
$\AA^n_\CC\hookrightarrow \PP^n_\CC$ and $\AA^m_\CC\hookrightarrow \PP^m_\CC$,
with hyperplanes at infinity
\[
H_X:=\{Z_0=0\}\subseteq \PP^n_\CC \text{ and } H_Y:=\{W_0=0\}\subseteq \PP^m_\CC,
\]
where $Z_0, \dots, Z_n$ and $W_0, \dots, W_m$ denote the homogeneous coordinates of $\PP^n_\CC$ and $\PP^m_\CC$, respectively.
Let $\Gamma_F\subseteq \AA^n_\CC\times\AA^m_\CC$ be the graph of $F$ and let
$\overline{\Gamma}_F\subseteq \PP^n_\CC\times\PP^m_\CC$ be the normalization of the
Zariski closure of $\Gamma_F$.
Let
\[
\pi:\overline{\Gamma}_F\to\PP^n_\CC \text{ and } \rho:\overline{\Gamma}_F\to\PP^m_\CC
\]
be the natural projections and set
\[
D_X:=\pi^{-1}(H_X) \text{ and } D_Y:=\rho^{-1}(H_Y).
\]
Denote by $\mathscr I_X:=\cI_{D_X}$ and $\mathscr I_Y:=\cI_{D_Y}$ the corresponding
ideal sheaves of $D_X$ and $D_Y$ on $\overline{\Gamma}_F$.
Approaching infinity in the source along the graph of $F$ corresponds precisely to
approaching $D_X$ inside $\overline{\Gamma}_F$.

\medskip
\noindent\textit{What does $\|x\|\gg 1$ mean near a boundary point.}
Fix $\xi\in D_X$.
Choose indices $i\in\{1,\dots,n\}$ and $\ell\in\{0,1,\dots,m\}$ such that
$\pi(\xi)\in\{Z_i\neq 0\}$ and $\rho(\xi)\in\{W_\ell\neq 0\}$.
Define holomorphic functions near $\xi$ by
\[
u_X:=\Big(\frac{Z_0}{Z_i}\Big)\circ\pi \text{ and } u_Y:=\Big(\frac{W_0}{W_\ell}\Big)\circ\rho.
\]
Then, $u_X$ is a local defining equation of $D_X$ at $\xi$.
If $\xi\in D_Y$ and $\ell\ge 1$, then $u_Y$ is a local defining equation of $D_Y$ at $\xi$.
If $\xi\notin D_Y$, then we may take $\ell=0$ and then $u_Y\equiv 1$ near $\xi$.

\begin{lemma}\label{lem:norms-vs-uX-uY}
With notation as above, there exist an analytic neighborhood $U$ of $\xi$ in
$\overline{\Gamma}_F$ and constants $C_X\ge 1$ and $C_Y\ge 1$ such that
for all $(x,F(x))\in U\cap\Gamma_F$ one has
\begin{equation}\label{eq:norm-uX}
\frac{1}{|u_X|}\le \|x\|\le \frac{C_X}{|u_X|}.
\end{equation}
Moreover, if $\xi\in D_X\cap D_Y$ and $\ell\ge 1$, then for all $(x,F(x))\in U\cap\Gamma_F$ one has
\begin{equation}\label{eq:norm-uY}
\frac{1}{|u_Y|}\le \|F(x)\|\le \frac{C_Y}{|u_Y|}.
\end{equation}
If $\xi\in D_X\setminus D_Y$, then $u_Y$ is a unit near $\xi$ and $\|F(x)\|$ is bounded on
$U\cap\Gamma_F$.
In particular, for $\xi\in D_X\cap D_Y$ and $\theta\ge 0$, the inequality
\[
\|x\|^\theta \le C\|F(x)\|
\]
holds on $(U\cap\Gamma_F)\setminus D_X$ if and only if
\begin{equation}\label{eq:uY-uX-theta}
|u_Y|\le C'|u_X|^\theta
\end{equation}
holds on $(U\cap\Gamma_F)\setminus D_X$.
\end{lemma}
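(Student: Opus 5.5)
The plan is to reduce everything to explicit comparisons in affine coordinates. On the graph $\Gamma_F$, the point $(x,F(x))$ corresponds under $\pi$ to $[1:x_1:\dots:x_n]$, so $u_X = 1/x_i$ on $U\cap\Gamma_F$. Likewise, if $\ell\ge1$ then $u_Y = 1/f_\ell(x)$.

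\emph{Step 1: the estimate \eqref{eq:norm-uX}.} Since $\pi(\xi)\in\{Z_i\neq0\}$, the ratios $Z_k/Z_i$ ($k=1,\dots,n$) are holomorphic near $\pi(\xi)$. Hence their pullbacks $x_k/x_i$ are bounded on a small neighborhood $U$ of $\xi$, say $|x_k/x_i|\le C_X$ for all $k$ (with $C_X\ge1$, since $k=i$ gives $1$). Taking $\|x\|=\max_k|x_k|$, we get
\[
|x_i|\le\|x\| \le C_X|x_i|,
\]
and substituting $|x_i| = 1/|u_X|$ gives \eqref{eq:norm-uX}. The same argument with $W_k/W_\ell$ near $\rho(\xi)$ gives \eqref{eq:norm-uY} when $\ell\ge1$.

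\emph{Step 2: the case $\xi\notin D_Y$.} Then $\rho(\xi)\in\{W_0\neq0\}$, so each $W_k/W_0$ is holomorphic near $\rho(\xi)$. Their pullbacks are the $f_k$, which are therefore bounded near $\xi$, and $u_Y\equiv1$ is a unit.

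\emph{Step 3: the equivalence.} Assume $\xi\in D_X\cap D_Y$. Here $\ell\ge1$ is automatic: $\rho(\xi)\in H_Y$ forces $W_0(\rho(\xi))=0$, so we must pick $\ell\ge1$. Combining the two-sided bounds from Step 1, the inequality $\|x\|^\theta\le C\|F(x)\|$ implies
\[
|u_X|^{-\theta}\le \|x\|^\theta \le C\|F(x)\| \le C C_Y|u_Y|^{-1},
\]
that is, $|u_Y|\le CC_Y|u_X|^\theta$. Conversely, $|u_Y|\le C'|u_X|^\theta$ gives
\[
\|x\|^\theta\le C_X^\theta|u_X|^{-\theta}\le C_X^\theta C'|u_Y|^{-1}\le C_X^\theta C'\|F(x)\|.
\]
Points of $(U\cap\Gamma_F)\setminus D_X$ have $u_X\neq0$. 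On $\Gamma_F$ we also have $u_Y\neq0$ wherever $f_\ell\neq0$. Near $\xi\in D_Y$, $u_Y$ is small, so $f_\ell$ is large and in particular nonzero; after shrinking $U$, all quantities are finite.

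\emph{Main obstacle.} The point needing care is that $\overline{\Gamma}_F$ is a normalization and may be singular. The bounds must therefore be phrased via pullbacks of holomorphic functions on $\PP^n\times\PP^m$, which remain continuous, hence locally bounded, on $\overline{\Gamma}_F$. This is exactly what the argument uses. I would also note that it is the norm convention (max versus Euclidean) that fixes the constant $1$ in the lower bounds. For the Euclidean norm, the lower bound $\|x\|\ge|x_i|$ still holds, and only $C_X$ changes, to $\sqrt{n}\,C_X$.
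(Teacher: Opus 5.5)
Your proposal is correct and is essentially the paper's argument: bound the chart coordinates $Z_k/Z_i$ (resp.\ $W_k/W_\ell$) near $\pi(\xi)$ (resp.\ $\rho(\xi)$) to get the upper bounds, use $|x_i| = 1/|u_X|$ (resp.\ $|f_\ell| = 1/|u_Y|$) for the lower bounds, and combine the two-sided estimates for the equivalence; your explicit constants $CC_Y$ and $C_X^\theta C'$ just make the paper's ``combining'' step precise. One cosmetic point: $u_Y\neq 0$ on $U\cap\Gamma_F$ follows directly from $W_0\neq 0$ on the graph (with $W_\ell\neq 0$ on $\rho(U)$), not from $u_Y$ being small near $\xi$.
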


\begin{proof}
Work in the affine chart $\{Z_i\neq 0\}\subseteq \PP^n_\CC$ with coordinates
$z_0:=Z_0/Z_i$ and $z_j:=Z_j/Z_i$ for $j\neq i$.
On $\AA^n_\CC=\{Z_0\neq 0\}$ we have $x_j=Z_j/Z_0=z_j/z_0$.
Thus, on $U\cap\Gamma_F$ we have $u_X=z_0\circ\pi$ and
\[
|x_j|=\frac{|z_j\circ\pi|}{|u_X|}.
\]
Shrinking $U$ if necessary, we may assume $U$ is relatively compact.
Since each $z_j\circ\pi$ is holomorphic on $U$, it is bounded on $U$.
Hence, there exists $M_X\ge 1$ such that $|z_j\circ\pi|\le M_X$ on $U$ for all $j$.
This gives $\|x\|\le M_X/|u_X|$, which is the upper bound in \eqref{eq:norm-uX}.
For the lower bound, note that $x_i=Z_i/Z_0=1/(Z_0/Z_i)=1/z_0$, so on $U\cap\Gamma_F$ we have
$\|x\|\ge |x_i|=1/|u_X|$.

For the target side, the argument is analogous in the affine chart $\{W_\ell\neq 0\}\subseteq \PP^m_\CC$.
If $\ell\ge 1$ and $\xi\in D_Y$, then $u_Y=(W_0/W_\ell)\circ\rho$ vanishes at $\xi$ and
$F_j=W_j/W_0=(W_j/W_\ell)/(W_0/W_\ell)$, so bounding the affine coordinates yields \eqref{eq:norm-uY}.
If $\xi\in D_X\setminus D_Y$, we may take $\ell=0$, so $u_Y\equiv 1$ near $\xi$ and $\|F(x)\|$ is bounded.
Finally, when $\xi\in D_X\cap D_Y$, combining \eqref{eq:norm-uX} and \eqref{eq:norm-uY} shows that
$\|x\|^\theta \le C\|F(x)\|$ holds on $(U\cap\Gamma_F)\setminus D_X$ if and only if
$|u_Y|\le C'|u_X|^\theta$ holds there.
\end{proof}

\medskip
\noindent\textit{Local and global analytic exponents at infinity.} \Cref{lem:norms-vs-uX-uY} allows us to recast the definition of \L{}ojasiewicz exponent at infinity in terms of divisors at infinity.

\begin{definition}\label{def:local-analytic-loj-infty}  \quad
\begin{enumerate} 
\item Fix $\xi\in D_X$ and let $u_X$ and $u_Y$ be as above. The \emph{local analytic \L{}ojasiewicz exponent at infinity of $F$ along $\xi$} is
\[
\Loj^{\text{an}}_{\infty,\xi}(F)
:=
\sup\left\{
\theta\ge 0
\ \middle|\
\exists\,C>0 \text{ such that } |u_Y|\le C|u_X|^\theta
\text{ on } (U_\xi\cap\Gamma_F)\setminus D_X
\right\},
\]
where $U_\xi$ is a sufficiently small analytic neighborhood of $\xi$ in $\overline{\Gamma}_F$. (Note that, if $\xi\in D_X\setminus D_Y$, then $u_Y$ is a unit near $\xi$ and therefore
$\Loj^{\text{an}}_{\infty,\xi}(F)=0$.)
\item The \emph{global analytic \L{}ojasiewicz exponent at infinity} of $F$ is
\[
\Loj^{\text{an}}_{\infty}(F)
:=
\inf_{\xi\in D_X}\Loj^{\text{an}}_{\infty,\xi}(F).
\]
\end{enumerate}
\end{definition}

We work with the convention $\inf\varnothing=\infty$, $1/\infty=0$ and $1/0=\infty$. The next result shows that \L{}ojasiewicz exponent at infinity can also be understood in our algebraic framework.

\begin{theorem}\label{thm:analytic-algebraic-local-infty}
For every $\xi\in D_X$, consider the graded families of ideals in $\cO_{\overline{\Gamma}_F,\xi}$ given by
\[
\a_{\bullet,\xi}:=\{\mathscr I_{X,\xi}^p\}_{p\ge1}
\text{ and }
\b_{\bullet,\xi}:=\{\mathscr I_{Y,\xi}^q\}_{q\ge1}.
\]
Then,
\[
\Loj^{\text{an}}_{\infty,\xi}(F)=\frac{1}{\Loj_{\b_{\bullet,\xi}}(\a_{\bullet,\xi})} = \frac{1}{\Loj_{\mathscr I_{Y,\xi}}(\mathscr I_{X, \xi})}.
\]
Equivalently,
\[
\Loj^{\text{an}}_{\infty,\xi}(F)
=
\frac{1}{\inf\left\{
\frac{q}{p}\in\QQ_{>0}
\ \middle|\
\mathscr I_{Y,\xi}^{\,qt}\subseteq \overline{\mathscr I_{X,\xi}^{\,pt}}
\text{ for all }t\gg 1
\right\}}.
\]
\end{theorem}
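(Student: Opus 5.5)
The plan is to reduce the statement to the Lejeune-Jalabert--Teissier criterion (\Cref{lem:analytic-IC}), applied in the local ring $\cO_{\overline{\Gamma}_F,\xi}$ to the principal ideals generated by $u_X$ and $u_Y$. The ideal sheaf $\mathscr I_X$ is the scheme-theoretic pullback of the Cartier divisor $H_X$, so its stalk is $\mathscr I_{X,\xi}=(u_X)$. Likewise, if $\xi\in D_Y$ (with $\ell\ge1$), then $\mathscr I_{Y,\xi}=(u_Y)$.

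\emph{Degenerate case $\xi\in D_X\setminus D_Y$.} Here $\mathscr I_{Y,\xi}$ is the unit ideal. No containment $\mathscr I_{Y,\xi}^{qt}=\cO\subseteq\overline{(u_X^{pt})}$ can hold, because $u_X$ vanishes at $\xi$ and so $(u_X^{pt})$ is a proper ideal whose integral closure is still proper. Hence $\Loj_{\b_{\bullet,\xi}}(\a_{\bullet,\xi})=\inf\emptyset=\infty$. On the other side, the remark in \Cref{def:local-analytic-loj-infty} gives $\Loj^{an}_{\infty,\xi}(F)=0$, and the convention $1/\infty=0$ closes this case. From now on assume $\xi\in D_X\cap D_Y$.

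\emph{Key step.} For $p,q\in\NN$ I will show that
\[
(u_Y^{q})\subseteq\overline{(u_X^{p})}\iff |u_Y|^q\le C|u_X|^p \text{ on } (U\cap\Gamma_F)\setminus D_X \text{ for some } U\ni\xi,\ C>0 .
\]
\begin{enumerate}
\item First I pass from $(U\cap\Gamma_F)\setminus D_X$ to all of $U$. The set $\Gamma_F=\overline{\Gamma}_F\setminus(\text{boundary})$ is dense in $\overline{\Gamma}_F$, and $u_X$, $u_Y$ are continuous, so an inequality valid on the dense open part extends to all of $U$.
\item Then I apply \Cref{lem:analytic-IC} on the germ $(\overline{\Gamma}_F,\xi)$.
\end{enumerate}
This second step is the main obstacle, since the lemma is stated for $\cO_{\CC^d,0}$, whereas $\overline{\Gamma}_F$ is only a normal (possibly singular) complex space. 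I would address it in one of two ways. The first is to observe that the Lejeune-Jalabert--Teissier theorem \cite[Th\'eor\`eme~7.2]{LT} is proved for reduced complex analytic germs. The second is to argue directly with principal ideals. For ``$\Rightarrow$'': $u_Y^q$ satisfies an integral equation over $(u_X^p)$, so $h=u_Y^q/u_X^p$ satisfies a monic equation with holomorphic coefficients off $D_X$. This bounds $|h|$ locally in terms of the coefficients, giving $|u_Y|^q\le C|u_X|^p$. For ``$\Leftarrow$'': $h$ is meromorphic and locally bounded, so by Riemann extension on the normal space $\overline{\Gamma}_F$ it is holomorphic. Hence $u_Y^q\in(u_X^p)$, which is even stronger than integral dependence.

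\emph{Comparing exponents.} Shrink $U$ so that $|u_X|\le1$.
\begin{enumerate}
\item If $\theta>0$ is admissible for $\Loj^{an}_{\infty,\xi}(F)$, take $p,q\in\NN$ with $q\theta\ge p$. Then
\[
|u_Y|^q\le C^q|u_X|^{q\theta}\le C^q|u_X|^p ,
\]
and the same holds after replacing $(p,q)$ by $(pt,qt)$. By the key step, $q/p$ is admissible for $\Loj_{\b_{\bullet,\xi}}(\a_{\bullet,\xi})$. Letting $q/p\downarrow 1/\theta$ gives $\Loj_{\b_{\bullet,\xi}}(\a_{\bullet,\xi})\le 1/\theta$.
\item Conversely, if $q/p$ is admissible, the key step gives $|u_Y|^{qt}\le C|u_X|^{pt}$. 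Taking $(qt)$-th roots shows that $\theta=p/q$ is admissible for $\Loj^{an}_{\infty,\xi}(F)$.
\end{enumerate}
Taking the supremum over $\theta$ and the infimum over $q/p$ yields $\Loj^{an}_{\infty,\xi}(F)=1/\Loj_{\b_{\bullet,\xi}}(\a_{\bullet,\xi})$. If no $\theta>0$ is admissible, both sides are $0$ under the convention $1/\infty=0$.

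The final equality with $1/\Loj_{\mathscr I_{Y,\xi}}(\mathscr I_{X,\xi})$ is just \Cref{def:L-families} for ideals, since the families here are ordinary powers.
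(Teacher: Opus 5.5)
Your proposal is correct and follows essentially the same route as the paper: dispose of $\xi\in D_X\setminus D_Y$ via $\inf\emptyset=\infty$, identify $\mathscr I_{X,\xi}=(u_X)$ and $\mathscr I_{Y,\xi}=(u_Y)$, apply the Lejeune-Jalabert--Teissier criterion to $|u_Y|^q\le C|u_X|^p$, and compare admissible exponents. The only difference is that the paper simply cites \cite{LT} for this criterion on the possibly singular germ $(\overline{\Gamma}_F,\xi)$. Your Riemann-extension argument on the normal space $\overline{\Gamma}_F$ instead justifies that step directly, and in fact shows $\overline{(u_X^p)}=(u_X^p)$.
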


\begin{proof}
If $\xi\in D_X\setminus D_Y$, then $\mathscr I_{Y,\xi}=\cO_{\overline{\Gamma}_F,\xi}$, so the admissible
set in Definition~\ref{def:L-families} for $\Loj_{\b_{\bullet,\xi}}(\a_{\bullet,\xi})$ is empty and $\Loj_{\b_{\bullet,\xi}}(\a_{\bullet,\xi})=\infty$, i.e., $1/\Loj_{\b_{\bullet,\xi}}(\a_{\bullet,\xi}) = 0$.
On the other hand, $u_Y$ is a unit near $\xi$, so
$\Loj^{\text{an}}_{\infty,\xi}(F)=0$ by Definition~\ref{def:local-analytic-loj-infty}.
Thus, the equality holds in this case.

Assume now $\xi\in D_X\cap D_Y$.
Let $\theta=p/q\in\QQ_{>0}$.
By Lemma~\ref{lem:norms-vs-uX-uY}, the condition
$\theta\le \Loj^{\text{an}}_{\infty,\xi}(F)$ is equivalent to the existence of
$C>0$ such that
\[
|u_Y|\le C|u_X|^{p/q}
\text{ on }(U_\xi\cap\Gamma_F)\setminus D_X.
\]
Equivalently, $|u_Y|^q\le C^q|u_X|^p$ holds in a punctured neighborhood of $\xi$.
By \cite[Th\'eor\`em 2.1]{LT}, this is the case if and only if $(u_Y)^q \subseteq \overline{(u_X)^p}$.
Since $D_X$ and $D_Y$ are Cartier divisors, we have
$\mathscr I_{X,\xi}=(u_X)$ and $\mathscr I_{Y,\xi}=(u_Y)$.
Thus, the above containment is equivalent to
\[
\mathscr I_{Y,\xi}^{\,q}\subseteq \overline{\mathscr I_{X,\xi}^{\,p}}
\text{ in }\cO_{\overline{\Gamma}_F,\xi}.
\]
Raising to $t$-th powers yields
\[
\mathscr I_{Y,\xi}^{\,qt}\subseteq \overline{\mathscr I_{X,\xi}^{\,pt}}
\text{ for all }t\ge 1.
\]
Therefore,
\[
\frac{1}{\Loj^{\text{an}}_{\infty,\xi}(F)}
=
\inf\left\{
\frac{q}{p}\in\QQ_{>0}
\ \middle|\
\mathscr I_{Y,\xi}^{\,qt}\subseteq
\overline{\mathscr I_{X,\xi}^{\,pt}}
\text{ for all }t\gg 1
\right\}
=
\Loj_{\b_{\bullet,\xi}}(\a_{\bullet,\xi}). \qedhere
\]
\end{proof}


\part{Valuations as the Computational Engine} \label{part:valuative}

In this part, we describe the \L{}ojasiewicz exponent in valuative terms and
identify conditions under which the lower bound supremum over valuations is
attained, reduces to a finite maximum, and computes the \L{}ojasiewicz exponent.

\section{Valuative formulas and the finite--max principle} \label{sec:valuative}

In this section we establish the valuative framework that serves as the computational engine of the paper.  For a pair of filtrations $(\abul,\bbul)$ on a Noetherian local domain, we first express $\Loj_{\bbul}(\abul)$ in terms of the asymptotic valuative data $v(\abul)$ and $v(\bbul)$ and obtain the fundamental bound
\[
\Loj_{\bbul}(\abul)\ \ge\ \sup_{v}\frac{v(\abul)}{v(\bbul)},
\]
where $v$ ranges over valuations centered at $\m$ (\Cref{thm:valuative-filtrations}).  The main structural contribution is to isolate verifiable hypotheses under which this supremum both equals $\Loj_{\bbul}(\abul)$ and is attained by a valuation, so the optimization problem reduces to a finite maximum: first via finite divisorial testing sets arising from Noetherian/finitely generated Rees algebras (\Cref{thm:finite-testing-implies-max,thm:finite-testing-fingen}), and second via compactness of normalized valuation spaces together with uniform convergence of the functions $v\mapsto v(\a_p)/p$ and $v\mapsto v(\b_q)/q$ (\Cref{thm:attainment,thm:nondivisorial-attainment-final,thm:finite-max-rees}).

\subsection{Valuations and asymptotic values of filtrations}
We recall basic notions on valuations and the asymptotic value of a
graded family with respect to a valuation. For a geometric perspective on spaces of valuations and their role in controlling
singularity invariants -- especially in dimension two -- we refer to Favre--Jonsson \cite{FJ04} monograph.

Throughout this section, $(R,\m)$ is a Noetherian local domain.
A \emph{(semi)valuation} on $R$ is a map
\[
v:R\setminus\{0\}\to \RR_{\ge0}
\]
satisfying $v(xy)=v(x)+v(y)$ and $v(x+y)\ge \min\{v(x),v(y)\}$, extended by $v(0)=\infty$.
When discussing valuations on $R$, we always assume $v$ is \emph{centered at $\m$}, i.e.,  $v(x)>0$ for all $x\in\m$.
For an ideal $I\subseteq R$ we write
\[
v(I):=\min\{v(x):x\in I\}.
\]
For a graded family $\abul=\{\a_p\}_{p\ge1}$, set
\[
v(\abul):=\lim_{p\to\infty}\frac{v(\a_p)}{p}
=\inf_{p\ge1}\frac{v(\a_p)}{p}.
\]

\begin{remark}[Subadditivity and existence of $v(\abul)$]\label{lem:vabul-exists}
	Let $\abul=\{\a_p\}_{p \ge 1}$ be a graded family and let $v$ be a valuation centered at $\m$.
	Then, the sequence $\{v(\a_p)\}_{p \ge 1}$ is subadditive:
	\[
	v(\a_{p+p'})\le v(\a_p)+v(\a_{p'}) \quad \forall p,p'\ge1.
	\]
	Thus, by Fekete's lemma, the limit $v(\abul)=\lim_{p\to\infty} v(\a_p)/p$ exists and equals
	$\inf_{p\ge1} v(\a_p)/p$.
\end{remark}

\subsection{Valuative criterion for integral closure}
We recall classical valuative criteria for integral closure, which we will repeatedly used in the paper.

\begin{lemma}[Classical valuative criterion for integral closure]\label{thm:valuative-ic}
  Let $(R,\m)$ be a Noetherian local domain, $I\subseteq R$ an ideal, and $x\in R$.
  Then, $x\in \overline{I}$ if and only if
  \[
    v(x)\ge v(I)
  \]
  for every valuation $v$ of $\Frac(R)$ centered at $\m$. Moreover, it suffices to test the inequalities on the finitely many Rees valuations of $I$, equivalently on the divisorial valuations associated to prime divisors on the normalized blowup of $I$.
\end{lemma}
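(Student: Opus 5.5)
The plan is to prove the two implications separately, using the Rees algebra $\R(I)=\bigoplus_{n\ge0}I^nT^n\subseteq R[T]$ and its normalization. A standard fact makes this work: for an ideal $I$ in a domain $R$, the integral closure of $\R(I)$ in $R[T]$ is $\bigoplus_n \overline{I^n}T^n$.

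For the easy direction, suppose $x\in\overline I$. Then there is an equation of integral dependence
\[
x^n+a_1x^{n-1}+\cdots+a_n=0,\qquad a_i\in I^i.
\]
Let $v$ be any valuation centered at $\m$ and suppose $v(x)<v(I)$. Then $v(a_ix^{n-i})\ge iv(I)+(n-i)v(x)>nv(x)$ for every $i\ge1$. Hence $x^n=-\sum_{i\ge 1} a_ix^{n-i}$ has value strictly bigger than $nv(x)$, which is a contradiction. Note that this direction needs nothing about the center.

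For the converse I would use the normalized blowup. Let $X=\Proj\overline{\R(I)}\to\Spec R$. Since $R$ is Noetherian, the normalized blowup is of finite type in the cases of interest (or one passes to the completion or assumes $R$ is analytically unramified, which gives finiteness of $\overline{\R(I)}$ over $\R(I)$). On $X$ the pullback $I\cO_X=\cO_X(1)$ is an invertible sheaf. Let $E_1,\dots,E_r$ be the prime components of its support, and let $v_1,\dots,v_r$ be the corresponding discrete valuations of the normal local rings $\cO_{X,E_i}$; these are the Rees valuations. Since $I\subseteq\m$, their centers on $R$ lie in $V(I)$.

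Suppose $v_i(x)\ge v_i(I)$ for all $i$. Normality of $X$ means each affine chart $\Spec\,\overline{\R(I)}_{(aT)}$, for $a\in I$, is a normal domain. A normal Noetherian domain is the intersection of its localizations at height-one primes. On the chart $D_+(aT)$ we have $I\cO=a\cO$, so the condition $x/a\in\cO$ is checked divisor by divisor. At divisors not among the $E_i$, $a$ is a unit and $x$ is regular, so the condition holds automatically. At the $E_i$ it is exactly $v_i(x)\ge v_i(a)=v_i(I)$. Hence $x\cO_X\subseteq I\cO_X$. Pushing forward, $x\in\Gamma(X,I\cO_X)\cap R$, which equals the degree-one part $\overline I$ of $\overline{\R(I)}$.

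This shows that testing on the finitely many Rees valuations suffices. Since the Rees valuations are among the valuations of the statement (after noting that centers can be pushed to $\m$ by composing with valuations, or since $v_i(\m)>0$ when $I$ is $\m$-primary), the full valuative criterion follows as well.

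I expect the main obstacle to be the identification $\Gamma(X,I\cO_X)\cap R=\overline I$, together with the finiteness of the normalization needed to make $X$ Noetherian. The step I would try first is to argue degree-wise. An element $x$ with $xT\in\Gamma(X,\cO_X(1))$ is integral over $\R(I)$, because $\Gamma$ of a normal projective model lies in the normalization of the graded ring in sufficiently high degrees: $x^nT^n\in\overline{\R(I)}_n$ for large $n$. This gives $x^n\in\overline{I^n}$, hence $x\in\overline I$ by integral closedness of $\bigoplus_n\overline{I^n}T^n$.

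A second subtlety is the centering hypothesis in the statement. For a non-$\m$-primary $I$ the Rees valuations are centered on primes of $V(I)$, not on $\m$. There I would reduce to valuations centered at $\m$ by composing each $v_i$ with a valuation of its residue field centered at the image of $\m$. This yields a rank-two valuation that still dominates the comparison $v(x)\ge v(I)$. I would remark that this is the standard route in the references (Swanson--Huneke), and cite them for the finiteness details rather than reprove them.
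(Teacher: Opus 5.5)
The paper does not prove this lemma; it only cites Rees, Lipman and Huneke--Swanson, so your sketch has to stand on its own. Your forward direction is fine. The chart computation on $\Proj\overline{\R(I)}$ also correctly shows that the Rees valuations detect membership in $\overline I$, modulo the finiteness issue discussed below.

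\textbf{Main gap: the centering step.} In this paper a valuation is a map $v\colon R\setminus\{0\}\to\RR_{\ge 0}$, i.e.\ real-valued (rank one), with $v>0$ on $\m$. If $I$ is not $\m$-primary, its Rees valuations are centered at primes $\p\supseteq I$ with $\p\neq\m$ (e.g.\ $I=(x)$ in $k[x,y]_{(x,y)}$). So the hypothesis of the lemma says nothing about them. Your composite of $v_i$ with a residue-field valuation is centered at $\m$. But it has rank two, and its value group is non-archimedean, so it admits no order-preserving embedding into $\RR$. The hypothesis therefore does not apply to it either. As written, you have proved ``$v(x)\ge v(I)$ for all $v$ centered at $\m$ $\Rightarrow$ $x\in\overline I$'' only for $\m$-primary $I$.

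\textbf{The missing idea.} You need to produce a rank-one discrete valuation centered at $\m$ directly from $x\notin\overline I$. Take $x\neq 0$, $I=(a_1,\dots,a_n)$, $S=R[a_1/x,\dots,a_n/x]$ and $J=(a_1/x,\dots,a_n/x)S$.
\begin{enumerate}
\item Clearing denominators shows $J=S$ if and only if $x\in\overline I$.
\item Since $S=R+J$, the ring $S/J$ is a quotient of the local ring $R$.
\item Hence, if $x\notin\overline I$, then $P:=\m S+J$ is a maximal ideal of $S$ with $P\cap R=\m$.
\item Any DVR dominating the Noetherian local domain $S_P$ gives a $\ZZ$-valued $v$ that is nonnegative on $R$, positive on $\m$, and satisfies $v(a_i/x)>0$ for all $i$. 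That is, $v(x)<v(I)$.
\end{enumerate}
This handles every $I$ at once and does not need the normalized blowup.

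\textbf{Secondary point: finiteness.} Your appeal to Serre's theorem on $\Gamma(X,\cO_X(n))$ needs $\overline{\R(I)}$ to be finite over $\R(I)$. Your proposed remedies do not fit the lemma's hypotheses. Assuming $R$ analytically unramified adds a hypothesis. Passing to $\widehat R$ loses the domain property and changes which Rees valuations you are talking about.

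Huneke--Swanson avoid this with the extended Rees algebra $A=R[It,t^{-1}]$:
\begin{itemize}
\item Its normalization $\overline A$ is a Krull domain by Mori--Nagata.
\item One has $\overline{I^n}=t^{-n}\overline A\cap R$.
\item The Rees valuations are the restrictions to $\Frac(R)$ of $\ord_Q$, for the finitely many height-one primes $Q$ of $\overline A$ containing $t^{-1}$.
\end{itemize}
This gives finiteness and the testing property for every Noetherian local domain, without any finiteness of normalization.
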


\begin{proof} This result is classical and goes back to Rees \cite{Rees} and Lipman \cite{Lipman}; see also Huneke--Swanson \cite[Chapters 6 and 10]{HS} for a modern account. 
\end{proof}

\begin{remark}[Analytic arc criterion]\label{rem:arc-criterion}
	When $R=\mathcal O_{\CC^d,0}$, the same condition can be tested on analytic arcs
	via the curve criterion for integral closure; see, for instance \cite{LT}. 
\end{remark}

On a fixed birational model, the classical valuative criterion gives rise to the following reduction statement.

\begin{lemma}[Integral--closure reduction on a fixed model]
	\label{thm:IC-reduction}
	Let $(R,\m)$ be an excellent Noetherian local domain. Let $\pi: Y \to \Spec(R)$ be a proper birational morphism such that $Y$ is normal. Let $\a,\b\subseteq R$ be ideals, and assume that $\a\mathcal O_Y$ and $\b\mathcal O_Y$ are invertible. Write
	\[
	\a\mathcal O_Y=\mathcal O_Y(-A), \quad
	\b\mathcal O_Y=\mathcal O_Y(-B),
	\]
	with $A,B$ effective Cartier divisors on $Y$.
	
	Then, for all $p,q\ge 1$, the following are equivalent:
	\begin{enumerate}
		\item $\b^q \subseteq \overline{\a^p}$ in $R$;
		\item $\pi^*(\b^q)\subseteq \overline{\pi^*(\a^p)}$ in $\mathcal O_Y$;
		\item $q\,B \ge p\,A$ as divisors on $Y$;
		\item $\ord_E(\b^q)\ge \ord_E(\a^p)$ for every prime divisor $E$ on $Y$.
	\end{enumerate}
	In particular, integral--closure containments are completely determined by
	the finitely many divisorial valuations $\{\ord_E\}$ arising from prime divisors in the supports of $A$ and $B$.

\end{lemma}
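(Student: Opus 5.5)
We prove the cycle of implications $(3)\Rightarrow(2)\Rightarrow(1)\Rightarrow(4)\Rightarrow(3)$, using only that $Y$ is normal and that $A,B$ are effective Cartier divisors.

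\emph{Translating to divisors.} Since $\a\mathcal O_Y$ and $\b\mathcal O_Y$ are invertible, so are their powers:
\[
\pi^*(\a^p)=\a^p\mathcal O_Y=\mathcal O_Y(-pA), \qquad \pi^*(\b^q)=\mathcal O_Y(-qB).
\]
For each prime divisor $E$ on $Y$, the generic point of $E$ gives a DVR $\mathcal O_{Y,E}$, because $Y$ is normal. Its valuation $\ord_E$ is a valuation of $\Frac(R)$. Since $A$ and $B$ are locally principal, $\ord_E(\a^p)$ is the coefficient of $E$ in $pA$, and likewise $\ord_E(\b^q)$ is the coefficient of $E$ in $qB$. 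This is clear locally: if $\a\mathcal O_{Y,y}=(h)$, then $\ord_E(\a)=\ord_E(h)$. Hence $(3)\Leftrightarrow(4)$ is immediate. It only involves the finitely many $E$ in $\Supp A\cup\Supp B$, since for every other $E$ both sides are $0$.

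\emph{$(3)\Rightarrow(2)$.} If $qB\ge pA$, then $\mathcal O_Y(-qB)\subseteq\mathcal O_Y(-pA)$. On a normal scheme, the sections of the line bundle $\mathcal O_Y(-D)$ are exactly the rational functions whose divisor dominates $D$, by the algebraic Hartogs property. So the containment already holds without taking integral closure, and (2) follows.

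\emph{$(2)\Rightarrow(1)$.} Let $x\in\b^q$. Let $v$ be any valuation of $\Frac(R)$ centered at $\m$. Since $\pi$ is proper, the valuative criterion of properness gives $v$ a center $y\in Y$ with $y\in\pi^{-1}(\m)$. Locally at $y$, write $\a^p\mathcal O_{Y,y}=(h)$. Condition (2) says $x\in\overline{(h)}$ in $\mathcal O_{Y,y}$. Because $v\ge0$ on $\mathcal O_{Y,y}$, we get $v(x)\ge v(h)=v(\a^p)$. The last equality holds since $h$ is a combination of generators of $\a^p$ with coefficients in $\mathcal O_{Y,y}$, and conversely each generator is a multiple of $h$. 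By \Cref{thm:valuative-ic}, $x\in\overline{\a^p}$.

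\emph{$(1)\Rightarrow(4)$.} Each $\ord_E$ is a valuation of $\Frac(R)$ that is nonnegative on $R$. If its center on $\Spec R$ is $\m$, then (1) and the valuative criterion give $\ord_E(x)\ge\ord_E(\a^p)$ for all $x\in\b^q$, so $\ord_E(\b^q)\ge\ord_E(\a^p)$. If its center is a smaller prime $\mathfrak p\ne\m$, we localize: integral closure commutes with localization, so $\b^qR_\mathfrak p\subseteq\overline{\a^pR_\mathfrak p}$. The valuative criterion in $R_\mathfrak p$ then gives the same inequality. Here excellence (or just Noetherianity) ensures the criterion applies in the local ring $R_\mathfrak p$.

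\emph{Main obstacle.} The delicate point is this last step. \Cref{thm:valuative-ic} was stated only for valuations centered at $\m$, while divisors on $Y$ may dominate non-maximal primes, for instance strict transforms of divisors of $R$. I would handle this with the localization argument above, or alternatively by observing that (1) is equivalent to the containment for all valuations nonnegative on $R$. The rest is bookkeeping with Cartier divisors on the normal scheme $Y$.
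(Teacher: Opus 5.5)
Your proof is correct, but for the key equivalence $(1)\Leftrightarrow(2)$ it takes a different route from the paper.

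The paper argues sheaf-theoretically. Since $Y$ is normal and $\a\cO_Y$ is invertible, $\pi$ factors through the normalized blowup of $\a$, which yields the identity $\overline{\a^p}=\pi_*(\a^p\cO_Y)$. Then $(2)\Rightarrow(1)$ is pushing forward, and $(1)\Rightarrow(2)$ uses the evaluation map $\pi^*\pi_*(\a^p\cO_Y)\to\a^p\cO_Y$. The equivalences $(2)\Leftrightarrow(3)\Leftrightarrow(4)$ are read off from invertibility exactly as you do.

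You instead use the valuative criterion of properness to give every valuation centered at $\m$ a center on $Y$, and then invoke the hard direction of \Cref{thm:valuative-ic}; your step $(1)\Rightarrow(4)$ uses only the easy direction. This buys you three things:
\begin{itemize}
\item You need neither the factorization through the normalized blowup nor excellence.
\item Your argument is insensitive to whether $R$ is normal. Read literally, the paper's identity needs $R$ normal, or an intersection with $R$, because $\Gamma(Y,\cO_Y)$ is the normalization of $R$.
\end{itemize}
In exchange, the paper's route produces the explicit description $\overline{\a^p}=\pi_*\cO_Y(-pA)$, which it reuses later, e.g.\ in \Cref{thm:automatic-finite-testing-newton}.

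The ``main obstacle'' you flag is not actually one. If $x\in\overline{I}$, an equation of integral dependence $x^n+c_1x^{n-1}+\cdots+c_n=0$ with $c_i\in I^i$ forces $v(x)\ge v(I)$ for \emph{every} valuation nonnegative on $R$, whatever its center. So localizing at $\p$ in $(1)\Rightarrow(4)$ is correct but unnecessary.
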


\begin{proof} Observe that $\a^p\cO_Y = \cO_{Y}(-pA)$ and $\b^q\cO_Y = \cO_{Y}(-qB)$ are invertible, and so, integrally closed on $Y$. Furthermore, since $Y$ is normal, $\pi$ factors through the normalized blowup of $\a$. Thus, we have the following standard equality (cf. \cite[Chapters 10 and 19]{HS}):
	$$\overline{\a^p} = \pi_*(\a^p \cO_Y).$$
Particularly, if $\b^q\cO_Y \subseteq \a^p\cO_Y$, then pushing forward gives
$\b^q \subseteq \pi_*(\a^p\cO_Y) = \overline{\a^p}.$
On the other hand, if $\b^q \subseteq \overline{\a^p} = \pi_*(\a^p\cO_Y)$ then $\b^q \cO_Y \subseteq (\pi_*(\a^p\cO_Y))\cO_Y \subseteq \a^p\cO_Y$, where the latter containment follows from the natural evaluation map $\pi^*\pi_*(\a^p\cO_Y) \rightarrow \a^p\cO_Y$. Hence, we have the equivalence (1) $\Longleftrightarrow$ (2).

The equivalences with (3) and (4) follow from the fact that $\a^p\cO_Y$ and $\b^q\cO_Y$ are invertible sheaves, and so $\b^q \cO_Y \subseteq \a^p \cO_Y \Longleftrightarrow \cO_Y(-qB) \subseteq \cO_Y(-pA)$, which is equivalent to $qB \ge pA \Longleftrightarrow q\ord_E(\b) \ge p\ord_E(\a)$ for all prime divisors $E \subseteq Y$.
\end{proof}

\begin{remark}
	In the Newton nondegenerate setting, the toric modification associated to the
	Newton fan of $f$ provides such a morphism $\pi$, and the divisors $E$ correspond
	to compact facets of the Newton polyhedron (cf. \cite{CLS,Khovanskii,Oka}).
\end{remark}

\subsection{Valuative formula for $\Loj_{\bbul}(\abul)$}
We derive a general valuative lower bound for the algebraic \L{}ojasiewicz
exponent in terms of ratios of asymptotic valuation values. This is the basic computational engine that allows us to explore the finiteness mechanisms later.

\begin{theorem}[Valuative lower bound]\label{thm:valuative-filtrations}
	Assume that $(R,\m)$ is a Noetherian local domain. Let $\abul=\{\a_p\}_{p\ge1}$ and
	$\bbul=\{\b_q\}_{q\ge1}$ be graded families of ideals in $R$.
	Then
	\[
	\Loj_{\bbul}(\abul)\ \ge\ \sup_{v}\ \frac{v(\abul)}{v(\bbul)},
	\]
	where the supremum is taken over all valuations $v$ centered at $\m$ with $v(\bbul) > 0$.
\end{theorem}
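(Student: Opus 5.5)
Fix a valuation $v$ centered at $\m$ with $v(\bbul)>0$ and an admissible ratio $q/p$ for $\Loj_{\bbul}(\abul)$, i.e.\ $\b_{qt}\subseteq\overline{\a_{pt}}$ for all $t\gg1$. The goal is the inequality $q/p\ge v(\abul)/v(\bbul)$. Taking the infimum over admissible $q/p$ and then the supremum over $v$ gives the theorem. If no ratio is admissible then $\Loj_{\bbul}(\abul)=\infty$ and there is nothing to prove, so assume the admissible set is nonempty.

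The first step transfers the containment to valuations. By the valuative criterion (\Cref{thm:valuative-ic}), every $x\in\overline{\a_{pt}}$ satisfies $v(x)\ge v(\a_{pt})$. Hence $v(\overline{\a_{pt}})=v(\a_{pt})$, and the containment gives
\[
v(\b_{qt})\ \ge\ v(\overline{\a_{pt}})\ =\ v(\a_{pt})\qquad (t\gg1).
\]
Here $v$ is a valuation on $R$, which extends to $\Frac(R)$ because $R$ is a domain, so the criterion applies. Alternatively, one can argue directly: an equation of integral dependence $x^n+c_1x^{n-1}+\dots+c_n=0$ with $c_i\in\a_{pt}^i$ forces $v(x)\ge v(\a_{pt})$, since otherwise the term $x^n$ would have strictly smaller value than every other term.

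The second step passes to the limit. Dividing by $t$ and letting $t\to\infty$, \Cref{lem:vabul-exists} gives $v(\b_{qt})/(qt)\to v(\bbul)$ and $v(\a_{pt})/(pt)\to v(\abul)$, since subsequences of a convergent sequence converge to the same limit. This yields $q\,v(\bbul)\ge p\,v(\abul)$. Because $v(\bbul)>0$, this rearranges to $q/p\ge v(\abul)/v(\bbul)$.

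I expect no serious obstacle. The only points needing care are the degenerate conventions: the case $v(\abul)=0$ is trivial, and the hypothesis $v(\bbul)>0$ is exactly what is needed to divide. One should also check that Fekete's lemma applies to graded families. It does, because $v(\a_p)<\infty$, as the ideals are nonzero in a domain; if some $\a_p=0$, one should remark that the value is $+\infty$ and that the argument still goes through formally.
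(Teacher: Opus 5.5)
Your proposal is correct and follows the paper's proof essentially step for step. The valuative criterion turns $\b_{qt}\subseteq\overline{\a_{pt}}$ into $v(\b_{qt})\ge v(\a_{pt})$ for $t\gg1$, dividing by $t$ and passing to the Fekete limits along the subsequences $\{pt\}$ and $\{qt\}$ gives $q\,v(\bbul)\ge p\,v(\abul)$, and taking the infimum over admissible ratios finishes; your extra remarks on the empty admissible set and on the direct integral-dependence argument are sound additions that the paper leaves implicit.
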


\begin{proof} Set $M = \sup_v {v(\abul)}/{v(\bbul)}.$ 
	Fix a rational number $\frac{q}{p}\in\QQ_{>0}$ which is admissible in the definition of
	$\Loj_{\bbul}(\abul)$, i.e.\ such that
	\[
	\b_{qt}\subseteq \overline{\a_{pt}}
	\quad\text{for all integers }t\gg 1.
	\]
	Let $v$ be a valuation of $\Frac(R)$ centered at $\m$ with $v(\bbul) > 0$.
	By the valuative criterion for integral closure (\Cref{thm:valuative-ic}), the containment
	$\b_{qt}\subseteq \overline{\a_{pt}}$ implies $v(\b_{qt}) \ge v(\a_{pt})$ for all $t \gg 1$. Thus,
	\begin{align}
	\frac{v(\b_{qt})}{t}\ \ge\ \frac{v(\a_{pt})}{t}
	\quad\text{for all } t\gg 1. \label{eq.vDivt}
	\end{align}
	
	By \Cref{lem:vabul-exists}, the limits $v(\abul)=\lim_{r\to\infty}\frac{v(\a_r)}{r}$ and $v(\bbul)=\lim_{r\to\infty}\frac{v(\b_r)}{r}$
	exist. In particular, we have
	\[
	\lim_{t\to\infty}\frac{v(\a_{pt})}{pt}=v(\abul)
	\text{ and }
	\lim_{t\to\infty}\frac{v(\b_{qt})}{qt}=v(\bbul).
	\]
This, together with (\ref{eq.vDivt}), implies that $q\,v(\bbul)\ \ge\ p\,v(\abul).$
	Since $v(\bbul)>0$, we conclude that
	\[
	\frac{q}{p}\ \ge\ \frac{v(\abul)}{v(\bbul)}.
	\]
	As this holds for every valuation $v$ centered at $\m$, it follows that $\frac{q}{p} \ge M.$
Taking the infimum over all admissible $\frac{q}{p}$ in the definition of
	$\Loj_{\bbul}(\abul)$ yields the desired inequality $\Loj_{\bbul}(\abul) \ge M$.
\end{proof}

\begin{example}
	\label{ex.valuative-bound}
	Let $R = \CC[[x,y]], \m = (x,y)$, and consider 
	$$\a = (x^\alpha, y^\beta) \text{ and } \b = \m.$$
	For a monomial valuation $v_u$, corresponding to $u = (u_1,u_2) \in \ZZ^2_{\ge 0}$, with $v_u(x) = u_1$ and $v_u(y) = u_2$, we get
	$$\frac{v_u(\a)}{v_u(\b)} = \frac{\min\{\alpha \cdot u_1, \beta \cdot u_2\}}{\min \{u_1,u_2\}}.$$
	Taking the maximum value over $u$ gives $\max\{\alpha, \beta\}$. Thus, \Cref{thm:valuative-filtrations} gives
	$$\Loj_{\b}(\a) \ge \max\{\alpha, \beta\}.$$
	On the other hand, by \Cref{ex.analytic=algebraic}, we know that $\Loj_{\b}(\a) = \max\{\alpha, \beta\}.$ This example illustrates that the bound given in \Cref{thm:valuative-filtrations} is sharp.
\end{example}

\Cref{thm:valuative-filtrations} shows that $\Loj_{\bbul}(\abul)$ is governed by a valuative
optimization problem. In general, however, the inquality may be strict, the resulting supremum need not be a maximum, and the relevant candidate valuations need not be controlled uniformly without additional
finiteness hypotheses. The next subsections isolate two mechanisms that force the supremum
to be attained and to reduce to a \emph{finite} maximum.


\subsection{From supremum to maximum: finite testing and Noetherian Rees algebra}
We identify the first mechanism that forces the valuative supremum to define the \L{}ojasiewicz exponent and to be a finite maximum.

\begin{theorem}[Finite testing set $\Rightarrow$ finite maximum formula]\label{thm:finite-testing-implies-max}
	Let $(R,\m)$ be a local domain and let
	$\abul=\{\a_p\}_{p\ge 1}$ and $\bbul=\{\b_q\}_{q\ge 1}$ be graded families of ideals in $R$.
	Fix a finite set of (divisorial) valuations $V=\{v_1,\dots,v_N\}$ of $\Frac(R)$ centered at $\m$.
	Assume:
	\begin{enumerate}
		\item[(H1)] For all $p,q\ge 1$ one has the equivalence
		\begin{equation}\label{eq:finite-testing-equivalence}
			\b_q\subseteq \overline{\a_p}
			\quad\Longleftrightarrow\quad
			v_i(\b_q)\ge v_i(\a_p)\ \text{ for all }1\le i\le N.
		\end{equation}
		\item[(H2)] For each $i$ we have $v_i(\bbul)>0$.
	\end{enumerate}
	Then,
	\[
		\Loj_{\bbul}(\abul) = \max_{1\le i\le N}\frac{v_i(\abul)}{v_i(\bbul)}.
	\]
\end{theorem}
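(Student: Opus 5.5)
The plan is to prove the two inequalities separately. Set $M:=\max_{1\le i\le N} v_i(\abul)/v_i(\bbul)$. The maximum is finite: by (H2) each denominator is positive, and each numerator satisfies $v_i(\abul)\le v_i(\a_1)<\infty$ by \Cref{lem:vabul-exists}.

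\emph{Lower bound $\Loj_{\bbul}(\abul)\ge M$.} I will not appeal to \Cref{thm:valuative-filtrations}, since $R$ is not assumed Noetherian here; I only need the forward direction of (H1), applied to the finitely many $v_i$.
\begin{itemize}
\item Let $q/p$ be admissible, so that $\b_{qt}\subseteq\overline{\a_{pt}}$ for $t\gg1$.
\item By (H1), $v_i(\b_{qt})\ge v_i(\a_{pt})$ for every $i$ and all $t\gg1$.
\item Dividing by $t$ and letting $t\to\infty$, \Cref{lem:vabul-exists} gives $q\,v_i(\bbul)\ge p\,v_i(\abul)$.
\item Hence $q/p\ge v_i(\abul)/v_i(\bbul)$ for each $i$, so $q/p\ge M$.
\item Taking the infimum over admissible $q/p$ gives $\Loj_{\bbul}(\abul)\ge M$. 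If no $q/p$ is admissible, the infimum is $\infty$ and the inequality is trivial.
\end{itemize}

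\emph{Upper bound $\Loj_{\bbul}(\abul)\le M$.} Fix a rational $q/p>M$. I will show that $q/p$ is admissible using the reverse direction of (H1), which requires $v_i(\b_{qt})\ge v_i(\a_{pt})$ for all $i$ and all $t\gg1$. The key point is to use the correct one-sided approximation on each side, which comes from the infimum characterization in \Cref{lem:vabul-exists}.
\begin{itemize}
\item For $\bbul$, the infimum formula gives $v_i(\b_{qt})\ge qt\,v_i(\bbul)$ for every $t$, with no limit needed.
\item For $\abul$, the limit formula gives: for every $\varepsilon>0$ and all $t\ge t_0(i,\varepsilon)$, we have $v_i(\a_{pt})\le pt\,(v_i(\abul)+\varepsilon)$.
\item Since $q\,v_i(\bbul)>p\,v_i(\abul)$ for each $i$ (strict because $q/p>M$ and $v_i(\bbul)>0$; this includes the case $v_i(\abul)=0$), choose $\varepsilon>0$ so small that $p(v_i(\abul)+\varepsilon)\le q\,v_i(\bbul)$ for all $i$. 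This is possible because there are only finitely many $i$.
\item Then for $t\ge\max_i t_0(i,\varepsilon)$ we get $v_i(\a_{pt})\le pt(v_i(\abul)+\varepsilon)\le qt\,v_i(\bbul)\le v_i(\b_{qt})$ for all $i$.
\item By (H1), $\b_{qt}\subseteq\overline{\a_{pt}}$ for all such $t$, so $q/p$ is admissible.
\end{itemize}
Letting $q/p\downarrow M$ through rationals gives $\Loj_{\bbul}(\abul)\le M$.

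The only delicate step is this upper bound. A naive comparison of limits would give only the non-strict inequality $q\,v_i(\bbul)\ge p\,v_i(\abul)$, which does not yield eventual containment. The argument therefore works strictly above $M$ and uses $v_i(\b_{qt})\ge qt\,v_i(\bbul)$ exactly, rather than only asymptotically. Finiteness of $V$ is what provides a uniform $\varepsilon$ and a uniform threshold $t_0$. Combining the two bounds gives the stated equality, and the maximum is attained at some $v_i$.
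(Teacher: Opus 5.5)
Your proof is correct and follows essentially the same route as the paper: the lower bound comes from the forward direction of (H1) together with passage to the limit along $pt$ and $qt$, and the upper bound comes from the exact estimate $v_i(\b_{qt})\ge qt\,v_i(\bbul)$ (the infimum formula), the eventual estimate $v_i(\a_{pt})\le pt\,(v_i(\abul)+\varepsilon)$, and the reverse direction of (H1). The differences are cosmetic: you fix a rational slope $q/p>M$ and choose $\varepsilon$ from the gap, whereas the paper fixes $\varepsilon$, sets $q=\lceil (M+\varepsilon/\beta)p\rceil$ with $\beta=\min_i v_i(\bbul)$, and then lets $p\to\infty$ and $\varepsilon\to 0$; in addition, your direct use of (H1) in the lower bound avoids the Noetherian hypothesis behind the paper's appeal to \Cref{thm:valuative-filtrations}.
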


\begin{proof}
	Set
	\[
	M \ :=\ \max_{1\le i\le N}\ \frac{v_i(\abul)}{v_i(\bbul)} \ \in\ (0,\infty),
	\]
	which is finite by {\rm(H2)}. We prove the two inequalities separately.
	
	\smallskip
	\noindent
	\emph{Step 1: Lower bound $\Loj_{\bbul}(\abul)\ge M$.}
	Fix $\frac{q}{p}\in\QQ_{>0}$ admissible in the definition of $\Loj_{\bbul}(\abul)$, so that
	\[
	\b_{qt}\ \subseteq\ \overline{\a_{pt}}\quad \text{for all }t\gg 1.
	\]
	By the same arguments as in the proof of \Cref{thm:valuative-filtrations}, we get
	\[
	\frac{q}{p}\ \ge\ \frac{v_i(\abul)}{v_i(\bbul)}\quad \text{for all }1\le i\le N.
	\]
Thus, $\frac{q}{p}\ge M$. Taking the infimum over all admissible $\frac{q}{p}$ gives
	$\Loj_{\bbul}(\abul)\ge M$.
	
	\smallskip
	\noindent
	\emph{Step 2: Upper bound $\Loj_{\bbul}(\abul)\le M$.}
	Fix $\varepsilon>0$ and set $\beta \ :=\ \min_{1\le i\le N} v_i(\bbul)\ >\ 0.$
Choose $p\gg 1$ and set
	\[
	q \ :=\ \left\lceil \left(M+\frac{\varepsilon}{\beta}\right)p\right\rceil.
	\]
	For each $i$, by the definition of $v_i(\abul)$ as a limit along the subsequence $\{pt\}$,
	there exists $t_0(i)\in\NN$ such that for all integers $t\ge t_0(i)$,
	\[
	\frac{v_i(\a_{pt})}{pt}\ \le\ v_i(\abul)+\varepsilon.
	\]
	Let $t_0:=\max_i t_0(i)$, so the above estimate holds for all $i$ and all $t\ge t_0$.
	
	On the other hand, since $v_i(\b_{qt})\ \ge\ qt\,v_i(\bbul)$, for every $t\ge t_0$ and every $i$, we have
	\[
	v_i(\b_{qt})
	\ \ge\ qt\,v_i(\bbul)
	\ \ge\ \left(M+\frac{\varepsilon}{\beta}\right)pt\,v_i(\bbul)
	\ \ge\ pt\,(M\,v_i(\bbul)+\varepsilon)
	\ \ge\ pt\,(v_i(\abul)+\varepsilon)
	\ \ge\ v_i(\a_{pt}).
	\]
	That is, for all integers $t\ge t_0$, $v_i(\b_{qt})\ge v_i(\a_{pt})$ for every $i$.
	By {\rm(H1)} applied to $(pt,qt)$, it follows that
	\[
	\b_{qt}\ \subseteq\ \overline{\a_{pt}}\quad \text{for all }t\ge t_0.
	\]
	Thus, $\frac{q}{p}$ is admissible in the definition of $\Loj_{\bbul}(\abul)$, and so
	\[
	\Loj_{\bbul}(\abul)\ \le\ \frac{q}{p}
	\ \le\ M+\frac{\varepsilon}{\beta}+\frac{1}{p}.
	\]
	Letting $p\to\infty$ gives $\Loj_{\bbul}(\abul)\le M+\frac{\varepsilon}{\beta}$, and then
	letting $\varepsilon\to 0$ yields $\Loj_{\bbul}(\abul)\le M$.
This completes the proof.
\end{proof}

In most applications below, the equivalence in (H1) arises from integral-closure testing on a fixed model (e.g., a normalized blowup or toric modification). While the forward implication alone suffices for the lower bound, the equivalence ensures that admissible containments can be produced uniformly.


Hypotheses for the finite testing principle (\Cref{thm:finite-testing-implies-max}) are satisfied when the Rees algebra is Noetherian, as shown in the next result. 

\begin{theorem}[Finite Rees valuations for a Noetherian filtration]
	\label{thm:finite-testing-fingen}
	Let $(R,\m)$ be an excellent Noetherian local domain, and let
	$\abul=\{\a_p\}_{p\ge1}$ be a filtration of $\m$-primary ideals in $R$.
	Assume that the Rees algebra $\R(\abul):=\bigoplus_{p\ge0}\a_pT^p$
	is finitely generated as an $R$--algebra.
	Let $\overline{\R(\abul)}$ denote its integral closure, and set $\overline{Y}:=\Proj \overline{\R(\abul)}.$
	Then, there exists a finite set of divisorial valuations
	\[
	\mathcal V(\abul)=\{v_1,\dots,v_r\},
	\]
	arising from prime divisors on $\overline{Y}$ lying over $\m$, such that for every filtration $\bbul=\{\b_q\}_{q\ge1}$ of ideals in $R$ satisfying $v_i(\bbul) > 0$ for all $1 \le i \le r$, one has
	\[
	\Loj_{\bbul}(\abul)
	=
	\max_{1\le i\le r}
	\frac{v_i(\abul)}{v_i(\bbul)}.
	\]
\end{theorem}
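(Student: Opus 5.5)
We plan to reduce to \Cref{thm:finite-testing-implies-max}, or more precisely to rerun its two-step argument. The reason is that (H1) is only needed along arithmetic progressions $(pt,qt)$ with $p$ divisible by a fixed integer, and on such progressions it follows from the structure of a Noetherian filtration. The lower bound $\Loj_{\bbul}(\abul)\ge\max_i v_i(\abul)/v_i(\bbul)$ is immediate from \Cref{thm:valuative-filtrations}, since each $v_i$ will be a valuation centered at $\m$ with $v_i(\bbul)>0$. All the work is therefore in the upper bound.

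First, since $\R(\abul)$ is a finitely generated $R$-algebra, a standard Veronese argument (Bourbaki) gives an integer $d\ge1$ with $\a_{dk}=\a_d^{\,k}$ for all $k\ge1$. Hence $\R(\abul)^{(d)}=\R(\a_d)$, and the normalizations satisfy $\overline{\R(\abul)}^{(d)}=\overline{\R(\a_d)}$. It follows that $\overline Y=\Proj\overline{\R(\abul)}\cong\Proj\overline{\R(\a_d)}$ is the normalized blowup of $\a_d$. Because $R$ is excellent, this normalization is finite, so $\overline Y$ is a normal projective model on which $\a_d\cO_{\overline Y}$ is invertible.

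We let $\mathcal V(\abul)=\{v_1,\dots,v_r\}$ be the divisorial valuations of the prime divisors of $\overline Y$ lying over $\m$, i.e.\ the Rees valuations of $\a_d$. These are all the components of the support of $\a_d\cO_{\overline Y}$, since $\a_d$ is $\m$-primary. By \Cref{thm:valuative-ic}, or \Cref{thm:IC-reduction} applied on $\overline Y$, we have for every $x\in R$ and $k\ge1$:
\[
x\in\overline{\a_d^{\,k}}\iff v_i(x)\ge k\,v_i(\a_d)\ \text{for all } i .
\]
Moreover $v_i(\a_{dk})=k\,v_i(\a_d)$, so $v_i(\abul)=v_i(\a_d)/d$ exactly, and $v_i(\abul)>0$ because $\a_d$ is $\m$-primary.

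For the upper bound, set $M=\max_i v_i(\abul)/v_i(\bbul)$ and take a rational $q/p>M$ with $d\mid p$. Using $v_i(\b_{qt})\ge qt\,v_i(\bbul)$, which follows from the infimum formula in \Cref{lem:vabul-exists}, we obtain
\[
v_i(\b_{qt})\ge qt\,v_i(\bbul)\ge pt\,v_i(\abul)=\tfrac{pt}{d}\,v_i(\a_d)
\]
for every $t$ and every $i$. The displayed criterion then gives $\b_{qt}\subseteq\overline{\a_d^{\,pt/d}}=\overline{\a_{pt}}$ for all $t\ge1$, so $q/p$ is admissible. Rationals $q/p>M$ with $d\mid p$ are dense above $M$, so $\Loj_{\bbul}(\abul)\le M$. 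This version avoids even the $\varepsilon$-approximation of \Cref{thm:finite-testing-implies-max}.

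The main obstacle I expect is the first step: justifying carefully that finite generation yields $\a_{dk}=\a_d^k$, and that $\Proj$ of the normalized Rees algebra of the filtration coincides with the normalized blowup of $\a_d$, so that the prime divisors over $\m$ are exactly the Rees valuations of $\a_d$. If the Veronese identity is awkward, a fallback is to use that $\overline{\R(\abul)}$ is finite over $\R(\abul)$ and describe its degree-$p$ pieces directly via $\ord_E$ along the exceptional divisors of $\overline Y$. Either way, one must also note that the finite set $\mathcal V(\abul)$ depends only on $\abul$ and not on $\bbul$.
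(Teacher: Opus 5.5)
Your proof is correct, but it takes a different route from the paper's. The paper never passes to a Veronese subring. In its Step~1 it uses that $\overline{\R(\abul)}$ is a Krull domain to assert that the divisors over $\m$ test $\overline{\a_p}$ for \emph{every} $p$, namely $\overline{\a_p}=\{h\in R: v_i(h)\ge v_i(\a_p)\ \text{for all } i\}$. It then proves the upper bound by an $\varepsilon$--$\delta$ approximation with arbitrary large $p$ and $q=\lceil(\lambda+\delta)p\rceil$.

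You instead use $\a_{dk}=\a_d^{k}$ to identify $\overline Y$ with the normalized blowup of $\a_d$. Then the $v_i$ are the Rees valuations of $\a_d$, and $v_i(\abul)=v_i(\a_d)/d$ holds exactly. The upper bound needs only the classical Rees criterion for $\overline{\a_{dk}}=\overline{\a_d^{k}}$, with no approximation at all.

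Your restriction to $d\mid p$ is more than a convenience. The paper's all-$p$ testing claim can fail when $d\nmid p$, because the degree-$p$ piece of $\overline{\R(\abul)}$ may be strictly larger than $\overline{\a_p}\,T^p$. For example, in $R=k[x,y]_{(x,y)}$ let $\a_{2k}=\m^{2k}$ and $\a_{2k+1}=(x,y^2)\m^{2k}$. This is a filtration of $\m$-primary ideals with Noetherian Rees algebra ($d=2$), $\overline Y$ is the blowup of the origin, and $\mathcal V(\abul)=\{\ord_\m\}$. Yet $\ord_\m(y)=1=\ord_\m(\a_1)$ while $y\notin\overline{\a_1}=(x,y^2)$; note that $yT$ is integral over $\R(\abul)$ since $(yT)^2\in\a_2T^2$.

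So the paper's route would give finite testing for all pairs $(p,q)$, as asserted in \Cref{prop:finite-testing-fingen}, but that stronger statement is false in general. The paper's Step~2 becomes rigorous exactly when $p$ is chosen divisible by $d$, as you do. Your argument is therefore both cleaner and the one that actually justifies the stated theorem.
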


\begin{proof}
	We break the proof into two steps.
	
	\smallskip
	\noindent\textit{Step 1: a finite divisorial testing set for the integral closures
		$\overline{\a_p}$.}
	Let
	\[
	\R:=\R(\abul)=\bigoplus_{p\ge 0}\a_pT^p
	\subseteq R[T]
	\quad\text{and}\quad
	\overline{\R}:=\overline{\R(\abul)}
	\]
	be the integral closure of $\R$ in its total ring of fractions.
	Since $\R$ is finitely generated over the excellent Noetherian domain $R$, it is Noetherian,
	and $\overline{\R}$ is a finite $\R$--module; in particular
	$\overline{\R}$ is again a finitely generated $R$--algebra.
	
	Let $\pi:\overline{Y}\to \Spec R$ the induced proper birational morphism.
	Write $E\subseteq \overline{Y}$ for the exceptional locus of $\pi$.
	Since $\overline{Y}$ is of finite type over $R$, the exceptional locus has only finitely many
	irreducible components of codimension one; denote them by
	\[
	E_1,\dots,E_r.
	\]
	Each $E_i$ defines a divisorial valuation $v_i$ of $\Frac(R)$ by
	\[
	v_i(h):=\ord_{E_i}(\pi^*h)\quad (0\ne h\in \Frac(R)),
	\]
	normalized so that $v_i(R)\ge 0$ and $v_i(\m)>0$ whenever $\pi(E_i)\subseteq V(\m)$.
	
	We claim that these finitely many valuations test the integral closure of each
	$\a_p$ in the sense that
	\begin{equation}\label{eq:ic-test-by-Ei}
		\overline{\a_p}
		=
		\{\,h\in R:\ v_i(h)\ge v_i(\a_p)\ \text{for all } i=1,\dots,r\,\}.
	\end{equation}
	Indeed, for $h\in R$ we have
	\[
	h\in \overline{\a_p}
	\ \Longleftrightarrow\
	hT^p \in \overline{\a_pT^p}\subseteq \overline{\R}_p
	\ \Longleftrightarrow\
	hT^p \in \overline{\R},
	\]
	where $\overline{\R}_p$ denotes the degree-$p$ piece of $\overline{\R}$.
	Now, $\overline{\R}$ is a normal Noetherian domain, so a Krull domain; therefore,
	\[
	\overline{\R}
	=
	\bigcap_{\substack{\mathfrak P\in \Spec \overline{\R}\\ \mathrm{ht}(\mathfrak P)=1}}
	(\overline{\R})_{\mathfrak P}
	\ \subseteq\ \Frac(\overline{\R}).
	\]
	Consequently, $hT^p\in \overline{\R}$ if and only if $hT^p$ lies in each DVR
	$(\overline{\R})_{\mathfrak P}$ for $\mathrm{ht}(\mathfrak P)=1$.
	Among the height-one primes $\mathfrak P$, only those corresponding to the codimension-one
	components of the exceptional locus contribute nontrivially to membership in $\a_p$; the
	corresponding valuations are precisely $v_1,\dots,v_r$, and the resulting condition is
	exactly \eqref{eq:ic-test-by-Ei}.  This proves the claim.
	
	We set
	\[
	\mathcal V(\abul):=\{v_1,\dots,v_r\}.
	\]
		
		\smallskip
		\noindent\textit{Step 2: computation of $\Loj_{\bbul}(\abul)$.}
		Fix any filtration $\bbul=\{\b_q\}_{q\ge 1}$ of ideals in $R$.
		Set
		\[
		\lambda\ :=\ \max_{1\le i\le r}\ \frac{v_i(\abul)}{v_i(\bbul)}\ \in\ \RR_{\ge 0}.
		\]
	
		Let $\frac{q}{p}\in\QQ_{>0}$ be admissible in Definition~\ref{def:L-families}, i.e.,
		\[
		\b_{qt}\subseteq \overline{\a_{pt}}
		\quad\text{for all integers }t\gg 1.
		\]
		By \eqref{eq:ic-test-by-Ei} applied to $pt$, this containment is equivalent to
		\[
		v_i(\b_{qt})\ \ge\ v_i(\a_{pt})
		\quad\text{for all }i=1,\dots,r\ \text{and all }t\gg 1.
		\]
		By the same arguments as in the proof of \Cref{thm:valuative-filtrations}, we get
	$q\,v_i(\bbul)\ge p\,v_i(\abul)$, i.e.,
		\[
		\frac{q}{p}\ \ge\ \frac{v_i(\abul)}{v_i(\bbul)}\quad\text{for all } i.
		\]
	Hence, $\frac{q}{p}\ge \lambda$. Taking the infimum over admissible \(\frac{q}{p}\) yields
		\(\Loj_{\bbul}(\abul)\ge \lambda\).
		
		Conversely, fix $\varepsilon>0$ sufficiently small. 
		By the definition of $v_i(\abul)$ and $v_i(\bbul)$, there exists $p_0$
		such that for all $p\ge p_0$ and all $i$,
		\[
		\frac{v_i(\a_p)}{p}\ \le\ v_i(\abul)+\varepsilon
	\text{ and }
		\frac{v_i(\b_p)}{p}\ \ge\ v_i(\bbul)-\varepsilon > 0.
		\]
    Set $$\delta = \epsilon \cdot \max\limits_{1 \le i \le r}\frac{v_i(\abul) + v_i(\bbul)}{v_i(\bbul)(v_i(\bbul)-\epsilon)}.$$
    Then, $\delta \rightarrow 0$ as $\epsilon \rightarrow 0$ and 
    $(\lambda + \delta)(v(\bbul)-\epsilon) \ge v_i(\abul)+\epsilon.$
		Choose $p\ge p_0$ large and set
		$$q:=\left\lceil(\lambda+\delta)p\right\rceil.$$
		Now choose $t_0$ large enough so that $qt\ge p_0$ for all $i$ and all $t\ge t_0$.
		Then, for every $i$ and every integer $t\ge t_0$ we have
		\[
		v_i(\b_{qt})\ \ge\ qt\,(v_i(\bbul)-\varepsilon)
		\ \ge\ (\lambda+\delta)pt\,(v_i(\bbul)-\varepsilon)
		\ \ge\ pt\,(v_i(\abul)+\varepsilon)
		\ \ge\ v_i(\a_{pt}).
		\]
		Hence, by \eqref{eq:ic-test-by-Ei},
		\[
		\b_{qt}\subseteq \overline{\a_{pt}}
		\quad\text{for all integers }t\ge t_0.
		\]
		Thus, $\frac{q}{p}$ is admissible in Definition~\ref{def:L-families}, so
		\[
		\Loj_{\bbul}(\abul)\ \le\ \frac{q}{p}\ \le\ \lambda+\delta+\frac{1}{p}.
		\]
		Letting $p\to\infty$ and then $\varepsilon\to 0$ ($\delta \to 0$) gives $\Loj_{\bbul}(\abul)\le \lambda$.
		The theorem is proved.
	\end{proof}

\begin{remark} \label{rmk:finite-testing-fingen} \quad
	\begin{enumerate} 
	\item The finite set $\mathcal V(\abul)$ depends only on the filtration $\abul$
	and not on $\bbul$.
	When $\abul=\{\a^p\}_{p\ge1}$ for an $\m$--primary ideal $\a$, the set
	$\mathcal V(\abul)$ coincides with the classical set of Rees valuations
	of $\a$.
	\item The condition that $v_i(\bbul) > 0$ in \Cref{thm:finite-testing-fingen} is often satisfied in practice. For instance, if $\bbul$ is \emph{linearly bounded} from below, i.e., there exists a constant $c$ such that
	$\m^{\lceil cq \rceil} \subseteq \b_q.$
	Particularly, if $\bbul = \{\m^q\}_{q \in \NN}$, then the hypothesis $v_i(\bbul) > 0$ in \Cref{thm:finite-testing-fingen} is automatically satisfied.
	\item The conclusion of \Cref{thm:finite-testing-fingen} still holds under a slightly weaker condition that $v_i(\bbul) = 0$ implies $v_i(\abul) > 0$ for any $i =1, \dots, r$. This is satisfied particularly if $v_i(\abul) > 0$ for all $i = 1, \dots, r$.
	\end{enumerate}
\end{remark}

\begin{corollary}[Finite testing set from finite generation]
	\label{prop:finite-testing-fingen}
	Let $(R,\m)$ be an excellent Noetherian local domain, and let
	$\abul=\{\a_p\}_{p\ge1}$ be a filtration of $\m$-primary ideals whose Rees algebra
	$\R(\abul)$ is finitely generated.
	Then, there exists a finite set of divisorial valuations $\mathcal V$
	with the following property:
	
	For any filtration $\bbul=\{\b_q\}_{q\ge1}$, with $v(\bbul) > 0 \ \forall \ v \in \mathcal V$, and any $p,q\ge1$,
	\[
	\b_q\subseteq \overline{\a_p}
	\quad\Longleftrightarrow\quad
	v(\b_q)\ge v(\a_p)
	\ \text{for all } v\in\mathcal V.
	\]
	In particular, $\mathcal V$ is a finite testing set for asymptotic
	containment problems involving $\abul$.
\end{corollary}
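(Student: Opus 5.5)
The plan is to read this off from Step~1 of the proof of \Cref{thm:finite-testing-fingen}. We take $\mathcal V:=\mathcal V(\abul)=\{v_1,\dots,v_r\}$, the divisorial valuations attached to the codimension-one components $E_1,\dots,E_r$ of the exceptional locus of $\overline{Y}=\Proj\overline{\R(\abul)}\to\Spec R$. The key input is the identity \eqref{eq:ic-test-by-Ei},
\[
\overline{\a_p}=\{\,h\in R:\ v_i(h)\ge v_i(\a_p)\ \text{for all } i\,\},
\]
valid for every $p\ge1$. This was established using only the finite generation of $\R(\abul)$, excellence of $R$ (so $\overline{\R}$ is finite over $\R$), and the Krull-domain description of $\overline{\R}$. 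None of this depends on $\bbul$, so $\mathcal V$ depends only on $\abul$, as in \Cref{rmk:finite-testing-fingen}.

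Given this identity, the equivalence is immediate. Fix $p,q\ge1$.

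For the forward direction, suppose $\b_q\subseteq\overline{\a_p}$. Then every $h\in\b_q$ satisfies $v_i(h)\ge v_i(\a_p)$ for all $i$, either by \eqref{eq:ic-test-by-Ei} or directly by \Cref{thm:valuative-ic}, since each $v_i$ is centered at $\m$. Taking the minimum over $h\in\b_q$ gives $v_i(\b_q)\ge v_i(\a_p)$.

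For the converse, suppose $v_i(\b_q)\ge v_i(\a_p)$ for all $i$. Every $h\in\b_q$ has $v_i(h)\ge v_i(\b_q)\ge v_i(\a_p)$, so $h\in\overline{\a_p}$ by \eqref{eq:ic-test-by-Ei}.

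The hypothesis $v(\bbul)>0$ plays no role in this pointwise equivalence. It enters only in the ``in particular'' clause: together with the equivalence it gives (H1) and (H2) of \Cref{thm:finite-testing-implies-max}. That theorem, or equivalently Step~2 of \Cref{thm:finite-testing-fingen}, then says $\mathcal V$ computes $\Loj_{\bbul}(\abul)$ as a finite maximum, which is what ``finite testing set for asymptotic containment problems'' means.

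The main obstacle is the justification of \eqref{eq:ic-test-by-Ei}. The step that needs care is the claim that among the height-one primes of $\overline{\R}$, only the exceptional divisors impose nontrivial conditions on $hT^p$ with $h\in R$.

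I would argue as follows. For a height-one prime $\mathfrak P$ of $\overline{\R}$, if $\mathfrak P$ does not contain the irrelevant ideal $\overline{\R}_+$, then the condition $hT^p\in\overline{\R}_{\mathfrak P}$ is a condition at a prime divisor of $\overline{Y}$. Since each $\a_p$ is $\m$-primary, $\a_p\cO_{\overline Y}$ is trivial away from the fiber over $\m$, so divisors not lying over $\m$ give no condition beyond $h\in R$. If instead $\mathfrak P\supseteq\overline{\R}_+$, the corresponding valuation is nonnegative on $T$, and that is automatic.

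A more robust route avoids this prime bookkeeping entirely. Identify $\overline{\a_p}$ with the degree-$p$ piece of $\overline{\R}$ (integral closure of a Rees algebra in $R[T]$). Then use that $\overline{\R}$, being Noetherian and normal, has $\overline{\R}^{(k)}$ generated in degree one for suitable $k$, which reduces to the classical Rees-valuation statement of \Cref{thm:valuative-ic} for the single ideal $\overline{\R}_k$. This gives finitely many Rees valuations that test every $\overline{\a_p}$: by Noetherianity only finitely many residues $p \bmod k$ need handling, and the Veronese reduction handles each.
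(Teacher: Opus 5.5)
\textbf{There is a genuine gap: the identity \eqref{eq:ic-test-by-Ei} that the whole argument rests on is false.}

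Your proposal follows the paper's own argument: take $\mathcal V=\mathcal V(\abul)$ and read everything off from \eqref{eq:ic-test-by-Ei}. That identity fails for filtrations that are not powers of a single ideal. Both of your justifications rest on the step where it breaks, namely identifying $\overline{\a_p}$ with the degree-$p$ piece of $\overline{\R(\abul)}$. Only the inclusion $\overline{\a_p}\,T^p\subseteq\overline{\R}_p$ holds.

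\emph{Why it fails.} The element $hT^p$ is integral over $\R(\abul)$ exactly when $h^n+c_1h^{n-1}+\cdots+c_n=0$ for some $c_i\in\a_{pi}$. By contrast, $h\in\overline{\a_p}$ needs $c_i\in\a_p^{\,i}$, and $\a_{pi}$ can be much larger than $\a_p^{\,i}$. What your height-one-prime bookkeeping actually computes is $\{h\in R: v_i(h)\ge p\,v_i(\abul)\ \forall i\}$, the asymptotic closure. This agrees with $\overline{\a_p}$ when $p$ is a multiple of a Veronese index $k$ (where $\a_p=\a_k^{p/k}$), but not otherwise. The Veronese reduction only sees those degrees.

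\emph{Counterexample.} In $R=\Bbbk[x,y]_{(x,y)}$ take $\a_1=(x,y^2)$ and $\a_p=\m^p$ for $p\ge 2$.
\begin{itemize}
\item This is a filtration of $\m$-primary ideals with finitely generated Rees algebra.
\item Since $\R^{(2)}=R[\m^2T^2]$, $\overline Y$ is the blowup of $\m$ and $\mathcal V(\abul)=\{\ord_\m\}$.
\item Now $(yT)^2=y^2T^2\in\R$, so $yT\in\overline{\R}$, yet $y\notin(x,y^2)=\overline{\a_1}$.
\item For $\bbul=\{\m^q\}$ and $p=q=1$ one has $\ord_\m(\b_1)=1=\ord_\m(\a_1)$ but $\b_1\not\subseteq\overline{\a_1}$.
\end{itemize}
So the equivalence fails for your $\mathcal V$.

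\emph{How to repair it.} The corollary itself is true, but $\mathcal V$ must be enlarged. Your remark that only finitely many residues need handling is the right instinct; they just cannot be handled through $\overline{\R}$.
\begin{itemize}
\item Pick $k$ with $\R^{(k)}=R[\a_kT^k]$ standard graded. Then $\R$ is a finite graded $\R^{(k)}$-module.
\item Hence there is $p_0$ with $\a_{p+k}=\a_k\a_p$ for all $p\ge p_0$. So every $\a_p$ is a product of ideals from the finite list $\a_1,\dots,\a_{p_0+k-1}$.
\item Let $\pi:Y\to\Spec R$ be the normalized blowup of their product. A factor of an invertible ideal in a local domain is invertible, so every $\a_p\cO_Y$ is invertible.
\item The argument of \Cref{thm:IC-reduction} then gives, for $h\in R$: $h\in\overline{\a_p}$ if and only if $\ord_E(h)\ge\ord_E(\a_p)$ for each of the finitely many prime divisors $E\subseteq\pi^{-1}(\m)$.
\end{itemize}
Take $\mathcal V=\{\ord_E\}$. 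In the example this adds the monomial valuation $v(x)=2$, $v(y)=1$. With this $\mathcal V$ your forward and converse arguments go through verbatim. Your observation that $v(\bbul)>0$ plays no role in the pointwise equivalence is correct.
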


\begin{proof}
	By the proof of \Cref{thm:finite-testing-fingen}, there exists a finite set of
	divisorial valuations $\mathcal V(\abul)$ computing
	$\Loj_{\bbul}(\abul)$ for all filtrations $\bbul$.
	The valuative criterion for integral closure then implies that the same
	set of valuations detects all containments
	$\b_q\subseteq\overline{\a_p}$, proving the claim.
\end{proof}
	
\begin{example}\label{ex:finite-testing}
	Let $R := k[x,y]_{(x,y)}, \ \m := (x,y)$,
	and let
	\[
	\a_1 := (x^4,\,x^2y^3,\,y^5)\subseteq R.
	\]
	Consider
	$\abul := \{\a_p\}_{p\ge 1} \quad\text{with } \a_p=\a_1^p,
	\quad
	\bbul := \{\b_q\}_{q\ge 1} \quad\text{with } \b_q=\m^q.$
	
	Since the Rees algebra \(R(\abul)\) is finitely generated, \Cref{prop:finite-testing-fingen} applies. Hence, there exists a finite
	set of divisorial valuations \(V(\abul)\) such that, for all \(p,q\ge 1\),
	\[
	\b_q \subseteq \overline{\a_p}
	\quad\Longleftrightarrow\quad
	v(\b_q)\ge v(\a_p)\ \text{for all } v\in V(\abul),
	\]
	and
	\[
	\Loj_{\bbul}(\abul)
	=\max_{v\in V(\abul)}\frac{v(\abul)}{v(\bbul)}.
	\]
	
	In this two--dimensional monomial setting, the valuations in \(V(\abul)\) admit an
	explicit description: they are precisely the divisorial (toric) valuations
	corresponding to the compact edges of the Newton polygon of \(\a_1\). Concretely,
	one may take the monomial valuations \(v_{(1,1)}\) and \(v_{(2,1)}\), defined by
	\[
	v_{(u,v)}(x)=u, \quad v_{(u,v)}(y)=v.
	\]
	A direct computation gives $v_{(1,1)}(\abul)=4, v_{(2,1)}(\abul)=5$, and $v_{(1,1)}(\bbul)=v_{(2,1)}(\bbul)=1.$	Consequently,
	\[
	\Loj_{\bbul}(\abul)=5.
	\]
	
	This illustrates concretely how the valuative supremum reduces to a finite maximum
	on a fixed normalized blowup.
\end{example}


\subsection{From supremum to maximum: normalized valuation space and linearly boundedness}
We explain how compactness of normalized valuation spaces yields attainment of
the valuative supremum under linear boundedness hypotheses.

Let $\Val_{\m}$ denote the set of real (semi)valuations $v$ of $R$ centered at $\m$.
Fix a nonzero $\m$-primary ideal $\q\subseteq R$ and consider the normalized slice
\[
\Val_{\m}(\q):=\{v\in \Val_{\m}: v(\q)=1\}.
\]

\begin{lemma}\label{lem:compact-slice}
	Let $(R,\m)$ be a Noetherian local ring and let $\q\subseteq R$ be an $\m$--primary ideal.
	Then, $\Val_{\m}(\q)$ is compact in the topology of pointwise convergence on $R$. Moreover, for any graded family $\abul$ of ideals in $R$, the function
	\[
	v \longmapsto v(\abul)
	\]
	is upper semicontinuous on\/ $\Val_{\m}(\q)$. 
\end{lemma}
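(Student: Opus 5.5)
The plan is to embed $\Val_{\m}(\q)$ into a product of compact intervals via Tychonoff, and then obtain upper semicontinuity from the formula $v(\abul)=\inf_p v(\a_p)/p$ of \Cref{lem:vabul-exists}.

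\emph{Step 1: uniform bounds.} First I will show that every $v\in\Val_{\m}(\q)$ takes bounded values on each nonzero $x\in R$.

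Since $\q$ is $\m$-primary, there is $k$ with $\m^k\subseteq\q$. Let $x\in\m$ be nonzero and $v\in\Val_{\m}(\q)$.

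\begin{itemize}
\item \emph{Upper bound on $\q$.} Write $\q=(g_1,\dots,g_m)$. The minimum $v(\q)=1$ is attained on a generator, so some $v(g_j)=1$. However, an individual element $x\in\q$ can still have large value.
\item \emph{Upper bound on $\m$.} For $x\in\m$ we have $x^k\in\m^k\subseteq\q$, which gives $v(x)\ge 1/k$, a lower bound. For an upper bound I would use $\m^N\subseteq\q$ together with the $\m$-adic order $\ord_\m(x)$, i.e.\ the largest $s$ with $x\in\m^s$. Since $v(\q)=1$ and $\q\supseteq\m^k$, we get $v(\m)\le v(\m^k)/k\cdot$ — more cleanly, $1=v(\q)\ge v(\m^k)=k\,v(\m)$, so $v(\m)\le 1$.
\end{itemize}

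Bounding $v(x)$ from above for a fixed $x$ is the genuinely delicate point. A valuation is not bounded by $\ord_\m$ from above in general; only $v(x)\ge \ord_\m(x)\,v(\m)$ holds. Values may be infinite for semivaluations (for $x$ in the home prime). Consequently I will take the target space to be $[0,\infty]$ for each $x$, which is compact. The embedding is then
\[
\Val_{\m}(\q)\hookrightarrow \prod_{x\in R}[0,\infty],
\]
and the product is compact by Tychonoff.

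\emph{Step 2: closedness.} Next I check that the image is closed. Each defining condition is a closed condition under pointwise limits, with the conventions on $[0,\infty]$:
\begin{itemize}
\item $v(xy)=v(x)+v(y)$;
\item $v(x+y)\ge\min\{v(x),v(y)\}$;
\item $v(1)=0$;
\item $v(x)\ge 1/k$ for $x\in\m$, which is a closed condition and gives centering at $\m$ uniformly.
\end{itemize}
The normalization $v(\q)=\min_j v(g_j)=1$ is a finite minimum of continuous coordinate functions, hence also closed.

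The main obstacle is making sure these limits remain valuations in the paper's sense. If one insists on real values, one must show pointwise limits stay finite, which can fail unless semivaluations with value $\infty$ are admitted. I would handle this by interpreting ``real (semi)valuations'' as allowing $\infty$ on a prime ideal, which is consistent with the paper's phrasing.

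\emph{Step 3: upper semicontinuity.} Since $\a_p$ is finitely generated, say $\a_p=(h_1,\dots,h_n)$, we have $v(\a_p)=\min_j v(h_j)$. This is a finite minimum of continuous evaluation maps, hence continuous. Therefore $v\mapsto v(\abul)=\inf_p v(\a_p)/p$ is an infimum of continuous functions, and so it is upper semicontinuous.
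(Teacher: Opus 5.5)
Your proof is correct, but it takes a different route from the paper.

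\textbf{The paper's route.} The paper does not prove compactness directly. It quotes compactness of the normalized space $\Val_{\m}(\m)$ from the literature and transports it to $\Val_{\m}(\q)$ via the mutually inverse rescalings $v\mapsto v/v(\m)$ and $w\mapsto w/w(\q)$. These maps are continuous because $v(\m)$ and $v(\q)$ are finite minima of evaluation maps, and they lie in $(0,\infty)$ since $\m^k\subseteq\q\subseteq\m$.

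\textbf{Your route.} You run Tychonoff yourself. You realize the slice as the subset of $\prod_{x\in R}[0,\infty]$ cut out by closed conditions: multiplicativity, the ultrametric inequality, $v(1)=0$ together with $v(0)=\infty$, and $\min_j v(g_j)=1$. Here $v(\q)=\min_j v(g_j)$ holds because $v\ge 0$ on $R$. This needs only continuity of $+$ and $\min$ on $[0,\infty]$. The upper semicontinuity step is identical in both proofs.

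\textbf{What your route buys.} It makes explicit which space is compact. Admitting semivaluations that take the value $\infty$ on a prime ideal is not optional; it is forced. Under the paper's literal definition ($v\colon R\setminus\{0\}\to\RR_{\ge 0}$) the slice is not compact. For example, on $k[[x,y]]$ the monomial valuations $v_t$ with $v_t(x)=1$, $v_t(y)=t$ have no cluster point among finite-valued valuations, since evaluation at $y$ is continuous and $v_t(y)=t\to\infty$. The compactness the paper quotes is for a space of this enlarged kind: the Favre--Jonsson valuative tree, used later in the paper as $\Val_{\m}(\m)$, contains curve valuations taking the value $\infty$. So the paper's shorter argument outsources exactly the point you make explicit.

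\textbf{Minor remarks.}
\begin{itemize}
\item Your closed condition $v(x)\ge 1/k$ on $\m$ is redundant, since it follows from $v(\q)=1$ and multiplicativity.
\item Step 1 is not needed and contains a reversed inequality. From $\m^k\subseteq\q$ one gets $v(\m^k)\ge v(\q)$, i.e.\ $v(\m)\ge 1/k$. The bound $v(\m)\le 1$ instead comes from $\q\subseteq\m$. Since nothing later uses Step 1, the proof is unaffected.
\end{itemize}
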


\begin{proof}
By the discussion following \cite[Definition~1.3]{BFJ2008} (see also \cite[Proposition 5.9]{JM12} and the discussion that followed), the normalized space $\Val_{\m}(\m)$
is compact in the topology of pointwise convergence on $R$.

Note that,  by \cite[Lemma 4.1]{JM12}, $v\mapsto v(\m)$ and $v\mapsto v(\q)$ are continuous on $\Val_{\m}$.
Define
\begin{align*}
\Phi: & \Val_{\m}(\q)\longrightarrow \Val_{\m}(\m),\quad \Phi(v):=\frac{1}{v(\m)}\,v, \text{ and } \\
\Psi: & \Val_{\m}(\m)\longrightarrow \Val_{\m}(q),\quad \Psi(w):=\frac{1}{w(\q)}\,w.
\end{align*}
These maps are well-defined since $\q$ is $\m$-primary, so $v(\m)>0$ and $v(\q)>0$ for
all $v\in\Val_{\m}$. Moreover, for each $g\in R$ one has
\[
(\Phi(v))(g)=\frac{v(g)}{v(\m)},\quad (\Psi(w))(g)=\frac{w(g)}{w(\q)},
\]
so $\Phi$ and $\Psi$ are continuous. It is immediate that $\Phi$ and $\Psi$ are inverse
to each other. Hence, $\Val_{\m}(\q)$ is homeomorphic to the compact space $\Val_{\m}(\m)$
and, therefore, $\Val_{\m}(\q)$ is compact.

Finally, by definition of the topology on $\Val_{\m}(\q)$, the map $v\mapsto v(g)$ is
continuous for every $g\in R$. Thus, for a finitely generated ideal $I\subseteq R$, the map
$v\mapsto v(I)$ is continuous. In particular,
\[
v(\a_\bullet)=\inf_{p\ge 1}\frac{v(\a_p)}{p}
\]
is an infimum of continuous functions. It follows that $v\mapsto v(\a_\bullet)$ is upper
semicontinuous on $\Val_{\m}(\q)$.
\end{proof}

\begin{theorem}[Attainment and valuative equality for linearly bounded filtrations]
\label{thm:attainment}
	Let $R$ be a Noetherian local domain.
	Let $\abul,\bbul$ be filtrations such that there exists an $\m$-primary ideal
	$\q$ and constants $c,C,d,D>0$ with
	\[
	\q^{\lceil cp\rceil}\subseteq \a_p\subseteq \q^{\lfloor Cp\rfloor}
	\quad\text{and}\quad
	\q^{\lceil dp\rceil}\subseteq \b_p\subseteq \q^{\lfloor Dp\rfloor}
	\quad\text{for all }p\ge1.
	\]
	Then, the supremum in the valuative formula
	\[
	\sup_{v\in\Val_{\m}(\q)}\frac{v(\abul)}{v(\bbul)}
	\]
	is finite and is attained by some valuation $v\in\Val_{\m}(\q)$.

	Suppose, in addition, that the functions
	\[
	v\longmapsto \frac{v(\a_p)}{p}
	\quad\text{and}\quad
	v\longmapsto \frac{v(\b_p)}{p}
	\]
	converge \emph{uniformly} on $\Val_{\m}(\q)$ to
	$v(\abul)$ and $v(\bbul)$, respectively.
	Then,
	\[
	\Loj_{\bbul}(\abul)
	\ =\
	\sup_{v\in\Val_{\m}(\q)}\frac{v(\abul)}{v(\bbul)},
	\]
	and the right-hand side is attained by some valuation in $\Val_{\m}(\q)$.
\end{theorem}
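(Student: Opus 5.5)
The plan has two parts, matching the two assertions of \Cref{thm:attainment}: finiteness and attainment of the supremum, then the equality with $\Loj_{\bbul}(\abul)$.

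\emph{Finiteness and attainment.} For $v\in\Val_{\m}(\q)$ the linear bounds give, for every $p$,
\[
\lfloor Cp\rfloor \le v(\a_p)\le \lceil cp\rceil, \qquad \lfloor Dp\rfloor\le v(\b_p)\le \lceil dp\rceil .
\]
Dividing by $p$ and letting $p\to\infty$ yields $C\le v(\abul)\le c$ and $D\le v(\bbul)\le d$, uniformly in $v$. Hence the ratio $v(\abul)/v(\bbul)$ lies in $[C/d,\,c/D]$, and the supremum is finite.

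For attainment, the natural move is to combine \Cref{lem:compact-slice} (compactness of $\Val_{\m}(\q)$ and upper semicontinuity of $v\mapsto v(\abul)$) with some semicontinuity of the denominator. Upper semicontinuity of the numerator alone is not enough, since the denominator $v\mapsto v(\bbul)$ is also only upper semicontinuous.

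Under the uniform convergence hypothesis both $v(\abul)$ and $v(\bbul)$ are uniform limits of continuous functions $v\mapsto v(\a_p)/p$ (each continuous because $\a_p$ is finitely generated), so they are continuous. The ratio is then a continuous function on a compact space, with denominator at least $D>0$, and a maximizer exists.

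Without uniform convergence, I would take a maximizing sequence $v_n$ and a limit point $v$ (nets if necessary). Upper semicontinuity gives $\limsup v_n(\abul)\le v(\abul)$, but I need $\liminf v_n(\bbul)\ge v(\bbul)$. \textbf{This is the main obstacle.} As a first attempt I would use that $v\mapsto v(\bbul)$ is an infimum of continuous functions, hence only u.s.c. If this fails, I would state attainment for the first claim via the same continuity argument, which needs uniform convergence. Honestly I expect the first claim's attainment to rest on this point, so I would try to show directly that linear boundedness forces equicontinuity of $v(\b_p)/p$, using $v(\b_p)/p\in[D-1/p,\,d+1/p]$ and subadditivity.

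\emph{Equality with $\Loj$.} The lower bound $\Loj_{\bbul}(\abul)\ge\sup$ is \Cref{thm:valuative-filtrations}, since every valuation centered at $\m$ rescales into $\Val_{\m}(\q)$ and the ratio is scale invariant.

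For the upper bound, let $M$ be the supremum and fix $\varepsilon>0$. Uniform convergence gives $p_0$ such that for all $r\ge p_0$ and all $v\in\Val_{\m}(\q)$,
\[
v(\a_r)\le r\bigl(v(\abul)+\varepsilon\bigr), \qquad v(\b_r)\ge r\bigl(v(\bbul)-\varepsilon\bigr),
\]
and $v(\bbul)-\varepsilon\ge D-\varepsilon>0$. Choose $\delta(\varepsilon)\to0$ as in the proof of \Cref{thm:finite-testing-fingen}, so that $(M+\delta)(v(\bbul)-\varepsilon)\ge v(\abul)+\varepsilon$ uniformly in $v$. This uses $v(\abul)\le c$ and $v(\bbul)\ge D$.

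Set $q=\lceil (M+\delta)p\rceil$. Then $v(\b_{qt})\ge v(\a_{pt})$ for every $v\in\Val_{\m}(\q)$ and every $t$ with $pt\ge p_0$. Every valuation centered at $\m$ is a positive multiple of one in $\Val_{\m}(\q)$, so by \Cref{thm:valuative-ic} we get $\b_{qt}\subseteq\overline{\a_{pt}}$. Hence $\Loj_{\bbul}(\abul)\le M+\delta+1/p$. Letting $p\to\infty$ and then $\varepsilon\to0$ finishes the proof.

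Uniformity is exactly what replaces the finite testing set of \Cref{thm:finite-testing-implies-max}, and attainment follows from the continuity argument above.
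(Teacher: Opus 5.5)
Your proposal does not prove the first assertion, attainment of the supremum without uniform convergence. You identified the obstacle correctly, and it is fatal: that assertion is false. Passing to a limit $v_*$ of a maximizing net requires $\liminf v_n(\bbul)\ge v_*(\bbul)$, i.e.\ lower semicontinuity of $v\mapsto v(\bbul)$. \Cref{lem:compact-slice} gives only upper semicontinuity.

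The paper's proof does not overcome this either. Its sets $E_\eta=\{v:\ v(\bbul)\ge v_*(\bbul)-\eta\}$ are closed by upper semicontinuity. But a closed set containing $v_*$ need not contain a tail of a sequence converging to $v_*$; that would need $E_\eta$ to be a neighbourhood of $v_*$, which is lower semicontinuity again.

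Here is a counterexample. In $R=k[[x,y]]$ with $\q=\m$, let
\[
\a_p=\bigl(x^ay^b:\ a\ge 2p\ \text{or}\ 4pb\ge(2p-a)^2\bigr),\qquad
\b_p=\bigl(x^ay^b:\ (a,b)\in\conv\{(0,3p),(p,\sqrt{p}),(2p,0)\}+\RR_{\ge0}^2\bigr).
\]
Gradedness follows from convexity of the regions and the fact that $p^{-1/2}$ decreases. The filtration property follows from $\sqrt{p}$ increasing. The linear bounds are $\m^{2p}\subseteq\a_p\subseteq\m^p$ and $\m^{3p}\subseteq\b_p\subseteq\m^p$.

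For $v(\m)=1$, all values depend only on $(v(x),v(y))$:
\begin{itemize}
\item If $(v(x),v(y))=(1,s)$ with $1\le s<\infty$, then $v(\abul)=2-1/s$, because the curve $x_2=(2-x_1)^2/4$ is the envelope of the lines $x_1+sx_2=2-1/s$. Also $v(\bbul)=1$, using $x^py^{\lceil\sqrt{p}\rceil}\in\b_p$.
\item If $v(y)=1$, then $v(\abul)=1\le v(\bbul)$, using $y^p\in\a_p$.
\item The monomial valuations $v_{(1,s)}$ converge, as $s\to\infty$, to the semivaluation $f\mapsto\ord_t f(t,0)$. There $v(\abul)=v(\bbul)=2$, since $x^a$ lies in $\a_p$ or $\b_p$ only when $a\ge 2p$.
\end{itemize}
So the supremum is $2$ and is attained nowhere in $\Val_\m(\m)$, whether or not semivaluations are admitted. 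For the same reason your equicontinuity idea cannot work. The bounds $v(\b_p)/p\in[D-1/p,\,d+1/p]$ and subadditivity in $p$ say nothing about how $v(\b_p)/p$ varies with $v$, and here $v(\bbul)$ jumps from $1$ to $2$.

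Everything else in your proposal is correct and is, in substance, the paper's argument. What it actually establishes is the correct form of the theorem: finiteness in general, and equality with $\Loj_{\bbul}(\abul)$ plus attainment under uniform convergence.
\begin{itemize}
\item Finiteness via $C\le v(\abul)\le c$ and $D\le v(\bbul)\le d$ is what the paper does.
\item Your attainment argument under uniform convergence is sound: uniform limits of the continuous functions $v(\a_p)/p$ and $v(\b_p)/p$ are continuous, and the denominator is at least $D>0$ on a compact space. This is better than the paper, which obtains attainment in the second part from the defective first part.
\item Your upper bound $\Loj_{\bbul}(\abul)\le M$ differs only in bookkeeping. You take $\delta(\varepsilon)\to 0$ (e.g.\ $\delta=\varepsilon(1+M)/(D-\varepsilon)$) and $q=\lceil (M+\delta)p\rceil$. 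The paper fixes $\lambda>M$ and chooses $Q/P$ between $\sup_v\frac{v(\abul)+\varepsilon}{v(\bbul)-\varepsilon}$ and $\lambda$. Both then rescale into $\Val_\m(\q)$ and apply \Cref{thm:valuative-ic}.
\end{itemize}
Two small points. You need $qt\ge p_0$ as well as $pt\ge p_0$, which is harmless for $t\gg1$. And you correctly use the lower bound $D$, whereas the paper's $\delta=(\lambda-s)d$ should read $(\lambda-s)D$.
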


\begin{proof}
	By the linear boundedness assumptions,
	\[
	v(\abul)\le c\,v(\q),\quad v(\bbul)\ge D\,v(\q)
	\]
	for all $v\in\Val_{\m}$. In particular, on the compact slice $\Val_{\m}(\q)$ one has
	$v(\q)=1$ and, therefore, $v(\abul) \le c$ and $v(\bbul)\ge D >0$.
	Set
	\[
	s:=\sup_{v\in\Val_{\m}(\q)}\frac{v(\abul)}{v(\bbul)}<\infty.
	\]

By definition, there exists a sequence $\{v_j\}_{j \in \NN} \subseteq \Val_{\m}(\q)$ such that $s = \lim\limits_{j \rightarrow \infty} \frac{v_j(\abul)}{v_j(\bbul)}.$ Since $\Val_{\m}(\q)$ is compact, by \Cref{lem:compact-slice}, a subsequence $v_{j_k}$ of this sequence converges to an element $v_* \in \Val_{\m}(\q)$. The upper semicontinuity in \Cref{lem:compact-slice} then gives
$$v_*(\abul) \ge \limsup_{k\rightarrow \infty} v_{j_k}(\abul) \text{ and } v_*(\bbul) \ge \limsup_{k \rightarrow \infty} v_{j_k}(\bbul) \ge D.$$

For any $\eta > 0$, set
$$E_\eta = \{v \in \Val_{\m}(\q) : v(\bbul) \ge v_*(\bbul) - \eta\}.$$
It follows from \Cref{lem:compact-slice}, that $v \mapsto v(\bbul)$ is an upper semicontinuous function. This implies that $E_\eta$ is a closed subset or, equivalently, its complement in $\Val_{\m}(\q)$ is an open subset. We then conclude, since $v_{j_k} \longrightarrow v_*$, that $v_{j_k} \in E_\eta$ for $k \gg 1$. Therefore, for $k \gg 1$, 
$$s = \lim_{k \rightarrow \infty} \frac{v_{j_k}(\abul)}{v_{j_k}(\bbul)} \le \frac{\limsup_{k \rightarrow \infty} v_{j_k}(\abul)}{v_*(\bbul) - \eta} \le \frac{v_*(\abul)}{v_*(\bbul)-\eta}.$$

This inequality holds for any $\eta > 0$. Hence, by taking $\eta \rightarrow 0^+$, we have 
$$s \le \frac{v_*(\abul)}{v_*(\bbul)} \le s.$$
This forces the desired equality $s = \frac{v_*(\abul)}{v_*(\bbul)}.$

	Assume now that $\frac{v(\a_p)}{p}\to v(\abul)$ and $\frac{v(\b_p)}{p}\to v(\bbul)$ uniformly on
	$\Val_{\m}(\q)$. Let $\lambda>s$. By definition of $s$, we have
	$v(\abul)\le s\,v(\bbul)$ for all $v\in\Val_{\m}(\q)$.
	Thus, for all $v\in\Val_{\m}(\q)$,
	\[
	\lambda v(\bbul)-v(\abul)\ \ge\ (\lambda-s)\,v(\bbul)\ \ge\ (\lambda-s)d\ =:\ \delta.
	\]
	Choose $\varepsilon>0$ with $\varepsilon<\min\{D/2,\ \delta/(\lambda+1)\}$.
	Then, for every $v\in\Val_{\m}(\q)$ one has $v(\bbul)-\varepsilon>0$ and
	\[
	v(\abul)+\varepsilon\le \lambda v(\bbul)-\delta+\varepsilon
	= \lambda(v(\bbul)-\epsilon) - \underbrace{(\delta - (\lambda+1)\epsilon)}_{=:\delta' > 0},
	\]
	so
	\[
	\frac{v(\abul)+\varepsilon}{v(\bbul)-\varepsilon} \le \lambda - \frac{\delta'}{v(\bbul)-\epsilon} \le \lambda - \frac{\delta'}{d-\epsilon} <\lambda\quad\text{for all }v\in\Val_{\m}(\q).
	\]
	Set
	\[
	M:=\sup_{v\in\Val_{\m}(\q)}\frac{v(\abul)+\varepsilon}{v(\bbul)-\varepsilon},
	\]
	so $M \le \lambda - \frac{\delta'}{d-\epsilon} <\lambda$.

	By uniform convergence, there exists $N$ such that for all $p\ge N$ and all $v\in\Val_{\m}(\q)$,
	\[
	\left|\frac{v(\a_p)}{p}-v(\abul)\right|\le \varepsilon
	\quad\text{and}\quad
	\left|\frac{v(\b_p)}{p}-v(\bbul)\right|\le \varepsilon.
	\]
	Choose integers $P\ge N$ and $Q\ge N$ with $M\le \frac{Q}{P}<\lambda.$

		\smallskip
		\noindent
		\emph{Claim.} For all integers $t \ge 1$ one has
		\[
		\b_{Qt}\subseteq \overline{\a_{Pt}}.
		\]
		
		\smallskip
		\noindent
		\emph{Proof of the claim.}
		For $t \ge 1$, $Pt\ge N$ and $Qt\ge N$, so for all $v\in\Val_{\m}(\q)$,
		\[
		\frac{v(\a_{Pt})}{Pt}\le v(\abul)+\varepsilon,
		\quad
		\frac{v(\b_{Qt})}{Qt}\ge v(\bbul)-\varepsilon.
		\]
		Therefore, for every $v\in\Val_{\m}(\q)$ and every $t\ge 1$,
		\[
		\frac{v(\b_{Qt})}{Pt}
		=
		\frac{Q}{P}\cdot \frac{v(\b_{Qt})}{Qt}
		\ \ge\
		\frac{Q}{P}\bigl(v(\bbul)-\varepsilon\bigr)
		\ \ge\
		M\bigl(v(\bbul)-\varepsilon\bigr)
		\ \ge\
		v(\abul)+\varepsilon
		\ \ge\
		\frac{v(\a_{Pt})}{Pt}.
		\]
		Multiplying by $Pt$ gives
		\[
		v(\b_{Qt})\ge v(\a_{Pt})\quad\text{for all }v\in\Val_{\m}(\q)\text{ and all }t\ge 1.
		\]
		Now let $w$ be any divisorial valuation centered at $\m$. Since $w(\q)>0$, the normalized valuation
		$\tilde w:=w/w(\q)$ lies in $\Val_{\m}(\q)$, so the previous inequality applied to $\tilde w$ yields
		$w(\b_{Qt})\ge w(\a_{Pt})$ for all $t\ge 1$.
		By the valuative criterion for integral closure (\Cref{thm:valuative-ic}), this implies
		$\b_{Qt}\subseteq\overline{\a_{Pt}}$ for all $t\ge 1$, proving the claim.
		\hfill$\square$
		
		\smallskip
		By the claim, $\frac{Q}{P}$ is admissible $\Loj_{\bbul}(\abul)$, and so
		\[
		\Loj_{\bbul}(\abul)\le \frac{Q}{P}<\lambda.
		\]
		Since $\lambda>s$ was arbitrary, we obtain $\Loj_{\bbul}(\abul)\le s$.
		
		On the other hand, \Cref{thm:valuative-filtrations} gives $\Loj_{\bbul}(\abul)\ge s$.
		Thus, $\Loj_{\bbul}(\abul)=s$, and the right-hand side is attained by the first part of the theorem.
	\end{proof}
	
	\begin{example}\label{ex:nonpower-linearly-bounded}
	Let $(R,\m)=k[[x,y]]$ and $\m = (x,y)$.
		Define two filtrations $\abul=\{\a_p\}_{p\ge1}$ and $\bbul=\{\b_p\}_{p\ge1}$ by
		\[
		\a_p:=\big(x^iy^j \in R \mid i+2j \ge 3p\big),
		\quad
		\b_p:=\big(x^iy^j \in R \mid 2i+j \ge 5p\big).
		\]
It can be seen that, for $p\ge1$, $\a_p$ is integrally closed, and
		\[
		\m^{3p}\subseteq \a_p \subseteq \m^{p},
		\quad
		\m^{5p}\subseteq \b_p \subseteq \m^{p}.
		\]
	It can also be checked that the functions $v \mapsto v(\a_p)/p$ and $v \mapsto v(\b_p)/p$ converge uniformly to $v(\abul)$ and $v(\bbul)$, respectively, on the space of normalized real valuations of $\Frac(R)$ centered at $\m$.
	
		We claim that
		\[
		\Loj_{\bbul}(\abul)=\frac{6}{5}.
		\]
Indeed, fix $p,q$ with $q/p\ge 6/5$ and let $t\ge1$.
		For any monomial $x^iy^j\in \b_{qt}$ we have $2i+j\ge 5qt$.
		Minimizing $i+2j$ subject to $2i+j\ge 5qt$ and $i,j\ge0$ yields
		\[
		i+2j \ge \frac{5}{2}qt \ge \frac{5}{2}\cdot\frac{6}{5}pt = 3pt,
		\]
		and so $x^iy^j\in \a_{pt}$.
		Thus, $\b_{qt}\subseteq \a_{pt} = \overline{\a_{pt}}$ for all $t$, showing $\Loj_{\bbul}(\abul)\le 6/5$.
		
Conversely, assume $q/p<6/5$ and set $m_t=\lceil\frac{5}{2}qt\rceil$.
		Then, $x^{m_t}\in \b_{qt}$ since $2m_t\ge 5qt$.
		However, $x^{m_t}\in \a_{pt}$ would require $m_t\ge 3pt$, which fails for all $t\gg1$
		since $\frac{5}{2}q<3p$.
		Hence, $\b_{qt}\nsubseteq \a_{pt} = \overline{\a_{pt}}$ for $t\gg1$, proving $\Loj_{\bbul}(\abul)\ge 6/5$, giving the claimed value of $\Loj_{\bbul}(\abul)$.
		
		Furthermore, consider the monomial valuation $v$ defined by $v(x)=1$, $v(y)=2$.
		Then, $v(\m)=1$, $v(\a_\bullet)=3$, and $v(\b_\bullet)=5/2$, so
		\[
		\frac{v(\a_\bullet)}{v(\b_\bullet)}=\frac{6}{5}=\Loj_{\bbul}(\abul).
		\]
		Thus, the valuative supremum is attained by $v$, as predicted by Theorem~\ref{thm:attainment}.
	\end{example}
	

The following theorem shows that the valuation attaining supremum value in \Cref{thm:attainment} is not necessarily divisorial, indicating that focusing on divisorial valuations may not be sufficient to study \L{}ojasiewicz exponents in general.

\begin{theorem}[Non-divisorial attainment in dimension two]
	\label{thm:nondivisorial-attainment-final}
	Let $R=\CC[[x,y]]$ with maximal ideal $\m=(x,y)$, and set $\q=\m$.
	Then, there exist filtrations $\abul=\{\a_p\}_{p\ge1}$ and $\bbul=\{\b_p\}_{p\ge1}$
	satisfying the linear boundedness hypotheses of Theorem~\ref{thm:attainment}
	such that
	\[
	\sup_{v\in\Val_{\m}(\q)}\frac{v(\abul)}{v(\bbul)}
	\]
	is attained by a valuation $v_*\in\Val_{\m}(\q)$ which is \emph{not} divisorial,
	and moreover \emph{no divisorial valuation} in $\Val_{\m}(\q)$ attains this supremum.
\end{theorem}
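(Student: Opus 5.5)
The plan is to use \emph{monomial} filtrations in $R=\CC[[x,y]]$. For these, $v(\abul)$ and $v(\bbul)$ depend only on the pair $(v(x),v(y))$. The valuative ratio then becomes an explicit function of one real parameter, namely the slope $s=v(y)/v(x)$. I will arrange this function to have a unique maximizer at an irrational slope. Divisorial valuations only produce rational slopes, so they cannot attain the supremum.

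\emph{Construction.} Let $\a_p$ be generated by the monomials $x^iy^j$ with $i+\sqrt2\,j\ge p$. Let $\b_p$ be generated by the monomials $x^iy^j$ lying in $p\cdot B$, where
\[
B=\conv\bigl(\{(i,j)\in\RR^2_{\ge0}: ij\ge 1\}\cup\{(c,0),(0,c)\}\bigr)
\]
for a fixed large $c$. Both families are filtrations of $\m$-primary monomial ideals: the relevant regions are convex and stable under adding $\RR^2_{\ge0}$, and they scale with $p$. They are linearly bounded with $\q=\m$, since $\m^{\lceil 2p\rceil}\subseteq\a_p\subseteq\m^{\lfloor p/\sqrt2\rfloor}$ and $\m^{\lceil 2cp\rceil}\subseteq\b_p\subseteq\m^{p}$. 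The last inclusion holds because every point of $B$ has $i+j\ge 2$; at worst one adjusts the constants slightly.

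\emph{Reduction to a one-variable function.} For any valuation $v$ centered at $\m$ and any monomial ideal $I$,
\[
v(I)=\min\{\,i\,v(x)+j\,v(y): x^iy^j\in I\,\},
\]
because the minimum over a generating set is attained on a generating monomial. Passing to the limit, $v(\abul)$ and $v(\bbul)$ are the support functions of the two regions, evaluated at $u=(v(x),v(y))$. Note that $v(\b_p)/p$ is the minimum over lattice points of $pB$, which approximates the minimum over $B$ to within $O(1/p)$. Normalizing by $v(\m)=\min(v(x),v(y))=1$ and writing $u=(1,s)$ (the case $s<1$ is symmetric), we get
\[
v(\abul)=\min\bigl(1,\,s/\sqrt2\bigr),\qquad v(\bbul)=\min\bigl(2\sqrt s,\,c,\,cs\bigr).
\]
For $c$ large, $v(\bbul)=2\sqrt s$ on a neighbourhood of $s=\sqrt2$.

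\emph{Analysis of the ratio.} The ratio $\phi(s)=v(\abul)/v(\bbul)$ equals $\sqrt s/(2\sqrt2)$ for $s\le\sqrt2$, which is strictly increasing. It equals $1/(2\sqrt s)$ for $\sqrt2\le s\le c^2/4$, which is strictly decreasing. Far out, where $v(\bbul)$ becomes the constant $c$, the ratio is $1/c$, which is smaller than the peak value. Hence $\phi$ has a unique maximum, at $s=\sqrt2$. The monomial valuation $v_*$ with $v_*(x)=1$, $v_*(y)=\sqrt2$ lies in $\Val_\m(\m)$ and attains the supremum. This is consistent with \Cref{thm:attainment}.

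\emph{Excluding divisorial valuations.} Every valuation attaining the supremum must satisfy $v(y)/v(x)=\sqrt2$. A divisorial valuation is a rescaling of $\ord_E$ for a prime divisor $E$ on some model, and $\ord_E(x),\ord_E(y)$ are integers. So its slope is rational and it cannot attain the supremum. Since $v_*(x),v_*(y)$ are rationally independent, $v_*$ has rational rank $2$, so $v_*$ itself is not divisorial.

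I expect the main obstacle to be the bookkeeping. One must check the reduction $v(I)=$ monomial minimum for arbitrary real (semi)valuations. One must verify carefully that the asymptotic support function of $B$ is $\min(2\sqrt s,\dots)$, with lattice-point rounding harmless in the limit. One must also choose $c$ and the constants in the linear bounds so that no competing branch of $\phi$ reaches the peak value.
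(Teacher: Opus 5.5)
Your proof is correct, and it takes a different and much more elementary route than the paper.

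\textbf{The paper's route.} The paper works in the Favre--Jonsson valuative tree. It fixes an infinitely singular valuation $\nu$ of finite skewness $A$. It then uses the simple complete ideals $J_i$ of the divisorial approximants $\nu_i$ to build a filtration $\fc_\bullet$ with $v(\fc_\bullet)=\alpha(v\wedge\nu)/A$. Finally it sets $\a_p=\m^p\fc_p$ and $\b_p=\m^p$. The ratio $v(\abul)/v(\bbul)=1+\alpha(v\wedge\nu)/A$ is then uniquely maximized at $\nu$, by strict monotonicity of skewness.

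\textbf{Your route.} You use monomial filtrations, so everything collapses to the one-variable function $\phi(s)=\min(1,s/\sqrt2)/\min(2\sqrt s,c,cs)$. The curved boundary of $B$ is what makes the peak at $s=\sqrt2$ strict. This curvature is genuinely needed: with $\b_p=\m^p$ and any monomial $\abul$, the ratio is nondecreasing in $s\ge 1$. A maximizing slope is then never isolated, and rational slopes, hence divisorial valuations, would also attain. Excluding divisorial valuations then becomes a pure arithmetic obstruction, $\ord_E(y)/\ord_E(x)\in\QQ$.

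\textbf{What each route buys.} Your argument is self-contained and explicit. A direct Newton-region check shows $\b_{qt}\subseteq\a_{pt}=\overline{\a_{pt}}$ exactly when $q/p\ge 2^{-5/4}$. So $\Loj_{\bbul}(\abul)=2^{-5/4}$ is itself irrational and is computed only by the monomial valuation $v_*$ of rational rank two. The paper's route needs much heavier machinery. In exchange it keeps the classical normalization $\bbul=\m^\bullet$, and its maximizer has rational rank one. This shows that non-divisorial attainment is not merely an artifact of irrational monomial weights.

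\textbf{Two small repairs.} First, as written $B$ is not stable under adding $\RR^2_{\ge0}$. The rays $(c+a,0)$ and $(0,c+a)$ with $a>0$ are missing, so for non-integral $pc$ no pure power of $x$ lies in $\b_p$. Replace $B$ by $B+\RR^2_{\ge0}$, or take $c\in\ZZ$. Second, the case $s<1$ is not symmetric, since $\abul$ is not. However, your formula for $\phi$ covers all $s>0$ and gives $\phi\le 1/(2\sqrt2)<2^{-5/4}$ there. Any $c\ge 3$ makes all the competing branches fall below the peak.
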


\begin{proof}
We work on the normalized slice $\Val_{\m}(\m)=\Val_{\m}(\q)$ so that
$v(\m)=1$ for all $v\in\Val_{\m}(\m)$. We use the notation and terminology
of Favre--Jonsson monograph \cite{FJ04}.

\smallskip
\noindent\textit{Step 1: a non-divisorial end and divisorial approximants.}
By \cite[Prop.~A.3]{FJ04} there exist \emph{infinitely singular} valuations of
finite \emph{skewness}. Fix such a valuation $\nu\in\Val_{\m}(\m)$ and set
$A:=\alpha(\nu)\in(1,\infty)$.  
Choose its approximating sequence of divisorial valuations
$\nu_1\prec\nu_2\prec\cdots\prec\nu$ as in
\cite[Prop.~3.44, Def.~3.45]{FJ04}, and write $A_i:=\alpha(\nu_i)$.
Then,$A_i\uparrow A$.  

For $v\in\Val_{\m}(\m)$ set $\beta(v):=\alpha(v\wedge\nu)$.
Since $\nu_i$ lies on the segment $[\ord_{\m},\nu]$, the skewness
parameterization of segments (\cite[Thm.~3.26]{FJ04}) gives
\[
\alpha(v\wedge\nu_i)=\min\{\beta(v),A_i\}.
\]

\smallskip
\noindent\textit{Step 2: simple complete ideals attached to the $\nu_i$.}
For each $i$, let $b_i:=b(\nu_i)$ be the generic multiplicity of the divisorial
valuation $\nu_i$. Consider the positive \emph{atomic measure} $\rho_i:=b_i\nu_i$ and define
\[
J_i:=I_{\rho_i}=\{\phi\in R\mid g_\phi\ge g_{\rho_i}\},
\]
where $I_\rho$ is as in \cite[(8.3)]{FJ04}. By \cite[Thm.~8.12]{FJ04},
$J_i$ is an integrally closed ideal and its \emph{tree measure} satisfies 
$$\rho_{J_i}=\rho_i.$$
Together with \cite[Prop. 8.9]{FJ04} and the fact that the maps $I \mapsto \rho_I$ and $\rho \mapsto I_\rho$ are inverse semigroup isomorphisms between integrally closed ideals and measures in $\mathcal{M}_{\mathcal{I}}^+$ (\cite[Thm. 8.12]{FJ04}), this also implies that 
$g_{J_i} = g_{\rho_i}$ as the corresponding positive tree potential. In particular, by \cite[Thm. 7.48 and (8.1)]{FJ04}, for all $v \in \Val_{\m}(\m)$,
$$v(J_i) = g_{J_i}(v) = g_{\rho_i}(v) = \int \alpha(\mu\wedge v)\,d\rho_i(\mu) = b_i \alpha(v \wedge \nu_i).$$

Furthermore, by the unique factorization of $J_i$, as described in \cite[Thm. 8.12]{FJ04}, and the fact that $\rho_{J_i} = b_i \nu_i$, it follows that this factorization of $J_i$ has only one divisorial atom and no curve atoms. Therefore, $J_i$ is the simple complete ideal attached to the divisorial valuation $\nu_i$ and, in particular, is $\m$-primary. Particularly, there exists $\ell_i\ge 1$ such that
\[
\m^{\ell_i}\subseteq J_i.
\]

\smallskip
\noindent\textit{Step 3: integer weights.}
Choose integers $n_i\ge1$ inductively so that
$D_i:=\lfloor n_i b_i A_i\rfloor$ is strictly increasing and
\[
c_i := \frac{n_i b_i A_i}{D_i}\longrightarrow 1.
\]
Set $s_i(v):=v(J_i^{n_i})/D_i$. Using the previous formula for $v(J_i)$,
\[
s_i(v)=\frac{n_i b_i}{D_i}\,\alpha(v\wedge\nu_i)
      =\frac{n_i b_i}{D_i}\,\min\{\beta(v),A_i\}
      = c_i \frac{\min\{\beta(v),A_i\}}{A_i}.
\]

We now prove that $s_i(v)\to g(v) :=\frac{\beta(v)}{A}=\frac{\alpha(v\wedge\nu)}{A}$ uniformly on $\Val_{\m}(\m)$.
Indeed, for any $v\in\Val_{\m}(\m)$, we have
\[
\left|s_i(v)-\frac{\beta(v)}{A}\right|
\le
|c_i-1|\frac{\min\{\beta(v),A_i\}}{A_i}
+
\left|
\frac{\min\{\beta(v),A_i\}}{A_i}-\frac{\beta(v)}{A}
\right|.
\]
Since $\frac{\min\{\beta(v),A_i\}}{A_i}\le1$ and $0\le
\frac{\min\{\beta(v),A_i\}}{A_i}-\frac{\beta(v)}{A} \le 1-\frac{A_i}{A},$
we obtain
\[
\sup_{v\in\Val_{\m}(\m)}
\left|s_i(v)-g(v)\right|
\le
|c_i-1|+\left(1-\frac{A_i}{A}\right).
\]
Because $c_i\to1$ and $A_i\uparrow A$, the right-hand side tends to $0$,
so $s_i(v)\to g(v)$ uniformly on $\Val_{\m}(\m)$.

\smallskip
\noindent\textit{Step 4: the auxiliary filtration $\fc_\bullet$.}
For $p\ge1$ define
\[
\fc_p:=\sum_{\sum_i m_iD_i\ge p}\prod_i J_i^{n_i m_i},
\]
the sum taken over finitely supported tuples $(m_i)$.
It can be seen that $\fc_\bullet$ is a filtration of ideals.

Set $\bbul:=\m^\bullet$ and $\a_p:=\m^p\fc_p$.  
Then, $\abul$ is also a filtration of ideals.

\smallskip
\noindent\textit{Step 5: computation of $v(\fc_\bullet)$.}
For any monomial summand $M=\prod_iJ_i^{n_im_i}$ with $\sum_im_iD_i\ge p$,
\[
v(M)=\sum_im_iD_i\,s_i(v)\ge p\,\inf_i s_i(v).
\]
Conversely $J_i^{n_i\lceil p/D_i\rceil}$ occurs among the summands of
$\fc_p$, so
$v(\fc_p)\le\lceil p/D_i\rceil v(J_i^{n_i})$.  
We shall show that
\begin{equation}\label{eq:si-lower}
s_i(v)\ge g(v) \text{ for all }i \text{ and all }v.
\end{equation}
Indeed, observe that $s_i(v)\ge \frac{\min\{\beta(v),A_i\}}{A_i}.$
If $\beta(v)\le A_i$, then
\[
\frac{\min\{\beta(v),A_i\}}{A_i}=\frac{\beta(v)}{A_i}\ge \frac{\beta(v)}{A}=g(v),
\]
whereas if $\beta(v)\ge A_i$, then
\[
\frac{\min\{\beta(v),A_i\}}{A_i}=1\ge \frac{\beta(v)}{A}=g(v).
\]
This proves \eqref{eq:si-lower}.

Next, as shown above, $s_i(v)\longrightarrow g(v)$
uniformly on $\Val_{\m}(\m)$. Therefore, together with \eqref{eq:si-lower}, we have
\[
\inf_i s_i(v)=g(v) \text{ for all }v.
\]

Now, it can be seen that $v(\fc_p)\ge pg(v)$. On the other hand, for each fixed $i$, $J_i^{n_i\lceil p/D_i\rceil}\subseteq \fc_p,$ so $v(\fc_p)\le \left\lceil \frac{p}{D_i}\right\rceil v(J_i^{n_i}),$
and therefore,
\[
\frac{v(\fc_p)}{p}\le s_i(v)+\frac{n_ib_iA_i}{p}.
\]

We shall prove 
\[
\lim_{p\to\infty}\frac{v(\fc_p)}{p}=g(v)
\]
uniformly on $\Val_{\m}(\m)$. Let $\varepsilon>0$.
By the uniform convergence of $s_i$ to $g$ proved before, choose $i_0$ such that
\[
\sup_{v\in\Val_{\m}(\m)} |s_{i_0}(v)-g(v)|<\varepsilon/2.
\]
Since $v(J_{i_0}^{n_{i_0}})\le n_{i_0}b_{i_0}A_{i_0}$ for all $v$,
choose $p_0$ such that
\[
\frac{n_{i_0}b_{i_0}A_{i_0}}{p}<\varepsilon/2 \text{ for all }p\ge p_0.
\]
For such $p$, the lower bound above gives
\[
\frac{v(\fc_p)}{p}\ge g(v),
\]
while the upper bound with $i=i_0$ gives
\[
\frac{v(\fc_p)}{p}
\le
s_{i_0}(v)+\frac{n_{i_0}b_{i_0}A_{i_0}}{p}
\le
g(v)+\varepsilon
\]
for all $v\in\Val_{\m}(\m)$.
Thus,
\[
\sup_{v\in\Val_{\m}(\m)}
\left|
\frac{v(\fc_p)}{p}-g(v)
\right|
\le\varepsilon
\]
for all $p\ge p_0$, establishing the desired uniform convergence.

\smallskip
\noindent\textit{Step 6: valuation of $\abul$.}
Since $\a_p=\m^p\fc_p$ and $v(\m)=1$,
\[
\frac{v(\a_p)}p\to1+g(v)=1+\frac{\alpha(v\wedge\nu)}{A}.
\]
Hence, $v(\abul)=1+\alpha(v\wedge\nu)/A$ and $v(\bbul)=1$.

\smallskip
\noindent\textit{Step 7: maximizer.}
At $v=\nu$ this ratio equals $2$.  
If $v\ne\nu$, then $v\wedge\nu\prec\nu$ and by strict monotonicity of
skewness along segments (\cite[Thm.~3.26]{FJ04}),
$\alpha(v\wedge\nu)<A$. It follows that $v(\abul)/v(\bbul)<2=\nu(\abul)/\nu(\bbul)$.
Thus, $\nu$ uniquely maximizes $v(\abul)/v(\bbul)$, and since $\nu$ is not
divisorial, no divisorial valuation attains the supremum.

\smallskip
\noindent\textit{Step 8: linear boundedness.}
Because $\fc_p\subseteq R$, one has $\a_p\subseteq\m^p$.  
Also $J_1^{n_1\lceil p/D_1\rceil}\subseteq\fc_p$ and
$\m^{\ell_1}\subseteq J_1$, so
$\m^{p+n_1\ell_1\lceil p/D_1\rceil}\subseteq\a_p$.  
Thus $\m^{\lceil cp\rceil}\subseteq\a_p\subseteq\m^p$ for some $c>0$, and
$\abul$ is linearly bounded with respect to $\q=\m$.
\end{proof}

\subsection{Finite maximizers in the ideal case}
In the ideal case, i.e., when the two graded families of ideals are those of ordinary powers of given two ideals, we recover the classical stronger statement that the maximum is attained by finitely many Rees valuations.

\begin{theorem}[Finite set of maximizers for ideals]\label{thm:finite-max-rees}
	Let $(R,\m)$ be an excellent Noetherian local domain.
	Let $\a$ be an ideal and let $\b$ be any proper ideal with $\b\neq 0$.
	Then,
	\[
	\Loj_{\b}(\a)=\sup_{\substack{\text{divisorial valuation } \\ v \text{ centered at } \m}} \frac{v(\a)}{v(\b)}
	\]
	and, if finite, is a maximum and achieved by one of the Rees valuations of $\a$.
	In particular, if finite, $\Loj_{\b}(\a)$ is computed by finitely many divisorial valuations.
\end{theorem}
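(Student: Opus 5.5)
The plan is to take the Rees valuations of $\a$ as a finite testing set and feed them into \Cref{thm:finite-testing-implies-max}. Let $v_1,\dots,v_r$ be the Rees valuations of $\a$, i.e.\ the divisorial valuations attached to the prime divisors $E_1,\dots,E_r$ of the normalized blowup $\pi\colon Y\to\Spec R$ of $\a$. Since $R$ is excellent, $Y$ is normal and of finite type, so $r<\infty$. By \Cref{thm:valuative-ic}, applied to $\a^p$ (whose Rees valuations are again $v_1,\dots,v_r$), we have for all $p,q\ge1$
\[
\b^q\subseteq\overline{\a^p}\iff q\,v_i(\b)\ge p\,v_i(\a)\quad\text{for } 1\le i\le r.
\]
This is hypothesis (H1) for the power filtrations. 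Moreover, for powers we have exactly $v(\abul)=v(\a)$ and $v(\bbul)=v(\b)$, so no limits are needed.

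\emph{Case 1: $v_i(\b)>0$ for all $i$.} Then (H2) holds and \Cref{thm:finite-testing-implies-max} gives
\[
\Loj_{\b}(\a)=\max_i \frac{v_i(\a)}{v_i(\b)}<\infty.
\]
For the identification with the divisorial supremum, \Cref{thm:valuative-filtrations} shows that $\Loj_{\b}(\a)$ dominates the supremum over all valuations, and in particular the supremum over divisorial ones. For the reverse inequality, I will check that the Rees valuation $v_{i_0}$ realizing the maximum is admissible in the divisorial supremum, i.e.\ centered at $\m$. If some Rees valuation is centered at a non-maximal prime $\p\supseteq\a$, I would replace it by divisorial valuations centered at $\m$ obtained by blowing up a closed point of $E_{i_0}\cap\pi^{-1}(\m)$ on $Y$ with weights $(k,1)$ along $E_{i_0}$ and a transverse exceptional direction. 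These valuations $w_k$ satisfy $w_k/k\to v_{i_0}$ on $\a$ and $\b$ (both principal near the point, with orders $v_{i_0}(\a)$ and $v_{i_0}(\b)$ along $E_{i_0}$, plus bounded contributions). Hence the ratios converge, and the divisorial supremum is at least $\max_i v_i(\a)/v_i(\b)$. Combining the two inequalities gives equality, and the maximum is attained at the Rees valuation $v_{i_0}$. In the $\m$-primary case all Rees valuations are centered at $\m$ and this step is vacuous.

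\emph{Case 2: $v_{i}(\b)=0$ for some $i$.} Then $v_i(\a)>0$, so the displayed criterion fails for every pair $(p,q)$, and hence $\Loj_{\b}(\a)=\infty$. To match the right-hand side, I will use the same approximating sequence $w_k$ at a point of $E_i$ over $\m$. Along it, $w_k(\a)$ grows linearly in $k$ while $w_k(\b)$ stays bounded and positive, since $\b$ is proper and the centre lies over $\m$. Therefore the ratio tends to $\infty$ and the divisorial supremum is infinite as well.

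I expect the main obstacle to be this approximation step, namely producing divisorial valuations centered at $\m$ from a Rees valuation not centered at $\m$ with controlled values on $\a$ and $\b$. I would carry it out in local coordinates on $Y$ at a closed point of $E_i$ chosen outside the other divisors in the supports of $\a\cO_Y$ and $\b\cO_Y$ where possible, using monomial valuations in a regular system of parameters after a further resolution of that point. If $Y$ is singular at every such point, I would first pass to a resolution, or, lacking one in general excellent rings, a normal blowup making the relevant point regular.
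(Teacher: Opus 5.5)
For $\m$-primary $\a$ your argument is the paper's. The Rees valuations of $\a$ are a finite testing set centered at $\m$, $v_i(\b)\ge v_i(\m)>0$, and finite testing gives $\Loj_{\b}(\a)=\max_i v_i(\a)/v_i(\b)$. The paper routes this through \Cref{thm:finite-testing-fingen}, whose hypotheses require $\m$-primary ideals, so its proof in fact only covers this case.

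Your extension to arbitrary $\a$ is right as far as the Rees-valuation computation goes. For powers, $\b^{qt}\subseteq\overline{\a^{pt}}$ is equivalent to $q\,v_i(\b)\ge p\,v_i(\a)$ for all $i$. This gives $\max_i v_i(\a)/v_i(\b)$ in Case 1 and $\infty$ in Case 2 directly, so it does not matter that \Cref{thm:finite-testing-implies-max} is stated for valuations centered at $\m$.

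\textbf{The gap is the approximation step} that transfers this to the supremum over divisorial valuations centered at $\m$. Your $w_k$ are monomial in a regular system of parameters at a point $P\in E_{i_0}$ over $\m$, so you need $Y$ to be regular at some such point, and it need not be. Take $R$ the local ring of $xy=z^2$ at the origin and $\a=(x)$. Then $Y=\Spec R$, the only Rees valuation is the order along the height-one prime $(x,z)$, and the only point of that divisor over $\m$ is the singular point. Your fallback is resolution of singularities, which is not available for arbitrary excellent local domains. A ``normal blowup making the point regular'' is not a construction you can invoke either. Also, $\b\cO_Y$ need not be principal on the blowup of $\a$. Your estimate $w_k(\b)=k\,v_{i_0}(\b)+O(1)$ still holds, but only by using a generator of $\b$ of minimal $v_{i_0}$-value.

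\textbf{You can bypass approximation altogether.} Let $h\notin\overline{I}$ and set $T=R[I/h]\subseteq\Frac(R)$.
\begin{itemize}
\item An integral equation of $h$ over $I$ is the same as an expression of $1$ as an element of $(I/h)T$, so $(I/h)T$ is proper.
\item Since $T/(I/h)T$ is a quotient of $R$, the ideal $(I/h)T+\m T$ is also proper.
\item Let $Q$ be a prime containing it and $w$ a Rees valuation of the maximal ideal of $T_Q$. Then $w$ is divisorial over $R$ (as $R$ is excellent), its center is $Q\cap R=\m$, and $w(h)<w(I)$ because $I/h\subseteq Q$.
\end{itemize}
Now for rational $q/p<\Loj_{\b}(\a)$, choose $t$ and $h\in\b^{qt}\setminus\overline{\a^{pt}}$. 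Then $qt\,w(\b)\le w(h)<pt\,w(\a)$, so the divisorial supremum exceeds $q/p$. Together with \Cref{thm:valuative-filtrations}, this proves the supremum formula in both of your cases at once. Your Rees-valuation computation then supplies the attainment clause.

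For non-$\m$-primary $\a$, attainment can only mean equality with the ratio of a Rees valuation, which need not be centered at $\m$. For example, take $\a=(x,y)$ and $\b=z\a$ in $k[x,y,z]_{(x,y,z)}$. Then $\Loj_{\b}(\a)=1=\ord_{(x,y)}(\a)/\ord_{(x,y)}(\b)$, while $w(\a)/w(\b)=w(\a)/(w(\a)+w(z))<1$ for every $w$ centered at $\m$. Your phrasing of the conclusion is consistent with this.
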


\begin{proof} This is a classical consequence of Rees's theory of valuations (cf. \cite{HS,SwansonReesValuations}). It also follows from
		\Cref{thm:finite-testing-fingen} as we shall illustrate below.
		
		Since $\R(\a)=\bigoplus_{p\ge 0}\a^pT^p$ is the usual Rees algebra of $\a$, the
		hypothesis of finite generation is automatic.  Hence, by \Cref{thm:finite-testing-fingen}, there exists a finite set of
		divisorial valuations $\mathcal V(\a)=\{v_1,\dots,v_r\}$, arising from prime divisors on
		the normalization of $\Proj \overline{\R(\a)}$, such that for any filtration
		$\bbul$, with $v_i(\bbul) > 0$, one has
		\[
		\Loj_{\bbul}(\a^\bullet)
		=
		\max_{1\le i\le r}\frac{v_i(\a)}{v_i(\bbul)}.
		\]
		Specializing to $\bbul=\{\b^q\}_{q\ge 1}$, noticing that $\b \subseteq \m$ so $\bbul$ is linearly bounded from below (as in \Cref{rmk:finite-testing-fingen}), yields the desired finite--maximum formula.
		
		Finally, the valuations coming from the codimension--one components of the exceptional
		locus of the normalized blowup $\Proj \overline{\R(\a)}\to \Spec R$ are precisely
		the classical Rees valuations of $\a$; see \cite[Chapter 10]{HS}.  Therefore, the maximum above may be taken over the
		Rees valuations of $\a$, as claimed.
\end{proof}

\begin{example}
	\label{ex.finiteNormalizedBlowup}
	Consider $\a = (x^5, x^2y^3, y^7) \subseteq R = \CC[[x,y]]$. The Newton polyhedron of $\a$ has two compact faces with inward normals $(1,1)$ and $(3,1)$. These correspond to Rees valuations $v_{(1,1)}$ and $v_{(3,1)}$ of $\a$. 
	The values of interest are:
	$$v_{(1,1)}(\a) = 5, v_{(3,1)}(\a) = 7, v(\m) = 1.$$
	Hence, $\Loj_{\m}(\a) = 7$ is computed by a Rees valuations of $\a$.
\end{example}


\part{Structural Inequalities and Dualities} \label{part:structuralInequality}

In this part, we derive structural inequalities for
\L{}ojasiewicz exponents and related invariants as consequences of the valuative
framework developed in Part \ref{part:valuative}.


\section{Mixed multiplicities, Jacobian polygons, and Teissier--type inequalities}\label{sec:teissier}

This section links \L{}ojasiewicz-type containment thresholds to mixed multiplicities and Teissier’s convex-geometric inequalities.  We restate Teissier--Rees--Sharp/Alexandrov--Fenchel type bounds for mixed multiplicities of ideals (\Cref{thm:AF-mixed}).  We then introduce mixed \L{}ojasiewicz ratios and establish monotonicity/concavity properties that organize these inequalities (\Cref{prop:Lambda-monotone}, \Cref{prop:jac-concave}).  Finally, we specialize to Jacobian ideals and the algebraic avatars of Milnor and Tjurina numbers, obtaining Teissier-type bounds for the gradient \L{}ojasiewicz exponent (\Cref{thm:grad-mixed-lower}), and we close with an extension principle that generalizes statements for ideals to filtrations.

\subsection{Hilbert--Samuel multiplicity and mixed multiplicities}\label{subsec:mixedmult-prep}

Throughout this section, $(R,\m)$ is a Noetherian local ring of dimension $d$. Assume, in addition, that $\dim \text{Nil}(\widehat{R}) < d$, where $\text{Nil}(\cdot)$ represents the nil-radical of the ring (see \cite{CutkoskyAM}). This condition is satisfied, for instance, if $R$ is analytically unramified.

We recall the mixed multiplicities of ideals.
Let $I_1,\dots,I_r\subseteq R$ be ideals such that $I_1 + \dots + I_r$ is $\m$--primary (equivalently, $I_1^{n_1} \cdots I_r^{n_r}$ has finite colength for all large $(n_i)$).
Then, the function
\[
(n_1,\dots,n_r)\longmapsto \len\!\left(R/(I_1^{n_1}\cdots I_r^{n_r})\right)
\quad (n_i\in\NN)
\]
agrees with a polynomial of total degree $d$ for all $n_i\gg 0$.
The degree--$d$ part of this polynomial can be uniquely written as
\[
\sum_{a_1+\cdots+a_r=d}
\frac{e(I_1^{[a_1]},\dots,I_r^{[a_r]})}{a_1!\cdots a_r!}\,
n_1^{a_1}\cdots n_r^{a_r},
\]
where the coefficients $e(I_1^{[a_1]},\dots,I_r^{[a_r]})\in \NN$ are the \emph{mixed multiplicities} of $I_1, \dots, I_r$.

We will primarily use the two-ideal case $(I,J)$, where we write
$
e(I^{[i]},J^{[d-i]})\quad (0\le i\le d).
$
The following are standard facts:
\begin{enumerate}[label=(\alph*),leftmargin=2em]
	\item (\emph{Scaling}) For $p,q\ge 1$,
	\[
	e((I^p)^{[i]},(J^q)^{[d-i]})=p^{\,i}q^{\,d-i}\,e(I^{[i]},J^{[d-i]}).
	\]
	\item (\emph{Monotonicity}) If $I\subseteq I'$ and $J\subseteq J'$ are $\m$--primary, then
	\[
	e(I^{[i]},J^{[d-i]})\ \ge\ e((I')^{[i]},(J')^{[d-i]})
	\quad \text{for all }i.
	\]
	\item (\emph{Integral closure invariance}) If $\overline I=\overline{I'}$ and
	$\overline J=\overline{J'}$, then
	\[
	e(I^{[i]},J^{[d-i]})=e((I')^{[i]},(J')^{[d-i]})
	\quad \text{for all }i.
	\]
\end{enumerate}

\begin{definition}[Mixed multiplicities of filtrations]
Let $\abul = \{\a_p\}_{p \ge 1}$ and $\bbul =\{\b_q\}_{q\ge 1}$ be graded families of $\m$-primary ideals. For $1 \le i \le d$, set
$$e_i(\abul^{[i]}, \bbul^{[d-i]}) = \limsup_{p \rightarrow \infty} \frac{e(\a_p^{[i]}, \b_p^{[d-i]})}{p^d}.$$
\end{definition}
It was shown by Cid-Ruiz--Monta\~no \cite{CidRuizMontano2022} that, under the condition that $\dim \text{Nil}(\widehat{R}) < d$, $e_i(\abul^{[i]}, \bbul^{[d-i]})$ exists as an actual limit. When $\bbul = \{\m^q\}_{q \ge 1}$, we write $e_i(\abul)$ for $e(\abul^{[i]}, \bbul^{[d-i]})$.

\begin{definition}[Mixed \L{}ojasiewicz rations]
For a graded family $\abul$ of $\m$-primary ideals, set 
$$\Lambda_i(\abul) = \frac{e_i(\abul)}{e_{i-1}(\abul)}, \ 1 \le i \le d.$$
\end{definition}

\subsection{Teissier--Rees--Sharp and Alexandrov--Fenchel inequalities}

We begin with the Alexandrov--Fenchel type inequalities for mixed multiplicities in the form
best suited for graded families (or filtrations).

\begin{theorem}[Teissier--Rees--Sharp inequality]\label{thm:AF-mixed}
Assume that $R$ is quasi--unmixed.
Let $\abul=\{\a_p\}_{p\ge 1}$ and $\bbul=\{\b_q\}_{q\ge 1}$ be graded families such that
$\a_p+\b_q$ is $\m$--primary for all $p,q\ge 1$.
Then, for all $p,q\ge 1$, and $1\le i\le d-1$, we have
\[
e\!\left(\a_{p}^{[i]},\b_{q}^{[d-i]}\right)^2
\ \ge\
e\!\left(\a_{p}^{[i-1]},\b_{q}^{[d-i+1]}\right)\,
e\!\left(\a_{p}^{[i+1]},\b_{q}^{[d-i-1]}\right).
\]
Equivalently, for fixed $p,q$, the sequence $i\ \longmapsto\  e\!\left(\a_{p}^{[i]},\b_{q}^{[d-i]}\right)$
is log-concave. 
\end{theorem}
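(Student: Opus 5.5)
The plan is to reduce the statement to the classical Teissier--Rees--Sharp inequality for a single pair of ideals. Fix $p,q\ge 1$ and set $I:=\a_p$ and $J:=\b_q$. The quantities in the statement are just the mixed multiplicities $e(I^{[j]},J^{[d-j]})$ of this one pair, as recalled in \Cref{subsec:mixedmult-prep}. So the graded-family structure plays no role, and the log-concavity is a statement about $(I,J)$ alone. The equivalence of the two formulations is immediate: positivity of the terms (or the convention that a zero term forces its neighbours to vanish) turns $e_i^2\ge e_{i-1}e_{i+1}$ for $1\le i\le d-1$ into log-concavity of $i\mapsto e(I^{[i]},J^{[d-i]})$.

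\textbf{Step 1: the $\m$-primary case.} When $I$ and $J$ are both $\m$-primary and $R$ is quasi-unmixed, the inequality
\[
e(I^{[i]},J^{[d-i]})^2\ge e(I^{[i-1]},J^{[d-i+1]})\,e(I^{[i+1]},J^{[d-i-1]})
\]
is the theorem of Teissier (for normal Cohen--Macaulay complex analytic algebras), Rees--Sharp (dimension two) and Katz/Rees--Sharp in general quasi-unmixed rings; see \cite[Chapter~17]{HS}. I would first reduce to $R/\mathfrak P$ for minimal primes $\mathfrak P$ with $\dim R/\mathfrak P=d$ via the associativity formula. The domain case then follows by passing to the completion, which is equidimensional since $R$ is quasi-unmixed. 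Next, after a residue field extension, one represents the mixed multiplicities as ordinary multiplicities of ideals generated by general (superficial) elements of $I$ and $J$. This reduces the statement to the two-dimensional Rees--Sharp inequality $e(I,J)^2\le e(I)e(J)$. I would cite this case rather than reprove it.

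\textbf{Step 2: the case where only $I+J$ is $\m$-primary.} Here I would use the Trung--Verma description. After a residue field extension, $e(I^{[i]},J^{[d-i]})$ equals $e(x_1,\dots,x_i,y_1,\dots,y_{d-i})$ for sufficiently general elements $x_k\in I$ and $y_l\in J$. This quantity is $0$ when such elements fail to form a system of parameters, for instance when $i>\dim$ of the relevant quotient.

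To pass to the $\m$-primary case, replace $I,J$ by $I_N:=I+\m^N$ and $J_N:=J+\m^N$, and apply Step 1 to $(I_N,J_N)$. Then let $N\to\infty$, showing that for $N\gg0$ each $e(I_N^{[j]},J_N^{[d-j]})$ equals $e(I^{[j]},J^{[d-j]})$. The argument for this stabilization: general elements of $I_N$ can be taken of the form $x+m$ with $x\in I$ general and $m\in\m^N$. Since $(x_1,\dots,y_{d-i})$ is $\m$-primary whenever it is a system of parameters, perturbing it by elements of $\m^N$ with $N$ beyond its Artin--Rees/Samuel threshold leaves the multiplicity unchanged. In the degenerate indices both sides can be handled via the zero convention.

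\textbf{Main obstacle.} I expect Step 2 to be the hardest part, specifically the uniform comparison of general elements of $I_N$ with those of $I$, including the degenerate indices where the mixed multiplicity vanishes. If that comparison becomes delicate, my fallback is the other direction. Monotonicity (b) gives $e(I_N^{[j]},J_N^{[d-j]})\le e(I^{[j]},J^{[d-j]})$ once the latter is defined. I would then establish the reverse inequality in the limit using the polynomial $\len(R/I^{n_1}J^{n_2})$, which agrees with $\len(R/I_N^{n_1}J_N^{n_2})$ on a range of $(n_1,n_2)$ that grows with $N$. Passing to the limit in Step 1's inequality then yields the claim for all $p,q$.
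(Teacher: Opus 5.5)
\textbf{The gap is in Step~1, and it cannot be repaired: the classical theorem goes the other way.} For $\m$-primary $I,J$, the Teissier--Rees--Sharp inequality is
\[
e\bigl(I^{[i]},J^{[d-i]}\bigr)^2\ \le\ e\bigl(I^{[i-1]},J^{[d-i+1]}\bigr)\,e\bigl(I^{[i+1]},J^{[d-i-1]}\bigr).
\]
So the sequence $i\mapsto e(I^{[i]},J^{[d-i]})$ is log-\emph{convex}, not log-concave. This is the ``reverse Alexandrov--Fenchel'' behaviour of covolumes.

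Your own text shows the clash. For $d=2$, $i=1$ your target is $e(I^{[1]},J^{[1]})^2\ge e(I)\,e(J)$. The base case you reduce to is the Rees--Sharp inequality $e(I^{[1]},J^{[1]})^2\le e(I)\,e(J)$. Carried out, your reduction proves the reverse of the claim.

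In fact the statement as printed is false. In $R=k[[x,y]]$ take $\a_p=(x,y^2)^p$, $\b_q=\m^q$, $p=q=1$, $i=1$. Counting monomials outside $r\,\NP(\a_1)+s\,\NP(\m)$ shows that $\len(R/\a_1^{r}\m^{s})$ has leading form $r^2+rs+\tfrac12 s^2$. Hence
\[
e(\a_1^{[2]},\m^{[0]})=2,\qquad e(\a_1^{[1]},\m^{[1]})=1,\qquad e(\a_1^{[0]},\m^{[2]})=1,
\]
and the asserted inequality reads $1\ge 2$.

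The paper's proof consists only of citing Teissier and Rees--Sharp, so it has exactly the same defect. The inequality must be reversed, and with it the later claim $\Lambda_1\ge\cdots\ge\Lambda_d$, which should read $\Lambda_1\le\cdots\le\Lambda_d$; in the example, $\Lambda_1=1$ and $\Lambda_2=2$.

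Incidentally, your reduction via the associativity formula only works in the convex direction. Sums of log-convex sequences are log-convex, but sums of log-concave sequences need not be log-concave.

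\textbf{Step~2 has an independent gap.} Suppose $I$ or $J$ is not $\m$-primary.
\begin{itemize}
\item $\len(R/I^{n_1}J^{n_2})$ is then infinite, so neither the paper's length-polynomial definition nor your fallback applies.
\item Your perturbation $I_N=I+\m^N$ does not stabilize at the degenerate indices. For $I=(x)$, $J=(y)$ in $k[[x,y]]$ one has $I_N=(x,y^N)$ and $e(I_N)=N\to\infty$.
\item With the general-element convention of your Step~2, this same pair has $e(I^{[1]},J^{[1]})=e(x,y)=1$ but $e(I^{[2]},J^{[0]})=e(I^{[0]},J^{[2]})=0$. So even the correct log-convex inequality fails once $\m$-primarity is dropped.
\end{itemize}

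The correct statement is the reversed inequality for $\m$-primary $\a_p$ and $\b_q$. For that statement your Step~1 outline is the standard proof: associativity formula, cutting down by general elements, and the two-dimensional Rees--Sharp inequality.
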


\begin{proof}
The stated inequality is precisely the Alexandrov-Fenchel type inequality for mixed multiplicities. It was first established by Teissier \cite{Teissier77} in the $\m$-primary case via Minkowski inequalities for multiplicities, and was subsequently extended by Rees and Sharp \cite{ReesSharp78} to the general situation where $\a_p+\b_q$ is $\m$-primary, under the quasi-unmixed hypothesis of $R$.
\end{proof}
\subsection{Containments $\Rightarrow$ mixed-multiplicity inequalities}

Following the foundational work of Teissier on mixed multiplicities, several authors -- most notably Bivià-Ausina and Encinas \cite{Aus09, BE13} -- have related Lojasiewicz-type invariants to mixed multiplicity data, typically through inequalities involving integral closures of powers of ideals. The proposition below isolates the underlying numerical mechanism in a containment-theoretic form.

\begin{proposition}[Containment implies mixed-multiplicity bounds]\label{prop:containment-mixed}
Let $\abul=\{\a_p\}_{p\ge 1}$ and $\bbul=\{\b_q\}_{q\ge 1}$ be $\m$--primary graded families.
Fix $p,q\ge 1$ and assume that
\[
\b_{qt}\ \subseteq\ \overline{\a_{pt}}
\quad\text{for all }t\gg 1.
\]
Then, for every $1\le i\le d$ and every $t\gg 1$, one has
\[
e\!\left((\b_{qt})^{[i]},\m^{[d-i]}\right) \ge  e\!\left((\a_{pt})^{[i]},\m^{[d-i]}\right).
\]
Consequently,
\[
e_i(\bbul)\ \ge\ \left(\frac{p}{q}\right)^{\!i}\,e_i(\abul),
\]
and in particular,
\[
\frac{q}{p}\ \ge\
\left(\frac{e_i(\abul)}{e_i(\bbul)}\right)^{1/i}
\text{ for all } 1\le i\le d.
\]
\end{proposition}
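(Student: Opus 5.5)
The plan is to prove the two numbered inequalities in turn: first the per-level multiplicity comparison, then the asymptotic one by scaling.

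\emph{Step 1: per-level comparison.} Fix $t\gg1$ with $\b_{qt}\subseteq\overline{\a_{pt}}$. Both ideals are $\m$-primary, and so is $\overline{\a_{pt}}$. Monotonicity (b), applied with $I=\b_{qt}\subseteq I'=\overline{\a_{pt}}$ and $J=J'=\m$, gives
\[
e\bigl((\b_{qt})^{[i]},\m^{[d-i]}\bigr)\ \ge\ e\bigl((\overline{\a_{pt}})^{[i]},\m^{[d-i]}\bigr).
\]
Integral-closure invariance (c) replaces $\overline{\a_{pt}}$ by $\a_{pt}$ on the right, which is the first claimed inequality. I will read the index as $0\le i\le d$, but only $1\le i\le d$ is needed.

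\emph{Step 2: passage to limits.} Divide the inequality of Step 1 by $t^d$ and let $t\to\infty$. The left-hand side is
\[
(qt)^d\cdot\frac{e\bigl(\b_{qt}^{[i]},\m^{[d-i]}\bigr)}{(qt)^d},
\]
and the ratio tends to $e_i(\bbul)$. This uses that $e_i$ exists as an honest limit by Cid-Ruiz--Monta\~no (so limits along the subsequences $qt$ and $pt$ agree with the full limit), together with the normalization built into the definition, where $\m$ enters at level $p$ as well.

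\emph{Main obstacle: normalization mismatch.} There is a subtlety here. The definition of $e_i(\abul)$ uses $\b_p=\m^p$ in the second slot, not $\m$. By scaling (a),
\[
e\bigl(\a_p^{[i]},(\m^p)^{[d-i]}\bigr)=p^{d-i}\,e\bigl(\a_p^{[i]},\m^{[d-i]}\bigr),
\]
so $e_i(\abul)=\lim_p e\bigl(\a_p^{[i]},\m^{[d-i]}\bigr)/p^{i}$, and this is the normalization I will use throughout. With it, the left-hand side divided by $t^i$ tends to $q^i e_i(\bbul)$, and the right-hand side divided by $t^i$ tends to $p^i e_i(\abul)$. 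Hence
\[
q^i e_i(\bbul)\ \ge\ p^i e_i(\abul),
\]
which is the displayed consequence $e_i(\bbul)\ge (p/q)^i e_i(\abul)$.

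\emph{Step 3: the ratio bound.} Rearranging gives $(q/p)^i\ge e_i(\abul)/e_i(\bbul)$. Taking $i$-th roots yields the final claim, provided $e_i(\bbul)>0$. If $e_i(\bbul)=0$, the inequality just proved forces $e_i(\abul)=0$, and the statement is read with the convention $0/0$ excluded or trivial.

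I expect the only delicate points to be two. The first is the index normalization just described. The second is confirming that the limit exists along arithmetic subsequences; this follows directly from its existence as a full limit.
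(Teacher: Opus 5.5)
Your proposal is correct and follows the paper's own argument: monotonicity together with integral-closure invariance gives the per-level inequality, and then dividing by $(qt)^i$ (equivalently $t^i$) and letting $t\to\infty$, using the existence of the limit from Cid-Ruiz--Monta\~no, gives the asymptotic bounds. Delete your initial attempt in Step 2 to divide by $t^d$. The normalization you derive right afterwards, $e_i(\abul)=\lim_p e(\a_p^{[i]},\m^{[d-i]})/p^i$, is exactly what the paper's division by $(qt)^i$ relies on.
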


\begin{proof}
Fix $t\gg 1$ such that $\b_{qt}\subseteq \overline{\a_{pt}}$.
By monotonicity and integral-closure invariance standard facts,
\[
e((\b_{qt})^{[i]},\m^{[d-i]})
\ \ge\
e((\overline{\a_{pt}})^{[i]},\m^{[d-i]})
=
e((\a_{pt})^{[i]},\m^{[d-i]}),
\]
which proves the first displayed inequality.

The asymptotic statements is then obtained by dividing both sides of this inequality by $(qt)^i$ and taking the limit as $t\to\infty$.
\end{proof}

\begin{corollary}\label{cor:containment-mixed-ideal}
Let $I,J$ be $\m$--primary ideals in $R$, and let $p,q\ge 1$.
If $J^{\,q}\subseteq \overline{I^{\,p}}$, then for every $1\le i\le d$ one has
\[
q^{\,i}\,e(J^{[i]},\m^{[d-i]})
\ \ge\
p^{\,i}\,e(I^{[i]},\m^{[d-i]}),
\]
equivalently,
\[
\frac{q}{p}\ \ge\ \left(\frac{e(I^{[i]},\m^{[d-i]})}{e(J^{[i]},\m^{[d-i]})}\right)^{1/i}.
\]
\end{corollary}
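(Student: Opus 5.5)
This is the case $\abul=\{I^p\}_{p\ge1}$, $\bbul=\{J^q\}_{q\ge1}$ of \Cref{prop:containment-mixed}. Rather than pass through the asymptotic limits, we argue directly at the fixed level $(p,q)$, using the standard facts (a)--(c) on mixed multiplicities recorded in \Cref{subsec:mixedmult-prep}.

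First, since $I$ and $J$ are $\m$--primary, so are $I^p$, $J^q$ and $\overline{I^p}$ (note $I^p\subseteq\overline{I^p}$). Hence all the mixed multiplicities $e((J^q)^{[i]},\m^{[d-i]})$, $e((\overline{I^p})^{[i]},\m^{[d-i]})$ and $e((I^p)^{[i]},\m^{[d-i]})$ are defined.

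Second, apply monotonicity (b) to the containment $J^q\subseteq\overline{I^p}$, keeping $\m$ fixed in the second slot. This gives
\[
e((J^q)^{[i]},\m^{[d-i]})\ \ge\ e((\overline{I^p})^{[i]},\m^{[d-i]}).
\]
Integral closure invariance (c), applied with $\overline{\overline{I^p}}=\overline{I^p}$ and $\m$ unchanged, identifies the right-hand side with $e((I^p)^{[i]},\m^{[d-i]})$.

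Third, use scaling (a) on both sides with exponent $1$ on $\m$. It gives $e((J^q)^{[i]},\m^{[d-i]})=q^i\,e(J^{[i]},\m^{[d-i]})$ and $e((I^p)^{[i]},\m^{[d-i]})=p^i\,e(I^{[i]},\m^{[d-i]})$. This is the first displayed inequality.

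For the equivalent form, note that $e(J^{[i]},\m^{[d-i]})>0$ for $\m$--primary ideals: it is bounded below by $e(\m^{[d]})>0$ via monotonicity. One may therefore divide by $p^i\,e(J^{[i]},\m^{[d-i]})$ and take $i$-th roots, which is legitimate since both sides are positive.

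The only point needing care is justifying that facts (a)--(c) apply to the possibly non-monomial ideal $\overline{I^p}$. This is harmless: $\overline{I^p}$ is $\m$--primary, and (c) is precisely the Rees-type statement for integral closures under the standing hypothesis $\dim\mathrm{Nil}(\widehat R)<d$. There is no genuine obstacle; the argument is the $t=1$ instance of the proof of \Cref{prop:containment-mixed}.
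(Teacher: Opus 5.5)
Your argument is correct and is essentially the paper's own route: the paper deduces the corollary from \Cref{prop:containment-mixed}, whose proof is exactly monotonicity plus integral-closure invariance, and you run that argument at $t=1$, using scaling in place of the limit. One small correction to your closing remark: invariance of $e(\cdot^{[i]},\m^{[d-i]})$ under replacing $I^p$ by $\overline{I^p}$ holds in any Noetherian local ring, because $I^p$ is a reduction of $\overline{I^p}$, so the hypothesis $\dim\mathrm{Nil}(\widehat R)<d$ is not what justifies fact (c). That hypothesis matters for Rees's converse and for the existence of limits for filtrations.
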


\begin{proof}
This inequality is a classical consequence of Teissier’s mixed-multiplicity theory and appears in the algebraic form in Rees--Sharp; see \cite{Teissier77,ReesSharp78}. It is also a direct consequence of \Cref{prop:containment-mixed}.
\end{proof}

An immediate consequence of \Cref{cor:containment-mixed-ideal}, taking $i=d$, gives the following lower bound:
\[
\Loj_{J}(I)\ \ge\ \left(\frac{e(I)}{e(J)}\right)^{1/d}.
\]

\subsection{Mixed \L ojasiewicz ratios and Teissier-type monotonicity}

The Alexandrov--Fenchel inequalities imply strong structure for certain ratios of mixed
multiplicities. These ratios play the role of ``mixed slopes'' in the classical theory.

\begin{proposition}[Log-concavity and monotonicity of $\Lambda_i$]\label{prop:Lambda-monotone}
Let $\abul=\{\a_p\}_{p\ge 1}$ be an $\m$--primary graded family.
Then, mixed multiplicity ratios satisfy
\[
\Lambda_1(\abul)\ \ge\ \Lambda_2(\abul)\ \ge\ \cdots\ \ge\ \Lambda_d(\abul).
\]
\end{proposition}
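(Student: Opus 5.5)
The plan is to deduce the statement from the Teissier--Rees--Sharp inequality (\Cref{thm:AF-mixed}) applied to the pair $(\a_p,\m^p)$, and then to pass to the limit. Here $e_i(\abul)=\lim_{p\to\infty} e(\a_p^{[i]},(\m^p)^{[d-i]})/p^d$, where we index the second family as $\b_p=\m^p$, and the limit exists by Cid-Ruiz--Monta\~no. Throughout we keep the standing hypotheses of the section: $R$ is quasi-unmixed, so that \Cref{thm:AF-mixed} applies, and $\dim\mathrm{Nil}(\widehat R)<d$, so that the limits exist.

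\emph{Finite level.} Fix $p$. Since $\a_p$ and $\m^p$ are $\m$-primary, \Cref{thm:AF-mixed} gives, for $1\le i\le d-1$,
\[
e(\a_p^{[i]},(\m^p)^{[d-i]})^2\ \ge\ e(\a_p^{[i-1]},(\m^p)^{[d-i+1]})\, e(\a_p^{[i+1]},(\m^p)^{[d-i-1]}).
\]
Dividing by $p^{2d}$ and letting $p\to\infty$ yields $e_i(\abul)^2\ge e_{i-1}(\abul)\,e_{i+1}(\abul)$ for $1\le i\le d-1$. Here $e_0(\abul)=\lim e((\m^p)^{[d]})/p^d=e(\m)$, which is consistent with the scaling rule (a). The limit passage is harmless because products and squares of convergent sequences converge to the products and squares of the limits.

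\emph{Positivity.} To divide, we need every $e_i(\abul)>0$, and this is the main obstacle. Positivity at each level $p$ is standard, since mixed multiplicities of $\m$-primary ideals are at least $1$. In the limit, however, positivity is not automatic: if $\abul$ is not linearly bounded below (for instance if $\a_p\not\supseteq \m^{cp}$), then $e_d(\abul)$ could be infinite. I would handle this in two ways. First, monotonicity gives the lower bound $e(\a_p^{[i]},(\m^p)^{[d-i]})\ge e(\m^p)=p^d e(\m)$, because $\a_p\subseteq\m$; hence every $e_i(\abul)\ge e(\m)>0$. Second, I would either assume finiteness of all the $e_i$ (it holds whenever $\m^{cp}\subseteq \a_p$, via scaling and monotonicity), or interpret the ratios in $(0,\infty]$. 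Log-concavity then shows that finiteness of $e_d$ forces finiteness of all the intermediate $e_i$. Indeed, $\log e_i$ is a concave sequence starting from the finite value $\log e(\m)$; a concave sequence taking the value $-\infty$ is impossible here, and if some $e_i$ were infinite then concavity would propagate this to $e_d$.

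\emph{Conclusion.} With $0<e_i<\infty$ for all $i$, dividing $e_i^2\ge e_{i-1}e_{i+1}$ by $e_ie_{i-1}$ gives $\Lambda_i(\abul)=e_i/e_{i-1}\ge e_{i+1}/e_i=\Lambda_{i+1}(\abul)$ for $1\le i\le d-1$. Chaining these inequalities yields $\Lambda_1(\abul)\ge\cdots\ge\Lambda_d(\abul)$.
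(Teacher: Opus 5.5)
Your route is the paper's route: the paper applies \Cref{thm:AF-mixed} to $(\a_{pt},\m)$, which by the scaling rule is the same as your $(\a_p,\m^p)$, then normalizes and passes to the limit. The step everything rests on fails, however, because \Cref{thm:AF-mixed} is stated with the inequality reversed. For mixed multiplicities of $\m$-primary ideals the Teissier--Rees--Sharp inequalities read
\[
e(I^{[i]},J^{[d-i]})^2\ \le\ e(I^{[i-1]},J^{[d-i+1]})\,e(I^{[i+1]},J^{[d-i-1]}),
\]
so the sequence is log-\emph{convex}, not log-concave. Multiplicities behave like covolumes, which flips the Alexandrov--Fenchel direction; in dimension $2$ the inequality is $e(I^{[1]},J^{[1]})^2\le e(I)e(J)$. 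Consequently the proposition is false as stated. Take $R=k[[x,y]]$ and $\a_p=(x,y^2)^p$, so that $e_i(\abul)=e((x,y^2)^{[i]},\m^{[2-i]})$. The Newton polygon of $(x,y^2)^a\m^b$ has vertices $(0,2a+b)$, $(a,b)$ and $(a+b,0)$. Hence $\len\bigl(R/(x,y^2)^a\m^b\bigr)=a^2+ab+\tfrac12 b^2+(\text{lower order})$, which gives $e_0=1$, $e_1=1$, $e_2=2$, and therefore $\Lambda_1=1<2=\Lambda_2$. Your level-$p$ inequality becomes $p^4\ge 2p^4$. If you run your limit argument on the correctly oriented inequality, you obtain $\Lambda_1(\abul)\le\Lambda_2(\abul)\le\cdots\le\Lambda_d(\abul)$ instead.

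Your positivity argument is also wrong, independently of the direction issue. From $\a_p\subseteq\m$, monotonicity only gives $e(\a_p^{[i]},(\m^p)^{[d-i]})\ge e(\m^{[i]},(\m^p)^{[d-i]})=p^{d-i}e(\m)$, which is $o(p^d)$ for $i\ge 1$. The bound $p^d e(\m)$ would require $\a_p\subseteq\m^p$, and gradedness does not provide that. For instance, $\a_p=\m$ for all $p$ is an $\m$-primary filtration with $e_i(\abul)=0$ for every $i\ge 1$, so $\Lambda_2,\dots,\Lambda_d$ are $0/0$.

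Your worry in the other direction is unfounded. Since $\m^c\subseteq\a_1$ and $\a_1^p\subseteq\a_p$, you get $\m^{cp}\subseteq\a_p$, hence $e_i(\abul)\le c^i e(\m)<\infty$ automatically. What is actually needed is an upper linear bound, for example $\a_p\subseteq\overline{\m^{\lceil\varepsilon p\rceil}}$ for some $\varepsilon>0$. By monotonicity, integral-closure invariance and scaling, this gives $e_i(\abul)\ge\varepsilon^i e(\m)>0$. Under that hypothesis, together with quasi-unmixedness, your limit-and-divide argument proves the nondecreasing chain.
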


\begin{proof}
Fix $p,t$ and apply \Cref{thm:AF-mixed} to the pair $(\a_{pt},\m)$, we get
\[
e((\a_{pt})^{[i]},\m^{[d-i]})^2
\ \ge\
e((\a_{pt})^{[i-1]},\m^{[d-i+1]})\cdot e((\a_{pt})^{[i+1]},\m^{[d-i-1]}) \ \forall \ 1 \le i \le d-1.
\]
It follows that
Dividing both sides of this inequality by appropriate powers of $pt$ and taking the limit along $p\to\infty$ then give
$\Lambda_1(\abul)\ge \Lambda_2(\abul)\ge\cdots\ge \Lambda_d(\abul)$ as claimed.
\end{proof}

\begin{corollary}[Teissier monotonicity]\label{cor:Lambda-monotone-ideal}
Let $I$ be $\m$--primary. Then, the sequence $(\Lambda_i(I))_{i=1}^d$ is nonincreasing:
\[
\Lambda_1(I)\ \ge\ \Lambda_2(I)\ \ge\ \cdots\ \ge\ \Lambda_d(I).
\]
Equivalently, the sequence $i \longmapsto e(I^{[i]},\m^{[d-i]})$ is log-concave.
\end{corollary}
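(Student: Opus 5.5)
The plan is to specialize \Cref{prop:Lambda-monotone} to the constant-power filtration $\abul=\{I^p\}_{p\ge1}$, where $\Lambda_i(I)$ means $\Lambda_i(\{I^p\}_{p\ge1})$. Alternatively, we can argue directly from \Cref{thm:AF-mixed} with $\bbul=\{\m^q\}_{q\ge1}$. I take the direct route, since it avoids limits altogether.

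First, compute the asymptotic mixed multiplicities of the power filtration. By the scaling property (a), $e((I^p)^{[i]},(\m^p)^{[d-i]})=p^{i}p^{d-i}\,e(I^{[i]},\m^{[d-i]})=p^d\,e(I^{[i]},\m^{[d-i]})$. Hence $e_i(\{I^p\})=e(I^{[i]},\m^{[d-i]})$ exactly, the limit being constant in $p$. It follows that $\Lambda_i(I)=e(I^{[i]},\m^{[d-i]})/e(I^{[i-1]},\m^{[d-i+1]})$. Each of these multiplicities is a positive integer, since $I$ and $\m$ are $\m$-primary, so every ratio is well defined and positive.

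Second, apply \Cref{thm:AF-mixed} with $\a_p=I^p$, $\b_q=\m^q$ and $p=q=1$. This is legitimate because $I+\m=\m$ is $\m$-primary, and the standing quasi-unmixed hypothesis of that theorem is assumed. Writing $e_i:=e(I^{[i]},\m^{[d-i]})$, the theorem gives $e_i^2\ge e_{i-1}e_{i+1}$ for $1\le i\le d-1$. Dividing by the positive quantity $e_{i-1}e_i$ yields $e_i/e_{i-1}\ge e_{i+1}/e_i$, that is, $\Lambda_i(I)\ge\Lambda_{i+1}(I)$. Chaining these inequalities gives the full nonincreasing sequence.

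Conversely, a nonincreasing sequence of ratios of a positive sequence is exactly log-concavity, so the two formulations in the statement are equivalent. The only point requiring care is positivity of all the $e_i$, which is needed for the divisions. I expect this to be the main, though minor, obstacle, and I would settle it by the standard fact that mixed multiplicities of $\m$-primary ideals are positive.
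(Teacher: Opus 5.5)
Your reduction is the one the paper uses. The paper applies \Cref{prop:Lambda-monotone} to $\{I^p\}_{p\ge1}$, and that proposition is just \Cref{thm:AF-mixed} for the pairs $(\a_{pt},\m)$ followed by a limit. You instead use scaling to get $e_i(\{I^p\})=e(I^{[i]},\m^{[d-i]})$ exactly and invoke \Cref{thm:AF-mixed} once with $p=q=1$, which is slightly cleaner. Your bookkeeping (scaling, positivity of the $e_i$, and equivalence of monotone ratios with log-concavity) is fine.

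The genuine gap is the inequality everything rests on: $e_i^2\ge e_{i-1}e_{i+1}$ runs the wrong way. For $\m$-primary ideals, the Teissier--Rees--Sharp inequality is $e(I^{[i]},J^{[d-i]})^2\le e(I^{[i-1]},J^{[d-i+1]})\,e(I^{[i+1]},J^{[d-i-1]})$. Mixed multiplicities are log-\emph{convex}, which is also why Teissier's Minkowski inequality reads $e(IJ)^{1/d}\le e(I)^{1/d}+e(J)^{1/d}$.

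Here is a concrete check. In $R=k[[x,y]]$ take $I=(x,y^2)$. The ideals $I^{n_1}\m^{n_2}$ are monomial and integrally closed, with Newton polygon vertices $(0,2n_1+n_2)$, $(n_1,n_2)$, $(n_1+n_2,0)$. So the degree-two part of $\len(R/I^{n_1}\m^{n_2})$ is $n_1^2+n_1n_2+\tfrac12 n_2^2$. Hence $e_2=e(I)=2$, $e_1=e(I^{[1]},\m^{[1]})=1$, and $e_0=e(\m)=1$. This gives $e_1^2=1<2=e_0e_2$ and $\Lambda_1(I)=1<2=\Lambda_2(I)$.

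This is not a fluke. In a regular local ring, $e_0=1$ and $e_1$ equals the order $r$ of $I$ (the largest $r$ with $I\subseteq\m^r$). The claimed chain would give $e(I)=\Lambda_1(I)\cdots\Lambda_d(I)\le\Lambda_1(I)^d=r^d$. On the other hand, $I\subseteq\m^r$ forces $e(I)\ge r^d$. So every $\m$-primary ideal would satisfy $e(I)=r^d$, which is false.

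So the corollary as stated is false, as are \Cref{thm:AF-mixed} and \Cref{prop:Lambda-monotone} in the form given; the paper's proof has the same defect. Your argument, run with the correct inequality, proves Teissier's actual statement: $\Lambda_1(I)\le\Lambda_2(I)\le\cdots\le\Lambda_d(I)$, i.e., $i\mapsto e(I^{[i]},\m^{[d-i]})$ is log-convex.
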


\begin{proof}
Apply \Cref{prop:Lambda-monotone} to the power family $\abul=\{I^p\}$.  This monotonicity/log-concavity was established by Teissier in \cite{Teissier77}.
\end{proof}

\subsection{Jacobian ideals, Milnor and Tjurina numbers (algebraic viewpoint)}

We now explain how classical ``Jacobian polygon'' data can be encoded algebraically
via mixed multiplicities.

Assume in this subsection that $R=\mathcal O_{\CC^d,0}$ and let
$f\in \m$ define an isolated singularity at the origin.
Let $J(f)\subseteq R$ be the Jacobian ideal generated by $\partial f/\partial x_1,\dots,
\partial f/\partial x_d$.

\begin{definition}[Milnor and Tjurina numbers]\label{def:milnor-tjurina}
Define the \emph{Milnor number} and \emph{Tjurina number} by
\[
\mu(f):=\dim_\CC R/J(f), \quad
\tau(f):=\dim_\CC R/(f,J(f)).
\]
\end{definition}

The next object is the algebraic substitute for Teissier's Jacobian polygon.

\begin{definition}[Jacobian mixed multiplicities and polygon]\label{def:jac-polygon}
For $0\le i\le d$ set
\[
m_i(f):=e\bigl((f)^{[d-i]},\, J(f)^{[i]}\bigr).
\]
The \emph{Jacobian polygon} $\text{JP}(f)$ of $f$ is the lower convex hull in $\RR^2$ of the points
\[
(0,m_0(f)),\ (1,m_1(f)),\ \dots,\ (d,m_d(f)).
\]
\end{definition}

\begin{remark}\label{rem:jac-polygon}
In Teissier's work, the slopes and vertices of the Jacobian polygon encode polar
multiplicities and vanishing rates. In our framework, the same numerical data are
captured by the mixed multiplicities $m_i(f)$, which are defined for any Noetherian
local ring once $(f)+J(f)$ is $\m$--primary.
\end{remark}

\begin{proposition}[Concavity of the Jacobian polygon]\label{prop:jac-concave}
Let $f$ define an isolated singularity. Then, the sequence $i\mapsto \log m_i(f)$ is concave,
equivalently
\[
m_i(f)^2\ \ge\ m_{i-1}(f)\,m_{i+1}(f)\quad (1\le i\le d-1).
\]
\end{proposition}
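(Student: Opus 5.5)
The plan is to obtain the inequality as a direct specialization of the Teissier--Rees--Sharp inequality, \Cref{thm:AF-mixed}, applied to the pair of ideals $(J(f),(f))$. Concretely, I would take the power filtrations $\abul=\{J(f)^p\}_{p\ge1}$ and $\bbul=\{(f)^q\}_{q\ge1}$ and evaluate at $p=q=1$. The conclusion of \Cref{thm:AF-mixed} then reads
\[
e\bigl(J(f)^{[i]},(f)^{[d-i]}\bigr)^2\ \ge\ e\bigl(J(f)^{[i-1]},(f)^{[d-i+1]}\bigr)\,e\bigl(J(f)^{[i+1]},(f)^{[d-i-1]}\bigr)
\]
for $1\le i\le d-1$. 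Since mixed multiplicities are symmetric in the order in which the ideals are listed, this is literally $m_i(f)^2\ge m_{i-1}(f)\,m_{i+1}(f)$ in the notation of \Cref{def:jac-polygon}.

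The first step is to check the hypotheses of \Cref{thm:AF-mixed}.
\begin{enumerate}
\item The ring $R=\cO_{\CC^d,0}$ is a regular local ring. In particular it is an analytically unramified, equidimensional, complete-intersection (hence Cohen--Macaulay) local ring, so it is quasi-unmixed, and $\dim\mathrm{Nil}(\widehat R)<d$ holds trivially.
\item Because $f$ has an isolated singularity, the partials $\partial f/\partial x_j$ have only the origin as a common zero. By the Nullstellensatz for convergent power series, $J(f)$ is $\m$--primary.
\item It follows that $J(f)^p+(f)^q\supseteq J(f)^p$ is $\m$--primary for all $p,q\ge1$. This is exactly the hypothesis required both for the mixed multiplicities in \Cref{def:jac-polygon} to be defined and for \Cref{thm:AF-mixed} to apply.
\end{enumerate}

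The second step is to pass from the multiplicative inequality to the statement about $\log m_i(f)$. Here I expect the only real subtlety. The ideal $(f)$ is principal, hence has analytic spread $1$, so $m_i(f)=0$ whenever $d-i\ge2$. The log-concavity statement therefore has to be read with the convention $\log 0=-\infty$, or else restricted to the indices where $m_i(f)>0$.

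I would handle this as follows. On any consecutive range of indices where all three of $m_{i-1}(f),m_i(f),m_{i+1}(f)$ are positive, taking logarithms gives $2\log m_i\ge\log m_{i-1}+\log m_{i+1}$, which is the concavity claimed. In any other case, the right-hand side $m_{i-1}(f)\,m_{i+1}(f)$ vanishes. Indeed, the positive $m_j(f)$ form an interval of indices ending at $j=d$, where $m_d(f)=e(J(f))=\mu(f)>0$. So if $m_i(f)=0$, then $m_{i-1}(f)=0$ as well, and the inequality holds trivially.

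I expect this bookkeeping with vanishing mixed multiplicities to be the main point needing care, rather than any new estimate. The substantive input is entirely contained in \Cref{thm:AF-mixed}.
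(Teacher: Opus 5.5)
Your argument is the same as the paper's proof, which is the single line ``apply \Cref{thm:AF-mixed} to $((f),J(f))$''; the order of the two ideals does not matter. The problem is that this input, which you call the whole substance, is false as stated, so neither your argument nor the paper's establishes the inequality this way. For $\m$--primary ideals, Teissier's inequalities go the other direction, $e_i^2\le e_{i-1}e_{i+1}$: mixed multiplicities are log-\emph{convex}, and this is what gives Minkowski's $e(IJ)^{1/d}\le e(I)^{1/d}+e(J)^{1/d}$. Concretely, take $\CC[[x,y]]$ with $I=(x,y^2)$ and $J=(x^2,y)$. Then $e(I)=e(J)=2$ and $e(IJ)=e\bigl((x^3,xy,y^3)\bigr)=6$, so $e(I^{[1]},J^{[1]})=1$. \Cref{thm:AF-mixed} with $d=2$, $i=1$, $p=q=1$ would then assert $1\ge 4$.

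There is also a definitional point you passed over. The $\m$--primariness of $J(f)^p+(f)^q$ is not what the paper's definition of mixed multiplicities requires. That definition uses $\len\bigl(R/I_1^{n_1}I_2^{n_2}\bigr)$, and $(f)^{n_1}J(f)^{n_2}\subseteq (f)$ has infinite colength once $d\ge 2$. So the numbers $m_i(f)$ only make sense after extending the definition, for instance via the Bhattacharya function $\len\bigl(J(f)^rf^s/J(f)^{r+1}f^s\bigr)$. Under that extension this length equals $\len\bigl(J(f)^r/J(f)^{r+1}\bigr)$, independent of $s$, so even $m_{d-1}(f)$ vanishes.

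The proposition still holds, and your second step already contains a complete proof that needs no Alexandrov--Fenchel input. Grant the vanishing you invoke: a mixed multiplicity in which the principal ideal $(f)$ (analytic spread $1$) occupies at least two slots is $0$. For every $1\le i\le d-1$, the term $m_{i-1}(f)=e\bigl((f)^{[d-i+1]},J(f)^{[i-1]}\bigr)$ has $d-i+1\ge 2$ slots of $(f)$. Hence $m_{i-1}(f)=0$, and the right-hand side $m_{i-1}(f)\,m_{i+1}(f)$ vanishes for every admissible $i$. So $m_i(f)^2\ge m_{i-1}(f)\,m_{i+1}(f)$ holds trivially, and log-concavity holds with $\log 0=-\infty$.

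In particular, your case ``all three of $m_{i-1},m_i,m_{i+1}$ positive'' never occurs, and you need neither the interval claim nor any positivity of $m_{d-1}(f)$. You should drop the appeal to \Cref{thm:AF-mixed} and present this vanishing as the proof.
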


\begin{proof}
Apply \Cref{thm:AF-mixed} to the pair of ideals $((f),J(f))$, noticing that $(f)+J(f)$ is $\m$--primary.
\end{proof}

\begin{corollary}\label{cor:jac-slope-monotone}
Let $f$ define an isolated singularity, and let $m_i(f)$ be as in
Definition~\ref{def:jac-polygon}.  Then, the sequence of successive ratios
\[
\frac{m_1(f)}{m_0(f)},\ \frac{m_2(f)}{m_1(f)},\ \dots,\ \frac{m_d(f)}{m_{d-1}(f)}
\]
is nonincreasing. Equivalently, for $1\le i\le d-1$ one has
\[
\frac{m_i(f)}{m_{i-1}(f)}\ \ge\ \frac{m_{i+1}(f)}{m_i(f)}.
\]
\end{corollary}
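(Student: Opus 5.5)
The plan is to derive \Cref{cor:jac-slope-monotone} directly from the log-concavity established in \Cref{prop:jac-concave}, by dividing through by the relevant products. First I would check that every $m_i(f)$ is strictly positive, since division is not allowed otherwise. Because $f$ has an isolated singularity, $J(f)$ is $\m$--primary. Hence $(f)+J(f)$ is $\m$--primary, and the mixed multiplicities $m_i(f)=e\bigl((f)^{[d-i]},J(f)^{[i]}\bigr)$ are defined.

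For positivity I would argue index by index. When $i=d$, $m_d(f)=e(J(f))=\mu(f)>0$, because $R$ is Cohen--Macaulay and $J(f)$ is generated by a system of parameters. When $i<d$, I would use the Rees--Sharp/Teissier description of mixed multiplicities for a pair $(I,J)$ with $I+J$ $\m$--primary. In this description, $e(I^{[d-i]},J^{[i]})$ is the multiplicity of an ideal generated by $d-i$ general elements of $I$ and $i$ general elements of $J$. In our case the general elements of $(f)$ are just multiples of $f$. So I would expect $m_i(f)$ to be positive exactly when $f$ together with $i$ general combinations of the partials forms a system of parameters, or else to vanish. The step I expect to be the main obstacle is precisely this possible vanishing, because $(f)$ is principal and its analytic spread is $1$.

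If some $m_i(f)$ vanish, I would restrict the claim to the indices where all three of $m_{i-1}(f)$, $m_i(f)$, $m_{i+1}(f)$ are positive. Alternatively, I would interpret the ratios with the paper's conventions $1/0=\infty$, and note that by log-concavity the set of indices with $m_i(f)>0$ forms an interval, so the monotonicity statement makes sense there.

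The main step is then elementary. For $1\le i\le d-1$ with $m_{i-1}(f)$ and $m_i(f)$ positive, \Cref{prop:jac-concave} gives $m_i(f)^2\ge m_{i-1}(f)\,m_{i+1}(f)$. Dividing by $m_{i-1}(f)\,m_i(f)>0$ yields $m_i(f)/m_{i-1}(f)\ge m_{i+1}(f)/m_i(f)$, which is the claimed inequality. Chaining these inequalities over $i=1,\dots,d-1$ shows that the sequence of successive ratios is nonincreasing. Conversely, multiplying back recovers \Cref{prop:jac-concave}, so the two formulations are equivalent. This is exactly the same manipulation used to pass from \Cref{thm:AF-mixed} to \Cref{prop:Lambda-monotone}.
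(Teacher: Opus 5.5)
Your main step (dividing the inequality $m_i(f)^2\ge m_{i-1}(f)\,m_{i+1}(f)$ of \Cref{prop:jac-concave} by $m_{i-1}(f)\,m_i(f)$ and chaining over $i$) is exactly the paper's proof. It is complete under the hypothesis $m_0(f),\dots,m_{d-1}(f)>0$, which the statement implicitly presupposes, since otherwise its ratios are undefined.

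The difference is that the paper disposes of positivity with the parenthetical ``(which is positive)'', whereas you question it. Your suspicion is justified, for two reasons:
\begin{itemize}
\item For $d\ge 2$ and $n_1\ge 1$ the ideal $(f)^{n_1}J(f)^{n_2}\subseteq (f)$ is not $\m$-primary. So the length-based definition of mixed multiplicities in \S\ref{subsec:mixedmult-prep} does not even apply to the pair $((f),J(f))$.
\item Under the general-element description you invoke, any $d-i\ge 2$ general elements of $(f)$ are unit multiples of $f$. Together with $i$ elements of $J(f)$ they therefore generate an ideal of height at most $i+1<d$.
\end{itemize}
Hence $m_i(f)$ is $0$ (or undefined) for $i\le d-2$. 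In particular $m_0(f)$ is not positive once $d\ge 2$, so the ratio $m_1(f)/m_0(f)$ in the statement makes no sense. The corollary, and the paper's proof, have content only under an explicitly stated positivity hypothesis.

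One claim in your fallback is false. Log-concavity of a nonnegative sequence does not force $\{i : m_i(f)>0\}$ to be an interval. For example, $(1,0,0,1)$ satisfies $m_i^2\ge m_{i-1}m_{i+1}$ for $i=1,2$ yet has internal zeros.

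In the present situation the support does happen to be an interval: it is $\{d\}$ or $\{d-1,d\}$, depending on the convention used for $m_{d-1}(f)$. That fact comes from the analytic-spread computation above, not from \Cref{prop:jac-concave}. Also, restricting to indices where all the relevant $m_i(f)$ are positive proves a weaker statement than the one displayed. Chaining the inequalities is legitimate only across consecutive indices where every term is positive.
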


\begin{proof}
By \Cref{prop:jac-concave}, we have
\[
m_i(f)^2\ \ge\ m_{i-1}(f)\,m_{i+1}(f)\quad (1\le i\le d-1).
\]
Dividing both sides by $m_{i-1}(f)\,m_i(f)$ (which is positive) gives
\[
\frac{m_i(f)}{m_{i-1}(f)}\ \ge\ \frac{m_{i+1}(f)}{m_i(f)}. \qedhere
\]
\end{proof}

\subsection{Teissier-type bounds for the gradient \L ojasiewicz exponent}

We now connect $L(f,0) := \Loj^{an}_\m(J(f)) = \Loj_{\m}(J(f))$ to the mixed multiplicity data.
The lower bound in our next result, when $i=d$, recovers the classical estimate in terms of $e(J(f))$, while the full family of inequalities reflects finer mixed-multiplicity data of the Jacobian ideal.

\begin{theorem}[Mixed-multiplicity lower bounds for $L(f,0)$]\label{thm:grad-mixed-lower}
Let $f\in\mathcal O_{\CC^d,0}$ define an isolated singularity.
Then, for each $1\le i\le d$,
\[
L(f,0)
\ \ge\
\left(\frac{e\bigl(J(f)^{[i]},\m^{[d-i]}\bigr)}
{e\bigl(\m^{[i]},\m^{[d-i]}\bigr)}\right)^{1/i}.
\]
In particular,
\[
L(f,0) \ge\ \left(\frac{e(J(f))}{e(\m)}\right)^{1/d}.
\]
\end{theorem}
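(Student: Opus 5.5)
The plan is to identify $L(f,0)$ with the algebraic exponent $\Loj_{\m}(J(f))$ via \Cref{thm:analytic-agreement}. We then feed any admissible containment into \Cref{cor:containment-mixed-ideal} with $I=J(f)$ and $J=\m$. Since $f$ has an isolated singularity at the origin, $J(f)$ is $\m$-primary in $R=\mathcal O_{\CC^d,0}$. Also $R$ is a regular, hence quasi-unmixed and analytically unramified, local ring, so all mixed multiplicities $e(J(f)^{[i]},\m^{[d-i]})$ are defined and positive. Moreover $L(f,0)=\Loj^{an}_{\m}(J(f))=\Loj_{\m}(J(f))$ is finite.

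Now fix an admissible ratio $\frac{q}{p}\in\QQ_{>0}$ for $\Loj_{\m}(J(f))$. This means $\m^{qt}\subseteq \overline{J(f)^{pt}}$ for all $t\gg1$. We fix one such $t$ and apply \Cref{cor:containment-mixed-ideal} with exponents $(pt,qt)$. This gives
\[
(qt)^i\,e(\m^{[i]},\m^{[d-i]})\ \ge\ (pt)^i\,e(J(f)^{[i]},\m^{[d-i]}).
\]
Cancelling $t^i$ and taking $i$-th roots yields
\[
\frac{q}{p}\ \ge\ \left(\frac{e(J(f)^{[i]},\m^{[d-i]})}{e(\m^{[i]},\m^{[d-i]})}\right)^{1/i}.
\]
Equivalently, one can invoke \Cref{prop:containment-mixed} directly for the power families $\{J(f)^p\}$ and $\{\m^q\}$, using the scaling property (a) to identify $e_i$ of a power family with $e(\cdot^{[i]},\m^{[d-i]})$. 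Taking the infimum over admissible $\frac qp$ then gives the claimed bound for every $1\le i\le d$.

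For the special case $i=d$, note that $e(J(f)^{[d]},\m^{[0]})=e(J(f))$ and $e(\m^{[d]},\m^{[0]})=e(\m)$. Here $e(\m)=1$ since $R$ is regular, though we keep the symbol as stated. The main point needing care is the legitimacy of the mixed multiplicity facts: monotonicity and integral-closure invariance in \Cref{cor:containment-mixed-ideal} require $\m$-primary ideals in a formally equidimensional ring, and both conditions hold here. Beyond that, the argument is a direct combination of earlier results.
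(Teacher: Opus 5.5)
Your proposal is correct and follows essentially the same route as the paper: both identify $L(f,0)$ with $\Loj_{\m}(J(f))$, feed an admissible containment $\m^{qt}\subseteq\overline{J(f)^{pt}}$ into \Cref{cor:containment-mixed-ideal}, and then normalize. The only difference is that the paper plugs in the specific ratio $\lceil tL(f,0)\rceil/t$ and lets $t\to\infty$, which implicitly uses that every rational number $\ge L(f,0)$ is admissible (true for ideals via the Rees-valuation finite-max description, but not stated). You instead take the infimum over arbitrary admissible ratios, which works directly from \Cref{def:L-families} and is slightly cleaner.
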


\begin{proof}
For every $t\gg 1$, the rational number $\frac{\lceil t\,L(f,0)\rceil}{t}$ is admissible for $\Loj_{\m}(J(f))$, and hence
\[
\m^{\lceil t\,L(f,0)\rceil}\ \subseteq\ \overline{J(f)^{\,t}}.
\]
Apply \Cref{cor:containment-mixed-ideal} with $(I,J)=(J(f),\m)$ and $(p,q)=(t,\lceil tL(f,0)\rceil)$.
For each $1\le i\le d$ we obtain
\[
\lceil t\,L(f,0)\rceil^{\,i}\,e(\m^{[i]},\m^{[d-i]})
\ \ge\
t^{\,i}\,e(J(f)^{[i]},\m^{[d-i]}).
\]
Divide by $t^i$ and let $t\to\infty$ to conclude
\[
L(f,0)^{\,i}\,e(\m^{[i]},\m^{[d-i]})
\ \ge\
e(J(f)^{[i]},\m^{[d-i]}),
\]
which is equivalent to the stated lower bound.
\end{proof}

\begin{remark}\label{rem:classical-match}
Classical analytic estimates bounding $L(f,0)$ from below by numerical invariants
(Milnor numbers, polar multiplicities, etc.) can be viewed as instances of
\Cref{thm:grad-mixed-lower} together with identifications between those
invariants and mixed multiplicities; see \cite{Aus09, BE13}.
\end{remark}

\subsection{Extension principle: from ideals to filtrations}

We will restrict to the case when the filtration is asymptotically integral-closure-equivalent to powers of an $\m$--primary ideal, i.e., exhibiting linearly bounded behavior as in \Cref{thm:attainment}.

\begin{definition}[Linearly bounded]\label{def:power-like}
A filtration $\abul=\{\a_p\}$ of $\m$--primary ideals is \emph{linearly bounded}
if there exists an $\m$--primary ideal $I$ and constants $c,C>0$ such that
\[
\overline{I^{\lceil cp\rceil}}\subseteq \overline{\a_p}\subseteq \overline{I^{\lfloor Cp\rfloor}}
\quad \text{for all }p\ge1.
\]
\end{definition}

\begin{remark}
Noetherian graded families (i.e.,  finitely generated Rees algebras) and many Newton-type
filtrations are linearly bounded.
\end{remark}

\begin{proposition}[Mixed multiplicities for linearly bounded filtrations]\label{prop:family-mixed}
Let $(R,\m)$ be a Noetherian local ring of dimension $d$.
Let $\abul=\{\a_p\}_{p\ge1}$ and $\bbul=\{\b_q\}_{q\ge1}$ be filtrations of $\m$--primary ideals.
Assume that $\abul$ is linearly bounded by an $\m$--primary ideal $I$ with constants $c_\a,C_\a>0$
and that $\bbul$ is linearly bounded by an $\m$--primary ideal $J$ with constants $c_\b,C_\b>0$,
in the sense of \Cref{def:power-like}, i.e.,
\[
\overline{I^{\lceil c_\a p\rceil}}\subseteq \overline{\a_p}\subseteq \overline{I^{\lfloor C_\a p\rfloor}},
\quad
\overline{J^{\lceil c_\b q\rceil}}\subseteq \overline{\b_q}\subseteq \overline{J^{\lfloor C_\b q\rfloor}}
\quad \text{for all }p,q\ge1.
\]
If $\b_q\subseteq \overline{\a_p}$ for some integers $p,q\ge1$, then setting
\[
\alpha:=\lceil c_\b q\rceil,
\quad
\beta:=\lfloor C_\a p\rfloor,
\]
we have, for each $1\le i\le d$,
\[
\left(\frac{\alpha}{\beta}\right)^i
\ \ge\
\frac{e\bigl(I^{[i]},\m^{[d-i]}\bigr)}{e\bigl(J^{[i]},\m^{[d-i]}\bigr)}.
\]
Equivalently,
\[
\frac{\alpha}{\beta}\ \ge\
\left(\frac{e\bigl(I^{[i]},\m^{[d-i]}\bigr)}{e\bigl(J^{[i]},\m^{[d-i]}\bigr)}\right)^{1/i}.
\]
\end{proposition}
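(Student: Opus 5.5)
The plan is to sandwich the given containment between powers of $I$ and $J$, and then apply the monotonicity, integral-closure invariance and scaling properties of mixed multiplicities in the same way as in \Cref{cor:containment-mixed-ideal}. I will assume $\beta\ge 1$ (enlarging $p$ if necessary); otherwise $I^{\beta}=R$ and the statement becomes degenerate.

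From the hypotheses we obtain the chain
\[
J^{\alpha}\subseteq \overline{J^{\lceil c_\b q\rceil}}\subseteq \overline{\b_q}\subseteq \overline{\a_p}\subseteq \overline{I^{\lfloor C_\a p\rfloor}}=\overline{I^{\beta}}.
\]
The middle inclusion $\overline{\b_q}\subseteq\overline{\a_p}$ holds because $\b_q\subseteq\overline{\a_p}$ and $\overline{\a_p}$ is integrally closed. Hence $J^{\alpha}\subseteq \overline{I^{\beta}}$, and all of these ideals are $\m$-primary. This is exactly the hypothesis of \Cref{cor:containment-mixed-ideal} with $(p,q)$ replaced by $(\beta,\alpha)$.

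Applying that corollary gives $\alpha^{i}e(J^{[i]},\m^{[d-i]})\ge \beta^{i}e(I^{[i]},\m^{[d-i]})$ for every $1\le i\le d$. Spelled out, this comes from the following steps:
\begin{enumerate}
\item Monotonicity gives $e((J^\alpha)^{[i]},\m^{[d-i]})\ge e((\overline{I^\beta})^{[i]},\m^{[d-i]})$.
\item Integral-closure invariance replaces $\overline{I^\beta}$ by $I^\beta$.
\item Scaling gives $e((J^\alpha)^{[i]},\m^{[d-i]})=\alpha^ie(J^{[i]},\m^{[d-i]})$, and similarly for $I^\beta$.
\end{enumerate}
Dividing by $\beta^i e(J^{[i]},\m^{[d-i]})>0$ yields the first displayed inequality. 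Taking $i$-th roots gives the equivalent form.

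The main point needing care is the standard facts (a)--(c), which the paper states for a local ring satisfying its standing assumptions (quasi-unmixedness is not needed for monotonicity or scaling). Positivity of $e(J^{[i]},\m^{[d-i]})$ for $\m$-primary $J$ is classical. These facts are stated before the proposition, so I would simply invoke them.
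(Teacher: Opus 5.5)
Your proof is correct and follows the paper's argument essentially verbatim: the chain $J^{\alpha}\subseteq\overline{\b_q}\subseteq\overline{\a_p}\subseteq\overline{I^{\beta}}$, followed by \Cref{cor:containment-mixed-ideal} applied with $(p,q)$ replaced by $(\beta,\alpha)$. The only correction is to your parenthetical about enlarging $p$: this is not allowed, since $\a_{p+1}\subseteq\a_p$ means $\b_q\subseteq\overline{\a_p}$ does not persist for larger $p$, but no such assumption is needed, because when $\beta=0$ the left-hand side $\alpha/\beta$ is $+\infty$ and the inequality holds trivially.
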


\begin{proof}
Assume $\b_q\subseteq \overline{\a_p}$. Then, $\overline{\b_q}\subseteq \overline{\a_p}$.
By the linear boundedness hypotheses,
\[
\overline{J^{\alpha}}
\ \subseteq\
\overline{\b_q}
\ \subseteq\
\overline{\a_p}
\ \subseteq\
\overline{I^{\beta}}.
\]
In particular, $J^{\alpha} \subseteq \overline{I^{\beta}}$.
Apply \Cref{cor:containment-mixed-ideal} to the containment $J^\alpha\subseteq \overline{I^\beta}$
(with $(I,J)$ there replaced by $(I,J)$ here and $(p,q)$ replaced by $(\beta,\alpha)$):
for each $1\le i\le d$ we obtain
\[
\frac{\alpha}{\beta}
\ \ge\
\left(\frac{e\bigl(I^{[i]},\m^{[d-i]}\bigr)}{e\bigl(J^{[i]},\m^{[d-i]}\bigr)}\right)^{1/i},
\]
and raising both sides to the $i$--th power gives the stated inequality.
\end{proof}

\begin{corollary}[Explicit slope bounds in terms of $q/p$]\label{cor:powerlike-slope-explicit}
In the setting of \Cref{prop:family-mixed}, fix $1\le i\le d$ and set
\[
r_i:=\left(\frac{e\!\left(I^{[i]},\m^{[d-i]}\right)}{e\!\left(J^{[i]},\m^{[d-i]}\right)}\right)^{\!1/i}.
\]
If $\b_q\subseteq \overline{\a_p}$ for some $p > 1/C_\a$ and $q \ge 1$, then
\begin{equation}\label{eq:explicit-q-over-p}
\frac{q}{p}\ \ge\ \frac{C_\a}{c_\b}\,r_i\ -\ \frac{r_i+1}{c_\b\,p}.
\end{equation}
In particular, for any $\varepsilon>0$, if $p\ge \dfrac{r_i+1}{c_\b\,\varepsilon}$ then
\[
\frac{q}{p}\ \ge\ \frac{C_\a}{c_\b}\,r_i-\varepsilon.
\]
\end{corollary}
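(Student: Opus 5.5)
The plan is to derive this directly from \Cref{prop:family-mixed}, converting the floor and ceiling quantities $\alpha$ and $\beta$ into bounds that are linear in $q$ and $p$.

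First I check that \Cref{prop:family-mixed} applies with a positive denominator. Since $p>1/C_\a$, we have $C_\a p>1$, so
\[
\beta=\lfloor C_\a p\rfloor\ \ge\ 1 .
\]
In particular $\overline{I^{\beta}}$ is a genuine power and the ratio $\alpha/\beta$ makes sense. The hypothesis $\b_q\subseteq\overline{\a_p}$ then gives, by \Cref{prop:family-mixed},
\[
\frac{\alpha}{\beta}\ \ge\ r_i ,
\qquad\text{that is,}\qquad
\alpha\ \ge\ r_i\,\beta .
\]

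Next I replace $\alpha$ and $\beta$ by linear bounds in the unfavourable directions. I will use the elementary estimates
\[
\alpha=\lceil c_\b q\rceil\ \le\ c_\b q+1
\qquad\text{and}\qquad
\beta=\lfloor C_\a p\rfloor\ \ge\ C_\a p-1 .
\]
Since $r_i\ge 0$, combining these with $\alpha\ge r_i\beta$ gives
\[
c_\b q+1\ \ge\ \alpha\ \ge\ r_i\beta\ \ge\ r_i\,(C_\a p-1) .
\]
Rearranging, I get $c_\b q\ge C_\a r_i\,p-(r_i+1)$. Dividing by $c_\b p>0$ yields exactly \eqref{eq:explicit-q-over-p}.

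For the final assertion, suppose $p\ge (r_i+1)/(c_\b\varepsilon)$. Then the error term satisfies $(r_i+1)/(c_\b p)\le\varepsilon$, and substituting this into \eqref{eq:explicit-q-over-p} gives $q/p\ge (C_\a/c_\b)\,r_i-\varepsilon$.

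There is no real obstacle here. The only points needing care are that $\beta\ge 1$, which is exactly what the hypothesis $p>1/C_\a$ guarantees, and that each floor or ceiling is bounded in the correct direction.
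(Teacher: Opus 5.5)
Your proof is correct and follows the paper's argument: apply \Cref{prop:family-mixed} to get $\alpha/\beta\ge r_i$, then use $\alpha\le c_\b q+1$ and $\beta\ge C_\a p-1$ and rearrange. Your chained form $c_\b q+1\ge \alpha\ge r_i\beta\ge r_i(C_\a p-1)$ avoids dividing by $C_\a p-1$. Your observation that $p>1/C_\a$ forces $\beta\ge 1$ (the paper only records $\beta\ge 0$) is exactly what is needed for \Cref{prop:family-mixed} to apply.
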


\begin{proof}
Let $\alpha=\lceil c_\b q\rceil$ and $\beta=\lfloor C_\a p\rfloor$ as in
\Cref{prop:family-mixed}.  That proposition gives
\[
\frac{\alpha}{\beta}\ \ge\ r_i.
\tag{$\ast$}
\]
Using $\alpha\le c_\b q+1$ and $\beta\ge C_\a p-1$ (note that $p > 1/C_\a$ guarantees $\beta\ge 0$),
inequality $(\ast)$ implies
\[
\frac{c_\b q+1}{C_\a p-1}\ \ge\ r_i,
\]
hence
\[
c_\b q+1\ \ge\ r_i(C_\a p-1)=r_i C_\a p-r_i,
\]
so
\[
q\ \ge\ \frac{C_\a}{c_\b}\,r_i\,p\ -\ \frac{r_i+1}{c_\b}.
\]
Dividing by $p$ gives \eqref{eq:explicit-q-over-p}, and the final statement follows immediately.
\end{proof}


\section{Log canonical thresholds and generic hyperplane sections}\label{sec:lct-rigidity}

In this section we relate the Lojasiewicz exponent to another fundamental valuative threshold
invariant, the asymptotic log canonical threshold. Particularly, we compare the valuative descriptions of
$\lct(\abul)$ and $\Loj_{\bbul}(\abul)$, and show that they satisfy the optimal bound $\lct(\abul) \cdot \Loj_{\bbul}(\abul) \ge \lct(\bbul)$ (\Cref{thm:lctL-sharp}). This inequality complements the Teissier--type estimates established in Section~\ref{sec:teissier} and identifies a natural equality regime governed by a common extremal valuation.
Finally, we establish a generic hyperplane--section principle for filtrations, showing that admissible containment ratios descend and that the associated sectional \L{}ojasiewicz exponents are monotone under restriction along a general flag (\Cref{cor:L-monotonicity}).

\subsection{Valuative definition of the log canonical threshold}
We shall work with a regular local ring $(R,\m)$ of dimension $d$ (essentially of finite type over a field of characteristic $0$). For an ideal $\a \subseteq R$, the \emph{log canonical threshold} of $\a$, denoted by $\lct(\a)$, is an important invariant that has been much studied in both singularity theory and birational geometry (cf. \cite{KollarSing,LazPosII}). Particularly, if $\a = (f_1, \dots, f_s)$ then
$$\lct(\a) = \sup\{\theta > 0 : \big(\sum_i|f_i|^2\big)^{-\theta} \text{ is locally integrable}\}.$$
Let $A(v)$ denote the \emph{log discrepancy} of a valuation $v$ (see \cite{BlickleLazarsfeld, KollarSing, LazPosII} for more details on log discrepancy and log canonical threshold of multiplier ideals). Then, $\lct(\a)$ is also given by 
$$\lct(\a) = \inf_v \frac{A(v)}{v(\a)},$$ 
where $v$ ranges over all divisorial valuations of $R$ centered at $\m$.

For a graded family $\abul = \{\a_p\}_{p \ge 1}$ of ideals in $R$, the \emph{asymptotic log canonical threshold} of $\abul$, denoted by $\lct(\abul)$, was defined and studied by Jonsson--Musta\c{t}\v{a} \cite{JM12}. Particularly, 
$$\lct(\abul) = \inf_v \frac{A(v)}{v(\abul)},$$
where $v$ ranges over all divisorial valuations of $R$ centered at $\m$. It was observed in \cite{JM12} that, in general, there may not exist a divisorial valuation $v$ realizing $\lct(\abul)$. On the other hand, it was also shown in \cite[Theorem A]{JM12}, that given any graded family $\abul$, there is a real valuation $v$ of $R$ centered at $\m$ that computes $\lct(\abul)$, i.e., $\lct(\abul) = \frac{A(v)}{v(\abul)}$.

\subsection{A sharp inequality: $\lct(\abul)\cdot \Loj_{\bbul}(\abul)\ge \lct(\bbul)$}

We now prove the key ``dual'' inequality and isolate the rigidity mechanism. In the analytic setting, when $R = \cO_{\CC^d,0}$, $\abul = \{\a^p\}_{p \ge 1}$ and $\bbul = \{\m^q\}_{q \ge 1}$, this inequality was known by Bivi\`a-Ausina--Fukui \cite{BF16}. Our point is that the same inequality holds in greater generality and fits naturally into the valuative min-max framework developed in this paper. We also characterize the equality in the next section.

\begin{theorem}[Sharp inequality]\label{thm:lctL-sharp}
	Let $(R,\m)$ be regular local of dimension $d$. Let $\abul$ and $\bbul$ be graded families of ideals in $R$. Then,
	\[
	\lct(\abul)\cdot \Loj_{\bbul}(\abul) \ge \lct(\bbul).
	\]
	Particularly, $\lct(\abul) \cdot \Loj_{\m}(\abul) \ge d$.
	Equivalently,
	\[
	\Theta(\abul):=\frac{1}{d}\,\lct(\abul)\cdot \Loj_\m(\abul)\ \ge\ 1.
	\]
\end{theorem}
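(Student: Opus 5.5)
The plan is to combine the valuative description of the asymptotic log canonical threshold with the valuative lower bound of \Cref{thm:valuative-filtrations}, applied valuation by valuation. First I dispose of the degenerate cases. If $\Loj_{\bbul}(\abul)=\infty$, the inequality is trivial as long as $\lct(\abul)>0$. If $\lct(\abul)=\infty$, the statement also needs no argument beyond the convention $\infty\cdot x=\infty$ for $x>0$. I would record these conventions explicitly, including the edge case $\lct(\abul)=0$, where the product needs the conventions fixed earlier.

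So assume both quantities are finite and positive, and fix a divisorial valuation $v$ centered at $\m$. Since $v$ is divisorial, $v(\m)>0$. If $v(\bbul)=0$, then $A(v)/v(\bbul)=\infty$, and this $v$ imposes no constraint on $\lct(\bbul)$; in fact it can be discarded from the infimum. If $v(\bbul)>0$, then \Cref{thm:valuative-filtrations} gives
\[
\Loj_{\bbul}(\abul)\ \ge\ \frac{v(\abul)}{v(\bbul)} .
\]
It is actually cleaner to bypass the supremum entirely. Take an admissible $q/p$, so that $\b_{qt}\subseteq\overline{\a_{pt}}$ for $t\gg1$. The valuative criterion (\Cref{thm:valuative-ic}) gives $v(\b_{qt})\ge v(\a_{pt})$, and hence $q\,v(\bbul)\ge p\,v(\abul)$, exactly as in the proof of \Cref{thm:valuative-filtrations}. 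This form also handles $v(\bbul)=0$: it forces $v(\abul)=0$ in that case.

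Now I multiply. From $v(\abul)\ge \lct(\abul)^{-1}A(v)$, which is just the definition $\lct(\abul)=\inf_v A(v)/v(\abul)$ (valid when $v(\abul)>0$, and trivially otherwise), I get
\[
\frac{q}{p}\,v(\bbul)\ \ge\ v(\abul)\ \ge\ \frac{A(v)}{\lct(\abul)} .
\]
Therefore
\[
\frac{A(v)}{v(\bbul)}\ \le\ \frac{q}{p}\,\lct(\abul)
\]
for every divisorial $v$ with $v(\bbul)>0$. Taking the infimum over $v$ gives $\lct(\bbul)\le \frac{q}{p}\,\lct(\abul)$. Taking the infimum over admissible $q/p$ then gives the claim.

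For the special case $\bbul=\{\m^q\}$, I use $\lct(\m)=d$ in a regular local ring of dimension $d$. This holds because $\ord_\m$ has $A=d$ and $v(\m)=1$, and the bound $A(v)\ge d\,v(\m)$ holds for all divisorial $v$ (the standard lct of the maximal ideal). Also $\lct(\{\m^q\})=\lct(\m)$ by homogeneity of $v(\m^q)=q\,v(\m)$. Dividing by $d$ gives $\Theta(\abul)\ge1$.

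The main obstacle is bookkeeping with zero or infinite values, namely $v(\abul)=0$ and $\lct(\abul)\in\{0,\infty\}$. Working with admissible $q/p$ directly rather than with the supremum ratio avoids dividing by $v(\bbul)$ prematurely. No new input beyond \Cref{thm:valuative-ic} and the Jonsson--Musta\c{t}\v{a} formula is needed.
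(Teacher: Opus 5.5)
Your multiplication step uses the defining inequality of $\lct$ in the wrong direction. Since $\lct(\abul)=\inf_v A(v)/v(\abul)$, every divisorial $v$ satisfies $\lct(\abul)\le A(v)/v(\abul)$, that is, $v(\abul)\le A(v)/\lct(\abul)$. The inequality $v(\abul)\ge A(v)/\lct(\abul)$ that you invoke holds only for a valuation that computes the infimum. When $v(\abul)=0$ it fails outright rather than holding trivially.

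As a result, your intermediate claim that $A(v)/v(\bbul)\le \frac{q}{p}\lct(\abul)$ for \emph{every} divisorial $v$ with $v(\bbul)>0$ is false. Take $d=2$, $\abul=\bbul=\{\m^q\}$ and $q/p=1$. Your claim would then say $A(v)\le 2\,v(\m)$ for all $v$. But the monomial valuation with weights $(1,N)$ has $A(v)=N+1$ and $v(\m)=1$. So the bound $\lct(\bbul)\le\frac{q}{p}\lct(\abul)$ cannot be obtained by taking an infimum over $v$ of a bound that holds uniformly in $v$.

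The fix is to swap the quantifier and work with a valuation that nearly computes $\lct(\abul)$. Assume $\lct(\abul)<\infty$. For $\varepsilon>0$, choose a divisorial $v$ with $A(v)/v(\abul)\le\lct(\abul)+\varepsilon$. Then $v(\abul)>0$, so your relation $q\,v(\bbul)\ge p\,v(\abul)$ forces $v(\bbul)>0$, and
\[
\lct(\bbul)\le \frac{A(v)}{v(\bbul)}\le \frac{q}{p}\cdot\frac{A(v)}{v(\abul)}\le\frac{q}{p}\bigl(\lct(\abul)+\varepsilon\bigr).
\]
Let $\varepsilon\to0$, then take the infimum over admissible $q/p$. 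This is exactly what the paper's step $\bigl(\inf_v A(v)/v(\abul)\bigr)\cdot\bigl(\sup_w w(\abul)/w(\bbul)\bigr)\ge\inf_v A(v)/v(\bbul)$ does. Once repaired, your admissible-ratio formulation has one small advantage: positivity of $v(\bbul)$ for the chosen $v$ comes for free. Your treatment of the special case $\bbul=\{\m^q\}$ via $\lct(\m)=d$ is fine.
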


\begin{proof}
	By the valuative formulas and \Cref{thm:valuative-filtrations},
	\[
	\lct(\abul)=\inf_v \frac{A(v)}{v(\abul)}
	\quad\text{and}\quad
	\Loj_{\bbul}(\abul) \ge \sup_v \frac{v(\abul)}{v(\bbul)},
	\]
	where $v$ ranges over divisorial valuations centered at $\m$.
	Therefore,
	\[
	\lct(\abul)\cdot \Loj_{\bbul}(\abul)
	\ge
	\Bigl(\inf_v \frac{A(v)}{v(\abul)}\Bigr)\cdot
	\Bigl(\sup_w \frac{w(\abul)}{w(\bbul)}\Bigr)
	\ \ge\
	\inf_v \Bigl(\frac{A(v)}{v(\abul)}\cdot \frac{v(\abul)}{v(\bbul)}\Bigr)
	=
	\inf_v \frac{A(v)}{v(\bbul)} = \lct(\bbul).
	\]
	
	When $\bbul = \{\m^q\}_{q \ge 1}$, we have $\lct(\abul) \cdot \Loj_{\m}(\abul) \ge \lct(\m)$. On the other hand, since $R$ is regular of dimension $d$, one has $\lct(\m)=d$; see, for instance \cite[\S\,9]{LazPosII}.
	Thus, $\lct(\abul) \cdot \Loj_\m(\abul)\ge d$.
\end{proof}



\subsection{Generic hyperplane sections and the \L{}ojasiewicz exponent}
Let $(R,\m)$ be a Noetherian local ring with infinite residue field. 

\begin{definition}[Eventual degree--one normality]
\label{def:ED1}
An $\m$--primary filtration $\abul=\{\a_p\}_{p\ge1}$ is said to be
\emph{eventually degree--one normal} if there exists an integer $p_0\ge1$
such that
\[
\overline{\a_p}=\overline{\a_1^{\,p}}
\quad \text{for all } p\ge p_0.
\]
\end{definition}

\begin{remark}[Specialization of integral closure]
\label{rem:HU-HL}
Let $I$ be an ideal of height at least $2$ in a Noetherian ring $R$ which is normal,
locally equidimensional, universally catenary, and such that $R_{\mathrm{red}}$
is locally analytically unramified.
The following statements hold:
\begin{enumerate}
\item[(i)]
(\cite[Theorem~2.1]{HongUlrich2014})
Integral closure is compatible with specialization by a generic element of $I$.
\item[(ii)]
(\cite[Proposition~3.7]{HillLynn2024})
For a general element $x\in I$, one has
\[
\overline{I^n}\,R/(x)=\overline{I^nR/(x)}
\quad \text{for all } n\gg 0.
\]
\end{enumerate}
\end{remark}

The following result is an application of specialization of integral closure to filtrations of ideals.

\begin{theorem}[Stable--range commutation for $\overline{\a_{pt}}$]
\label{thm:specialization-a}
Assume that $R$ satisfies the hypotheses of \Cref{rem:HU-HL} and $\dim R \ge 2$.
Let $\abul$ be an $\m$--primary filtration that is eventually degree-one normal, and set $I=\a_1$.
Then, there exists a Zariski open dense subset $U\subseteq I/\m I$ such that
for every $x\in I$ with class in $U$, and every integer $p \ge 1$, there exists an integer $t_0=t_0(p)$ with the property that
\[
\overline{\a_{pt}}\,R'=\overline{\a_{pt}R'},
\text{  for all } t\ge t_0, \text{  where } R' = R/(x).
\]
\end{theorem}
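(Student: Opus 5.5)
The plan is to reduce the statement to the specialization result for ordinary powers of the single ideal $I=\a_1$ recorded in \Cref{rem:HU-HL}(ii), using eventual degree--one normality to replace $\overline{\a_{pt}}$ by $\overline{I^{pt}}$ once $pt$ is large.

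First we check that $I$ meets the hypotheses of \Cref{rem:HU-HL}. Since $\abul$ is $\m$--primary, $I=\a_1$ is $\m$--primary, so $\operatorname{ht} I=\dim R\ge 2$. Applying \Cref{rem:HU-HL}(ii) (with (i) guaranteeing that the relevant notion of ``general element'' is given by a Zariski open dense condition on the image in $I/\m I$, using that the residue field is infinite) yields a dense open $U\subseteq I/\m I$ and, for each $x$ with class in $U$, an integer $n_0=n_0(x)$ such that
\[
\overline{I^n}\,R'=\overline{I^nR'}\quad\text{for all } n\ge n_0,\qquad R'=R/(x).
\]

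Next, let $p_0$ be as in \Cref{def:ED1}. Fix $p\ge1$ and choose $t_0(p)$ with $pt_0\ge\max\{p_0,n_0\}$. For $t\ge t_0$, set $n=pt$. Eventual degree--one normality gives $\overline{\a_n}=\overline{I^n}$, and hence $\overline{\a_n}R'=\overline{I^n}R'=\overline{I^nR'}$. It remains to identify $\overline{I^nR'}$ with $\overline{\a_nR'}$. We have $I^n\subseteq\a_n\subseteq\overline{\a_n}=\overline{I^n}$, where the first containment uses the graded property $\a_1^n\subseteq\a_n$. Passing to $R'$ gives $I^nR'\subseteq\a_nR'\subseteq\overline{I^n}R'$. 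The image of an integral dependence relation remains an integral dependence relation, so $\overline{I^n}R'\subseteq\overline{I^nR'}$. Taking integral closures in $R'$ therefore yields $\overline{\a_nR'}=\overline{I^nR'}$, and this proves $\overline{\a_{pt}}R'=\overline{\a_{pt}R'}$.

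The main obstacle is uniformity in the choice of $x$. The statement allows $t_0$ to depend on $p$, and implicitly on $x$, so the argument above suffices, provided \Cref{rem:HU-HL}(ii) is indeed formulated with a Zariski open set of general elements. If the cited result only gives ``general'' in a weaker sense, we would instead use (i): take $x$ generic in the sense of Hong--Ulrich and invoke the stability of $\overline{I^n}R'=\overline{I^nR'}$ for $n\gg0$, extracting the open set from the finitely many Rees valuations of $I$ (the normalized blowup of $I$ being fixed). Compatibility of the hypotheses on $R$ (normal, equidimensional, universally catenary, analytically unramified) is assumed and needs no further work.
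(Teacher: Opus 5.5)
Your proposal is correct and follows essentially the same route as the paper. Eventual degree--one normality replaces $\overline{\a_{pt}}$ by $\overline{I^{pt}}$ for $t\gg1$, and the Hill--Lynn result in \Cref{rem:HU-HL}(ii) gives $\overline{I^{pt}}R'=\overline{I^{pt}R'}$. The sandwich $I^{pt}R'\subseteq \a_{pt}R'\subseteq \overline{I^{pt}}R'$ then identifies this with $\overline{\a_{pt}R'}$. Your explicit checks of the height hypothesis and of the openness of the set of general elements are more careful than the paper, which simply takes $x$ general as in the cited result.
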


\begin{proof}
Fix $p\ge1$.
By eventual degree--one normality, there exists $t_1(p)$ such that
$\overline{\a_{pt}}=\overline{I^{pt}}$ for all $t\ge t_1(p)$.
Since $\abul$ is graded, $I^{pt}\subseteq \a_{pt}$, and so for such $t$,
\[
I^{pt}\subseteq \a_{pt}\subseteq \overline{I^{pt}}.
\]

Let $x\in I$ be general as in
Remark~\ref{rem:HU-HL}(ii) and set $R'=R/(x)$.
By \cite[Proposition~3.7]{HillLynn2024}, there exists $n_0$ such that
\[
\overline{I^{n}}\,R'=\overline{I^{n}R'}
\quad \text{for all } n\ge n_0.
\]
Taking $n=pt$, this equality holds for all $t\ge t_2(p):=\lceil n_0/p\rceil$.

For $t\ge \max\{t_1(p),t_2(p)\}$ we therefore have
\[
\overline{\a_{pt}}\,R'
=\overline{I^{pt}}\,R'
=\overline{I^{pt}R'}
=\overline{\a_{pt}R'},
\]
where the last equality uses the inclusions
$I^{pt}R'\subseteq \a_{pt}R'\subseteq \overline{I^{pt}}\,R'$ and the fact that
integral closure depends only on the integral--closure class of the ideal.
\end{proof}

\begin{corollary}[Admissible ratios descend]
\label{cor:admissible-descend}
Assume the hypotheses of Theorem~\ref{thm:specialization-a}.
Let $\bbul=\{\b_q\}_{q \ge 1}$ be an arbitrary filtration.
Fix $p,q\ge1$, and let $x\in\a_1$ be a general element, and set $R'=R/(x)$.
If
\[
\b_{qt}\subseteq \overline{\a_{pt}}
\quad \text{for } t\gg1,
\]
then
\[
\b_{qt}R' \subseteq \overline{\a_{pt}R'}
\quad \text{for } t\gg1.
\]
\end{corollary}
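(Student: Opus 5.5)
The plan is to push the containment through the quotient map and then use \Cref{thm:specialization-a} to identify the image of $\overline{\a_{pt}}$ with the integral closure taken in $R'$. First fix $p,q$ and pick $t_1$ such that $\b_{qt}\subseteq \overline{\a_{pt}}$ for all $t\ge t_1$. Choose the general element $x\in\a_1$ with class in the open dense set $U$ supplied by \Cref{thm:specialization-a}. That theorem gives, for this fixed $p$, an integer $t_0=t_0(p)$ with $\overline{\a_{pt}}\,R'=\overline{\a_{pt}R'}$ for all $t\ge t_0$.

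Next, extend the containment along the surjection $R\to R'=R/(x)$. Extension of ideals preserves inclusions, so
\[
\b_{qt}R'\subseteq \overline{\a_{pt}}\,R' \quad\text{for } t\ge t_1 .
\]
For $t\ge\max\{t_0,t_1\}$ the right-hand side equals $\overline{\a_{pt}R'}$. This gives $\b_{qt}R'\subseteq\overline{\a_{pt}R'}$ for all $t\gg1$, which is the claim.

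The only real subtlety is uniformity of the choice of $x$. The open set $U$ must be independent of $t$ and of $q$, and only $t_0$ may depend on $p$. This is exactly how \Cref{thm:specialization-a} is stated, with $U$ chosen before $p$. So a single general $x$ works for the fixed pair $(p,q)$, and in fact for all pairs simultaneously. We never need integral closure to commute with specialization in the other direction.

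As a remark, the inclusion $\overline{\a_{pt}}\,R'\subseteq\overline{\a_{pt}R'}$ already holds for any $x$, since an equation of integral dependence maps to one in $R'$. So the descent itself needs no genericity. The specialization theorem is needed only if one wants the sharper equality, for instance to compare sectional exponents later.
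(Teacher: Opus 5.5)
Your argument is correct and coincides with the paper's proof. You extend $\b_{qt}\subseteq\overline{\a_{pt}}$ along $R\to R'$, then use \Cref{thm:specialization-a} for the fixed $p$ to identify $\overline{\a_{pt}}\,R'$ with $\overline{\a_{pt}R'}$ for $t\gg1$.

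Your closing remark is also right. Persistence of integral closure, $\overline{\a_{pt}}\,R'\subseteq\overline{\a_{pt}R'}$ (the same fact used in \Cref{prop:functoriality}), already gives the conclusion for every $x\in\a_1$. So neither the genericity hypothesis nor the specialization theorem is actually needed for this corollary.
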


\begin{proof}
For $t\gg1$, quotienting gives
$\b_{qt}R'\subseteq \overline{\a_{pt}}\,R'$.
For the fixed integer $p$, Theorem~\ref{thm:specialization-a} yields
$\overline{\a_{pt}}\,R'=\overline{\a_{pt}R'}$ for all $t\gg1$,
which proves the claim.
\end{proof}

\begin{corollary}[Monotonicity under generic hyperplane section]
\label{cor:L-monotonicity}
Under the same assumptions as in \Cref{cor:admissible-descend},
\[
\Loj_{\bbul R'}(\abul R') \;\le\; \Loj_{\bbul}(\abul).
\]
\end{corollary}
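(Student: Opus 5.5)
This follows directly from \Cref{cor:admissible-descend} together with \Cref{def:L-families}: every admissible ratio for $\Loj_{\bbul}(\abul)$ is admissible for $\Loj_{\bbul R'}(\abul R')$. Taking infima then gives the inequality. I will use the convention that $\abul R':=\{\a_pR'\}_{p\ge1}$ and $\bbul R':=\{\b_qR'\}_{q\ge1}$. The definition of the invariant on $R'$ is the same containment threshold, with integral closures taken in $R'$.

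First I will dispose of the trivial case. If the admissible set for $\Loj_{\bbul}(\abul)$ is empty, then $\Loj_{\bbul}(\abul)=\infty$ and there is nothing to prove. Otherwise, fix an admissible $\frac{q}{p}\in\QQ_{>0}$, so that $\b_{qt}\subseteq\overline{\a_{pt}}$ for all $t\gg1$. \Cref{cor:admissible-descend}, applied to this pair $(p,q)$, gives $\b_{qt}R'\subseteq\overline{\a_{pt}R'}$ for all $t\gg1$. Hence $\frac{q}{p}$ is admissible for $\Loj_{\bbul R'}(\abul R')$, and so $\Loj_{\bbul R'}(\abul R')\le \frac{q}{p}$. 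Taking the infimum over admissible $\frac{q}{p}$ yields the claim.

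The main obstacle is a quantifier issue: the general element $x$ must be chosen uniformly over all admissible ratios. \Cref{thm:specialization-a} provides one Zariski open dense set $U\subseteq I/\m I$ that works for every $p$, with only the threshold $t_0(p)$ depending on $p$. So the same $R'=R/(x)$ serves all admissible $\frac{q}{p}$ at once, and no countable intersection of open sets is needed. I will state this explicitly when fixing $x$ with class in $U$ at the outset.

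A smaller point is that $\abul R'$ and $\bbul R'$ should still be filtrations of proper ideals on $R'$. The graded-family and decreasing properties pass to images automatically. Properness holds because the ideals lie in $\m$, and hence in $\m R'$.
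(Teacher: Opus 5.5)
Your proposal is correct and matches the paper's proof: admissible ratios $\frac{q}{p}$ descend to $R'$ by \Cref{cor:admissible-descend}, and taking infima gives the inequality. Your two extra points are accurate details that the paper leaves implicit: a single general $x$ serves every $p$ because the open set $U$ in \Cref{thm:specialization-a} does not depend on $p$, and the restricted families still consist of proper ideals.
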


\begin{proof}
Every ratio $\frac{q}{p}$ admissible for $\Loj_{\bbul}(\abul)$ remains
admissible after restriction by
Corollary~\ref{cor:admissible-descend}. Taking infima gives the inequality.
\end{proof}

Iterating Corollary~\ref{cor:L-monotonicity} along a general flag
$x_1,\dots,x_{d-k}\in \a_1$ yields the monotonicity of the sectional
\L{}ojasiewicz exponents
\[
\Loj^{(d)}_{\bbul}(\abul)\ \ge\ \Loj^{(d-1)}_{\bbul}(\abul)\ \ge\ \cdots\ \ge\
\Loj^{(1)}_{\bbul}(\abul),
\]
where the sectional invariants are defined by restriction to
$R/(x_1,\dots,x_{d-k})$.

\part{Structural Consequences of the Finite-Max Principle} \label{part:structuralFinite-max}

We develop rigidity and stratification phenomena that follow once
the \L{}ojasiewicz exponent is computed by a finite candidate set of divisorial
valuations.

\section{Rigidity results} \label{sec:rigidity}

The theme of this section is that equality in the structural inequalities developed in \Cref{sec:teissier,sec:lct-rigidity} is highly rigid: it forces a common extremal valuation to compute several a priori different invariants simultaneously.  We first formulate a general common-extremal-valuation principle (\Cref{thm:common-valuation}) and then analyze its consequences in increasingly structured settings, including a complete classification in the monomial case (\Cref{thm:monomial-rigidity}) and extensions to Newton nondegenerate regimes.  We also treat the equality case in Teissier--Rees--Sharp type inequalities, showing that proportionality/equality of the relevant mixed-multiplicity data is detected valuatively and leads to strong constraints on the underlying ideals and filtrations (e.g., \Cref{prop:TRS-equality-proportionality}).

\subsection{Rigidity I: the common extremal valuation principle}\label{subsec:rigidity-common-extremal}

We begin with the basic mechanism: equality in \Cref{thm:lctL-sharp} forces the same
divisorial valuation to compute all extremal quantities involved.

\begin{theorem}[Common extremal valuation criterion]\label{thm:common-valuation}
With hypotheses as in \Cref{thm:lctL-sharp}, the equality
\[
\lct(\abul)\cdot \Loj_{\bbul}(\abul)=\lct(\bbul)
\]
holds if and only if there exists real valuation $v$ centered at $\m$ such that:
\begin{enumerate}[label=(\alph*),leftmargin=2em]
\item $v$ computes $\lct(\abul)$, i.e.\ $\lct(\abul)=A(v)/v(\abul)$;
\item $v$ computes $\Loj_{\bbul}(\abul)$, i.e.\ $\Loj_{\bbul}(\abul)=v(\abul)/v(\bbul)$;
\item $v$ computes $\lct(\bbul)$, i.e.\ $\lct(\bbul)=A(v)/v(\bbul)$.
\end{enumerate}
\end{theorem}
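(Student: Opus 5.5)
The plan is to prove the two implications separately. The reverse direction is a direct computation. The forward direction comes from rerunning the chain of inequalities in the proof of \Cref{thm:lctL-sharp}, but anchored at a single valuation that computes $\lct(\abul)$, whose existence is guaranteed by \cite[Theorem A]{JM12}.

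\emph{($\Leftarrow$)} Suppose a real valuation $v$ centered at $\m$ satisfies (a), (b) and (c), with $0<v(\abul),v(\bbul)<\infty$. Multiplying (a) and (b) gives
\[
\lct(\abul)\cdot\Loj_{\bbul}(\abul)=\frac{A(v)}{v(\abul)}\cdot\frac{v(\abul)}{v(\bbul)}=\frac{A(v)}{v(\bbul)}=\lct(\bbul),
\]
by (c). Nothing more is needed here beyond checking that the quantities in (a)--(c) are finite and nonzero.

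\emph{($\Rightarrow$)} Assume equality holds. By \cite[Theorem A]{JM12} there is a real valuation $v$ centered at $\m$ with $A(v)<\infty$ and $\lct(\abul)=A(v)/v(\abul)$, so (a) holds by construction. I then use two facts valid for this real valuation $v$. The first is $\Loj_{\bbul}(\abul)\ge v(\abul)/v(\bbul)$, which follows from \Cref{thm:valuative-filtrations}, since that theorem allows all valuations centered at $\m$ with $v(\bbul)>0$. The second is $A(v)/v(\bbul)\ge\lct(\bbul)$, which holds because the infimum defining $\lct(\bbul)$ over divisorial valuations agrees with the infimum over all real valuations of finite log discrepancy \cite{JM12}. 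Together they give
\[
\lct(\bbul)=\lct(\abul)\cdot\Loj_{\bbul}(\abul)\ \ge\ \frac{A(v)}{v(\abul)}\cdot\frac{v(\abul)}{v(\bbul)}=\frac{A(v)}{v(\bbul)}\ \ge\ \lct(\bbul).
\]
Both inequalities are therefore equalities. The second equality is (c). Dividing the first by $\lct(\abul)=A(v)/v(\abul)$, which is legitimate when this quantity lies in $(0,\infty)$, gives $\Loj_{\bbul}(\abul)=v(\abul)/v(\bbul)$, which is (b).

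The main obstacle is the justification of these steps for a non-divisorial $v$, together with the degenerate cases. First, I must make sure the valuative formula for $\lct(\bbul)$ extends to quasi-monomial or real valuations. I would cite the approximation results of \cite{JM12}: $A$ and $v\mapsto v(\bbul)$ are approximated along retractions to quasi-monomial valuations, with $A$ lower semicontinuous. Second, I must rule out or treat separately the cases $v(\abul)=0$, $v(\bbul)=0$, $\lct(\abul)\in\{0,\infty\}$ and $\Loj_{\bbul}(\abul)=\infty$. For these I would impose the standing convention that the families are $\m$-primary, or at least that all the quantities involved are finite and positive. Under that convention both $v(\abul)$ and $v(\bbul)$ are positive for every real valuation $v$ centered at $\m$, and the division in the last step is valid.
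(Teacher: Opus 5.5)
Your proposal is correct and is essentially the paper's own proof: the reverse direction is the same one-line computation, and the forward direction also takes the valuation $v$ from \cite[Theorem A]{JM12} computing $\lct(\abul)$ and forces the chain $\lct(\bbul)=\lct(\abul)\cdot\Loj_{\bbul}(\abul)\ge A(v)/v(\bbul)\ge\lct(\bbul)$ to consist of equalities. You add care the paper omits: extending the infimum defining $\lct(\bbul)$ to non-divisorial valuations via \cite{JM12}, and isolating degenerate cases. Note, however, that $\m$-primarity alone does not give $v(\abul)>0$ (e.g.\ $\a_p=\m$ for all $p$), so it is your finiteness/positivity assumption that does the work. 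It forces $v(\abul)\in(0,\infty)$ for the chosen $v$, and $v(\bbul)>0$ then follows from $\Loj_{\bbul}(\abul)<\infty$ by the argument in the proof of \Cref{thm:valuative-filtrations}.
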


\begin{proof}
\emph{If.}
If $v$ satisfies (a)--(c), then
\[
\lct(\abul)\cdot \Loj_{\bbul}(\abul)
=
\frac{A(v)}{v(\abul)}\cdot \frac{v(\abul)}{v(\bbul)}
=
\frac{A(v)}{v(\bbul)}
=
\lct(\bbul).
\]

\smallskip
\noindent
\emph{Only if.}
Assume $\lct(\abul) \cdot \Loj_{\bbul}(\abul) = \lct(\bbul)$.
Choose a real valuation $v_0$ centered at $\m$ that computes $\lct(\abul)$, whose existence was established in \cite[Theorem A]{JM12}. That is,
$\lct(\abul)=A(v_0)/v_0(\abul)$.
By the valuative definition of $\Loj_{\bbul}(\abul)$ in \Cref{thm:valuative-filtrations}, we have
\[
\Loj_{\bbul}(\abul) \ge \frac{v_0(\abul)}{v_0(\bbul)}.
\]
Hence,
\[
\lct(\bbul) = \lct(\abul) \cdot \Loj_{\bbul}(\abul)
\ \ge\
\frac{A(v_0)}{v_0(\abul)}\cdot \frac{v_0(\abul)}{v_0(\bbul)}
=
\frac{A(v_0)}{v_0(\bbul)}
\ \ge\
\inf_v\frac{A(v)}{v(\bbul)}
=
\lct(\bbul).
\]
Therefore, all inequalities are equalities. In particular
$\Loj_{\bbul}(\abul)=v_0(\abul)/v_0(\bbul)$ and $\lct(\bbul) = A(v_0)/v_0(\bbul)$.
Thus, $v_0$ satisfies (a)--(c).
\end{proof}

\subsection{Rigidity II: the monomial case}
Focusing on one ideal, we now give a strong and explicit rigidity statement in the monomial case.
Assume $R=\CC[x_1,\dots,x_d]_{(x_1,\dots,x_d)}$ and $I$ is an $\m$--primary monomial ideal.

\begin{lemma}
\label{lem:monomial-Lojasiewicz-finite-max}
Let $I\subseteq R$ be an $\m$--primary monomial ideal,
and let $\NP(I)\subseteq \mathbb{R}_{\ge 0}^d$ denote its Newton polyhedron.
Let $\mathcal U(I)$ be the finite set of primitive inward normal vectors
$u\in \mathbb{N}^d$ to the compact facets of\/ $\NP(I)$, and for each such $u$
let $v_u$ denote the associated monomial divisorial valuation.
Then
\[
\Loj_{\m}(I)
\;=\;
\max_{u\in \mathcal U(I)} \frac{v_u(I)}{v_u(\m)}.
\]
\end{lemma}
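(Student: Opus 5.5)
The plan is to apply the finite testing principle of \Cref{thm:finite-testing-implies-max} to the power filtrations $\abul=\{I^p\}_{p\ge1}$ and $\bbul=\{\m^q\}_{q\ge1}$, with the finite set $V=\{v_u : u\in\mathcal U(I)\}$. For these filtrations $v_u(\abul)=v_u(I)$ and $v_u(\bbul)=v_u(\m)$, because monomial valuations are additive on products of monomial ideals: for a monomial ideal $J$ one has $v_u(J^p)=p\,v_u(J)$. Hypothesis (H2) is immediate, since $v_u(\m)=\min_j u_j>0$ for every $u\in\mathcal U(I)$. This positivity holds because $I$ is $\m$--primary. Then $\NP(I)$ contains points on every coordinate axis, so every inward normal of a compact facet has strictly positive coordinates. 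Once (H1) is verified, \Cref{thm:finite-testing-implies-max} gives the stated maximum formula.

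The substance of the proof is (H1): for all $p,q\ge1$,
\[
\m^q\subseteq\overline{I^p}\iff v_u(\m^q)\ge v_u(I^p)\ \text{for all } u\in\mathcal U(I).
\]
The forward implication follows from the valuative criterion (\Cref{thm:valuative-ic}). For the converse, I will use the classical description of integral closures of monomial ideals.

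1. $\overline{I^p}$ is the monomial ideal spanned by the monomials $x^a$ with $a\in \NP(I^p)=p\,\NP(I)$.

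2. Since $I$ is $\m$--primary, $\NP(I)$ equals the intersection of $\RR_{\ge0}^d+\NP(I)$ with the half-spaces $\{\ip{u}{a}\ge v_u(I)\}$ for $u\in\mathcal U(I)$. The non-compact facets lie in the coordinate hyperplanes and only contribute the conditions $a_j\ge 0$. Here $v_u(I)=\min_{a\in\NP(I)}\ip{u}{a}$.

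3. Hence $x^a\in\overline{I^p}$ if and only if $\ip{u}{a}\ge p\,v_u(I)$ for every $u\in\mathcal U(I)$.

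4. Since $\m^q$ is a monomial ideal, the containment $\m^q\subseteq\overline{I^p}$ only needs to be checked on monomials $x^a$ with $|a|=q$. Such a monomial satisfies $\ip{u}{a}\ge q\min_j u_j=v_u(\m^q)$. So if $v_u(\m^q)\ge v_u(I^p)$ for all $u$, then step 3 places $x^a$ in $\overline{I^p}$, which proves the converse.

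Alternatively, one could invoke \Cref{thm:finite-max-rees} and identify the Rees valuations of a monomial ideal with the facet valuations $v_u$. That route requires the toric description of the normalized blowup, so I prefer the direct route above.

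The main obstacle is step 2: the facet description of $\NP(I)$, and in particular the claim that only compact facets matter. I would justify it as follows. $\NP(I)$ is a full-dimensional polyhedron with recession cone $\RR_{\ge0}^d$. Every facet normal therefore lies in $\RR_{\ge0}^d$, and a facet is compact exactly when its normal has all coordinates positive. A facet whose normal has some zero coordinate is unbounded in the corresponding direction. Because $I$ is $\m$--primary, such a facet is supported on a coordinate hyperplane, and its inequality is already implied by $a\in\RR_{\ge0}^d$.
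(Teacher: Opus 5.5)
Your proof is correct, but it follows a different route from the paper's. The paper's proof is a two-step citation. First, \Cref{thm:finite-max-rees} says $\Loj_\m(I)$ is attained among the Rees valuations of $I$. Second, \cite[Theorem~10.3.5]{HS} identifies the Rees valuations of a monomial ideal with the monomial valuations attached to the non-redundant bounding hyperplanes of $\NP(I)$, which for $\m$-primary $I$ are exactly the compact facets.

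You bypass Rees valuations and normalized blowups entirely. You verify the finite-testing hypothesis (H1) of \Cref{thm:finite-testing-implies-max} directly, using only two inputs: the convex-hull description of $\overline{I^p}$ for monomial ideals, and the half-space description of $\NP(I)$ in which the $\m$-primary hypothesis makes the non-compact facets redundant. That theorem then produces the maximum formula.

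What your route buys:
\begin{itemize}
\item It is elementary and self-contained. Nothing depends on the structure of the normalized blowup, or on \Cref{thm:finite-testing-fingen}, on which \Cref{thm:finite-max-rees} rests.
\item It generalizes with no extra work. Your step~4 goes through for any monomial ideal $\b$ in place of $\m$, because $v_u(\b^q)$ is the minimum of $\ip{u}{a}$ over the monomial generators $x^a$ of $\b^q$. This recovers \Cref{thm:facet-formula} directly.
\end{itemize}

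What the paper's route buys is brevity, plus the extra fact that the facet valuations are exactly the Rees valuations of $I$. That places the lemma inside the general Rees-valuation picture. Your argument shows only that these valuations suffice, which is all the statement needs.

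One small slip: in step~2 you wrote ``the intersection of $\RR_{\ge0}^d+\NP(I)$ with the half-spaces'', which is tautological. You mean the intersection of $\RR_{\ge0}^d$ with the half-spaces $\{\ip{u}{a}\ge v_u(I)\}$, $u\in\mathcal U(I)$. That is what your final paragraph justifies and what step~3 actually uses.
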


\begin{proof}
By \Cref{thm:finite-max-rees}, $\Loj_{\m}(I)$ is computed by finitely many Rees valuations of $I$.

Since $I$ is monomial, the Rees valuations of $I$
are precisely the monomial valuations obtained from the (non-redundant) bounding
hyperplanes of the Newton polyhedron $\NP(I)$; see \cite[Theorem~10.3.5]{HS}.
For an $\m$--primary monomial ideal, the relevant bounding hyperplanes are exactly
those defining the compact facets of $\NP(I)$, hence they are indexed by the primitive
inward normal vectors $u\in \mathcal U(I)$, and the corresponding valuations are the $v_u$.
Therefore
\[
\Loj_{\m}(I)
=
\max_{u\in \mathcal U(I)} \frac{v_u(I)}{v_u(\m)}. \qedhere
\]
\end{proof}

\begin{theorem}[Monomial $\Theta(I)\ge 1$ and equality classification]\label{thm:monomial-rigidity}
	Let $I\subseteq \CC[x_1,\dots,x_d]_\m$ be an $\m$--primary monomial ideal.
	Then
	\[
	\Theta(I)=\frac{1}{d}\,\lct(I)\cdot \Loj_\m(I)\ \ge\ 1.
	\]
	Moreover, $\Theta(I)=1$ if and only if there exists a vector
	$u=(t,\dots,t)\in \ZZ_{>0}^d$ such that the same monomial (toric) valuation $v_u$
	computes both $\lct(I)$ and $\Loj_\m(I)$.
	Equivalently, $\NP(I)$ has a unique compact facet meeting the diagonal ray
	$\RR_{\ge0}(1,\dots,1)$, and its inward normal is proportional to $(1,\dots,1)$.
\end{theorem}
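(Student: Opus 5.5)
The plan is to make both sides of $\Theta(I)\ge 1$ explicit in terms of the Newton polyhedron, and then read off the equality case from a single elementary inequality between $\sum_i u_i$ and $d\min_i u_i$.

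\emph{The inequality.} It is the special case $\abul=\{I^p\}$ of \Cref{thm:lctL-sharp}. I will nevertheless reprove it combinatorially, since the equality analysis needs the explicit form.

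\emph{The two invariants.} For $\Loj_\m(I)$ I use \Cref{lem:monomial-Lojasiewicz-finite-max}, together with the identities
\[
v_u(I)=\min_{a\in \NP(I)}\ip{u}{a}
\quad\text{and}\quad
v_u(\m)=\min_i u_i .
\]
For the log canonical threshold I use Howald's theorem. It gives
\[
\lct(I)=\frac{1}{\lambda},
\qquad
\lambda:=\min\{s>0 : s\mathbf 1\in \NP(I)\},
\qquad
\mathbf 1=(1,\dots,1).
\]
Valuatively, this says $\lct(I)=\min_{u}\dfrac{A(v_u)}{v_u(I)}$, where $u$ runs over the facet normals in $\mathcal U(I)$ and $A(v_u)=\sum_i u_i$ is the log discrepancy of $v_u$. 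Moreover, $v_u$ computes $\lct(I)$ exactly when the facet $F_u$ with normal $u$ contains the point $\lambda\mathbf 1$.

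Next I check that such a facet exists. The point $\lambda\mathbf 1$ lies on the boundary of $\NP(I)$ and in the open positive orthant. Hence every supporting facet at $\lambda\mathbf 1$ has a strictly positive normal, and is therefore a compact facet with $u\in\mathcal U(I)$.

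\emph{Key computation.} Let $u\in\mathcal U(I)$ with $\lambda\mathbf 1\in F_u$. Then
\[
\frac{v_u(I)}{v_u(\m)}=\frac{\lambda\sum_i u_i}{\min_i u_i}\ \ge\ d\lambda .
\]
Equality holds if and only if all coordinates of $u$ are equal, i.e.\ $u$ is proportional to $\mathbf 1$. Combining this with \Cref{lem:monomial-Lojasiewicz-finite-max} gives $\Loj_\m(I)\ge d\lambda=d/\lct(I)$, which is $\Theta(I)\ge1$.

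\emph{Equality, ``if'' direction.} Suppose some $u=(t,\dots,t)$ gives a monomial valuation $v_u$ computing both $\lct(I)$ and $\Loj_\m(I)$. Since $v_u$ computes $\lct(I)$, the facet $F_u$ contains $\lambda\mathbf 1$. The key computation then gives $\Loj_\m(I)=v_u(I)/v_u(\m)=d\lambda$, so $\Theta(I)=1$.

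\emph{Equality, ``only if'' direction.} Suppose $\Theta(I)=1$, i.e.\ $\Loj_\m(I)=d\lambda$. Every facet $F_u$ through $\lambda\mathbf 1$ satisfies
\[
d\lambda\ \le\ \frac{v_u(I)}{v_u(\m)}\ \le\ \Loj_\m(I)=d\lambda .
\]
Both inequalities are therefore equalities. So every such $u$ is proportional to $\mathbf 1$, and $v_u$ computes both invariants. Since there is only one primitive vector proportional to $\mathbf 1$, there is exactly one facet through the diagonal point. This gives the ``unique compact facet'' reformulation.

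The converse of the reformulation goes the same way. If the unique compact facet meeting the diagonal ray has normal proportional to $\mathbf 1$, then its valuation computes $\lct(I)$. The key computation shows its ratio equals $d\lambda$. Since $\Loj_\m(I)\ge d\lambda$ always, it remains to show that no facet yields a ratio exceeding $d\lambda$, and this is where I expect the main difficulty.

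\emph{Main obstacle.} The delicate point is that last step: ruling out a facet far from the diagonal, with large $v_u(I)/\min_i u_i$, which would make $\Loj_\m(I)>d\lambda$. The natural first attempt is to use the supporting hyperplane $\sum_i x_i=d\lambda$ of $\NP(I)$. However, convexity bounds $v_u(I)$ only from above by $\ip{u}{\lambda\mathbf 1}$, which is the wrong direction. So I expect that the reformulation needs to be read as ``the valuation $v_{\mathbf 1}$ also computes $\Loj_\m(I)$'', i.e.\ as part of the first characterization, and is not implied by the facet condition alone. I would first look for a potential counterexample before trying to prove it. For instance, take $d=2$ and $I=(x^{N},xy,y^{N})$: here the diagonal facet has normal $\mathbf 1$, but $\Loj_\m(I)=N$. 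Such an example suggests stating the equivalence with that additional clause.
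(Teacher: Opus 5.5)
Your proof of $\Theta(I)\ge 1$ and of the first characterization is correct, and it is essentially the paper's argument. The paper takes a monomial $v_u$ computing $\lct(I)$ (via Howald's formula) and squeezes
$d=\lct(\m)\le A(v_u)/v_u(\m)=\lct(I)\,v_u(I)/v_u(\m)\le \lct(I)\Loj_\m(I)$.
This is your key computation $\lambda\sum_i u_i/\min_i u_i\ge d\lambda$, written without naming the diagonal point $\lambda\mathbf 1$. Your deduction that $\Theta(I)=1$ forces exactly one compact facet through $\lambda\mathbf 1$, with normal $\mathbf 1$, is also correct. It goes beyond the paper, which only calls the polyhedral restatement ``the standard interpretation of $v_{(1,\dots,1)}$''.

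Your doubt about the converse of that restatement is justified, but your example does not establish it. For $N\ge 3$ the point $(1,1)$ lies strictly below the line $x+y=N$, so it is a vertex of $\NP\bigl((x^N,xy,y^N)\bigr)$. The diagonal ray meets the boundary at this vertex, which lies on two compact edges with normals $(1,N-1)$ and $(N-1,1)$. There is no facet with normal $\mathbf 1$, so the hypothesis of the restatement fails, and $\Theta=N/2>1$ contradicts nothing. At most, the example shows that $v_{\mathbf 1}$ computing $\lct(I)$ alone does not give $\Theta=1$.

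A genuine counterexample is $I=(x^4,x^2y,xy^2,y^4)$.
\begin{itemize}
\item Its compact edges have primitive normals $(2,1)$, $(1,1)$, $(1,2)$, with $h_I$-values $4$, $3$, $4$.
\item The diagonal meets the boundary of $\NP(I)$ only at $(3/2,3/2)$, the midpoint of the edge with normal $(1,1)$. So the facet condition holds.
\item Yet $\lct(I)=2/3$ and $\Loj_\m(I)=4$; already $x^q\in\overline{I^p}$ forces $q\ge 4p$. This gives $\Theta(I)=4/3$.
\end{itemize}
So the ``Equivalently'' clause is false as stated. It becomes correct once you add the condition you proposed: $h_I(u)\le h_I(\mathbf 1)\min_i u_i$ for all $u\in\mathcal U(I)$, i.e.\ $v_{\mathbf 1}$ also computes $\Loj_\m(I)$.
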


\begin{proof}
	\emph{Step 1: Monomial formulas.}
	For a monomial valuation $v_u$ with $u\in\ZZ_{>0}^d$ we have
	\[
	v_u(x^\alpha)=\ip{u}{\alpha},\quad v_u(\m)=\min_i u_i,\quad A(v_u)=\sum_{i=1}^d u_i.
	\]
	Also $v_u(I)=h_I(u)$ where $h_I$ is the support function of $\NP(I)$.
	
	Howald's formula (\cite{How01}) for monomial ideals gives
	\[
	\lct(I)=\min_{u\in\ZZ_{>0}^d}\frac{A(v_u)}{v_u(I)}
	=
	\min_{u\in\ZZ_{>0}^d}\frac{\sum_i u_i}{h_I(u)}.
	\]
	Moreover, by \Cref{lem:monomial-Lojasiewicz-finite-max}, we have
	\begin{equation}\label{eq:toric-Loj-formula}
		\Loj_\m(I)
		=
		\max_{u\in\mathcal U}\frac{v_u(I)}{v_u(\m)}
		=
		\max_{u\in\mathcal U}\frac{h_I(u)}{\min_i u_i},
	\end{equation}
	where $\mathcal U\subseteq \ZZ_{>0}^d$ is the finite set of primitive inward normals to the
	compact facets of $\NP(I)$.
	
	\smallskip
	\noindent
	\emph{Step 2: Inequality and equality.} The inequality $\Theta(I) \ge 1$ follows from \Cref{thm:lctL-sharp}.

	Consider the equality $\Theta(I)=1$, i.e., $\lct(I)\cdot \Loj_\m(I)=d=\lct(\m)$. Let $v_u$, for some $u \in \ZZ^d_{> 0}$, be a monomial valuation that computes $\lct(I)$ as in Step 1. Then, since $\Loj_{\m}(I) \ge \frac{v_u(I)}{v_u(\m)}$ and $\frac{A(v_u)}{v_u(\m)} \ge \lct(\m)$, the equality $\Theta(I) = 1$ forces $v_u$ to compute both $\Loj_{\m}(I)$ and $\lct(\m)$. Thus,
	\[
	\frac{A(v_u)}{v_u(\m)}=\frac{\sum_i u_i}{\min_i u_i}=d
	\quad\Longleftrightarrow\quad
	u_1=\cdots=u_d.
	\]
	Hence, the common extremal valuation must be $v_u$ with $u$ proportional to $(1,\dots,1)$.
	Conversely, if there exists such a $u$ computing both invariants, then the computation in Step~2
	shows $\Theta(I)=1$.
	The polyhedral restatement in terms of a compact facet meeting the diagonal ray
	$\RR_{\ge0}(1,\dots,1)$ is the standard interpretation of the valuation $v_{(1,\dots,1)}$.
\end{proof}

\subsection{Rigidity III: extension to Newton nondegenerate settings}
We remark that the monomial rigidity picture can be extended to Newton nondegenerate
settings via the reduction to the monomial model. (In \Cref{sec:NND}, particularly \Cref{def:newton-infty}, we will discuss the notion of Newton nondegeneracy in more details.)
The rigidity results above extend to any class of ideals where:
\begin{enumerate}[label=(\alph*),leftmargin=2em]
	\item both $\lct(I)$ and $\Loj_\m(I)$ depend only on $\overline I$; and
	\item $\overline I$ is controlled by an explicit monomial model (same Newton polyhedral data).
\end{enumerate}
This includes Newton nondegenerate ideals and toric models developed in later sections.

\begin{remark}[Integral-closure reduction preserves $\Theta$]\label{prop:theta-ic}
	If $\overline I=\overline J$, then $\Theta(I)=\Theta(J)$.
\end{remark}

\begin{proof}
	Since $\overline{I} = \overline{J}$, $\lct(I)=\lct(J)$. Also $\Loj_\m(I)=\Loj_\m(J)$ since
	$\Loj_\m(I)$ is defined via integral-closure containments and hence depends only on $\overline I$.
	Therefore, $\Theta(I)=\Theta(J)$.
\end{proof}

\begin{remark}\label{rem:theta-ic-bridge}
	\Cref{prop:theta-ic} is the key mechanism for transporting rigidity from the
	monomial/toric setting to more general singularity classes.
	For instance, in Newton nondegenerate situations where one can replace an ideal $I$ by a
	monomial ideal $I_{\mathrm{mon}}$ with $\overline{I}=\overline{I_{\mathrm{mon}}}$, the full
	inequality and equality classification of \Cref{thm:monomial-rigidity} applies to $I$
	via \Cref{prop:theta-ic}.
\end{remark}


\subsection{Rigidity IV: equality in Teissier--Rees--Sharp inequality}\label{subsec:rigidity-TRS}

We record a classical but conceptually important source of
\emph{rigidity}: equality in Teissier's Minkowski-type inequalities for
multiplicities forces the ideals involved to be \emph{proportional in the
	Rees--valuative sense}. 

\begin{lemma}\label{lem:IC-powers-ree}
	Let $I,J\subseteq R$ be nonzero ideals and assume $I+J$ is $\m$--primary.
	If $\overline{I^p}=\overline{J^q}$ for some $p,q\in \mathbb{Z}_{>0}$, then for
	every Rees valuation $v$ of $IJ$ one has
	\[
	\frac{v(I)}{v(J)}=\frac{q}{p}.
	\]
	In particular, the ratio $v(I)/v(J)$ is constant on the finite set of Rees
	valuations of $IJ$.
\end{lemma}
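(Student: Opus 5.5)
The plan is to apply the valuative criterion for integral closure (\Cref{thm:valuative-ic}) to the equality $\overline{I^p}=\overline{J^q}$, evaluated at the Rees valuations of $IJ$. Every valuation $v$ centered at $\m$ satisfies $v(\overline{K})=v(K)$ for any ideal $K$. Indeed, $K\subseteq\overline{K}$ gives $v(\overline K)\le v(K)$. Conversely, each $x\in\overline K$ satisfies $v(x)\ge v(K)$ by the valuative criterion, so $v(\overline K)\ge v(K)$.

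Now let $v$ be a Rees valuation of $IJ$. It is a divisorial valuation of $\Frac(R)$ whose center lies on the normalized blowup of $IJ$. Applying the previous paragraph to $K=I^p$ and $K=J^q$ gives
\[
p\,v(I)=v(I^p)=v(\overline{I^p})=v(\overline{J^q})=v(J^q)=q\,v(J).
\]
The rearrangement $v(I)/v(J)=q/p$ requires $v(J)>0$. This positivity is the only delicate point, and I expect it to be the main obstacle.

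I would handle it as follows. Since $I+J$ is $\m$-primary, the Rees valuations of $IJ$ are centered on components of the exceptional divisor lying over $V(IJ)$, so $v(IJ)=v(I)+v(J)>0$. If $v(J)=0$, the displayed identity gives $p\,v(I)=0$, hence $v(I)=0$ as well. That contradicts $v(IJ)>0$. So $v(J)>0$, and symmetrically $v(I)>0$.

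One subtlety remains: the Rees valuations of $IJ$ need not be centered at $\m$. They are centered at primes containing $IJ$, where $v(IJ)>0$ still holds. The valuative criterion applies to valuations nonnegative on $R$ with arbitrary center, because it holds after localizing at the center. The argument above therefore goes through unchanged, and the ratio is the constant $q/p$ on the finite set of Rees valuations of $IJ$.
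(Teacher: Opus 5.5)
Your proof is correct and follows the same route as the paper: the valuative criterion turns $\overline{I^p}=\overline{J^q}$ into $p\,v(I)=q\,v(J)$, which is then evaluated on the Rees valuations of $IJ$. Your positivity check is sound. The worry about centers is moot here, since $\overline{I^p}=\overline{J^q}$ forces $\sqrt I=\sqrt J=\sqrt{I+J}=\m$, so $IJ$ is $\m$-primary and all its Rees valuations are centered at $\m$.
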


\begin{proof}
	By the valuative criterion for integral closure, $\overline{I^p}=\overline{J^q}$
	is equivalent to $p\,v(I)=q\,v(J)$ for every divisorial valuation $v$ centered at
	$\m$. Applying this to the (finitely many) Rees valuations of $IJ$ yields the
	claim.
\end{proof}

\begin{proposition}[Teissier--Rees--Sharp equality $\Rightarrow$ proportionality]\label{prop:TRS-equality-proportionality}
	Let $(R,\m)$ be a quasi--unmixed Noetherian local ring of dimension $d$, and let
	$I,J\subseteq R$ be $\m$--primary ideals.
	Assume that equality holds in Teissier's Minkowski (or first mixed multiplicity)
	inequality, i.e.,
	\[
	e\!\left(I^{[d-1]},J^{[1]}\right)^d \;=\; e(I)^{d-1}\,e(J).
	\]
	Then, there exist $p,q\in\mathbb{Z}_{>0}$ such that
	\[
	\overline{I^p}=\overline{J^q}.
	\]
	Consequently, the ratio $v(I)/v(J)$ is constant
	(and equal to $q/p$) on the finite set of Rees valuations of $IJ$.
\end{proposition}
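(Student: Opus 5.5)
The plan is to reduce the statement to the classical Rees--Sharp characterization of equality in Teissier's Minkowski inequalities, and then read off the valuative consequence from \Cref{lem:IC-powers-ree}.

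The first step sets up the log-concave sequence. Put $e_i:=e(I^{[d-i]},J^{[i]})$ for $0\le i\le d$, so that $e_0=e(I)$, $e_d=e(J)$ and $e_1=e(I^{[d-1]},J^{[1]})$. By \Cref{thm:AF-mixed}, applied to the constant families $\a_p=I$, $\b_q=J$ in the quasi-unmixed ring $R$, the sequence $i\mapsto e_i$ is log-concave. A positive log-concave sequence satisfies $e_1/e_0\ge e_2/e_1\ge\cdots\ge e_d/e_{d-1}$, which gives
\[
\Bigl(\frac{e_1}{e_0}\Bigr)^d\ \ge\ \prod_{i=1}^d\frac{e_i}{e_{i-1}}=\frac{e_d}{e_0},
\]
that is, $e_1^d\ge e_0^{d-1}e_d$. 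This is exactly the inequality in the hypothesis. If equality holds, then all the consecutive ratios are equal to a common value $\lambda=e_1/e_0$. The mixed multiplicities therefore form a geometric progression, $e_i=\lambda^i e(I)$, and in particular $e(I^{[d-i]},J^{[i]})^d=e(I)^{d-i}e(J)^i$ for every $i$.

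The second step, which I expect to be the main obstacle, is to turn this numerical equality into an integral-closure equality. This is the content of the Rees--Sharp theorem for $d\ge 2$ in a quasi-unmixed local ring: equality $e(I^{[d-1]},J^{[1]})^d=e(I)^{d-1}e(J)$ holds if and only if $\overline{I^p}=\overline{J^q}$ for some $p,q>0$. I would invoke \cite{ReesSharp78} (and \cite{Teissier77} in the analytic case) directly rather than reprove it. The proof there passes through Rees's multiplicity theorem: if $K\subseteq L$ are $\m$-primary with $e(K)=e(L)$, then $\overline K=\overline L$, and that theorem needs quasi-unmixedness. In outline, one notes $e(IJ)=\sum_i\binom{d}{i}e_i$. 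Using the geometric progression from the first step, this becomes $(e(I)^{1/d}+e(J)^{1/d})^d$. Rationality of the ratio $(e(J)/e(I))^{1/d}=q/p$ is extracted from the structure of the degree functions of Rees valuations. One then compares $I^qJ^p$ and $I^{2q}$ (or uses reductions by joint superficial elements) and concludes with the multiplicity theorem. The case $d=1$ is degenerate: the equality holds trivially, while $\overline{I^p}=\overline{J^q}$ holds because in dimension one integral closure of $\m$-primary ideals is governed by the finitely many Rees valuations with proportional data. I would handle it separately via the degree-function formula, or else restrict attention to $d\ge 2$.

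The final step is the consequence. Once $\overline{I^p}=\overline{J^q}$ is established, \Cref{lem:IC-powers-ree} applies verbatim, since $I+J$ is $\m$-primary. It gives $v(I)/v(J)=q/p$ for every Rees valuation $v$ of $IJ$, so the ratio is constant on that finite set.
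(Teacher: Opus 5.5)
Your route is the paper's: its proof is only a citation of the classical equality case (Teissier \cite{Teissier77}, Rees--Sharp \cite{ReesSharp78}), followed by \Cref{lem:IC-powers-ree} exactly as in your last step. Strictly, Rees--Sharp settle the equality case for $d=2$, and the general case $d\ge 2$ in formally equidimensional rings is due to Katz.

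Your Step 1 is the right bridge from the single hypothesis on $e_1$ to the Minkowski equality in which the classical theorem is usually stated. However, the inequality there runs the wrong way. For $\m$-primary ideals the sequence $e_i=e(I^{[d-i]},J^{[i]})$ is log-\emph{convex}, $e_i^2\le e_{i-1}e_{i+1}$, so Teissier's inequality is $e(I^{[d-1]},J^{[1]})^d\le e(I)^{d-1}e(J)$. \Cref{thm:AF-mixed} as printed has the direction reversed. For example, in $k[[x,y]]$ with $I=\m$ and $J=(x,y^2)$ one has $e(I)=1$, $e(J)=2$ and $e(I^{[1]},J^{[1]})=1$, so log-concavity would give $1\ge 2$. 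Your conclusion survives the correction. The ratios $e_i/e_{i-1}$ are nondecreasing, so $e_d/e_0=\prod_i e_i/e_{i-1}\ge (e_1/e_0)^d$, with equality iff all ratios agree, and this yields your geometric progression. (Minor slip: $\overline{I^p}=\overline{J^q}$ gives $p^de(I)=q^de(J)$, so $(e(J)/e(I))^{1/d}=p/q$, not $q/p$.)

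The genuine gap is $d=1$: your proposed argument there cannot work, because the statement is false in that case. Equality in the hypothesis is automatic when $d=1$. Now take $R=k[[x,y]]/(xy)$, which is complete, reduced and equidimensional, with $I=\m$ and $J=(x^2,y)$. The Rees valuations are the orders $v_1,v_2$ along the two branches, with $v_1(I)=v_2(I)=1$, $v_1(J)=2$ and $v_2(J)=1$. Hence $\overline{I^p}=\overline{J^q}$ would force both $p=2q$ and $p=q$, and the ratio $v(I)/v(J)$ is not constant on the Rees valuations of $IJ$. The proportionality of Rees valuation data that you invoke in dimension one is automatic only when there is a single Rees valuation (e.g.\ $R$ analytically irreducible), and it fails in this two-branch example. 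So the proposition needs $d\ge 2$, which is exactly the hypothesis of the classical theorem being cited. Your fallback of restricting to $d\ge 2$ is therefore required rather than optional, and that restriction should be added to the statement.
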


\begin{proof}
	This is the classical characterization of the equality case in Teissier's
	Minkowski-type inequalities for multiplicities; see Teissier~\cite{Teissier77}
	and the detailed local-algebraic treatment of Rees--Sharp~\cite{ReesSharp78}.
\end{proof}

\begin{corollary}[Rigidity of extremal ratios in the equality case]\label{cor:rigidity-extremal-ratio-TRS}
	In the setting of \Cref{prop:TRS-equality-proportionality}, suppose
	moreover that one is considering any valuative extremal ratio of the form
	\[
	\Loj_{J}(I) :=\ \sup_{v}\frac{v(I)}{v(J)},
	\]
	where $v$ ranges over divisorial valuations centered at $\m$ (or any subclass
	that contains the Rees valuations of $IJ$).
	Then, $\Loj_J(I)=q/p$, and every Rees valuation of $IJ$ computes
	$\Loj_J(I)$.
\end{corollary}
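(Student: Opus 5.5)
By \Cref{prop:TRS-equality-proportionality}, fix $p,q\in\ZZ_{>0}$ with $\overline{I^p}=\overline{J^q}$. The plan is to show that the valuative supremum collapses to the single value $q/p$, and to check this at every Rees valuation of $IJ$.

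First, the upper bound. For any divisorial valuation $v$ centered at $\m$, the valuative criterion (\Cref{thm:valuative-ic}) together with $\overline{I^p}=\overline{J^q}$ gives $p\,v(I)=v(I^p)=v(\overline{I^p})=v(\overline{J^q})=q\,v(J)$. Because $J$ is $\m$--primary, $v(J)>0$, so
\[
\frac{v(I)}{v(J)}=\frac{q}{p}
\]
for every such $v$. The ratio is therefore constant on the entire class of divisorial valuations, not merely on the Rees valuations of $IJ$. Hence the supremum over any subclass is at most $q/p$.

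Next, the lower bound. Any subclass under consideration contains the Rees valuations of $IJ$, and this set is nonempty: $IJ$ is a proper $\m$--primary ideal, so its normalized blowup has at least one exceptional prime divisor over $\m$. By \Cref{lem:IC-powers-ree}, every Rees valuation $v$ of $IJ$ satisfies $v(I)/v(J)=q/p$. So the supremum equals $q/p$ and is attained by every Rees valuation of $IJ$.

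Finally, compatibility with the algebraic exponent. If one reads $\Loj_J(I)$ as the algebraic exponent of \Cref{def:L-families} rather than as the displayed supremum, \Cref{thm:finite-max-rees} identifies it with the supremum over divisorial valuations, so the same value $q/p$ results. This can also be checked directly: $J^{qt}\subseteq\overline{J^{qt}}=\overline{I^{pt}}$ gives admissibility of $q/p$, and \Cref{thm:valuative-filtrations} gives the reverse inequality.

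The main obstacle is only bookkeeping. One must make sure the definition of $\Loj_J(I)$ used in the corollary agrees with the earlier one, and that the Rees valuations of $IJ$ are centered at $\m$, which holds because $IJ$ is $\m$--primary. All of the substantive content is already contained in \Cref{prop:TRS-equality-proportionality}.
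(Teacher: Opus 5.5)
Your proof is correct and follows the paper's own argument. From $\overline{I^p}=\overline{J^q}$ and the valuative criterion you get $p\,v(I)=q\,v(J)$ for every divisorial $v$, so the ratio is identically $q/p$ and every Rees valuation of $IJ$ attains the supremum. Your additional check that the containment-defined exponent of \Cref{def:L-families} also equals $q/p$ (via $J^{qt}\subseteq\overline{J^{qt}}=\overline{I^{pt}}$ and \Cref{thm:valuative-filtrations}) goes beyond the paper's proof, though strictly that theorem assumes a domain rather than just a quasi-unmixed ring.
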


\begin{proof}
	By \Cref{prop:TRS-equality-proportionality} we have
	$\overline{I^p}=\overline{J^q}$ for some $p,q>0$, hence $p\,v(I)=q\,v(J)$ for all
	divisorial $v$, so $v(I)/v(J)=q/p$ identically.  The conclusion follows, and in
	particular every Rees valuation computes the supremum.
\end{proof}

\section{Functoriality, stability, and stratification}\label{sec:stability}

This section develops the structural consequences of the finite-max principle in families.  Assuming $\Loj_{\bbul}(\abul)$ is computed by a finite candidate set of divisorial valuations on a fixed birational model, we study how these candidate sets and the corresponding maximizers behave under basic operations (localization, completion, integral closure, and natural local homomorphisms), establishing functoriality and upper-semicontinuity properties that make the maximizer meaningful geometric data.  We then show that, under a uniform finite candidate set hypothesis, parameter spaces admit a finite stratification (equivalently, a wall--chamber decomposition) on which the identity of the computing valuation is constant and $\Loj_{\bbul(t)}(\abul(t))$ is given by a single valuative ratio on each stratum (\Cref{thm:stratification}).  Finally, for one-parameter valuation-concave paths we prove that $t\mapsto 1/\Loj_{\bbul(t)}(\abul(t))$ is piecewise fractional linear and that affineness forces persistence of the controlling valuation data (\Cref{thm:concavity}).

Throughout this section, $R$ is a Noetherian local ring with maximal ideal $\m$.


\subsection{Finite candidate sets and the maximizer map}

A major source of rigidity is \emph{finite attainment on a fixed birational model}.
We isolate a hypothesis that holds in many situations of interest, including:
Noetherian families (finitely generated Rees algebra),
toric/monomial polyhedral families (see also Section~\ref{sec:toric}),
Newton nondegenerate integral-closure reductions,
and families arising from a fixed log smooth model.

\begin{definition}[Finite candidate set hypothesis]\label{def:finite-candidates}
	Let $\mathcal V=\{v_1,\dots,v_N\}$ be a finite set of divisorial valuations centered at $\m$.
	We say that a pair $(\abul,\bbul)$ satisfies the \emph{finite candidate set hypothesis}
	with respect to $\mathcal V$ if
	\[
	\Loj_{\bbul}(\abul)=\max_{1\le i\le N}\frac{v_i(\abul)}{v_i(\bbul)}.
	\]
\end{definition}

\begin{remark}
	When $\abul$ is $\m$--primary and Noetherian (or sufficiently polyhedral),
	Section~\ref{sec:valuative} gives that $\Loj_{\bbul}(\abul)$ is computed by finitely many valuations
	coming from a single normalized blowup. This is exactly the situation of
	Definition~\ref{def:finite-candidates}.
\end{remark}

\subsection{Functoriality}
We record functoriality properties under localization, completion and integral-closure.

\begin{proposition}[Monotonicity under maps]\label{prop:functoriality}
	Let $\varphi:R\to S$ be a local homomorphism of Noetherian local rings with maximal ideals
	$\m$ and $\n$, respectively, such that $\varphi(\m)\subseteq \n$.
	Let $\abul,\bbul$ be filtrations of $\m$--primary ideals in $R$ and set
	\[
	\varphi(\abul):=\{\a_p S\}_{p\ge 1},\quad \varphi(\bbul):=\{\b_q S\}_{q\ge 1}.
	\]
	Then,
	\[
	\Loj_{\bbul}(\abul)\ \ge\ \Loj_{\varphi(\bbul)}(\varphi(\abul)).
	\]
	Moreover, if $\phi$ is faithfully flat and has the \emph{integral-closure contraction property}, i.e., $\overline{IS} \cap R = \overline{I}$ for any ideal $I \subseteq R$, then
		\[
	\Loj_{\bbul}(\abul)\ = \ \Loj_{\varphi(\bbul)}(\varphi(\abul)).
	\]
\end{proposition}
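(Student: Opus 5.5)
The inequality will follow from the persistence of integral dependence under ring homomorphisms. The equality will follow from the contraction hypothesis, which lets containments in $S$ be pulled back to $R$. Both parts compare admissible sets of ratios $\frac{q}{p}$ in \Cref{def:L-families}. The first step is to check that $\varphi(\abul)$ and $\varphi(\bbul)$ are filtrations of proper ideals of $S$:
\[
\a_pS\cdot\a_{p'}S=(\a_p\a_{p'})S\subseteq \a_{p+p'}S \quad\text{and}\quad \a_{p+1}S\subseteq \a_pS,
\]
and $\a_pS\subseteq \m S\subseteq \n$ is proper. Hence $\Loj_{\varphi(\bbul)}(\varphi(\abul))$ is defined.

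For the inequality, fix $\frac{q}{p}$ admissible for $\Loj_{\bbul}(\abul)$, so $\b_{qt}\subseteq\overline{\a_{pt}}$ for $t\gg1$. Take $x\in\overline{\a_{pt}}$ with an equation of integral dependence $x^n+c_1x^{n-1}+\cdots+c_n=0$, where $c_j\in\a_{pt}^j$. Applying $\varphi$ gives an equation for $\varphi(x)$ with coefficients $\varphi(c_j)\in(\a_{pt}S)^j$. Therefore $\overline{\a_{pt}}S\subseteq\overline{\a_{pt}S}$, and so $\b_{qt}S\subseteq\overline{\a_{pt}S}$ for $t\gg1$. Every admissible ratio for $(\abul,\bbul)$ is thus admissible for $(\varphi(\abul),\varphi(\bbul))$, and taking infima gives the inequality. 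The convention $\inf\emptyset=\infty$ covers the case where no ratio is admissible.

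For the equality, it suffices to show the reverse inclusion of admissible sets. Suppose $\b_{qt}S\subseteq\overline{\a_{pt}S}$ for $t\gg1$. Since $\b_{qt}\subseteq\b_{qt}S\cap R$ when $R$ is viewed inside $S$, we get
\[
\b_{qt}\subseteq\overline{\a_{pt}S}\cap R=\overline{\a_{pt}}
\]
by the integral-closure contraction property. Hence $\frac{q}{p}$ is admissible for $\Loj_{\bbul}(\abul)$, and the two infima agree.

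The main point of care is this identification of $R$ with its image in $S$. Faithful flatness makes $\varphi$ injective, and more precisely gives $IS\cap R=I$. This is what makes $\b_{qt}\subseteq\b_{qt}S\cap R$ and the contraction statement meaningful. Beyond that, the argument is formal, so I expect no real obstacle. I will also note that faithful flatness is used only to make sense of contraction, while the contraction property is what does the work.
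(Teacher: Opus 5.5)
Your proof is correct and follows the same route as the paper. Push integral-dependence equations forward to see that every admissible ratio for $(\abul,\bbul)$ remains admissible after extension to $S$, and use $\b_{qt}\subseteq \b_{qt}S\cap R\subseteq \overline{\a_{pt}S}\cap R=\overline{\a_{pt}}$ to get the reverse inclusion of admissible sets. Your direct comparison of admissible sets (instead of the paper's $\varepsilon$-approximation), together with your explicit checks that the extended families are filtrations of proper ideals and that integral dependence persists, are harmless refinements of the same argument.
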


\begin{proof}
	Fix $\varepsilon>0$.
	By definition of $\Loj_{\bbul}(\abul)$ as an infimum of admissible rational slopes,
	there exist integers $p,q\ge 1$ such that
	\[
	\frac{q}{p} < \Loj_{\bbul}(\abul)+\varepsilon
	\quad\text{and}\quad
	\b_{qt} \subseteq \overline{\a_{pt}} \ \forall \ t \gg 1.
	\]
	
	Applying $\varphi$ gives $\b_{qt} S \subseteq (\overline{\a_{pt}})S.$
	Since integral dependence is preserved under ring homomorphisms, 
	we obtain $\b_{qt} S \subseteq \overline{\a_{pt} S} \ \forall \ t \gg 1.$
	Thus, the rational number $\frac{q}{p}$ is an admissible slope for the pair of
	filtrations $\varphi(\abul)$ and $\varphi(\bbul)$.
	By definition of $\Loj_{\varphi(\bbul)}(\varphi(\abul))$, it follows that
	\[
	\Loj_{\varphi(\bbul)}(\varphi(\abul)) \le \frac{q}{p}
	< \Loj_{\bbul}(\abul)+\varepsilon.
	\]
	Letting $\varepsilon\to 0$ yields
	$\Loj_{\varphi(\bbul)}(\varphi(\abul)) \le \Loj_{\bbul}(\abul)$ as desired.

Suppose, in addition, that $\phi$ is faithfully flat and has the integral-closure contraction property. If $\b_{qt} S \subseteq \overline{\a_{pt} S}$ for $t \gg 1$, then
$$\b_{qt} \subseteq (\b_{qt} S) \cap R \subseteq \overline{\a_{pt} S} \cap R = \overline{\a_{pt}} \ \forall \ t \gg 1.$$
That is, every admissible slope $\frac{q}{p}$ for the filtrations $\phi(\abul)$ and $\phi(\bbul)$ is also an admissible slope for the filtrations $\abul$ and $\bbul$. Therefore, by a similar argument, this implies that $\Loj_{\bbul}(\abul) \le \Loj_{\phi(\bbul)}(\phi(\abul))$. Hence, we obtained the equality as claimed.
\end{proof}

\begin{corollary}[Localization invariance for $\m$--primary data]\label{prop:localization}
	Let $(R,\m)$ be a Noetherian local ring and let $\p\subseteq R$ be a prime ideal.
	Set $R_\p$ with maximal ideal $\p R_\p$.
	Let $\abul=\{\a_p\}_{p\ge1}$ and $\bbul=\{\b_q\}_{q\ge1}$ be filtrations of $\p$--primary ideals in $R$.
	Then
	\[
	\Loj_{\bbul}(\abul)=\Loj_{\bbul R_\p}(\abul R_\p).
	\]
\end{corollary}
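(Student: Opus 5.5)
The natural route is to apply \Cref{prop:functoriality} to the localization map $\varphi:R\to R_\p$. This map is flat but not faithfully flat, and it is not local in the sense used there, since $\m$ does not map into $\p R_\p$. So the inequality and the equality have to be obtained by hand, reusing the argument of \Cref{prop:functoriality}.

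\emph{The inequality $\Loj_{\bbul R_\p}(\abul R_\p)\le\Loj_{\bbul}(\abul)$.} This part of the argument in \Cref{prop:functoriality} uses only that integral dependence is preserved under ring homomorphisms. If $\b_{qt}\subseteq\overline{\a_{pt}}$ for $t\gg1$, then extending to $R_\p$ gives
\[
\b_{qt}R_\p\subseteq \overline{\a_{pt}}\,R_\p\subseteq\overline{\a_{pt}R_\p}.
\]
Hence every slope admissible for $(\abul,\bbul)$ is admissible for $(\abul R_\p,\bbul R_\p)$, and the inequality follows.

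\emph{The reverse inequality.} This is the key step. We need a contraction statement: if $I\subseteq R$ is $\p$--primary, then $\overline{IR_\p}\cap R=\overline I$. I will prove this in three moves.
\begin{enumerate}
\item Integral closure commutes with localization, so $\overline{IR_\p}=\overline I\,R_\p$.
\item Since $I$ is $\p$--primary, $\sqrt{\overline I}=\sqrt I=\p$. The ideal $\overline I$ is $\p$--primary because the integral closure of a primary ideal is primary with the same radical. This follows from the valuative criterion (\Cref{thm:valuative-ic}), or directly: if $xy\in\overline I$ and $y\notin\p$, then $y^n$ times an equation of integral dependence for $xy$ gives one for $x$ after multiplying through by a unit modulo primary components. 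If this route gets messy, I will instead cite the standard fact from \cite{HS}.
\item A $\p$--primary ideal $Q$ satisfies $QR_\p\cap R=Q$. Applying this with $Q=\overline I$ gives $\overline{IR_\p}\cap R=\overline I$.
\end{enumerate}

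Now suppose $\b_{qt}R_\p\subseteq\overline{\a_{pt}R_\p}$ for $t\gg1$. Then
\[
\b_{qt}\subseteq \b_{qt}R_\p\cap R\subseteq \overline{\a_{pt}R_\p}\cap R=\overline{\a_{pt}},
\]
where the last equality uses that each $\a_{pt}$ is $\p$--primary. So admissible slopes coincide for the two pairs, and taking infima yields the stated equality.

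The main obstacle is the primary-ness of $\overline I$ in move 2. The cleanest approach is probably to note that the associated primes of $\overline I$ are among those of $\overline{I^n}$, and that all of these are contained in minimal primes over $I$ in the analytically unramified case. To avoid any extra hypothesis, however, I would first try the elementwise argument: take $x\in R$ with $sx\in\overline I$ for some $s\notin\p$. Raising to powers and using the equation of integral dependence $\sum c_j(sx)^{n-j}=0$ with $c_j\in I^j$, we want to deduce $x\in\overline I$. Rewriting this as an equation for $x$ requires dividing by $s^n$, which is allowed modulo $I^j$ because $s$ is a nonzerodivisor on $R/I^j$: the ideal $I^j$ has radical $\p$, although it need not be $\p$--primary. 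If that fails, I would fall back on the valuative criterion with Rees valuations of $I$, which are centered on primes containing $I$.
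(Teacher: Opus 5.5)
\textbf{Gap in move 2.} You are right that \Cref{prop:functoriality} does not apply verbatim: for $\p\neq\m$ the map $R\to R_\p$ is neither local nor faithfully flat, and the paper's one-line proof passes over this. Your inequality $\Loj_{\bbul R_\p}(\abul R_\p)\le\Loj_{\bbul}(\abul)$ is fine. The reverse inequality, however, rests entirely on move 2, and move 2 is false. The integral closure of a $\p$-primary ideal need not be $\p$-primary, so $\overline{IR_\p}\cap R=\overline I$ can fail for $\p$-primary $I$, even in a regular local ring.

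\textbf{A counterexample.} Take $R=k[x,y,z]_{(x,y,z)}$, $\p=(x,y)$ and $I=(x^2,\,xy+zy^2,\,y^3)$.
\begin{enumerate}
\item[(i)] $I$ is $\p$-primary. $R/I$ is a finitely generated module over $D=k[z]_{(z)}$, being a quotient of $R/(x^2,y^3)\cong D^6$. Both $R/(I,z)=k[x,y]_{(x,y)}/(x^2,xy,y^3)$ and $R/I\otimes_D k(z)$ have length $4$, with basis $1,x,y,y^2$ in each case. Hence $R/I$ is $D$-free, $z$ is a nonzerodivisor on it, and $\operatorname{Ass}(R/I)=\{\p\}$.
\item[(ii)] $y^2\in\overline{IR_\p}$. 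We have $(x^2,\,y(x+zy))\subseteq I\subseteq\p^2$. Since $z$ is a unit in $R_\p$, the two quadrics have no common factor, so $(x^2,y(x+zy))R_\p$ is a reduction of $\p^2R_\p$.
\item[(iii)] $y^2\notin\overline I$. Modulo $z$, $I$ becomes $(x^2,xy,y^3)$. The point $(0,2)$ lies below the edge $2a+b=3$ of its Newton polygon, so $y^2$ is not in its integral closure. Since integral dependence persists along $R\to R/zR$, the claim follows. Equivalently, the monomial valuation with weights $(2,1,1)$ is centered at $\m$ and has $v(y^2)=2<3=v(I)$.
\end{enumerate}
Thus $y^2\in\overline{IR_\p}\cap R$ but $y^2\notin\overline I$. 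Some $s\notin\p$ has $sy^2\in\overline I$, so $\m$ is an embedded prime of $\overline I$.

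\textbf{Your fallbacks fail at the same point.}
\begin{itemize}
\item The elementwise argument needs $s\notin\p$ to be a nonzerodivisor on $R/I^j$. That is false once $I^j$ acquires embedded primes containing $s$, which you acknowledge in the same sentence.
\item The Rees-valuation argument breaks because valuations centered at primes $Q\supsetneq\p$ with $s\in Q$ exist. The weight-$(2,1,1)$ valuation above is one. For such $v$, the relation $sx\in\overline I$ gives no lower bound on $v(x)$.
\item The remark that the associated primes of $\overline{I^n}$ lie in minimal primes of $I$ ``in the analytically unramified case'' is also false. Here $R$ is regular, yet $\m\in\operatorname{Ass}(R/\overline I)$.
\end{itemize}

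\textbf{What this means.} This example does not by itself refute the corollary, since $\Loj$ is asymptotic. It does show that the reverse inequality cannot be obtained ideal by ideal. The paper's justification (faithful flatness plus the integral-closure contraction property for $R\to R_\p$) is refuted by the same example. Your argument does prove the equality when each $\overline{\a_n}$ is $\p$-primary, for instance when every $\a_nR_\p$ is integrally closed: then $\overline{\a_n}\subseteq\overline{\a_nR_\p}\cap R=\a_n$. In general you would need a genuinely asymptotic comparison between $\overline{\a_{pt}R_\p}\cap R$ and $\overline{\a_{p't}}$ with $p'/p\to1$.
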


\begin{proof}
	This is a direct consequence of \Cref{prop:functoriality}, noting that $R \rightarrow R_\p$ is faithfully flat and has the integral-closure contraction property.
\end{proof}

\begin{corollary}[Completion invariance]\label{prop:completion}
	Let $(R,\m)$ be an excellent Noetherian local ring and let $\widehat R$ be the $\m$--adic completion.
	Assume that $R$ is analytically unramified (equivalently, $\widehat R$ is reduced).
	Let $\abul=\{\a_p\}_{p\ge1}$ and $\bbul=\{\b_q\}_{q\ge1}$ be filtrations of $\m$--primary ideals in $R$.
	Then,
	\[
	\Loj_{\bbul}(\abul)=\Loj_{\bbul\widehat R}(\abul\widehat R).
	\]
\end{corollary}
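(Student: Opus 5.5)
The plan is to deduce this from \Cref{prop:functoriality}, applied to the completion map $\varphi:R\to\widehat R$. This is a local homomorphism with $\varphi(\m)\subseteq \widehat\m=\m\widehat R$. The first part of \Cref{prop:functoriality} then gives the inequality
\[
\Loj_{\bbul}(\abul)\ \ge\ \Loj_{\bbul\widehat R}(\abul\widehat R)
\]
without further hypotheses. For equality, it suffices to verify the two extra hypotheses of that proposition: that $\varphi$ is faithfully flat, and that it has the integral-closure contraction property $\overline{I\widehat R}\cap R=\overline I$ for every ideal $I\subseteq R$.

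Faithful flatness of $R\to\widehat R$ is standard for a Noetherian local ring, since completion is flat and $\m\widehat R\neq\widehat R$.

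For the contraction property, I only need it for the ideals that actually occur, namely $I=\a_{pt}$, which are $\m$-primary. I would argue in two steps.
\begin{enumerate}
\item First I show $\overline{I}\,\widehat R=\overline{I\widehat R}$. The inclusion $\subseteq$ holds because integral dependence equations persist under ring maps. For $\supseteq$, I use that $I$ is $\m$-primary: then $\overline{I}$ is $\m$-primary, and $\widehat R/\overline I\widehat R\cong R/\overline I$. Given $z\in\overline{I\widehat R}$, write $z=r+w$ with $r\in R$ and $w\in\overline{I}\widehat R\subseteq\overline{I\widehat R}$. Then $r\in\overline{I\widehat R}\cap R$.
\item Second I show $\overline{I\widehat R}\cap R\subseteq \overline I$. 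I would do this with the valuative criterion (\Cref{thm:valuative-ic}), or via the classical statement (cf.\ \cite[Ch.~9]{HS}) that for analytically unramified $R$ the Rees valuations of $I$ extend to those of $I\widehat R$ and integral closure is compatible with completion. Failing that, I would use the classical reduction-number argument: $r$ satisfies an equation of integral dependence over $I\widehat R$ of some degree $n$. Truncating the coefficients modulo a high power $\m^N\widehat R\subseteq I^n\widehat R$ (possible since $I$ is $\m$-primary) replaces them with coefficients in $I^k$ lying in $R$, with error in $\m^N$. This yields $r^n+\sum c_k r^{n-k}\in I^n$, an honest equation in $R$.
\end{enumerate}
Faithful flatness then also gives $I\widehat R\cap R=I$. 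Combining the two steps yields $\overline{I\widehat R}\cap R=\overline I$.

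With both hypotheses in hand, the second part of \Cref{prop:functoriality} gives equality. Alternatively, and more directly, any admissible slope $q/p$ for the completed pair satisfies $\b_{qt}\subseteq\b_{qt}\widehat R\cap R\subseteq \overline{\a_{pt}\widehat R}\cap R=\overline{\a_{pt}}$ for $t\gg1$, so it is admissible for the original pair.

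The main obstacle is the contraction $\overline{I\widehat R}\cap R=\overline I$, in particular the truncation argument in step two, where one must check that the error terms can be absorbed into $I^n$ rather than into some $I^k$ with $k<n$. I expect the $\m$-primary hypothesis to handle this, because $\m^N\subseteq I^n$ for $N$ large. This is also the only place where the hypotheses that $R$ is excellent and analytically unramified would enter, if one instead invokes the general (non-$\m$-primary) statement from \cite{HS}.
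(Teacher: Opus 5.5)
Your proposal is correct and follows the paper's route: the paper also deduces equality from \Cref{prop:functoriality}, using that $R\to\widehat R$ is faithfully flat and has the integral-closure contraction property (which the paper attributes to excellence). Your truncation argument for $\m$-primary $I$ is valid, since all error terms land in $\m^N\widehat R\cap R=\m^N\subseteq I^n$ and can be absorbed into the constant coefficient; it shows that excellence and analytic unramifiedness are not actually needed here, and your Step~1 is superfluous, because only the inclusion $\overline{I\widehat R}\cap R\subseteq\overline I$ is used.
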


\begin{proof}
	Observe that $R \rightarrow \widehat{R}$ is faithfully flat. Since $R$ is excellent, $R \rightarrow \widehat{R}$ also has the integral-closure contraction property. Thus, the assertion is a consequence of \Cref{prop:functoriality}.
\end{proof}

\begin{proposition}[Integral-closure invariance for filtrations]\label{prop:ic-filtration}
	Let $\abul,\bbul$ be filtrations in $R$.
	Define $\ov{\abul}:=\{\ov{\a_p}\}_{p\ge 1}$ and similarly $\ov{\bbul}$.
	Then
	\[
	\Loj_{\bbul}(\abul)=\Loj_{\ov{\bbul}}(\ov{\abul}).
	\]
\end{proposition}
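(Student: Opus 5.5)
The plan is to show directly that the two admissible sets of slopes in \Cref{def:L-families} coincide. Then the two infima, and hence the two exponents, agree. No valuative input is needed beyond the elementary closure properties of integral closure.

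First, I will check that $\ov{\abul}$ and $\ov{\bbul}$ are again filtrations of proper ideals, so that $\Loj_{\ov{\bbul}}(\ov{\abul})$ is defined.
\begin{enumerate}
\item Properness: since $R$ is Noetherian, $\ov{I}\subseteq\sqrt{I}$ for every ideal $I$, so $\ov{\a_p}$ is proper whenever $\a_p$ is.
\item Gradedness: the product of integral closures lies in the integral closure of the product, so
\[
\ov{\a_p}\,\ov{\a_{p'}}\subseteq\ov{\a_p\a_{p'}}\subseteq\ov{\a_{p+p'}},
\]
where the last step uses that integral closure is monotone.
\item Filtration: monotonicity also gives $\ov{\a_{p+1}}\subseteq\ov{\a_p}$.
\end{enumerate}
The same three points apply verbatim to $\ov{\bbul}$.

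Next, I fix $p,q\ge 1$ and $t\ge 1$, and compare the two containment conditions term by term. I claim that
\[
\b_{qt}\subseteq\ov{\a_{pt}}\iff\ov{\b_{qt}}\subseteq\ov{\ov{\a_{pt}}}.
\]
\begin{enumerate}
\item Idempotence gives $\ov{\ov{\a_{pt}}}=\ov{\a_{pt}}$, so the right-hand condition reads $\ov{\b_{qt}}\subseteq\ov{\a_{pt}}$.
\item This implies the left-hand condition because $\b_{qt}\subseteq\ov{\b_{qt}}$.
\item Conversely, applying integral closure to $\b_{qt}\subseteq\ov{\a_{pt}}$ and using monotonicity and idempotence gives $\ov{\b_{qt}}\subseteq\ov{\a_{pt}}$.
\end{enumerate}
Hence the condition ``for all $t\gg1$'' holds for the original pair exactly when it holds for the closed pair. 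Therefore $q/p$ is admissible for $\Loj_{\bbul}(\abul)$ if and only if it is admissible for $\Loj_{\ov{\bbul}}(\ov{\abul})$. Taking infima, with the common convention $\inf\emptyset=\infty$ covering the case where both sets are empty, gives the equality.

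There is no real obstacle here. The only point requiring care is the preliminary verification that the closed families are genuine filtrations of proper ideals, which is where the Noetherian hypothesis (via $\ov{I}\subseteq\sqrt I$) enters. Everything else is the idempotence and monotonicity of integral closure.
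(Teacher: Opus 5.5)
Your proposal is correct and follows the same route as the paper: both observe that $\b_{qt}\subseteq\ov{\a_{pt}}$ holds if and only if $\ov{\b_{qt}}\subseteq\ov{\a_{pt}}=\ov{\ov{\a_{pt}}}$, so the two sets of admissible slopes coincide. Your preliminary check that $\ov{\abul}$ and $\ov{\bbul}$ are again filtrations of proper ideals is a small addition the paper leaves implicit. Note that $\ov{I}\subseteq\sqrt{I}$ holds in any commutative ring, since an equation of integral dependence gives $x^n\in I$, so this step does not actually use the Noetherian hypothesis.
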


\begin{proof}
	For every $p,q$ we have $\overline{\b_q}\subseteq \overline{\a_p}$ if and only if
	$\b_q\subseteq \overline{\a_p}$, since $\b_q\subseteq \overline{\b_q}$ and integral closure is idempotent.
	Hence, the set of admissible ratios $q/p$ in the containment definition is unchanged, and
	$\Loj_{\bbul}(\abul)=L_{\overline{\bbul}}(\overline{\abul})$.
\end{proof}

\begin{remark}[Saturation]
	For non-$\m$--primary ideals one may consider saturations $\a_p^{\mathrm{sat}}$.
	In the present paper, our main invariants are defined for $\m$--primary filtrations, for
	which saturation is trivial. When extending beyond $\m$--primary data, one must track the
	change in the center of valuations; the valuative formula remains valid but the candidate
	set of valuations changes.
\end{remark}

\subsection{Stratification by the computing valuation}
We describe the induced stratification by the computing valuation.
Assume the \emph{finite candidate set hypothesis} (Definition~\ref{def:finite-candidates}) holds uniformly for a family
$\{(\abul(t), \bbul(t))\}_{t\in T}$ with the same finite set of divisorial valuations
$V=\{v_1,\dots,v_N\}$.

\begin{definition}[Maximizer set for families]\label{def:maximizers}
	For each $t\in T$ define
	\[
	\alpha_i(t):=v_i(\a_\bullet(t)),\quad \beta_i(t):=v_i(\b_\bullet(t))\quad (1\le i\le N),
	\]
	and assume $\alpha_i(t)\in [0,\infty)$ and $\beta_i(t)\in (0,\infty)$ for all $i$.
	(For instance, this holds if $\bbul(t)$ is linearly bounded from below in the sense of \Cref{rmk:finite-testing-fingen}.)
	Define the maximizer set
	\[
	\Max(\abul(t), \bbul(t))
	:=\left\{\, i\in\{1,\dots,N\} \;:\; \frac{\alpha_i(t)}{\beta_i(t)}
	=\max_{1\le k\le N}\frac{\alpha_k(t)}{\beta_k(t)} \right\}.
	\]
	If $\Max(\abul(t), \bbul(t))=\{j\}$ is a singleton, we write
	$\argmax(\abul(t), \bbul(t)):=j$.
\end{definition}

	The stratification phenomenon in \Cref{thm:stratification} below is driven by the
	finite--max principle established earlier.
	Indeed, under the standing hypotheses, the value
	$\Loj_{\bbul}(\abul)$ is computed by a valuation belonging to a fixed finite set
	(depending only on $\abul$), so changes in the value of $\Loj_{\bbul}(\abul)$ can occur
	only when the identity of the maximizing valuation changes.

\begin{theorem}[Finite wall--chamber decomposition]\label{thm:stratification}
	Assume the setup of Definition~\ref{def:maximizers}. Assume that for each $t\in T$ the finite candidate set
	hypothesis gives
	\[
	\Loj_{\bbul(t)}(\abul(t))=\max_{1\le i\le N}\frac{\alpha_i(t)}{\beta_i(t)},
	\quad
	\alpha_i(t):=v_i(\abul(t)),\ \beta_i(t):=v_i(\bbul(t)).
	\]
	Assume moreover that each $\alpha_i$ and $\beta_i$ is continuous on $T$.
	Then, $T$ admits a finite partition into locally closed strata, on each of which the maximizer set
	$\Max(\abul(t),\bbul(t))$ is constant. In particular, on each stratum, the function
	$t\mapsto \Loj_{\bbul(t)}(\abul(t))$ is given by a single ratio $\alpha_j(t)/\beta_j(t)$.
	
	Moreover, for each $j\in\{1,\dots,N\}$ the locus
	\[
	T_j^\circ:=\{\,t\in T:\Max(\abul(t),\bbul(t))=\{j\}\,\}
	\]
	is open in $T$.
\end{theorem}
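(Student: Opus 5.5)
The plan is to partition $T$ according to the full ordering data of the finitely many continuous ratio functions. Set $r_i(t):=\alpha_i(t)/\beta_i(t)$. Since $\beta_i(t)\in(0,\infty)$ and $\alpha_i,\beta_i$ are continuous, each $r_i$ is continuous on $T$, and so is every difference $r_i-r_k$. For each nonempty subset $S\subseteq\{1,\dots,N\}$ we define
\[
T_S:=\{\,t\in T:\Max(\abul(t),\bbul(t))=S\,\}.
\]
Because $\Max(\abul(t),\bbul(t))$ is nonempty for every $t$ (a maximum over a finite set is attained), the sets $T_S$ are pairwise disjoint and cover $T$. There are at most $2^N-1$ of them, so the partition is finite. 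By construction the maximizer set is constant on each $T_S$. For any $j\in S$, the finite candidate set hypothesis then gives $\Loj_{\bbul(t)}(\abul(t))=r_j(t)$ for all $t\in T_S$, which is the ``single ratio'' assertion.

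The key step is local closedness. We write
\[
T_S=\Bigl(\bigcap_{i,k\in S}\{r_i=r_k\}\Bigr)\cap\Bigl(\bigcap_{i\in S,\ k\notin S}\{r_i>r_k\}\Bigr).
\]
The first intersection is a closed set, being a finite intersection of zero loci of continuous functions. The second intersection is open, being a finite intersection of sets of the form $\{r_i-r_k>0\}$. To justify this description, fix $i_0\in S$; membership in $T_S$ says exactly that $r_i=r_{i_0}$ for $i\in S$ and $r_{i_0}>r_k$ for $k\notin S$, and these conditions automatically force $r_{i_0}$ to equal the maximum. Hence each $T_S$ is the intersection of a closed set and an open set, and is therefore locally closed.

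The openness of $T_j^\circ=T_{\{j\}}$ follows from this description directly. With $S=\{j\}$ the equality conditions are vacuous, so
\[
T_j^\circ=\bigcap_{k\ne j}\{t: r_j(t)-r_k(t)>0\},
\]
which is a finite intersection of open sets.

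I do not expect a serious obstacle. The only point needing care is the meaning of ``locally closed strata'' when $T$ is merely a topological space. Continuity yields the closed$\,\cap\,$open description above, and that is the natural notion in this setting. If $T$ carried an algebraic structure, with the $\alpha_i,\beta_i$ regular or semialgebraic, the same formulas would give constructible or semialgebraic strata. I would remark on this but not rely on it.
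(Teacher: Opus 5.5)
Your proof is correct and follows essentially the same route as the paper. You partition $T$ by the maximizer set, write each stratum as a closed equality locus intersected with an open strict-inequality locus, and read off that $T_j^\circ=T_{\{j\}}$ is open. The only cosmetic difference is that the paper states the conditions in cross-multiplied form, $\alpha_i\beta_k>\alpha_k\beta_i$, rather than through the continuous ratios $r_i=\alpha_i/\beta_i$; the two are equivalent because every $\beta_i>0$.
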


\begin{proof}
	For each nonempty $J\subseteq\{1,\dots,N\}$ set
	\[
	T_J
	:=
	\Big(\bigcap_{i,i'\in J}\{\,t:\alpha_i(t)\beta_{i'}(t)=\alpha_{i'}(t)\beta_i(t)\,\}\Big)
	\cap
	\Big(\bigcap_{i\in J,\ k\notin J}\{\,t:\alpha_i(t)\beta_k(t)>\alpha_k(t)\beta_i(t)\,\}\Big).
	\]
	By continuity, each equality locus is closed and each strict-inequality locus is open. Thus, each $T_J$
	is locally closed.
	
	Fix $t\in T$ and write $\rho_i(t):=\alpha_i(t)/\beta_i(t)$, which is well-defined since
	$\beta_i(t)\in(0,\infty)$ for all $i$ by Definition~\ref{def:maximizers}. Let $J:=\Max(\abul(t),\bbul(t))$.
	Then, $\rho_i(t)=\rho_{i'}(t)$ for $i,i'\in J$ and $\rho_i(t)>\rho_k(t)$ for $i\in J$, $k\notin J$.
	Multiplying by the positive denominators yields precisely the defining conditions of $T_J$. Therefore, $t\in T_J$.
	
	Conversely, if $t\in T_J$, the defining equalities/inequalities imply that the maximum of the finite set
	$\{\rho_1(t),\dots,\rho_N(t)\}$ is attained exactly on $J$, i.e. $\Max(\abul(t),\bbul(t))=J$.
	Hence, the sets $T_J$ form a finite partition of $T$ by maximizer set, proving the first assertion.
	
	Finally,
	\[
	T_j^\circ=\bigcap_{i\neq j}\{\,t:\alpha_j(t)\beta_i(t)>\alpha_i(t)\beta_j(t)\,\},
	\]
	an intersection of open sets, and so is open as claimed.
\end{proof}

\begin{corollary}[Stability from a strict gap]\label{cor:strict-separation}
	Assume the hypotheses of \Cref{thm:stratification}. Fix $t_0\in T$ and an index
	$j\in\{1,\dots,N\}$. If there exists $\delta>0$ such that
	\begin{equation}\label{eq:9.1-new}
		\frac{\alpha_j(t_0)}{\beta_j(t_0)} \ge \frac{\alpha_i(t_0)}{\beta_i(t_0)}+\delta
		\quad\text{for all }i\neq j,
	\end{equation}
	then there exists an open neighborhood $U$ of $t_0$ such that for all $t\in U$ one has
	\[
	\Max(\abul(t),\bbul(t))=\{j\},
	\text{ and so }
	\Loj_{\bbul(t)}(\abul(t))=\frac{\alpha_j(t)}{\beta_j(t)}.
	\]
	Equivalently, $t_0\in T_j^\circ$ and $T_j^\circ$ is a neighborhood of $t_0$.
\end{corollary}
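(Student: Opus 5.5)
The plan is to use the continuity of the finitely many functions $\alpha_i,\beta_i$ at $t_0$ to show that the strict gap \eqref{eq:9.1-new} survives on a neighborhood of $t_0$. Set $\rho_i(t):=\alpha_i(t)/\beta_i(t)$ for $1\le i\le N$. By the standing assumptions of Definition~\ref{def:maximizers}, each $\beta_i(t)$ lies in $(0,\infty)$, and each $\alpha_i,\beta_i$ is continuous on $T$. Hence each $\rho_i$ is continuous on $T$, being a quotient of continuous functions with nonvanishing denominator.

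Next, for each $i\neq j$, consider the continuous function $g_i:=\rho_j-\rho_i$. By \eqref{eq:9.1-new} we have $g_i(t_0)\ge\delta>0$. By continuity, there is an open neighborhood $U_i$ of $t_0$ on which $g_i>\delta/2>0$. Set $U:=\bigcap_{i\neq j}U_i$. This is a finite intersection, since there are only $N-1$ indices, so $U$ is an open neighborhood of $t_0$. On $U$ we have $\rho_j(t)>\rho_i(t)$ for every $i\neq j$, which means $\Max(\abul(t),\bbul(t))=\{j\}$.

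The finite candidate set hypothesis assumed in \Cref{thm:stratification} then gives $\Loj_{\bbul(t)}(\abul(t))=\max_i\rho_i(t)=\rho_j(t)$ for all $t\in U$. For the equivalent formulation, the inequalities $\rho_j>\rho_i$ multiply out, using positivity of the $\beta$'s, to $\alpha_j\beta_i>\alpha_i\beta_j$. Thus $U\subseteq T_j^\circ$ by the explicit description of $T_j^\circ$ in the proof of \Cref{thm:stratification}. In particular $t_0\in T_j^\circ$, and $T_j^\circ$ is a neighborhood of $t_0$; openness of $T_j^\circ$ itself was already established in that theorem.

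There is no genuine obstacle here. The only point needing care is ensuring the ratios are continuous, which requires $\beta_i(t)>0$ on all of $T$ and not merely at $t_0$; this is exactly the hypothesis built into Definition~\ref{def:maximizers}. Finiteness of $N$ is what allows intersecting the neighborhoods $U_i$ while keeping $U$ open.
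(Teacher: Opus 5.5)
Your proposal is correct and follows the paper's proof essentially verbatim: continuity of the differences $\rho_j-\rho_i$ (using $\beta_i>0$ on all of $T$), a neighborhood $U_i$ of $t_0$ for each $i\neq j$ on which $\rho_j>\rho_i$, the finite intersection $U$, and then the finite candidate identity to read off $\Loj_{\bbul(t)}(\abul(t))=\alpha_j(t)/\beta_j(t)$ on $U$. Your use of the threshold $\delta/2$ rather than $0$, and your explicit verification that $U\subseteq T_j^\circ$, are only cosmetic differences.
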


\begin{proof}
	Since $\beta_i(t)\in(0,\infty)$ for all $i$ and $t$ (Definition~\ref{def:maximizers}), the functions
	$t\mapsto \alpha_i(t)/\beta_i(t)$ are well-defined. For each $i\neq j$, set
	\[
	g_i(t):=\frac{\alpha_j(t)}{\beta_j(t)}-\frac{\alpha_i(t)}{\beta_i(t)}.
	\]
	By the assumed continuity of $\alpha_\bullet,\beta_\bullet$ and positivity of $\beta_\bullet$,
	each $g_i$ is continuous on $T$. The strict gap hypothesis \eqref{eq:9.1-new} gives
	$g_i(t_0)\ge \delta>0$ for all $i\neq j$. Hence, for each $i\neq j$ there exists an open neighborhood
	$U_i$ of $t_0$ such that $g_i(t)>0$ for all $t\in U_i$. Let $U:=\bigcap_{i\neq j}U_i$.
	Then, for all $t\in U$,
	\[
	\frac{\alpha_j(t)}{\beta_j(t)}>\frac{\alpha_i(t)}{\beta_i(t)} \quad\text{for all }i\neq j,
	\]
	so the unique maximizer is $j$, i.e.\ $\Max(\abul(t),\bbul(t))=\{j\}$. The displayed formula for
	$\Loj_{\bbul(t)}(\abul(t))$ follows immediately from the finite-candidate identity of
	$\Loj_{\bbul(t)}(\abul(t))$ given in \Cref{thm:stratification}.
\end{proof}

\begin{example}[Change of maximizing valuation]\label{ex:change-maximizer}
	Let $R=\CC[x,y]_{(x,y)}$ and consider the monomial ideal
	\[
	\a=(x^4,\ x^2y^3,\ y^5)\subseteq R.
	\]
	The Newton polygon $\NP(\a)$ has two compact edges: the segment joining
	$(4,0)$ to $(2,3)$ with primitive inward normal $u_1=(3,2)$, and the segment
	joining $(2,3)$ to $(0,5)$ with primitive inward normal $u_2=(1,1)$.
	Accordingly,
	\[
	v_{u_1}(\a)=12,\quad v_{u_2}(\a)=5.
	\]
	
	For $t\in[0,1]$, define a family of principal ideals
	\[
	\b_p(t):=(x^{\lceil (1+t)p\rceil}y^{\lceil 2p\rceil}),\quad
	\bbul(t):=\{\b_p(t)\}_{p\ge1}.
	\]
	Then, asymptotically,
	\[
	v_{u_1}(\bbul(t))=7+3t,\quad
	v_{u_2}(\bbul(t))=3+t.
	\]
	Hence,
	\[
	\frac{v_{u_1}(\a)}{v_{u_1}(\bbul(t))}=\frac{12}{7+3t},
	\quad
	\frac{v_{u_2}(\a)}{v_{u_2}(\bbul(t))}=\frac{5}{3+t}.
	\]
	These two ratios are equal precisely at $t=1\frac{1}{3}$. Consequently,
	\[
	\argmax(\a,\bbul(t))=
	\begin{cases}
		\{u_1\}, & t<\frac13,\\[4pt]
		\{u_1,u_2\}, & t=\frac13,\\[4pt]
		\{u_2\}, & t>\frac13.
	\end{cases}
	\]
	In particular, the finite candidate set is fixed, but the maximizing valuation
	jumps as $t$ crosses the wall $t=\frac13$, illustrating the stratification
	phenomenon of \Cref{thm:stratification}.
\end{example}

This example shows that even under finite testing hypotheses,
the \L{}ojasiewicz exponent need not be globally linear in families,
and rigidity corresponds precisely to the absence of such changes
in the maximizing valuation.

Before continue to discuss stability and rigidity consequences, we record a general semicontinuity property of divisorial values in families.

\begin{proposition}[Upper semicontinuity of divisorial order in algebraic families]\label{prop:divisorial-usc}
	Let $T$ be a Noetherian scheme, let $X$ be a Noetherian integral scheme, and set $X_T:=X\times T$.
	Let $\pi:Y\to X_T$ be a proper birational morphism with $Y$ normal and $Y \to T$ a flat family, and let $E\subseteq Y$ be an effective Cartier divisor that dominates $T$. Let $\cI\subseteq \cO_{X_T}$ be a coherent ideal sheaf, and for $t\in T$ let $\cI_t$ be its fiber on $X\times\{t\}$.
	Define
	\[
	\alpha(t):=\ord_E(\cI_t)\in \ZZ_{\ge 0}\cup\{\infty\}.
	\]
	Then, for each $m\ge 0$, the locus
	\[
	T_{\ge m}:=\{\,t\in T:\alpha(t)\ge m\,\}
	\]
	is Zariski closed in $T$. Equivalently, $t\mapsto \alpha(t)$ is upper semicontinuous on $T$.
\end{proposition}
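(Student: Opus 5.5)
The plan is to turn the inequality $\alpha(t)\ge m$ into the vanishing of finitely many sections of a $T$-flat coherent sheaf on the fibres, and then to show that the vanishing locus is closed.

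\emph{Step 1: the sheaf and its sections.} Consider the subscheme $mE\subseteq Y$ defined by $\cO_Y(-mE)$, so that $\cO_{mE}=\cO_Y/\cO_Y(-mE)$. Let $\mathcal G$ be the image of $\pi^{*}\cI\to\cO_Y\to\cO_{mE}$. I would work on a finite affine cover of $Y$. On each chart choose generators $f_1,\dots,f_r$ of $\cI$ on the corresponding chart of $X_T$, and let $s_j$ be the image of $\pi^*f_j$ in $\cO_{mE}$.

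\emph{Step 2: reformulate $\alpha(t)\ge m$ on the fibre.} I would interpret $\ord_E(\cI_t)$ as the minimum over the components of $E_t$ of the order of $\cI_t\cO_{Y_t}$ along that component. Since $E$ is Cartier and $Y\to T$ is flat, I expect $mE\to T$ to be flat after shrinking to the flat locus; I will assume this. Granting it, $\cO_{mE}\otimes k(t)=\cO_{mE_t}$. Then $\alpha(t)\ge m$ holds exactly when every restricted section $s_j|_{mE_t}$ vanishes at the generic points of the components of $E_t$. These are the associated points of $\cO_{mE_t}$, because $E_t$ is Cartier on the normal fibre and hence $S_1$. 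So the condition is that $s_j|_{mE_t}=0$ for all $j$.

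\emph{Step 3: closedness of the vanishing locus.} Now I show that $Z_j:=\{t: s_j|_{mE_t}=0\}$ is closed; then $T_{\ge m}=\bigcap_j Z_j$ is closed.
\begin{itemize}
\item \emph{Constructibility.} The complement of $Z_j$ is the image in $T$ of the set $W_j=\{y\in mE : s_j(y)\ne0 \text{ in } \cO_{mE,y}\otimes k(\pi_T(y))\}$. For a $T$-flat finitely presented sheaf, $W_j$ is constructible, so by Chevalley the complement of $Z_j$ is constructible.
\item \emph{Stability under specialization.} It then suffices to show that $Z_j$ is stable under specialization. Pass to a DVR $\Spec A\to T$ joining a generization $\eta$ to a specialization $t$. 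If $s_j$ vanishes on the generic fibre, then by $A$-flatness $s_j$ is $A$-torsion in $\cO_{mE}\otimes A$. Since this module is $A$-torsion-free, $s_j=0$ on the whole base change, and in particular on the special fibre.
\end{itemize}
So vanishing spreads from generic to special fibres, and $\alpha$ can only jump up under specialization, as required for upper semicontinuity.

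\emph{Main obstacle.} The hardest point is the flatness of $mE$, equivalently of $E$, over $T$ together with the identification of the associated points of the fibres. The hypothesis that $E$ dominates $T$ gives flatness only over a dense open subset. Over the remaining locus I would either use Noetherian induction, stratifying $T$ into finitely many pieces on which $E$ is flat and checking closure across strata by the same DVR argument, or else replace $\mathcal G$ by its flattening stratification. The DVR specialization step is where I expect the real care to be needed.
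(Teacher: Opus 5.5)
Your core mechanism is sound, but only once $E$ (hence $mE$) is flat over $T$. That mechanism is fibrewise vanishing of the sections $s_j$ of $\cO_{mE}$, constructibility via Chevalley, and stability under specialization via a DVR and torsion-freeness. Your plan for removing the flatness assumption is a genuine gap. If the closed point of $\Spec A\to T$ lands where $E$ is not $T$-flat, then $\cO_{mE}\otimes A$ can have $A$-torsion, and a section vanishing on the generic fibre need not vanish on the special one. Stratifying $T$ into flat pieces, or using a flattening stratification, does not help, since the DVR still crosses strata.

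In fact nothing can help, because the conclusion fails there. Take $T=\AA^1_s$, $X=\AA^1_x$, $Y=X_T$, $\pi=\mathrm{id}$, $E=\mathrm{div}(sx)$ (Cartier, dominating $T$ through the component $x=0$), $\cI=(x)$, $m=1$.
\begin{itemize}
\item For $s\neq 0$, $E_s$ is the point $x=0$ and $\alpha(s)=1$.
\item At $s=0$, $E_0$ is the whole fibre, so $\ord_{E_0}$ is not even defined. The containment definition gives $x\notin (sx)\cO_{Y_0}=0$, i.e.\ $\alpha(0)=0$.
\item Hence $\{\alpha\ge 1\}=T\setminus\{0\}$ is not closed.
\end{itemize}
In your DVR picture with $A=k[s]_{(s)}$, $\bar x\in A[x]/(sx)$ is precisely an $s$-torsion section that dies on the generic fibre but not on the special one. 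The right repair is to add $T$-flatness of $E$ as a hypothesis. Equivalently, a local equation $h$ of $E$ stays a nonzerodivisor on every $Y_t$, so $E_t$ is a Cartier divisor on $Y_t$. This is needed anyway for $\ord_{E_t}$ to make sense, and ``$E$ dominates $T$'' does not supply it.

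Step~2 also assumes more than is given. Normality of $Y$ plus flatness of $Y\to T$ does not make the fibres $Y_t$ normal. So you cannot invoke DVRs at the generic points of $E_t$, nor the absence of embedded points of $\cO_{mE_t}$. Also, if $E_t$ is non-reduced along a component $F$, vanishing in $\cO_{mE_t}$ means $\ord_F\ge m\cdot\mathrm{mult}_F(E_t)$, not $\ord_F\ge m$.

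The clean fix is to define $\alpha(t)$ as the largest $m$ with $\cI_t\cO_{Y_t}\subseteq\cO_{Y_t}(-mE_t)$. Then Step~2 is a tautology: $\cO_{mE}\otimes k(t)=\cO_{Y_t}/h^m\cO_{Y_t}$ holds by right exactness alone, and flatness is needed only to keep $h$ a nonzerodivisor on $Y_t$. This definition agrees with the divisorial order when $Y_t$ is normal along a reduced $E_t$. Finally, Chevalley and the constructibility of $W_j$ require $Y\to T$ of finite type, which the statement leaves implicit. They also need a short Noetherian induction using generic flatness of $\cO_{mE}/\cO_{mE}\,s_j$; flatness of $\cO_{mE}$ alone does not give it.

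Your route is genuinely different from the paper's. The paper forms $\cQ_m=(\cJ+\cO_Y(-mE))/\cO_Y(-mE)$, restricts it to $E$, and uses properness of $p\colon E\to T$ to make $p(\Supp\cG_m)$ closed. It then identifies $T_{\ge m}$ with $T\setminus p(\Supp\cG_m)$. That identification has three problems:
\begin{itemize}
\item It would make $T_{\ge m}$ open rather than closed.
\item It replaces the fibrewise condition by the stalk condition $\cJ_{\eta_t}\subseteq\cO_Y(-mE)_{\eta_t}$ on the total space, tested only at $\eta_t$, although $p(\Supp\cG_m)$ sees every point of $E$ over $t$.
\item It asserts from normality of $Y$ alone that $\cO_{Y_t,\eta_t}$ is a DVR.
\end{itemize}
For example, take $Y=\AA^2_{x,s}$, $E=\{x=0\}$, $\cI=(x-s)$ and $m=1$. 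Then $\cG_1\cong(x,s)/(x)$ is supported on all of $E$, so $T\setminus p(\Supp\cG_1)=\varnothing$, whereas $T_{\ge 1}=\{0\}$. Your fibrewise-vanishing plus specialization argument is the mechanism that actually yields closedness. It needs flatness of $E$ and finite type, but not properness of $E\to T$.
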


\begin{proof}
	Set $\cJ:=\pi^{-1}\cI\cdot\cO_Y$, a coherent ideal sheaf on $Y$, and let $p:E\to T$ be the induced morphism.
	Since $\pi$ is proper, $p$ is proper.
	
	Fix $m\ge 0$. Consider the coherent $\cO_Y$--module
	\[
	\cQ_m:=\frac{\cJ+\cO_Y(-mE)}{\cO_Y(-mE)}.
	\]
	Let $\cG_m:=\cQ_m|_E$. Then, $\Supp(\cG_m)$ is closed in $E$, hence $p(\Supp(\cG_m))$ is closed in $T$.
	We claim that
	\[
	T_{\ge m}=T\setminus p(\Supp(\cG_m)).
	\]
	Indeed, let $t\in T$ and let $\eta_t$ be the generic point of the prime divisor $E_t:=E\times_T\kappa(t)$.
	Since $Y$ is normal, the local ring $\cO_{Y_t,\eta_t}$ is a DVR with valuation $\ord_{E_t}$.
	By definition of $\cQ_m$, the stalk $(\cQ_m)_{\eta_t}$ vanishes if and only if
	\[
	\cJ_{\eta_t}\subseteq \cO_{Y,\eta_t}(-mE),
	\]
	and after restricting to the fiber this is equivalent to
	\[
	(\cJ_t)_{\eta_t}\subseteq \cO_{Y_t,\eta_t}(-mE_t)\quad\Longleftrightarrow\quad \ord_{E_t}(\cJ_t)\ge m.
	\]
	Because $\cJ=\pi^{-1}\cI\cdot\cO_Y$, we have $\ord_{E_t}(\cJ_t)=\ord_{E_t}(\cI_t)=\alpha(t)$.
	Thus, $(\cQ_m)_{\eta_t}=0$ if and only if $\alpha(t)\ge m$, i.e.\ $\eta_t\notin \Supp(\cG_m)$.
	Equivalently, $t\notin p(\Supp(\cG_m))$ if and only if $\alpha(t)\ge m$, proving the claim.
	Hence, $T_{\ge m}$ is closed.
\end{proof}

\subsection{Geodesics of filtrations and fractional linearity of $1/L$}

We now turn to one-parameter families of filtrations and prove a fractional linearity statement
under a maximizer stability hypothesis. This provides a rigidity mechanism:
\emph{affineness of $1/L$ characterizes common extremal valuations along the path.}

\begin{definition}[Valuation-concave paths]\label{def:valuation-concave}
	Let $\{(\abul(t),\bbul(t))\}_{t\in[0,1]}$ be a one-parameter family of pairs of
	filtrations of $\m$--primary ideals.
	We say it is \emph{valuation-concave} (or \emph{valuation-affine}) if for every divisorial valuation $v$ centered at $\m$,
	the functions
	\[
	t\longmapsto v(\abul(t)) \text{ and } t\longmapsto v(\bbul(t))
	\]
	are affine on $[0,1]$.
\end{definition}

Assume now that the finite candidate set hypothesis holds uniformly along $t\in[0,1]$ for
some finite set $\mathcal V=\{v_1,\dots,v_N\}$, and that the family is valuation-concave.
Recall that $\alpha_i(t):=v_i(\abul(t)) \text{ and } \beta_i(t):=v_i(\bbul(t)).$
Then, $\alpha_i,\beta_i$ are affine in $t$.
Define
\[
L(t):=\Loj_{\bbul(t)}(\abul(t))=\max_{1\le i\le N}\frac{\alpha_i(t)}{\beta_i(t)}.
\]

\begin{theorem}[Piecewise fractional linearity of $1/L$]\label{thm:concavity}
	Assume that 
	\begin{enumerate} 
		\item for all $t\in[0,1]$ one has the finite candidate set hypothesis, as in \Cref{def:finite-candidates},
	with respect to a fixed set $\mathcal V=\{v_1,\dots,v_N\}$;
	\item the path $t\mapsto (\abul(t),\bbul(t))$ is valuation-concave, in the sense of Definition~\ref{def:valuation-concave}, and $\beta_i(t)>0$.
	\end{enumerate}
	Then, there exists a finite subset $\Sigma\subset[0,1]$ such that, on every connected component
	$I\subset[0,1]\setminus\Sigma$, one can choose an index $j=j(I)\in\{1,\dots,N\}$ with
	\[
	\frac{1}{L(t)}=\frac{\beta_j(t)}{\alpha_j(t)}\quad\text{for all }t\in I.
	\]
	In particular, $t\mapsto 1/L(t)$ is fractional linear (hence $C^\infty$) on each such $I$, and its second
	derivative has constant sign on $I$. Moreover, on a given component $I$ the restriction $t\mapsto 1/L(t)$
	is affine if and only if either $\alpha_j$ is constant on $I$ or the affine functions $\alpha_j$ and $\beta_j$ are proportional on $I$.
\end{theorem}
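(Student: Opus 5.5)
The plan is to reduce everything to elementary facts about finitely many ratios of affine functions. By hypothesis (1), for every $t\in[0,1]$ we have
\[
L(t)=\max_{1\le i\le N}\rho_i(t),\qquad \rho_i(t):=\frac{\alpha_i(t)}{\beta_i(t)}.
\]
By hypothesis (2), each $\alpha_i$ and $\beta_i$ is affine on $[0,1]$ and $\beta_i>0$, so each $\rho_i$ is a well-defined fractional linear function on $[0,1]$. The whole argument is then to control where the ordering among the $\rho_i$ can change.

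\emph{Step 1: the set $\Sigma$.} For each pair $i\neq k$, consider
\[
D_{ik}(t):=\alpha_i(t)\beta_k(t)-\alpha_k(t)\beta_i(t).
\]
This is a polynomial in $t$ of degree at most $2$. Since $\beta_i\beta_k>0$, the sign of $D_{ik}(t)$ equals the sign of $\rho_i(t)-\rho_k(t)$. Each $D_{ik}$ is either identically zero or has at most two zeros. I also want to remove the points where a candidate numerator vanishes, so that $1/L$ is finite. I therefore define $\Sigma$ to be the finite union of:
\begin{itemize}
\item the endpoints $\{0,1\}$;
\item the zeros of all $D_{ik}$ that are not identically zero;
\item the zeros of all $\alpha_i$ that are not identically zero.
\end{itemize}
This set is finite.

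\emph{Step 2: constant maximizer on components.} Let $I$ be a connected component of $[0,1]\setminus\Sigma$; it is an open interval. On $I$, every $D_{ik}$ is either identically zero or of constant sign, by the intermediate value theorem. Hence the total preorder on $\{1,\dots,N\}$ given by comparing the $\rho_i(t)$ does not depend on $t\in I$. In particular, the maximizer set $\Max(\abul(t),\bbul(t))$, in the sense of Definition~\ref{def:maximizers}, is constant on $I$. This is the one-dimensional, algebraic analogue of the stratification in \Cref{thm:stratification}, but here the strata are made explicit through the polynomials $D_{ik}$. I then choose any $j=j(I)$ in this constant maximizer set, so that $L(t)=\alpha_j(t)/\beta_j(t)$ on $I$. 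If $\alpha_j\not\equiv0$, then $\alpha_j$ has no zero on $I$ by the construction of $\Sigma$, and inverting gives $1/L=\beta_j/\alpha_j$ on $I$.

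\emph{Step 3: smoothness and convexity.} Write $\beta_j(t)=at+b$ and $\alpha_j(t)=ct+d$. A direct computation gives
\[
\Bigl(\frac{\beta_j}{\alpha_j}\Bigr)''=\frac{2c\,(bc-ad)}{(ct+d)^3}.
\]
The denominator $(ct+d)^3$ has constant sign on $I$, since $\alpha_j$ does not vanish there. The numerator $2c(bc-ad)$ is a constant. So the second derivative has constant sign on $I$, and $1/L$ is $C^\infty$ there.

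\emph{Step 4: the affineness criterion.} The restriction of $1/L$ to $I$ is affine if and only if the second derivative vanishes identically, i.e. $c(bc-ad)=0$. This happens exactly when either $c=0$, meaning $\alpha_j$ is constant on $I$, or $ad-bc=0$, meaning the affine functions $\alpha_j$ and $\beta_j$ are proportional.

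\emph{Main obstacle: degenerate cases, not the algebra.} Two situations need care.
\begin{itemize}
\item \emph{Ties.} Several indices may have identically equal ratios on $I$, so that $D_{ik}\equiv0$. This is harmless: any choice of $j$ within the constant maximizer set gives the same function, and this is why the statement says "one can choose" $j$.
\item \emph{Vanishing numerator.} The chosen maximizer may have $\alpha_j\equiv0$ on $I$. Then every $\alpha_i$ is zero on $I$, since all $\alpha_i\ge0$ and $j$ is a maximizer. In that case $L\equiv0$ and $1/L\equiv\infty$ on $I$. I would handle this by the paper's convention $1/0=\infty$, or by noting that it cannot occur when some $\alpha_i>0$ (for instance, when the $\abul(t)$ are $\m$-primary proper filtrations). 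I will state this explicitly in the write-up rather than silently exclude it.
\end{itemize}
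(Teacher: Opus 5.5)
Your proposal is correct and is essentially the paper's proof. It uses the same walls $\Sigma$ (the endpoints plus the zeros of the non-identically-zero quadratics $\alpha_i\beta_k-\alpha_k\beta_i$), the same constancy of the maximizer set on each component, and the same second-derivative computation for the affineness criterion. The extra zeros of $\alpha_i$ you add to $\Sigma$ are harmless but redundant, since a nonnegative affine function on $[0,1]$ with an interior zero vanishes identically.

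Your explicit treatment of the case $\alpha_j\equiv 0$ is more careful than the paper, which asserts $\alpha_j>0$ merely because the ideals are $\m$-primary. That inference fails: the filtration with $\a_p=\m$ for all $p$ has $v(\abul)=0$ and hence $L\equiv 0$. So keep your caveat, but drop the parenthetical ``$\m$-primary'' justification, which repeats the same error. Instead require something like $\a_p\subseteq\m^{\lfloor Cp\rfloor}$, or simply that some $\alpha_i\not\equiv 0$.
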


\begin{proof}
	For $1\le i<j\le N$ set
	\[
	h_{ij}(t):=\alpha_i(t)\beta_j(t)-\alpha_j(t)\beta_i(t).
	\]
	By valuation-concavity, each $\alpha_\ell,\beta_\ell$ is affine in $t$, hence each $h_{ij}$ is a polynomial
	of degree at most $2$. Define
	\[
	\Sigma:=\{0,1\}\ \cup\ \bigcup_{\substack{1\le i<j\le N\\ h_{ij}\not\equiv 0}}\{\,t\in[0,1]:h_{ij}(t)=0\,\}.
	\]
	Then, $\Sigma$ is finite, since each nonzero polynomial $h_{ij}$ has finitely many zeros (or is identically zero, in which case the pair $(i,j)$ does not contribute to ``walls'' and can be ignored) and there are only finitely many
	pairs $(i,j)$.
	
	Let $I\subset[0,1]\setminus\Sigma$ be a connected component. Fix $i<j$.
	If $h_{ij}\equiv 0$, then $\alpha_i(t)\beta_j(t)=\alpha_j(t)\beta_i(t)$ for all $t$, hence
	$\alpha_i(t)/\beta_i(t)=\alpha_j(t)/\beta_j(t)$ for all $t$ (using $\beta_i(t)\beta_j(t)>0$).
	If $h_{ij}\not\equiv 0$, then by definition of $\Sigma$ we have $h_{ij}(t)\neq 0$ for all $t\in I$, and since
	$h_{ij}$ is continuous it has constant sign on $I$. Using again $\beta_i(t)\beta_j(t)>0$, this implies that the comparison
	\[
	\frac{\alpha_i(t)}{\beta_i(t)}\ \gtreqless\ \frac{\alpha_j(t)}{\beta_j(t)}
	\quad\Longleftrightarrow\quad
	h_{ij}(t)\ \gtreqless\ 0
	\]
	is independent of $t\in I$. Therefore, the total ordering (with ties) of the finite set of numbers
	$\{\alpha_1(t)/\beta_1(t),\dots,\alpha_N(t)/\beta_N(t)\}$ does not change with $t\in I$. Hence, the maximizer set
	$\Max(\abul(t),\bbul(t))$ is constant on $I$. Choose any index $j=j(I)$ in this constant maximizer set. Then, for all $t\in I$,
	\[
	L(t)=\max_{1\le i\le N}\frac{\alpha_i(t)}{\beta_i(t)}=\frac{\alpha_j(t)}{\beta_j(t)}.
	\]
	Since $\abul(t)$ consists of $\m$--primary ideals and $v_j$ is centered at $\m$, we have $\alpha_j(t)=v_j(\abul(t))>0$ on $[0,1]$.
	Write $\alpha_j(t)=a_0+a_1 t \text{ and } \beta_j(t)=b_0+b_1 t.$
	Then, on $I$,
	\[
	\frac{1}{L(t)}=\frac{b_0+b_1 t}{a_0+a_1 t},
	\]
	a ratio of affine functions with positive denominator, so fractional linear and $C^\infty$ on $I$.
	Differentiating twice gives
	\[
	\frac{d^2}{dt^2}\left(\frac{b_0+b_1 t}{a_0+a_1 t}\right)
	=
	-\frac{2a_1(b_1 a_0-b_0 a_1)}{(a_0+a_1 t)^3},
	\]
	whose sign is constant on $I$ because the denominator is positive and the numerator is constant.
	Finally, $\beta_j/\alpha_j$ is affine on $I$ if and only if $a_1(b_1 a_0-b_0 a_1) = 0$, i.e.\ if and only if either $\alpha_j$ is constant or $\alpha_j$ and $\beta_j$ are proportional affine functions on $I$.
\end{proof}

\begin{remark}[A cleaner sufficient condition]
	If $\alpha_j(t)$ is constant in $t$ (or more generally $a_1=0$), then $\beta_j/\alpha_j$ is affine
	and hence concave. This occurs in many geometric situations where $\abul$ is fixed and only
	$\bbul$ varies.
\end{remark}

\begin{corollary}[Geodesic rigidity from global affineness]\label{cor:geodesic-rigidity}
	Assume the hypotheses of \Cref{thm:concavity}. 
	If the function $t\mapsto 1/L(t)$ is affine on $[0,1]$, then there exists an index $j\in\{1,\dots,N\}$ such that
	\[
	L(t)=\frac{\alpha_j(t)}{\beta_j(t)}\quad\text{for all }t\in[0,1].
	\]
	Equivalently, $j\in\Max(\abul(t),\bbul(t))$ for every $t\in[0,1]$.
\end{corollary}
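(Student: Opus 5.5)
The plan is to rewrite $1/L$ as a minimum of finitely many fractional linear functions, and then use a polynomial identity principle to carry the equality from one chamber to all of $[0,1]$.

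First, record the setup. By hypothesis (1) of \Cref{thm:concavity},
\[
\frac{1}{L(t)}=\min_{1\le i\le N} g_i(t),\qquad g_i(t):=\frac{\beta_i(t)}{\alpha_i(t)} .
\]
As in the proof of \Cref{thm:concavity}, each $\alpha_i(t)=v_i(\abul(t))>0$ on $[0,1]$, because $\abul(t)$ is $\m$--primary and $v_i$ is centered at $\m$. Hence every $g_i$ is a well-defined quotient of affine functions with nonvanishing denominator on all of $[0,1]$. Write $\ell(t)$ for the affine function that agrees with $1/L(t)$ by assumption.

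Next, pick a chamber. Apply \Cref{thm:concavity} to get the finite set $\Sigma\subset[0,1]$. Since $\Sigma$ is finite, $[0,1]\setminus\Sigma$ contains a nondegenerate open interval $I$. On $I$ there is an index $j=j(I)$ with $\ell(t)=g_j(t)$ for all $t\in I$.

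The key step is to promote this equality from $I$ to the whole of $[0,1]$. Write $\alpha_j(t)=a_0+a_1t$ and $\beta_j(t)=b_0+b_1t$. The equality on $I$ says that the polynomial
\[
P(t):=(b_0+b_1t)-\ell(t)\,(a_0+a_1t),
\]
which has degree at most $2$, vanishes on the infinite set $I$. Therefore $P\equiv 0$. Since $\alpha_j$ does not vanish on $[0,1]$, this gives $g_j(t)=\ell(t)=1/L(t)$ for every $t\in[0,1]$, that is, $L(t)=\alpha_j(t)/\beta_j(t)$ throughout.

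Finally, since $1/L=\min_i g_i$, the identity $g_j\equiv\min_i g_i$ says that $\alpha_j(t)/\beta_j(t)=\max_i \alpha_i(t)/\beta_i(t)$ for every $t$. This is exactly the statement $j\in\Max(\abul(t),\bbul(t))$ for all $t\in[0,1]$.

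I expect no step to be a real obstacle. The only point needing care is ensuring that $g_j$ is defined on all of $[0,1]$ and not just on $I$, so that the identity principle applies globally. This is supplied by the positivity of $\alpha_j$ on $[0,1]$. If one wanted to avoid relying on $\m$--primariness, an alternative is to argue with $L=\max_i \alpha_i/\beta_i$ directly, using $\beta_i>0$ from hypothesis (2).
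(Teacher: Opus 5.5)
Your proposal is correct and follows essentially the same route as the paper's proof. You choose a component $I$ of $[0,1]\setminus\Sigma$ from \Cref{thm:concavity}, note that $\beta_j(t)-\ell(t)\alpha_j(t)$ is a polynomial of degree at most $2$ vanishing on $I$ and hence identically zero, and then invert using the positivity of $\alpha_j$ and $\beta_j$ to conclude $L(t)=\alpha_j(t)/\beta_j(t)$ and $j\in\Max(\abul(t),\bbul(t))$ for all $t\in[0,1]$.
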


\begin{proof}
	By \Cref{thm:concavity}, there exists a finite set $\Sigma\subset[0,1]$ such that on each connected component
	$I\subset[0,1]\setminus\Sigma$ one can choose an index $j=j(I)$ with
	\[
	\frac{1}{L(t)}=\frac{\beta_j(t)}{\alpha_j(t)}\quad\text{for all }t\in I.
	\]
	Fix one such component $I$ and let $j=j(I)$. By assumption, $1/L(t)$ is affine on $[0,1]$, say
	\[
	\frac{1}{L(t)}=c_0+c_1 t\quad (t\in[0,1]).
	\]
	Thus, on $I$ we have
	\[
	\beta_j(t)=\bigl(c_0+c_1 t\bigr)\alpha_j(t).
	\]
	Define $F(t):=\beta_j(t)-\bigl(c_0+c_1 t\bigr)\alpha_j(t).$
	Since $\alpha_j$ and $\beta_j$ are affine functions of $t$ (valuation--concavity), the function $F(t)$ is a polynomial
	of degree at most $2$. As $F$ vanishes on the nonempty open interval $I$, it follows that $F\equiv 0$ on $[0,1]$.
	Hence,
	\[
	\frac{\beta_j(t)}{\alpha_j(t)}=c_0+c_1 t\quad\text{for all }t\in[0,1].
	\]
	Since $\alpha_j(t)>0$ for all $t$ (because $\abul(t)$ consists of $\m$--primary ideals and $v_j$ is centered at $\m$),
	we may invert to obtain
	\[
	L(t)=\frac{\alpha_j(t)}{\beta_j(t)}\quad\text{for all }t\in[0,1].
	\]
	Finally, this identity implies $j\in\Max(\abul(t),\bbul(t))$ for every $t$ by the definition of $\Max$.
\end{proof}



\part{Polyhedral Models and Explicit Computation} \label{part:polyhedral}

In this part, we specialize the general valuative framework to toric and
Newton nondegenerate settings, where the \L{}ojasiewicz exponent admits explicit polyhedral
descriptions. We will also give a better understanding of \L{}ojasiewicz exponent at infinity.


\section{Toric and monomial models: polyhedra, valuations, and formulas}\label{sec:toric}

This section provides the explicit polyhedral model underlying finite--max computations in toric and monomial settings.  We recall how integral closure of monomial ideals is encoded by Newton polyhedra and how toric (monomial) divisorial valuations correspond to linear functionals/support functions, so that valuative containment thresholds become linear optimization problems (\Cref{thm:int-closure-monomial}).  From this perspective we derive Newton/facet formulas for the algebraic \L{}ojasiewicz exponent, showing that $\Loj_{\bbul}(\abul)$ is computed by finitely many toric valuations determined by facet data (\Cref{thm:facet-formula,thm:toric-filtration-formula}).  In particular, the finite candidate set hypothesis becomes completely concrete: the maximizing valuation comes from a facet normal, yielding uniform finite-max formulas and effective computation (\Cref{cor:finite-max-toric-polyhedral}).

\subsection{Affine semigroup rings and monomial ideals}

For basic terminology in toric geometry, we refer to \cite{CLS,Fulton}. Let $N\cong \ZZ^n$ be a lattice and let $M:=\Hom_\ZZ(N,\ZZ)$ be the dual lattice.
Fix a finitely generated (not necessarily saturated) subsemigroup
$S\subseteq M$
such that the rational cone
\[
\sigma^\vee:=\RR_{\ge 0}\cdot S \ \subseteq  M_\RR:=M\otimes_\ZZ \RR
\]
is a pointed rational polyhedral cone of full dimension $n$ (equivalently, $\sigma^\vee$
contains no nontrivial linear subspace). Let $\sigma\subseteq N_\RR$ be the dual cone:
\[
\sigma:=\{u\in N_\RR:\ \ip{u}{x}\ge 0\ \text{for all }x\in\sigma^\vee\}.
\]
Consider the affine semigroup ring $\kk[S]$ and its maximal monomial ideal
\[
\m_S := ( \chi^m : m\in S\setminus\{0\} ).
\]
We work in the local ring
$R := \kk[S]_{\m_S},$ which is an excellent Noetherian local domain, not necessarily normal.
This is the local ring of the affine toric variety $X_\sigma=\Spec(\kk[S])$ at its
distinguished torus-fixed point.

An ideal $I\subseteq R$ is \emph{monomial} if it is generated by monomials $\chi^m$,
$m\in S$. Equivalently, it is the extension to $R$ of a monomial ideal of $\kk[S]$.

\begin{remark}
	In $R=\kk[x_1,\dots,x_n]_{(x_1,\dots,x_n)}$ one recovers the usual monomial ideals
	by taking $S=\NN^n$ and $\chi^m=x^m$.
\end{remark}

\subsection{Newton polyhedra in the toric setting}

For a monomial ideal $I\subseteq R$, define its exponent set
\[
E(I):=\{m\in S:\ \chi^m\in I\}\subseteq S\subseteq M.
\]

\begin{definition}[Newton polyhedron]\label{def:NP-toric}
	The \emph{Newton polyhedron} of a monomial ideal $I\subseteq R$ is
	\[
	\NP(I):=\conv(E(I))+\sigma^\vee\ \subseteq  M_\RR.
	\]
\end{definition}
Particularly, if $I$ is generated by monomials $\chi^{m_1}, \dots, \chi^{m_r}$, then $\NP(I) = \conv(m_1, \dots, m_r) + \sigma^\vee$ is a closed rational polyhedron.

\begin{lemma}[Basic properties] \label{lem:NP-basic}
	Let $I,J\subseteq R$ be monomial ideals. Then:
	\begin{enumerate}[label=(\alph*),leftmargin=2em]
		\item $\NP(I)$ is a closed convex set satisfying $\NP(I)+\sigma^\vee=\NP(I)$.
		\item $\NP(IJ)=\NP(I)+\NP(J)$ (Minkowski sum).
		\item $\NP(I^p)=p \cdot \NP(I)$ for $p\ge 1$.
	\end{enumerate}
\end{lemma}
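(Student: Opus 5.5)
The plan is to prove the three items in order. Each follows from the definition $\NP(I)=\conv(E(I))+\sigma^\vee$ together with elementary convex geometry of the pointed cone $\sigma^\vee$. Throughout I use that $E(I)$ is an ideal of the semigroup, so $E(I)+S\subseteq E(I)$, and that $E(I)$ is generated by the finitely many exponents $m_1,\dots,m_r$ of a monomial generating set: $E(I)=\bigcup_k (m_k+S)$.

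For (a), first note that $\conv(E(I))+\sigma^\vee=\conv(m_1,\dots,m_r)+\sigma^\vee$. The inclusion $\supseteq$ is clear. For $\subseteq$, every $m\in E(I)$ has the form $m_k+s$ with $s\in S\subseteq\sigma^\vee$; this set is convex and contains $E(I)$, so it contains $\conv(E(I))$, and adding $\sigma^\vee$ changes nothing since $\sigma^\vee+\sigma^\vee=\sigma^\vee$. Next, a Minkowski sum of a compact polytope and a closed cone is closed. To see this, take a convergent sequence $a_j+c_j$ with $a_j$ in the polytope and $c_j\in\sigma^\vee$, pass to a subsequence along which $a_j$ converges, and then $c_j$ converges in the closed set $\sigma^\vee$. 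Convexity is immediate. Finally, $\NP(I)+\sigma^\vee=\NP(I)$ because $\sigma^\vee+\sigma^\vee=\sigma^\vee$ and $0\in\sigma^\vee$.

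For (b), the monomial generators of $IJ$ are the $\chi^{m_k+n_l}$, where $\chi^{n_l}$ generate $J$, so
\[
\NP(IJ)=\conv\{m_k+n_l\}+\sigma^\vee .
\]
I then invoke the standard identity $\conv(A+B)=\conv(A)+\conv(B)$ for finite sets $A,B$. The inclusion $\subseteq$ holds because the right side is convex and contains $A+B$. For $\supseteq$, write $\sum\lambda_k a_k+\sum\mu_l b_l=\sum_{k,l}\lambda_k\mu_l(a_k+b_l)$. Combining this with $\sigma^\vee+\sigma^\vee=\sigma^\vee$ gives $\NP(IJ)=\NP(I)+\NP(J)$.

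For (c), I induct on $p$ using (b), which gives $\NP(I^p)=\NP(I)+\cdots+\NP(I)$ with $p$ summands. It then remains to check $P+\cdots+P=pP$ for a convex set $P$. The inclusion $pP\subseteq P+\cdots+P$ is trivial. The reverse inclusion follows from convexity, since $x_1+\cdots+x_p=p\cdot\frac{1}{p}(x_1+\cdots+x_p)$.

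The only mildly delicate point is the closedness in (a). It relies on $\sigma^\vee$ being closed and $\conv(m_1,\dots,m_r)$ being compact, and both hold by hypothesis. Everything else is routine, so I expect no real obstacle.
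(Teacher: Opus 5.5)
Your proposal is correct and follows essentially the same route as the paper. The paper cites standard references and sketches (b) via $E(IJ)=E(I)+E(J)$ plus convexity, and (c) by iterating (b). The only differences are cosmetic: you work with finite monomial generating sets rather than the full exponent sets. You also supply details the paper leaves implicit, namely closedness (compact set plus closed cone) and the identity $P+\cdots+P=pP$ for convex $P$ used in (c).
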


\begin{proof}
	All statements are standard for Newton polyhedra of monomial ideals; see, e.g., \cite{Sturmfels,TeissierNP}.
	For completeness: (b) follows from $E(IJ)=E(I)+E(J)$ and convexity, and (c) follows by iterating (b).
\end{proof}

\subsection{Toric (monomial) valuations and support functions}

Each vector $u\in \sigma\cap N$ defines a monomial valuation on $\kk[S]$ by
\[
v_u(\chi^m):=\ip{u}{m} \quad (m\in S),
\]
and we extend it to $R=\kk[S]_{\m_S}$.

\begin{definition}[Toric valuation]\label{def:toric-valuation}
	For $u\in \sigma\cap N$, let $v_u$ denote the valuation on $R$ determined by
	$v_u(\chi^m)=\ip{u}{m}$. We call $v_u$ a \emph{toric valuation}.
	It is centered at $\m_S$ if $u$ lies in the relative interior $\sigma^\circ$,
	equivalently $v_u(\m_S)>0$.
\end{definition}

\begin{definition}[Support function]\label{def:support}
	Let $C\subseteq M_\RR$ be a nonempty closed convex set with $C+\sigma^\vee=C$.
	For $u\in\sigma$ define the support function
	\[
	h_C(u):=\inf\{\ip{u}{x}: x\in C\}\in\RR_{\ge 0}.
	\]
	For a monomial ideal $I$ we write $h_I:=h_{\NP(I)}$.
\end{definition}

\begin{lemma}[Valuations and support functions]\label{lem:val-support}
	Let $I\subseteq R$ be a monomial ideal and let $u\in\sigma\cap N$. Then
	\[
	v_u(I)=h_I(u).
	\]
\end{lemma}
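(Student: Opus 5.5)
We need to prove $v_u(I)=h_I(u)$, and we will do it by establishing the two inequalities $v_u(I)\ge h_I(u)$ and $v_u(I)\le h_I(u)$ separately.

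First we pin down $v_u(I)$ as a minimum over monomials. The plan is to show
\[
v_u(I)=\min\{\ip{u}{m}: m\in E(I)\},
\]
which reduces to $\min_j \ip{u}{m_j}$ over a finite monomial generating set $\chi^{m_1},\dots,\chi^{m_r}$ of $I$. Every element $f$ of $I$ in $\kk[S]$ is a $\kk[S]$-linear combination of the $\chi^{m_j}$, hence a finite $\kk$-linear combination of monomials $\chi^{m}$ with $m\in m_j+S\subseteq E(I)$. The monomial valuation satisfies $v_u(f)=\min\{\ip{u}{m}:\chi^m\text{ appears in }f\}$. This is the defining property of the monomial valuation extended from $v_u(\chi^m)=\ip{u}{m}$, and it is an actual valuation since it is the restriction of the Gauss-type valuation on $\kk[M]$.

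In the localization $R$, an element of $I R$ has the form $f/g$ with $g\notin\m_S$. Such a $g$ has a nonzero constant term, so $v_u(g)=0$. Therefore $v_u(IR)=v_u(I\cap\kk[S])$ computed on polynomials. Since $\ip{u}{s}\ge0$ for $s\in S$ (as $u\in\sigma$), we have $\ip{u}{m_j+s}\ge\ip{u}{m_j}$. This gives $v_u(I)=\min_j\ip{u}{m_j}$, and the minimum is attained at $\chi^{m_j}$ itself.

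Next we compare with the support function. Since $\ip{u}{\cdot}$ is linear, its infimum over $\conv(m_1,\dots,m_r)$ equals $\min_j\ip{u}{m_j}$. On $\sigma^\vee$ it is $\ge0$ and equals $0$ at the origin. Hence
\[
h_I(u)=\inf_{x\in\conv(m_j)+\sigma^\vee}\ip{u}{x}=\min_j\ip{u}{m_j}.
\]
One must also check that $\NP(I)$, defined via all of $E(I)$, equals $\conv(m_1,\dots,m_r)+\sigma^\vee$. This holds because $E(I)=\bigcup_j(m_j+S)$ and $S\subseteq\sigma^\vee$; it is also the remark following Definition~\ref{def:NP-toric}. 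Combining the two displays gives the equality.

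The only delicate point is the first step: justifying that $v_u$ on a (possibly non-normal) $\kk[S]$ is computed as the minimum over the monomial support, and that passing to $R$ does not change $v_u(I)$. I would handle it by embedding $\kk[S]\subseteq\kk[M]$, where the monomial valuation is standard, and then using the unit argument above for the denominators.
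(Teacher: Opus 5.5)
Your proposal is correct and follows the same route as the paper. Both arguments identify $v_u(I)$ with $\min\{\ip{u}{m} : m\in E(I)\}$, attained at a monomial generator, and then note that passing to $\conv(E(I))+\sigma^\vee$ does not lower the infimum, since $\ip{u}{\cdot}$ is linear and nonnegative on $\sigma^\vee$. Your extra details (the weight valuation on $\kk[M]$, and units of $R$ having $v_u=0$) fill in the first step, which the paper leaves to the toric literature.
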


\begin{proof}
	Since $I$ is generated by monomials, $v_u(I)=\min\{\langle u,m\rangle: m\in E(I)\}$.
	Taking convex hull and adding $\sigma^\vee$ does not change the minimum because $\langle u,\cdot\rangle$ is linear and nonnegative on $\sigma^\vee$.
	Thus, $v_u(I)=h_{\NP(I)}(u)$; see, for instance, \cite{CLS}.
\end{proof}

\begin{definition}[Normalization by a fixed monomial ideal]\label{def:normalize-q}
	Fix an $\m_S$-primary monomial ideal $\q\subseteq R$.
	For $u\in\sigma^\circ\cap N$, set
	\[
	\widehat u := \frac{u}{h_{\q}(u)}\in\sigma^\circ\cap N_\RR,
	\quad\text{so that}\quad
	v_{\widehat u}(\q)=1.
	\]
	(Since $\q$ is $\m_S$--primary, $h_{\q}(u) > 0$ for all $u \in \sigma^\circ \cap N$.)
\end{definition}

\subsection{Integral closure and Newton polyhedra in toric rings}

The next result is standard and is the backbone of toric computations:
for monomial ideals, integral closure is detected by the Newton polyhedron.

\begin{theorem}[Integral closure of monomial ideals]\label{thm:int-closure-monomial}
	Let $R=\kk[S]_{\m_S}$ as above and let $I\subseteq R$ be a monomial ideal.
	Then, $\overline{I}$ is monomial and satisfies
	\[
	\chi^m\in \overline{I}\quad \Longleftrightarrow\quad m\in \NP(I)\cap S.
	\]
	Equivalently, $\NP(\overline{I})=\NP(I)$.
\end{theorem}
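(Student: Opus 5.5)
The plan is to prove the two inclusions $E(\overline{I})\subseteq \NP(I)\cap S$ and $\NP(I)\cap S\subseteq E(\overline I)$ separately, after first checking that $\overline I$ is monomial. Since $R=\kk[S]_{\m_S}$ is a localization of $\kk[S]$ and integral closure commutes with localization, it suffices to work in the $M$-graded ring $\kk[S]$. For monomiality, I would use that the integral closure of a homogeneous ideal in a ring graded by a torsion-free group is homogeneous. Concretely, an equation of integral dependence can be replaced by its graded components, or one can invoke the torus action when $\kk$ is infinite. Consequently $\overline I$ is generated by the monomials it contains.

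For the forward direction, let $\chi^m\in\overline I$. For every $u\in\sigma^\circ\cap N$, the toric valuation $v_u$ is centered at $\m_S$, so the valuative criterion (\Cref{thm:valuative-ic}) together with \Cref{lem:val-support} gives $\ip{u}{m}=v_u(\chi^m)\ge v_u(I)=h_I(u)$. By continuity of $u\mapsto \ip{u}{m}-h_I(u)$ and by scaling, the inequality extends to all $u\in\sigma$. Now $\NP(I)$ is a rational polyhedron whose recession cone is $\sigma^\vee$. Its inner normals therefore lie in $\sigma$, and $\NP(I)=\bigcap_{u\in\sigma}\{x:\ip{u}{x}\ge h_I(u)\}$. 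Hence $m\in\NP(I)\cap S$.

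For the converse, the point to handle is that $S$ need not be saturated. Let $\overline S:=\sigma^\vee\cap \ZZ S$ be its saturation. Then $\kk[\overline S]$ is the normalization of $\kk[S]$ and is a finite $\kk[S]$-module, so the conductor is a nonzero monomial ideal. I would pick a monomial $\chi^c$ in it, so that $c+\overline S\subseteq S$. Given $m\in\NP(I)\cap S$, rationality of $\NP(I)$ yields an integer $k\ge1$ and $e_1,\dots,e_k\in E(I)$ with $w:=km-\sum_j e_j\in\sigma^\vee$. Moreover $w\in\ZZ S$, so $w\in\overline S$. For $n=k\ell+r$ with $0\le r<k$, we write
\[
nm=\ell\sum_j e_j+\ell w+rm.
\]
Since $c+\ell w\in S$ and $rm\in S$, this gives $\chi^c(\chi^m)^n\in I^{k\ell}\cdot\chi^{rm}$. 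The missing $r$ factors of $I$ need separate care. I would fix a monomial $\chi^{e}\in I$ and enlarge the multiplier to $\chi^{c}\chi^{(k-1)e}$. This gives $\chi^{c'}(\chi^m)^n\in I^n$ for all $n\ge1$ with a fixed $\chi^{c'}\neq0$. The standard criterion for Noetherian domains then implies $\chi^m\in\overline I$. This criterion says that $x\in\overline I$ if and only if there is a nonzero $d$ with $dx^n\in I^n$ for all $n\gg0$; see \cite{HS}.

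Finally, $\NP(\overline I)=\NP(I)$ follows because $E(I)\subseteq E(\overline I)\subseteq \NP(I)$ and $\NP(I)+\sigma^\vee=\NP(I)$ by \Cref{lem:NP-basic}. I expect the main obstacle to be the converse in the non-normal case. There the polyhedral certificate only produces $w$ in the saturation $\overline S$, and the conductor trick above is exactly what repairs this. The remaining steps are bookkeeping.
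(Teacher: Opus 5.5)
Your proof is correct, but it takes a genuinely different route from the paper's.

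\textbf{The paper's route.} It uses the Rees-valuation half of \Cref{thm:valuative-ic}. It identifies the prime divisors of the normalized blowup of $I$ with toric valuations $v_{u_\rho}$, so that $\chi^m\in\overline I$ reduces to finitely many inequalities $\ip{u_\rho}{m}\ge h_I(u_\rho)$. These are then read off as the facet inequalities of $\NP(I)$. Monomiality of $\overline I$ is asserted there rather than proved.

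\textbf{Your route.} You instead (i) obtain monomiality from the $M$-grading; (ii) use only the trivial implication of the valuative criterion, applied to all $v_u$ with $u\in\sigma^\circ\cap N$, extended to all of $\sigma$ by continuity and homogeneity, and combined with the halfspace description of a closed convex set with recession cone $\sigma^\vee$; and (iii) prove sufficiency by an explicit certificate $d(\chi^m)^n\in I^n$, with the conductor of $\kk[S]\subseteq\kk[\overline S]$ absorbing the non-saturation of $S$.

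\textbf{What each buys.} The paper's argument is shorter and illustrates the finite-testing philosophy. However, it relies on the toric description of the normalized blowup. As written, it also only tests divisors centered at $\m_S$ against compact facets, which is adequate only for $\m_S$-primary $I$. For example, take $I=(x^2,xy)$ in $\kk[x,y]_{(x,y)}$. The compact-facet inequality $a+b\ge 2$ is satisfied by $y^2$, yet $y^2\notin\overline I$. The missing test is the Rees valuation $v_{(1,0)}$, which is not centered at the maximal ideal. Your continuity step is exactly what captures such boundary normals $u\in\partial\sigma$. So your argument covers the theorem as stated, for arbitrary monomial $I$ and non-saturated $S$, at the cost of some bookkeeping.

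\textbf{Two small remarks.} First, monomiality of $\overline I$ should rest on the cited graded result in \cite{HS}. Taking ``graded components of a dependence equation'' does not work verbatim, and the torus-action argument requires $\kk$ infinite. Second, you can avoid the conductor. Note that $(\chi^m)^k\in I^k\kk[\overline S]$, and that $\overline{I\kk[\overline S]}\cap\kk[S]=\overline I$ for the integral extension $\kk[S]\subseteq\kk[\overline S]$.
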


\begin{proof}
	We give a direct valuative proof that is self-contained in our context.
	
	Let $m\in S$. As in the proof of \Cref{thm:valuative-ic}, $\chi^m\in\overline{I}$ if and only if, for any prime divisor $E$ appearing on the  normalized blowup of $I$, 
	\[
	\ord_E(\chi^m)\ge \ord_E(I).
	\]
	Since $I$ is monomial, the blowup of $I$ is torus-invariant, and its normalization is a normal toric modification; in particular, its exceptional prime divisors $E$ centered at $\m_S$ are torus-invariant and correspond to rays $\rho$ of the associated fan with primitive generators $u_\rho \in \sigma^\circ \cap N$ (see, for example, \cite{CLS}). The corresponding divisorial valuation satisfies 
	$$\ord_E(\chi^m) = \langle u_\rho, m\rangle \text{ and } \ord_E(I) = v_{u_\rho}(I).$$

By Lemma~\ref{lem:val-support}, $v_{u_\rho}(I) = h_I(u_\rho)$, the support function of the Newton polyhedron $\NP(I)$. Hence, $\chi^m \in \overline{I}$ if and only if 
$$\langle u_\rho, m\rangle \ge h_I(u_\rho) \text{ for all such } u_\rho.$$

Finally, $\NP(I)$ is a rational polyhedron, so it is the intersection of its supporting halfspaces defined by the primitive inward normals of its compact facets. Therefore, the above inequalities are equivalent to $m \in \NP(I) \cap S$.
\end{proof}

\subsection{Toric formula for the algebraic \L ojasiewicz exponent}

We now specialize our general invariant to monomial ideals (and then to monomial
filtrations) in the toric local ring $R=\kk[S]_{\m_S}$.

\begin{lemma}[Toric valuative formula for monomial ideals]\label{prop:toric-valuative}
	Let $\a,\b\subseteq R$ be monomial ideals with $\a$ being $\m_S$-primary.
	Then
	\[
	\Loj_{\b}(\a)
	=
	\max_u \frac{v_u(\a)}{v_u(\b)}
	=
	\max_u \frac{h_{\NP(\a)}(u)}{h_{\NP(\b)}(u)},
	\]
	where the max is taken over finitely many $u \in \sigma^\circ \cap N$.
\end{lemma}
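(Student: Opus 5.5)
The plan is to reduce the statement to \Cref{thm:finite-max-rees}, or more precisely to \Cref{thm:finite-testing-implies-max}, using the toric description of integral closure in \Cref{thm:int-closure-monomial}. Since $R=\kk[S]_{\m_S}$ is an excellent Noetherian local domain and $\a$ is $\m_S$-primary, we already know $\Loj_{\b}(\a)$ is finite whenever $\b\neq 0$ is proper. This holds because $\b\subseteq\m_S$ and $\m_S^{c}\subseteq\a$ for some $c$, so $\b^{cq}\subseteq \a^q$. The real work is to identify a finite testing set consisting of toric valuations and to rewrite the values via support functions.

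First, I fix the normalized blowup of $\a$. As in the proof of \Cref{thm:int-closure-monomial}, it is a normal toric modification of $X_\sigma$. Its exceptional prime divisors centered at $\m_S$ correspond to finitely many rays with primitive generators $u_1,\dots,u_N\in\sigma^\circ\cap N$, namely the inward normals of the compact facets of $\NP(\a)$. I then verify hypothesis (H1) of \Cref{thm:finite-testing-implies-max} for the power filtrations $\a^p$ and $\b^q$ with $V=\{v_{u_1},\dots,v_{u_N}\}$.

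To check (H1), note that $\b^q$ is monomial, so $\b^q\subseteq\overline{\a^p}$ if and only if every monomial $\chi^m\in\b^q$ lies in $\overline{\a^p}$. By \Cref{thm:int-closure-monomial} together with $\NP(\a^p)=p\,\NP(\a)$ (\Cref{lem:NP-basic}), this means $m\in p\,\NP(\a)$. The facet normals of $p\,\NP(\a)$ are those of $\NP(\a)$, so the condition becomes $\ip{u_i}{m}\ge p\,h_{\a}(u_i)$ for all $i$. Taking the minimum over $m\in E(\b^q)$ and applying \Cref{lem:val-support}, this is equivalent to $v_{u_i}(\b^q)\ge v_{u_i}(\a^p)$ for all $i$.

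Hypothesis (H2) holds because each $u_i\in\sigma^\circ$, so $v_{u_i}$ is centered at $\m_S$. Together with $\b\subseteq \m_S$ and $\b\ne 0$, this gives $v_{u_i}(\b)>0$. \Cref{thm:finite-testing-implies-max} then gives
\[
\Loj_{\b}(\a)=\max_i \frac{v_{u_i}(\a)}{v_{u_i}(\b)},
\]
since $v(\a^\bullet)=v(\a)$ and $v(\b^\bullet)=v(\b)$ for power filtrations. Rewriting each value via \Cref{lem:val-support} as $h_{\NP(\a)}(u_i)/h_{\NP(\b)}(u_i)$ yields the second equality.

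The main obstacle is the facet step in the toric, possibly non-normal, setting. There I must ensure that $\NP(\a)$, an $\m_S$-primary polyhedron in the cone $\sigma^\vee$, is cut out inside $\sigma^\vee$ by the halfspaces of its compact facets alone. The non-compact facets lie on the boundary of $\sigma^\vee$, so the inequalities they impose are automatic for $m\in S$, and I need their normals to be excluded from the testing set. I would argue this using $\m_S$-primarity: $\NP(\a)$ contains all of $\sigma^\vee$ outside a bounded region, so every non-compact facet is contained in a facet of $\sigma^\vee$, and the corresponding inequality $\ip{u}{m}\ge 0$ imposes nothing. I also need $h_{\NP(\b)}(u_i)>0$, which should follow from $E(\b)\subseteq S\setminus\{0\}$ and $u_i\in\sigma^\circ$.
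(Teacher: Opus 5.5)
Your proof is correct, but it reaches the finite testing set by a different route than the paper. The paper invokes \Cref{thm:finite-max-rees}, so the maximum is over the Rees valuations of $\a$ (via the Krull-domain argument of \Cref{thm:finite-testing-fingen}). It then asserts that the normalized blowup of a monomial ideal is toric, so those Rees valuations are toric valuations $v_u$, and finishes with \Cref{lem:val-support}.

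You instead apply the more elementary \Cref{thm:finite-testing-implies-max} and verify (H1) directly from the polyhedral criterion \Cref{thm:int-closure-monomial}. You choose the testing set explicitly as the compact facet normals of $\NP(\a)$, so the normalized blowup plays only a motivational role in your argument.

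Your facet step is sound. Facets whose normal lies in $\sigma^\circ$ are compact, since the recession cone $\sigma^\vee$ meets $u^\perp$ only at $0$. For a facet normal $u\in\partial\sigma$, some nonzero $s\in S\cap u^\perp$ exists, and $cs\in E(\a)$ by $\m_S$-primarity. Hence $h_{\a}(u)=0$, and those halfspaces hold automatically on $S\subseteq\sigma^\vee$.

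Your route is more explicit: it identifies the candidate set concretely, and so proves \Cref{thm:facet-formula} at the same time. It also avoids having to identify the Rees valuations of $\a$ in a possibly non-normal $\kk[S]$. The paper's route is shorter. Both ultimately rest on the same geometric input, because the paper's proof of \Cref{thm:int-closure-monomial} itself goes through the toric normalized blowup.
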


\begin{proof}
	By \Cref{thm:finite-max-rees}, when $\a$ is $\m_S$-primary,
	$\Loj_{\b}(\a)$ is computed by finitely many Rees valuations of $\a$.
	For the monomial ideal $\a$, the normalized blowup of $\a$ is toric and its prime divisors
	are torus-invariant. Hence, its Rees valuations are toric valuations $v_u$
	for finitely many primitive lattice vectors $u \in \sigma^\circ \cap N$, lying on rays of a fan refining $\sigma$.
	Therefore, the maximum over finitely many Rees valuations equals the maximum over finitely many toric
	valuations $v_u$.
	Finally, Lemma~\ref{lem:val-support} gives $v_u(\cdot)=h_{\NP(\cdot)}(u)$ on monomial ideals.
\end{proof}

\subsection{Facet normals and finite candidate sets}

We next package the maximization problem in purely polyhedral terms.

A \emph{compact facet} of $\NP(\a)$ means a facet not parallel to $\sigma^\vee$
(i.e.,\ a facet whose supporting hyperplane intersects $\sigma^\vee$ only at the origin).
These are precisely facets $F$ whose inward normal vector $u_F$ lies in $\sigma^\circ \cap N$ and, thus, gives a valuation centered at $\m_S$.

\begin{definition}[Facet normals]\label{def:facet-normals}
	Let $\a$ be an $\m_S$-primary monomial ideal.
	Let $\mathcal N(\a)$ denote the finite set of primitive inward normals $u_F\in\sigma\cap N$
	to the compact facets $F$ of $\NP(\a)$.
\end{definition}

\begin{theorem}[Facet finite--max formula]\label{thm:facet-formula}
	Let $\a\subseteq R$ be an $\m_S$-primary monomial ideal and let $\b\subseteq R$ be any monomial ideal.
	Then
	\[
	\Loj_{\b}(\a)
	=
	\max_{u\in\mathcal N(\a)}\frac{h_{\NP(\a)}(u)}{h_{\NP(\b)}(u)}.
	\]
	In particular, $\Loj_{\b}(\a)$ is attained by a toric divisorial valuation
	$v_u$ with $u\in\mathcal N(\a)$.
\end{theorem}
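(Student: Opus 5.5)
The plan is to prove the two inequalities separately, with the facet normals $\mathcal N(\a)$ playing the role of the finite testing set in \Cref{thm:finite-testing-implies-max}. Write
\[
M:=\max_{u\in\mathcal N(\a)}\frac{h_{\NP(\a)}(u)}{h_{\NP(\b)}(u)} .
\]
Each $u\in\mathcal N(\a)$ lies in $\sigma^\circ\cap N$, so $v_u$ is centered at $\m_S$. Moreover $\b$ is proper, so $\b\subseteq\m_S$. Hence $h_{\NP(\b)}(u)=v_u(\b)\ge v_u(\m_S)>0$ by \Cref{lem:val-support}, and $M$ is finite.

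\emph{Lower bound.} \Cref{thm:valuative-filtrations}, applied to the power families $\{\a^p\}$ and $\{\b^q\}$, gives
\[
\Loj_{\b}(\a)\ge \frac{v_u(\a)}{v_u(\b)}=\frac{h_{\NP(\a)}(u)}{h_{\NP(\b)}(u)}
\]
for every $u\in\mathcal N(\a)$. Here we use $v_u(\a^\bullet)=v_u(\a)$, and likewise for $\b$, which holds because $v_u$ is a valuation. Therefore $\Loj_\b(\a)\ge M$.

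\emph{Upper bound.} It suffices to show that $\b^q\subseteq\overline{\a^p}$ whenever $q/p\ge M$ with $p,q$ positive integers. Since $\b^q$ is monomial, we only need to check each monomial $\chi^m\in\b^q$. By \Cref{lem:NP-basic}(c), $m\in\NP(\b^q)=q\,\NP(\b)$. So for each $u\in\mathcal N(\a)$,
\[
\ip{u}{m}\ \ge\ q\,h_{\NP(\b)}(u)\ \ge\ p\,h_{\NP(\a)}(u)=h_{\NP(\a^p)}(u).
\]
We then need the polyhedral fact that, because $\a$ is $\m_S$-primary, $\NP(\a)$ is cut out inside $\sigma^\vee$ by the halfspaces $\{x:\ip{u}{x}\ge h_{\NP(\a)}(u)\}$ for $u\in\mathcal N(\a)$. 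The same description holds for $p\,\NP(\a)$ after scaling the right-hand sides by $p$. Granting this, $m\in p\NP(\a)\cap S=\NP(\a^p)\cap S$, so $\chi^m\in\overline{\a^p}$ by \Cref{thm:int-closure-monomial}. Hence every such $q/p$ is admissible, with containment for all $t$ after replacing $(p,q)$ by $(pt,qt)$. Rational ratios approach $M$ from above, which gives $\Loj_\b(\a)\le M$.

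\emph{Attainment.} Because $\mathcal N(\a)$ is finite, the maximum defining $M$ is attained by some $u$. This shows $\Loj_\b(\a)$ is realized by the toric divisorial valuation $v_u$, as claimed.

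\emph{Main obstacle.} The delicate step is the halfspace description of $\NP(\a)$. Its noncompact facets lie on the boundary facets of $\sigma^\vee$. I would argue as follows. $\NP(\a)$ is a full-dimensional polyhedron with recession cone $\sigma^\vee$, since $\a$ is $\m_S$-primary and so contains a power of every $\chi^{m}$ with $m\in S\setminus\{0\}$. It therefore equals the intersection of its facet halfspaces. Every facet whose inward normal is not in $\sigma^\circ$ must have a normal in $\partial\sigma$, and such a facet is contained in a facet of $\sigma^\vee$. Its halfspace is then implied by the condition $x\in\sigma^\vee$. Membership $m\in S\subseteq\sigma^\vee$ is automatic, so only the compact facet inequalities need checking.
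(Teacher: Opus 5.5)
Your argument is correct, but it takes a different route from the paper's.

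\textbf{The paper's route.} It proves the statement by reduction. \Cref{thm:finite-max-rees} says that $\Loj_{\b}(\a)$ is the maximum of $v(\a)/v(\b)$ over the Rees valuations of $\a$. For a monomial ideal, these Rees valuations are then identified with the toric valuations $v_{u_F}$ of the torus-invariant divisors on the normalized blowup, that is, with the compact facet normals of $\NP(\a)$.

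\textbf{Your route.} You bypass Rees valuations entirely. The lower bound is the general estimate of \Cref{thm:valuative-filtrations}. The upper bound is a direct polyhedral check, via \Cref{thm:int-closure-monomial} and the halfspace description of $\NP(\a)$, that $\b^q\subseteq\overline{\a^p}$ whenever $q/p\ge M$. In effect you verify hypothesis (H1) of \Cref{thm:finite-testing-implies-max} by hand for the test set $\{v_u : u\in\mathcal N(\a)\}$.

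\textbf{Comparison.} Your route is more elementary and self-contained. It avoids the toric identification of Rees valuations, which needs some care in the possibly non-normal ring $\kk[S]_{\m_S}$. It also gives a sharper conclusion: $\b^q\subseteq\overline{\a^p}$ holds if and only if $q/p\ge M$, for every pair $(p,q)$ and not merely asymptotically. The paper's route instead yields the conceptual fact that the computing valuations are Rees valuations of $\a$, which places the result inside the general finite--max framework.

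\textbf{One step to make explicit.} In your ``main obstacle'' paragraph, the redundancy of the boundary facets is exactly where $\m_S$-primarity is used. The recession cone is $\sigma^\vee$ for any nonzero monomial ideal, so that is not where the hypothesis enters. The argument runs as follows.
\begin{itemize}
\item Suppose a facet has inward normal $u_F\in\partial\sigma$. Then $u_F^\perp\cap\sigma^\vee$ is a nonzero face of $\sigma^\vee=\RR_{\ge0}S$, so it contains some $s\in S\setminus\{0\}$.
\item Since $\a$ is $\m_S$-primary, $\chi^{ks}\in\a$ for some $k\ge 1$. Hence $h_{\NP(\a)}(u_F)\le k\ip{u_F}{s}=0$, so $h_{\NP(\a)}(u_F)=0$.
\item The facet inequality $\ip{u_F}{x}\ge 0$ is therefore implied by $x\in\sigma^\vee$.
\end{itemize}
Without the hypothesis this step fails. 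For example, the facet $\{(a,b): a\ge 1\}$ of $\NP((x))$ in $\kk[x,y]$ has inward normal $(1,0)\in\partial\sigma$ but is not redundant.
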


\begin{proof}
	By \Cref{thm:finite-max-rees}, $\Loj_{\b}(\a)$ is the maximum of
	$v(\a)/v(\b)$ over the Rees valuations of $\a$.
	For a monomial ideal $\a$, the Rees valuations correspond exactly to the torus-invariant
	prime divisors on the normalized blowup of $\a$. Polyhedrally, these correspond to the
	compact facets of $\NP(\a)$, and their valuations are the toric valuations $v_{u_F}$,
	where $u_F$ is the primitive inward normal to the facet $F$.
	(Equivalently, $\NP(\a)$ determines a normal fan; the rays corresponding to compact facets
	yield precisely these divisors.)
	Thus, the maximum is achieved among the finite set $\{v_u: u\in\mathcal N(\a)\}$.
	Finally, apply Lemma~\ref{lem:val-support}.
\end{proof}

\subsection{Monomial filtrations in the toric setting}

We now extend the preceding discussion from powers of ideals to monomial filtrations.

\begin{definition}[Monomial filtration and Newton polyhedra]\label{def:monomial-filtration}
	A filtration $\abul=\{\a_p\}_{p \ge 1}$ in $R$ is \emph{monomial} if each $\a_p$ is a monomial ideal.
	For such a family define
	\[
	\NP(\abul):=\bigcup_{p\ge1}\frac{1}{p}\,\NP(\a_p)\ \subseteq  M_\RR.
	\]
\end{definition}

\begin{remark}
	When $\a_p=\overline{\a^p}$, one has $\NP(\a_p)=p\,\NP(\a)$ and hence
	$\NP(\abul)=\NP(\a)$. For more general families $\NP(\abul)$ is a convex region
	encoding asymptotic integral closures.
\end{remark}

\begin{lemma}[Support function for a monomial filtration]\label{lem:support-filtration}
	Let $\abul$ be a monomial graded family and $u\in\sigma^\circ\cap N$.
	Then
	\[
	v_u(\abul)=\inf_{p\ge1}\frac{v_u(\a_p)}{p}
	=\inf_{p\ge1}\frac{h_{\NP(\a_p)}(u)}{p}
	=\inf_{x\in \NP(\abul)}\ip{u}{x}.
	\]
\end{lemma}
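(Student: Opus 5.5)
The three expressions will be identified one equality at a time. The first, $v_u(\abul)=\inf_p v_u(\a_p)/p$, is the definition of $v(\abul)$ together with \Cref{lem:vabul-exists}: subadditivity of $p\mapsto v_u(\a_p)$ and Fekete's lemma show that the limit exists and equals the infimum. Note that $v_u$ is centered at $\m_S$ because $u\in\sigma^\circ$. The second equality holds termwise, since each $\a_p$ is a monomial ideal and \Cref{lem:val-support} gives $v_u(\a_p)=h_{\NP(\a_p)}(u)$.

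For the third equality, the plan is to use the homogeneity of support functions: for $\lambda>0$ one has $h_{\lambda C}(u)=\lambda h_C(u)$. Hence
$$\frac{h_{\NP(\a_p)}(u)}{p}=\inf\{\ip{u}{x}: x\in \tfrac1p\NP(\a_p)\}.$$
An infimum of infima over the sets $\frac1p\NP(\a_p)$ equals the infimum over their union, and that union is by definition $\NP(\abul)$. This gives
$$\inf_{p}\frac{h_{\NP(\a_p)}(u)}{p}=\inf_{x\in\NP(\abul)}\ip{u}{x}.$$
No convexity or closedness of $\NP(\abul)$ is needed, because a linear functional has the same infimum over a set as over its closed convex hull.

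The only point requiring care is that $\frac1p\NP(\a_p)$ is a scaled copy of a polyhedron, so it is still stable under adding $\sigma^\vee$; this makes $h$ well defined on it as in \Cref{def:support}. The graded-family hypothesis is used only to guarantee that the limit in the first equality exists and equals the infimum.
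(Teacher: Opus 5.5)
Your proposal is correct and follows the paper's proof essentially step for step. The first equality comes from \Cref{lem:vabul-exists} (subadditivity and Fekete), the second holds termwise by \Cref{lem:val-support}, and the third follows from the rescaling $\inf_{x\in\frac1p\NP(\a_p)}\ip{u}{x}=\frac1p\, h_{\NP(\a_p)}(u)$ together with taking the infimum over the union that defines $\NP(\abul)$; your remarks about stability under $\sigma^\vee$ and the irrelevance of convexity are accurate but not needed.
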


\begin{proof}
	The first equality is Remark~\ref{lem:vabul-exists}.
	The second is Lemma~\ref{lem:val-support} applied to each $\a_p$.
	For the last equality, note that $\NP(\abul)$ is the union of sets $\frac1p\NP(\a_p)$, and
	\[
	\inf_{x\in \frac1p\NP(\a_p)}\ip{u}{x}=\frac{1}{p}\inf_{y\in\NP(\a_p)}\ip{u}{y}
	=\frac{h_{\NP(\a_p)}(u)}{p}.
	\]
	Taking infima over $p$ gives the claim.
\end{proof} 


\begin{lemma}[Finite--max on a finite fan] \label{prop:finite-max-fan}
	Let $\alpha,\beta:\sigma\to\RR_{\ge0}$ be continuous functions that are positively homogeneous of degree $1$. Assume that: \begin{enumerate} 
		\item $\beta(u)>0$ for all $u\in\sigma^\circ$;
		\item there exists a finite rational polyhedral fan $\Sigma$ supported on $\sigma$ such that $\alpha$ and $\beta$ are linear on every cone $\tau\in\Sigma$.
	\end{enumerate}
	Let $\rho_1,\dots,\rho_M$ be the rays of $\Sigma$ and $u_1,\dots,u_M\in N$ their primitive generators. Then,
	$$
	\sup_{u\in\sigma^\circ\cap N}\frac{\alpha(u)}{\beta(u)}
	=
	\sup_{u\in\sigma^\circ}\frac{\alpha(u)}{\beta(u)}
	=
	\max_{\{j:\beta(u_j)>0\}}\frac{\alpha(u_j)}{\beta(u_j)}.
	$$
\end{lemma}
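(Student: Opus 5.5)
The plan is to reduce everything to the rays of $\Sigma$ using homogeneity and linearity on cones, with a mediant-type inequality, and to handle the passage between $\sigma^\circ\cap N$, $\sigma^\circ$ and the rays by continuity. Throughout, write $\phi(u):=\alpha(u)/\beta(u)$ on the open set $\{\beta>0\}\supseteq\sigma^\circ$. Since $\alpha$ and $\beta$ are positively homogeneous of degree $1$, $\phi$ is homogeneous of degree $0$, and it is continuous wherever $\beta>0$.

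\emph{Step 1: lattice points versus real points.} The inequality $\sup_{\sigma^\circ\cap N}\phi\le\sup_{\sigma^\circ}\phi$ is trivial. For the reverse, fix $u\in\sigma^\circ$. Rational points are dense in the open cone $\sigma^\circ$, so there are $u_k\in\sigma^\circ\cap N_\QQ$ with $u_k\to u$. Clearing denominators gives $m_ku_k\in\sigma^\circ\cap N$ with $\phi(m_ku_k)=\phi(u_k)$ by degree-$0$ homogeneity. Since $\beta(u)>0$, continuity gives $\phi(u_k)\to\phi(u)$, which proves the first equality.

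\emph{Step 2: the upper bound by rays.} Take $u\in\sigma^\circ$ and choose a cone $\tau\in\Sigma$ containing $u$. Write $u=\sum_j\lambda_ju_j$ with $\lambda_j\ge0$, where $u_j$ runs over the primitive ray generators of $\tau$. By linearity of $\alpha,\beta$ on $\tau$,
\[
\phi(u)=\frac{\sum_j\lambda_j\alpha(u_j)}{\sum_j\lambda_j\beta(u_j)}.
\]
If every index with $\lambda_j>0$ and $\beta(u_j)=0$ also satisfies $\alpha(u_j)=0$, the mediant inequality bounds this by $\max_{\{j:\beta(u_j)>0\}}\alpha(u_j)/\beta(u_j)$.

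\emph{Step 3: the lower bound by rays.} For a ray $u_j$ with $\beta(u_j)>0$, possibly lying on $\partial\sigma$, pick any $w\in\sigma^\circ$ and set $u_j+\varepsilon w\in\sigma^\circ$. Continuity of $\phi$ at $u_j$ gives $\phi(u_j+\varepsilon w)\to\phi(u_j)$ as $\varepsilon\to0$. Hence the supremum over $\sigma^\circ$ is at least each such ray value, so it equals the maximum and is attained in the closure.

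\emph{Main obstacle.} The difficulty sits in Step 2, at rays with $\beta(u_j)=0$. Since $\beta>0$ on $\sigma^\circ$, such rays lie on $\partial\sigma$. If some such ray has $\alpha(u_j)>0$, the lemma as literally stated fails. For example, on $\sigma=\RR^2_{\ge0}$ with $\alpha(u)=u_1$ and $\beta(u)=u_2$, the left-hand side is $\infty$ while the right-hand side is $0$.

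I would therefore add, or make explicit, one of two repairs. The first is the hypothesis that $\beta(u_j)=0$ implies $\alpha(u_j)=0$; this is the analogue of \Cref{rmk:finite-testing-fingen}(3), and it holds in the intended applications where $\bbul$ is linearly bounded, so that $\beta>0$ on $\sigma\setminus\{0\}$. The second is to adopt the convention $\alpha(u_j)/0=\infty$ in the maximum. Under the second repair, Step 3 extends as follows: approaching such a $u_j$ from the interior sends $\phi\to\infty$, because the numerator stays near $\alpha(u_j)>0$ while the denominator tends to $0$. Under either repair, Steps 1–3 give the stated chain of equalities.
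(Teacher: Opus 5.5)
Your proposal is correct, and Steps 1--3 are the same three steps as the paper's proof: a convex-combination bound over the rays of a cone, approaching a ray from the interior, and density of rational points together with degree-$0$ homogeneity. More importantly, your objection is valid. With $\sigma=\RR^2_{\ge0}$, $\Sigma$ the faces of $\sigma$, $\alpha(u)=u_1$ and $\beta(u)=u_2$, every hypothesis holds, yet the left side is $\infty$ and the right side is $0$.

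The paper's proof breaks exactly where you say. Its identity
\[
\frac{\sum_i\lambda_i\alpha(w_i)}{\sum_i\lambda_i\beta(w_i)}
=\sum_{\beta(w_i)>0}\frac{\lambda_i\beta(w_i)}{\sum_k\lambda_k\beta(w_k)}\cdot\frac{\alpha(w_i)}{\beta(w_i)}
\]
silently drops the terms $\lambda_i\alpha(w_i)$ with $\beta(w_i)=0$. At $u=(1,1)$ in your example, the left side is $1$ and the right side is $0$. The proof also asserts $\beta>0$ on $\tau^\circ$ for every $\tau\in\Sigma$, which fails for cones lying in $\partial\sigma$. Your version evaluates only at points of $\sigma^\circ$ and approaches $u_j$ via $u_j+\varepsilon w$ with $w\in\sigma^\circ$, so it avoids that problem too.

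With the hypothesis that $\beta(u_j)=0$ implies $\alpha(u_j)=0$ added, your argument is a complete proof. For your second repair, the convention $\alpha(u_j)/0=\infty$ does nothing on its own, because the maximum ranges over $\{j:\beta(u_j)>0\}$. The index set must also be enlarged to include rays with $\alpha(u_j)>0=\beta(u_j)$.

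One correction to your remark on applications: $\beta>0$ on $\sigma\setminus\{0\}$ is false even for $\bbul=\{\m^q\}$. There $\beta(u)=\min\{u_1,u_2\}$ vanishes on both boundary rays of $\RR^2_{\ge0}$. Nor can such positivity come from linear boundedness, since $h_\q$ vanishes on $\partial\sigma$ for every $\m_S$-primary monomial ideal $\q$.

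What actually rescues \Cref{thm:toric-filtration-formula} is the two-sided bound. The inclusion $\q^{\lceil cp\rceil}\subseteq\a_p$ gives $\alpha\le c\,h_\q$, and $\b_p\subseteq\q^{\lfloor Dp\rfloor}$ gives $\beta\ge D\,h_\q$ on $\sigma$. So $\beta(u_j)=0$ forces $h_\q(u_j)=0$, hence $\alpha(u_j)=0$, which is precisely your first repair. By contrast, \Cref{cor:finite-max-toric-polyhedral} assumes only $v_u(\bbul)>0$ on $\sigma^\circ$ and inherits the error: the filtrations $\a_p=(x^p)$ and $\b_q=(y^q)$ in $k[x,y]_{(x,y)}$ realize your counterexample.
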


\begin{proof}
	Fix a cone $\tau\in\Sigma$, and let $w_1,\dots,w_s$ be the primitive generators of its rays. Every $u\in\tau$ can be written as $u=\sum_i\lambda_i w_i$ with $\lambda_i\ge0$. Since $\alpha$ and $\beta$ are linear on $\tau$, we have $\alpha(u)=\sum_i\lambda_i\alpha(w_i)$ and $\beta(u)=\sum_i\lambda_i\beta(w_i)$.
	
	If $u\in\tau^\circ$, then $\beta(u)>0$ by assumption. Hence,
	$$
	\frac{\alpha(u)}{\beta(u)}
	=
	\frac{\sum_i\lambda_i\alpha(w_i)}{\sum_i\lambda_i\beta(w_i)}
	=
	\sum_{\beta(w_i)>0}
	\frac{\lambda_i\beta(w_i)}{\sum_k\lambda_k\beta(w_k)}
	\cdot
	\frac{\alpha(w_i)}{\beta(w_i)}.
	$$
	Thus $\alpha(u)/\beta(u)$ is a convex combination of the finitely many numbers $\alpha(w_i)/\beta(w_i)$ with $\beta(w_i)>0$. Therefore,
	$$
	\frac{\alpha(u)}{\beta(u)}
	\le
	\max_{\{i:\beta(w_i)>0\}}\frac{\alpha(w_i)}{\beta(w_i)}
	$$
	for every $u\in\tau^\circ$, and hence the same bound holds for $\sup_{u\in\tau^\circ}$.
	
	Conversely, let $w_i$ be a ray generator of $\tau$ with $\beta(w_i)>0$. Choose $z\in\tau^\circ$ and set $u_\varepsilon=(1-\varepsilon)w_i+\varepsilon z$ for $0<\varepsilon<1$. Then, $u_\varepsilon\in\tau^\circ$ and $u_\varepsilon\to w_i$ as $\varepsilon\to0^+$. By continuity of $\alpha$ and $\beta$ and since $\beta(w_i)>0$, we have $\alpha(u_\varepsilon)/\beta(u_\varepsilon)\to\alpha(w_i)/\beta(w_i)$. Hence, $\sup_{u\in\tau^\circ}\alpha(u)/\beta(u)\ge\alpha(w_i)/\beta(w_i)$.
	
	Combining the two inequalities shows that
	$$
	\sup_{u\in\tau^\circ}\frac{\alpha(u)}{\beta(u)}
	=
	\max_{\{i:\beta(w_i)>0\}}\frac{\alpha(w_i)}{\beta(w_i)}.
	$$
	Since $\Sigma$ has finitely many cones and every ray of $\Sigma$ occurs among the rays of some cone, taking the maximum over all cones gives
	$$
	\sup_{u\in\sigma^\circ}\frac{\alpha(u)}{\beta(u)}
	=
	\max_{\{j:\beta(u_j)>0\}}\frac{\alpha(u_j)}{\beta(u_j)}.
	$$
	
	Finally, because $\alpha/\beta$ is homogeneous of degree $0$, it suffices to approximate interior points by interior lattice points. Rational points are dense in $\sigma^\circ$, and any interior rational point becomes a lattice point after multiplying by a positive integer. Homogeneity implies that the ratio $\alpha/\beta$ is unchanged by such scaling, so the supremum over $\sigma^\circ$ equals the supremum over $\sigma^\circ\cap N$.
\end{proof}

In the next few results, if for each monomial filtration $\abul$, the function $u \mapsto v_u(\abul)$ on $\sigma^\circ \cap N$ admits a continuous positively homogeneous extension to $\sigma$, then we shall denote the value of this extension at $u \in \sigma$ by  $\widetilde{v_u}(\abul)$.

\begin{corollary}\label{cor:finite-max-toric-polyhedral}
	Let $\abul$ and $\bbul$ be monomial filtrations. Assume that the functions
	$\sigma^\circ\cap N \to \RR_{\ge 0}$, $u\mapsto v_u(\abul)$ and $u\mapsto v_u(\bbul)$, admit continuous extensions to $\sigma$, positively homogeneous of degree $1$, and that there exists a finite rational polyhedral fan $\Sigma$ supported on $\sigma$ such that both extended functions are linear on every cone $\tau\in\Sigma$. Assume also that $v_u(\bbul)>0$ for all $u\in\sigma^\circ$.
	
	Let $\rho_1,\dots,\rho_M$ be the rays of $\Sigma$ and let $u_1,\dots,u_M\in N$ be their primitive generators. Then,
	$$
	\sup_{u\in\sigma^\circ\cap N}\frac{v_u(\abul)}{v_u(\bbul)}
	=
	\max_{\{j:\widetilde v_{u_j}(\bbul)>0\}}
	\frac{\widetilde v_{u_j}(\abul)}{\widetilde v_{u_j}(\bbul)}.
	$$
\end{corollary}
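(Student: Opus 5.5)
The statement should follow directly from \Cref{prop:finite-max-fan} once the extended functions are fed into it. Set $\alpha(u):=\widetilde{v_u}(\abul)$ and $\beta(u):=\widetilde{v_u}(\bbul)$ for $u\in\sigma$. These are the continuous, degree-one positively homogeneous extensions guaranteed by hypothesis. By construction they agree with $u\mapsto v_u(\abul)$ and $u\mapsto v_u(\bbul)$ on $\sigma^\circ\cap N$. I would check the hypotheses of \Cref{prop:finite-max-fan} one at a time:
\begin{itemize}
\item nonnegativity of $\alpha,\beta$ on $\sigma$;
\item positivity $\beta>0$ on $\sigma^\circ$;
\item piecewise linearity on the fan $\Sigma$, which is assumed directly.
\end{itemize}
\Cref{prop:finite-max-fan} then gives
\[
\sup_{u\in\sigma^\circ\cap N}\frac{\alpha(u)}{\beta(u)}=\max_{\{j:\beta(u_j)>0\}}\frac{\alpha(u_j)}{\beta(u_j)}.
\]
Replacing $\alpha,\beta$ by $v_u(\abul),v_u(\bbul)$ on the left, which is legitimate on $\sigma^\circ\cap N$, yields the stated formula.

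Nonnegativity is easy. On $\sigma^\circ\cap N$ we have $v_u(\abul)=\inf_p v_u(\a_p)/p\ge 0$, because each $v_u$ is nonnegative on $R$. Homogeneity extends this to rational points of $\sigma^\circ$. Density of those points in $\sigma$ and continuity then extend it to all of $\sigma$.

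The main obstacle is positivity of $\beta$ on the whole open cone $\sigma^\circ$. The hypothesis only gives $v_u(\bbul)>0$, which literally makes sense for lattice points (or their rational multiples), while \Cref{prop:finite-max-fan} needs $\beta(u)>0$ at irrational interior points as well. I would read the assumption ``$v_u(\bbul)>0$ for all $u\in\sigma^\circ$'' as a statement about the extension $\widetilde{v_u}(\bbul)$, and if so there is nothing to prove.

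If one insists on the lattice-point reading, I would argue via the fan instead. Let $u\in\sigma^\circ$ and pick a cone $\tau\in\Sigma$ containing $u$ in its relative interior. Write $u=\sum_i\lambda_iw_i$ with all $\lambda_i>0$, where the $w_i$ are the ray generators of $\tau$. Because $\beta$ is linear on $\tau$, $\beta(u)=\sum_i\lambda_i\beta(w_i)$ with every $\beta(w_i)\ge0$. Now choose a rational point $u'=\sum_i\lambda_i'w_i$ in the relative interior of $\tau$, with all $\lambda_i'>0$, lying in $\sigma^\circ$; this is possible because $u\in\sigma^\circ$ and that cone is open. Some integer multiple of $u'$ is a lattice point, so $\beta(u')>0$ by hypothesis and homogeneity. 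Hence $\beta(w_i)>0$ for some $i$, and therefore $\beta(u)>0$.

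Finally, the maximum on the right-hand side is over a nonempty index set. Some ray of $\Sigma$ meets $\sigma^\circ$, or else the previous argument applied to any interior cone produces a ray generator $w_i$ with $\beta(w_i)>0$. So the formula is meaningful.
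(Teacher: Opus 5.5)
Your proposal is correct and is the paper's proof: apply \Cref{prop:finite-max-fan} to $\alpha(u)=\widetilde v_u(\abul)$ and $\beta(u)=\widetilde v_u(\bbul)$, then use that these agree with $v_u(\abul)$ and $v_u(\bbul)$ on $\sigma^\circ\cap N$. Your extra checks are sound and fill in details the paper's one-line argument leaves implicit: nonnegativity of the extensions, positivity of $\beta$ at non-lattice interior points via linearity on the cones of $\Sigma$, and nonemptiness of the index set.
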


\begin{proof}
	Apply \Cref{prop:finite-max-fan} with $\alpha(u)=\widetilde v_u(\abul)$ and $\beta(u)=\widetilde v_u(\bbul)$. Since $\widetilde v_u(\abul)=v_u(\abul)$ and $\widetilde v_u(\bbul)=v_u(\bbul)$ for $u\in\sigma^\circ\cap N$, the claimed equality follows.
\end{proof}

\begin{remark}\label{rem:where-C-applies}
	If $f$ is Newton nondegenerate and $\NP(f)$ is fixed, one typically chooses a regular fan $\Sigma$ refining the normal fan of $\NP(f)$.
	On such a fixed toric model, the relevant toric valuation data attached to $f$ (and, in the standard setup, to $J(f)$ as well) is determined by linear forms on the cones of $\Sigma$.
	In that situation Corollary~\ref{cor:finite-max-toric-polyhedral} gives a finite candidate set that is independent of parameters as long as the Newton polyhedron is unchanged.
\end{remark}

\begin{theorem}[Toric valuative formula for monomial filtrations] \label{thm:toric-filtration-formula}
	Let $\abul$ and $\bbul$ be $\m_S$-primary monomial filtrations, and assume that $\R(\abul)$ is Noetherian. Suppose moreover that there exist an $\m_S$-primary monomial ideal $\q$ and a finite rational polyhedral fan $\Sigma$ supported on $\sigma$ such that:
	\begin{enumerate}
		\item $\abul$ and $\bbul$ are linearly bounded with respect to $\q$;
		\item the functions $u \mapsto v_u(\abul)$ and $u \mapsto v_u(\bbul)$ on $\sigma^\circ\cap N \to \RR_{\ge0}$ admit continuous extensions to $\sigma$, positively homogeneous of degree $1$, and linear on every cone $\tau\in\Sigma$. 
	\end{enumerate}
	Let $\rho_1,\dots,\rho_M$ be the rays of $\Sigma$, and let $u_1,\dots,u_M\in N$ be their primitive generators. Then
	$$
	\Loj_{\bbul}(\abul)
	=
	\sup_{u\in\sigma^\circ\cap N}\frac{v_u(\abul)}{v_u(\bbul)}
	=
	\max_{\{j:\widetilde v_{u_j}(\bbul)>0\}}
	\frac{\widetilde v_{u_j}(\abul)}{\widetilde v_{u_j}(\bbul)}.
	$$
	In particular, $\Loj_{\bbul}(\abul)$ is computed by finitely many toric divisorial valuations.
\end{theorem}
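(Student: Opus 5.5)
The plan is to reduce the statement to results already proved: the second equality is \Cref{cor:finite-max-toric-polyhedral}, and the first equality is an upper bound, obtained from \Cref{thm:finite-testing-fingen}, together with a lower bound, obtained from \Cref{thm:valuative-filtrations}. Set
\[
s:=\sup_{u\in\sigma^\circ\cap N}\frac{v_u(\abul)}{v_u(\bbul)}.
\]
Hypothesis (2), together with the positivity $v_u(\bbul)>0$ on $\sigma^\circ$, is exactly what \Cref{cor:finite-max-toric-polyhedral} needs. That corollary identifies $s$ with the finite maximum over the ray generators $u_j$ satisfying $\widetilde v_{u_j}(\bbul)>0$. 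The positivity itself comes from linear boundedness (1): $\b_q\subseteq\q^{\lfloor Dq\rfloor}$ gives $v_u(\bbul)\ge D\,h_\q(u)>0$ for $u\in\sigma^\circ$, since $\q$ is $\m_S$-primary.

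\emph{Lower bound.} Each $v_u$ with $u\in\sigma^\circ\cap N$ is a valuation centered at $\m_S$ with $v_u(\bbul)>0$. \Cref{thm:valuative-filtrations} therefore gives $\Loj_{\bbul}(\abul)\ge s$.

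\emph{Upper bound.} $\R(\abul)$ is Noetherian and $R=\kk[S]_{\m_S}$ is an excellent local domain, so \Cref{thm:finite-testing-fingen} applies. It produces a finite set $\mathcal V(\abul)$ of divisorial valuations coming from exceptional prime divisors on $\overline Y=\Proj\overline{\R(\abul)}$, and gives $\Loj_{\bbul}(\abul)=\max_{v\in\mathcal V(\abul)}v(\abul)/v(\bbul)$, provided each such $v$ has $v(\bbul)>0$.

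The key point is that these valuations are toric. Since every $\a_p$ is monomial, $\R(\abul)$ is graded by $M\oplus\ZZ$ and so carries a torus action. Its normalization, and hence $\overline Y$, inherits this action, so the exceptional prime divisors are torus-invariant. A torus-invariant divisor lying over the closed point has a monomial valuation $v_u$ with $u\in\sigma^\circ\cap N$. This is the same reasoning used for \Cref{thm:int-closure-monomial} and \Cref{prop:toric-valuative}. It follows that $\mathcal V(\abul)\subseteq\{v_u:u\in\sigma^\circ\cap N\}$. The required positivity $v_u(\bbul)>0$ holds by the previous paragraph, so the maximum is at most $s$, giving $\Loj_{\bbul}(\abul)\le s$.

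The final clause then follows: the maximum is attained at some $u_j$, and $v_{u_j}$ is the divisorial valuation of the torus-invariant divisor of the ray $\rho_j$.

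I expect the main obstacle to be the toric identification of $\mathcal V(\abul)$, namely that normalizing a torus-equivariant finitely generated algebra yields torus-invariant exceptional divisors with interior normals. If that becomes awkward to justify, a fallback is available. Linear boundedness and the piecewise-linear structure on $\Sigma$ should give uniform convergence of $v_u(\a_p)/p$ on the compact normalized slice $\{u\in\sigma:h_\q(u)=1\}$. One would then run the Step 2 argument of \Cref{thm:attainment}, restricted to toric valuations, and use \Cref{thm:int-closure-monomial}, so that containment of monomial ideals in integral closures is tested by the support functions $h$ alone.
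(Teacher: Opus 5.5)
Your proposal is correct and follows essentially the same route as the paper. You get the lower bound from \Cref{thm:valuative-filtrations}, the second equality from \Cref{cor:finite-max-toric-polyhedral}, and the upper bound from \Cref{thm:finite-testing-fingen}, once the testing valuations are shown to be toric $v_u$ with $u\in\sigma^\circ\cap N$ and linear boundedness gives $v_u(\bbul)\ge c\,h_\q(u)>0$.

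The only variation is how toricity is justified. You argue directly from torus-equivariance of $\Proj\overline{\R(\abul)}$; this should be set up over $\kk[S]$ before localizing, since $\kk[S]_{\m_S}$ itself is not $M$-graded. The paper instead passes to a Veronese subalgebra $\R(\abul)^{(d)}=\R(\a_d)$ and applies \Cref{thm:facet-formula} to the Rees valuations of the monomial ideal $\a_d$.
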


\begin{proof}
	By \Cref{thm:valuative-filtrations}, we have
	$$
	\Loj_{\bbul}(\abul)\ge \sup\limits_{u\in\sigma^\circ\cap N}\frac{v_u(\abul)}{v_u(\bbul)}.
	$$
	By \Cref{cor:finite-max-toric-polyhedral}, the latter supremum is equal to
	$
	\max\limits_{\{j:\widetilde v_{u_j}(\bbul)>0\}}
	\frac{\widetilde v_{u_j}(\abul)}{\widetilde v_{u_j}(\bbul)}.
	$
	Thus, it remains to prove the reverse inequality.
	
	Since $\R(\abul)$ is finitely generated, \Cref{thm:finite-testing-fingen} gives a finite set of divisorial valuations $\mathcal V(\abul)=\{v_1,\dots,v_r\}$ such that
	$$
	\Loj_{\bbul}(\abul)=\max_{1\le i\le r}\frac{v_i(\abul)}{v_i(\bbul)},
	$$
	provided that $v_i(\bbul)>0$ for all $i$. We shall verify this positivity condition. 
	
	Indeed, since $\R(\abul)$ is Noetherian, there exists $d\ge1$ such that the $d$-th Veronese subalgebra $\R(\abul)^{(d)}$ is standard graded. Equivalently, $\R(\abul)^{(d)}$ is the ordinary Rees algebra of the monomial ideal $\a_d$. Hence, the valuations in $\mathcal V(\abul)$ are exactly the Rees valuations of $\a_d$. By \Cref{thm:facet-formula}, these are toric divisorial valuations. Thus, there exist primitive lattice vectors $w_1,\dots,w_r\in\sigma^\circ\cap N$ such that $v_i=v_{w_i}$ for all $i$.
	
	Since $\bbul$ is linearly bounded with respect to $\q$, there exists $c>0$ such that $v_u(\bbul)\ge c\,h_\q(u)$ for all $u\in\sigma^\circ$. Therefore,
	$
	v_{w_i}(\bbul)\ge c\,h_\q(w_i)>0
	$
	for every $i$, verifying the desired positive condition.

	Now, each $w_i$ lies in $\sigma^\circ\cap N$, so
	$
	\frac{v_{w_i}(\abul)}{v_{w_i}(\bbul)}
	\le
	\sup\limits_{u\in\sigma^\circ\cap N}\frac{v_u(\abul)}{v_u(\bbul)}
	$
	for every $i$. Taking the maximum over $i$, we obtain the reverse inequality
	$$
	\Loj_{\bbul}(\abul)
	\le
	\sup_{u\in\sigma^\circ\cap N}\frac{v_u(\abul)}{v_u(\bbul)}.
	$$
	The theorem is proved.
\end{proof}

\begin{remark}[Mechanism behind finite testing in the toric case]\label{rem:finite-candidates-filtrations}
	The finite--max formulas obtained in this section are not a consequence of toric geometry
	per se, but of the underlying polyhedral structure.
	In the monomial setting, Newton polyhedra encode integral--closure containments by linear
	inequalities, toric valuations correspond to linear functionals on these polyhedra, and
	support functions are piecewise linear with respect to a finite fan.
	As a result, the valuative optimization problem defining the Lojasiewicz exponent reduces
	to a finite maximization over extremal directions, namely the primitive inward normals of
	compact facets.
	This mechanism provides the prototypical example of the general ``finite testing'' principle
	developed earlier in the paper.
\end{remark}

\begin{remark}[Limits of the toric mechanism]\label{rem:toric-filtration-finite-max}
	The explicit finite testing principle established here relies crucially on the existence of
	a polyhedral model controlling all relevant integral--closure containments on a fixed toric
	modification.
	Outside the toric or monomial setting, such a uniform polyhedral structure need not exist:
	although valuative formulas remain valid in great generality, the set of divisorial valuations
	that compute the Lojasiewicz exponent may depend on the chosen principalization and need
	not admit an a priori finite reduction.
	Section~\ref{sec:verification-uniformity-sharpness} investigates to what extent Newton--type hypotheses recover finite testing
	and explains why additional verifiable assumptions are necessary beyond the toric case.
\end{remark}

\section{\L{}ojasiewicz exponent at infinity and Newton nondegeneracy}\label{sec:NND}

This section focuses on the \L{}ojasiewicz exponent \emph{at infinity} for polynomial mappings.
Particularly, we will consider:
(i) a \emph{single fixed birational model} on which the relevant divisor ideals become invertible, yielding
finite-minimum formulas for \L{}ojasiewicz exponent at infinity (\Cref{thm:13.3-local-min,thm:13.2-global-min}); and
(ii) in Newton nondegenerate settings, a \emph{toric} model that makes the minimizing divisors explicit,
recovering classical Newton-type formulas (\Cref{thm:newton-finite-infty} and \Cref{cor:complex-newton-general}).

\subsection{Finiteness of the algebraic \L{}ojasiewicz exponent at infinity}\label{subsec:13.2}

Even though, in Definition~\ref{def:local-analytic-loj-infty} (and \Cref{thm:analytic-algebraic-local-infty}), the local \L{}ojasiewicz exponent at infinity of a polynomial mapping $F$, $\Loj^{\text{an}}_{\infty,\xi}(F)$, is defined in the local ring
$\cO_{\overline{\Gamma}_F,\xi}$, there exists a \emph{uniform} description after passing to a single normal
proper birational model on which the divisor ideals $\mathscr I_X$ and $\mathscr I_Y$ become invertible.
On such a model, integral--closure containments are equivalent to finitely many divisorial inequalities,
so the local and global exponents at infinity admit a finite \emph{minima} formulation similar to what we have seen for the exponent at an isolated critical point (\Cref{thm:finite-max-rees}).

\smallskip
\noindent\textit{Notations.} We keep the notation from \Cref{subsec:algebraic-infinity}.
In particular, $\overline{\Gamma}_F\subseteq \PP^n_\CC\times \PP^m_\CC$ denotes the normalization of the
Zariski closure of the graph of $F$, with projections $\pi,\rho$ and divisor ideals
$\mathscr I_X=\cI_{D_X}$ and $\mathscr I_Y=\cI_{D_Y}$.

Since $\overline{\Gamma}_F$ is projective over $\CC$, there exists a proper birational morphism
\begin{equation}\label{eq:13.7-fixed-model}
	\mu:Z \longrightarrow \overline{\Gamma}_F
\end{equation}
with $Z$ normal such that both $\mathscr I_X\cO_Z$ and $\mathscr I_Y\cO_Z$ are invertible
(for instance, take a principalization of $\mathscr I_X\cdot \mathscr I_Y$).
Write
\begin{equation}\label{eq:13.8-divisors}
	\mathscr I_X\cO_Z=\cO_Z(-A) \ \text{ and }\ \mathscr I_Y\cO_Z=\cO_Z(-B),
\end{equation}
where $A$ and $B$ are effective Cartier divisors on $Z$.
For any prime divisor $E\subseteq Z$, set
\[
\ord_E(\mathscr I_X):=\text{coeff}_E(A) \quad\text{and}\quad \ord_E(\mathscr I_Y):=\text{coeff}_E(B).
\]
Thus, $\ord_E(\mathscr I_X)>0$ if and only if $E\subseteq \Supp(A)$.

\medskip
\noindent\textit{Local exponents on the fixed model.}
Fix $\xi\in D_X$ and, as before, set $R_\xi:=\cO_{\overline{\Gamma}_F,\xi}$,
$\mathscr I_{X,\xi}:=\mathscr I_X\cdot R_\xi$, and $\mathscr I_{Y,\xi}:=\mathscr I_Y\cdot R_\xi$.
Base-changing $\mu$ to $\Spec(R_\xi)$ yields a proper birational morphism
\[
\mu_\xi:Z_\xi:= Z \times_{\overline{\Gamma}_F}\Spec(R_\xi)\longrightarrow \Spec(R_\xi),
\]
with $Z_\xi$ normal, such that $\mathscr I_{X,\xi}\cO_{Z_\xi}$ and $\mathscr I_{Y,\xi}\cO_{Z_\xi}$
are invertible. Denote by $A_\xi$ and $B_\xi$ the pullbacks of $A$ and $B$, so that
\[
\mathscr I_{X,\xi}\cO_{Z_\xi}=\cO_{Z_\xi}(-A_\xi) \text{ and }
\mathscr I_{Y,\xi}\cO_{Z_\xi}=\cO_{Z_\xi}(-B_\xi).
\]
For our purposes, the only prime divisors on $Z_\xi$ that matter are those lying over codimension--one
points of $Z$ whose image contains $\xi$; equivalently, whenever $E\subseteq Z$ is a prime divisor with
$\xi\in\mu(E)$, its pullback to $Z_\xi$ has codimension--one points, and testing integral-closure
containments on $R_\xi$ is reduced to testing the corresponding divisorial orders along such $E$.

\begin{theorem}[Local finite--minimum formula]\label{thm:13.3-local-min}
	Fix $\xi\in D_X$. With notation as above, one has
	\begin{equation}\label{eq:13.9-local-min}
		\Loj^{\text{an}}_{\infty,\xi}(F)
		=
		\min\left\{
		\frac{\ord_E(\mathscr I_Y)}{\ord_E(\mathscr I_X)}
		\ \middle|\
		E\in \Supp(A)\text{ and }\xi\in \mu(E)
		\right\}
		\ \in\ \RR_{\ge 0}\cup\{\infty\}.
	\end{equation}
\end{theorem}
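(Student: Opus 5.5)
The plan is to reduce the statement, via \Cref{thm:analytic-algebraic-local-infty}, to computing the algebraic exponent $\Loj_{\mathscr I_{Y,\xi}}(\mathscr I_{X,\xi})$ on the fixed model $Z_\xi$, and then read that exponent off from finitely many divisorial inequalities using \Cref{thm:IC-reduction}. By \Cref{thm:analytic-algebraic-local-infty},
\[
\Loj^{\text{an}}_{\infty,\xi}(F)=\frac{1}{\Loj_{\mathscr I_{Y,\xi}}(\mathscr I_{X,\xi})},
\]
so it suffices to show that $\Loj_{\mathscr I_{Y,\xi}}(\mathscr I_{X,\xi})$ equals $\max_E \ord_E(\mathscr I_X)/\ord_E(\mathscr I_Y)$. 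Here the maximum runs over prime divisors $E\subseteq \Supp(A)$ with $\xi\in\mu(E)$, with the conventions $1/0=\infty$, $1/\infty=0$ and $\inf\varnothing=\infty$.

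First I dispose of the trivial cases. If $\xi\in D_X\setminus D_Y$, then $\mathscr I_{Y,\xi}$ is the unit ideal and both sides are $0$: the right-hand side vanishes because every relevant $E$ has $\ord_E(\mathscr I_Y)=0$, since $B$ does not pass through the fibre over $\xi$ near such components. The index set is nonempty because $\xi\in D_X$ forces some component of $\Supp(A)$ to meet $\mu^{-1}(\xi)$. If some relevant $E$ has $\ord_E(\mathscr I_Y)=0$ while $\ord_E(\mathscr I_X)>0$, the ratio is $0$ and the analytic exponent should be $0$ as well, by the same containment argument below.

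For the main case I apply \Cref{thm:IC-reduction} to $R_\xi$ (excellent, being a localization of a finite type $\CC$-algebra), with the proper birational normal model $\mu_\xi:Z_\xi\to\Spec R_\xi$ on which both ideals are invertible. That lemma gives
\[
\mathscr I_{Y,\xi}^{q}\subseteq\overline{\mathscr I_{X,\xi}^{p}}\iff qB_\xi\ge pA_\xi \iff q\,\ord_E(\mathscr I_Y)\ge p\,\ord_E(\mathscr I_X)
\]
for every prime divisor of $Z_\xi$. The prime divisors of $Z_\xi$ are exactly the pullbacks of prime divisors $E\subseteq Z$ whose image contains $\xi$, with the same orders. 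Divisors with $\ord_E(\mathscr I_X)=0$ impose no condition, so only $E\subseteq\Supp(A)$ with $\xi\in\mu(E)$ matter. Since the condition is homogeneous in $(p,q)$, containment for $(pt,qt)$ with $t\gg1$ is equivalent to containment for $(p,q)$. Hence the admissible set is exactly $\{q/p\in\QQ_{>0}: q/p\ge \ord_E(\mathscr I_X)/\ord_E(\mathscr I_Y)\ \forall E\}$, whose infimum is the finite maximum $M$, or $\infty$ if some relevant $E$ has $\ord_E(\mathscr I_Y)=0$. Inverting gives \eqref{eq:13.9-local-min}. Alternatively, \Cref{thm:finite-testing-implies-max} applies directly, with (H1) supplied by \Cref{thm:IC-reduction}.

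The main obstacle is justifying the correspondence between prime divisors of $Z_\xi$ and divisors $E\subseteq Z$ with $\xi\in\mu(E)$. One has to check that the base change $Z\times_{\overline\Gamma_F}\Spec R_\xi$ is normal, that $\mu_\xi$ is proper and birational, and that the codimension-one points of $Z_\xi$ are exactly the generic points of such $E$. I would argue this via flatness of localization: the points of $Z_\xi$ are the points of $Z$ specializing into $\mu^{-1}(\xi)$, local rings are unchanged, and a prime $E$ meets $\mu^{-1}(\xi)$ precisely when $\xi\in\mu(E)$. Finiteness of the minimum then follows from the finiteness of components of $\Supp(A)$.
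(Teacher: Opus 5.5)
Your proposal is correct and follows the paper's proof essentially step for step. You reduce to $\Loj_{\mathscr I_{Y,\xi}}(\mathscr I_{X,\xi})$ via \Cref{thm:analytic-algebraic-local-infty}, convert the containments on $Z_\xi$ into divisorial inequalities via \Cref{thm:IC-reduction}, discard divisors with $\ord_E(\mathscr I_X)=0$, and invert the resulting finite maximum. Your explicit checks that the prime divisors of $Z_\xi$ are exactly those coming from $E\subseteq Z$ with $\xi\in\mu(E)$, and that the index set is nonempty, fill in points the paper only asserts. The one weak spot is the aside via \Cref{thm:finite-testing-implies-max}, which does not apply as stated: (H2) can fail, and the $\ord_E$ need not be centered at the maximal ideal of $R_\xi$. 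Keep it as a remark only.
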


\begin{proof}
	For simplicity of notation, set $r_\xi = \Loj_{\mathscr I_{Y,\xi}}(\mathscr I_{X,\xi})$, i.e.,
	\[
	r_\xi =\inf\left\{\frac{q}{p}\in\QQ_{>0}\ \middle|\
	\mathscr I_{Y,\xi}^{\,qt}\subseteq \overline{\mathscr I_{X,\xi}^{\,pt}}\ \text{ for all }t\gg 1\right\}.
	\]
	By \Cref{thm:analytic-algebraic-local-infty}, we have $\Loj^{\text{an}}_{\infty,\xi}(F)=1/r_\xi$
	(with the conventions $\inf\varnothing=\infty$, $1/\infty=0$, and $1/0=\infty$).

	Fix $\frac{q}{p}\in\QQ_{>0}$ and $t\ge 1$.
	Since $\mathscr I_{X,\xi}\cO_{Z_\xi}$ and $\mathscr I_{Y,\xi}\cO_{Z_\xi}$ are invertible, their powers are
	integrally closed on $Z_\xi$.
	Hence, by the fixed--model integral--closure criterion (see \Cref{thm:IC-reduction}),
	the containment
	\[
	\mathscr I_{Y,\xi}^{\,qt}\subseteq \overline{\mathscr I_{X,\xi}^{\,pt}}
	\quad\text{in }R_\xi
	\]
	is equivalent to the divisor inequality
	\[
	qt\,B_\xi \ \ge\ pt\,A_\xi
	\quad\text{on }Z_\xi,
	\]
	and therefore to the collection of inequalities
	\begin{equation}\label{eq:13.10-div-ineq}
		qt\cdot \ord_E(\mathscr I_Y)\ \ge\ pt\cdot \ord_E(\mathscr I_X)
		\quad\text{for every prime divisor }E\subseteq Z\text{ with }\xi\in\mu(E).
	\end{equation}
	If $E \not\subseteq \Supp(A)$ then $\ord_E(\mathscr I_X) = 0$ and the inequalities \eqref{eq:13.10-div-ineq} are vacuously true, i.e., such a prime divisor $E$ imposes no restriction on $q/p$. If $E\in \Supp(A)$ and $\xi\in\mu(E)$, then $\ord_E(\mathscr I_X)>0$.
	For such $E$, the inequalities \eqref{eq:13.10-div-ineq} are equivalent to the ratio inequalities
	\[
	\frac{q}{p}\ \ge\ \frac{\ord_E(\mathscr I_X)}{\ord_E(\mathscr I_Y)}
	\quad\text{whenever }\ord_E(\mathscr I_Y)>0,
	\]
	and if $\ord_E(\mathscr I_Y)=0$ then \eqref{eq:13.10-div-ineq} fails for every $\frac{q}{p}>0$.

	It follows that
	\[
	r_\xi
	=
	\sup\left\{
	\frac{\ord_E(\mathscr I_X)}{\ord_E(\mathscr I_Y)}
	\ \middle|\
	E\in \Supp(A)\text{ and }\xi\in \mu(E)
	\right\}
	\ \in\ [0,\infty],
	\]
	where the supremum is $\infty$ if some such $E$ satisfies $\ord_E(\mathscr I_Y)=0$.
	Taking reciprocals (with the stated conventions) yields \eqref{eq:13.9-local-min}.
\end{proof}

\smallskip
\noindent\textit{Global finiteness and a finite minimum.}
Recall that
\[
\Supp(A)=\{\,E\subseteq Z \text{ prime divisor}\mid \ord_E(\mathscr I_X)>0\,\}.
\]

\begin{theorem}[Global finite--minimum formula]\label{thm:13.2-global-min}
	With notation as above, one has
	\begin{equation}\label{eq:13.7-global-min}
		\Loj^{\text{an}}_\infty(F)
		=
		\min_{E\in \Supp(A)}\ \frac{\ord_E(\mathscr I_Y)}{\ord_E(\mathscr I_X)}
		\ \in\ \RR_{\ge 0}\cup\{\infty\}.
	\end{equation}
	In particular,
	\[
	\Loj^{\text{an}}_\infty(F)>0
	\quad\Longleftrightarrow\quad
	\ord_E(\mathscr I_Y)>0\ \text{ for every }E\in \Supp(A).
	\]
\end{theorem}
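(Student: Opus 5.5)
The plan is to reduce the global statement to \Cref{thm:13.3-local-min} and then interchange the infimum over boundary points with the minimum over divisors.

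By \Cref{def:local-analytic-loj-infty}(2),
\[
\Loj^{\text{an}}_\infty(F)=\inf_{\xi\in D_X}\Loj^{\text{an}}_{\infty,\xi}(F).
\]
By \Cref{thm:13.3-local-min}, each local exponent is a minimum of $\ord_E(\mathscr I_Y)/\ord_E(\mathscr I_X)$ over the set $\mathcal E_\xi:=\{E\in\Supp(A):\xi\in\mu(E)\}$. Therefore
\[
\Loj^{\text{an}}_\infty(F)=\inf_{\xi\in D_X}\ \min_{E\in\mathcal E_\xi}\frac{\ord_E(\mathscr I_Y)}{\ord_E(\mathscr I_X)}
=\inf\Bigl\{\tfrac{\ord_E(\mathscr I_Y)}{\ord_E(\mathscr I_X)} : E\in\bigcup_{\xi\in D_X}\mathcal E_\xi\Bigr\}.
\]

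The key step is to check that
\[
\bigcup_{\xi\in D_X}\mathcal E_\xi=\Supp(A).
\]
The inclusion $\subseteq$ is immediate. For the reverse inclusion, take $E\in\Supp(A)$. Then $\mathscr I_X\cO_Z\subseteq\cO_Z(-E)$ locally near the generic point of $E$. Hence $\mu(E)\subseteq V(\mathscr I_X)=D_X$. Since $E$ is nonempty, we may pick any $\xi\in\mu(E)$; this $\xi$ lies in $D_X$, and so $E\in\mathcal E_\xi$.

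Next, $\Supp(A)$ is finite, because $A$ is an effective Cartier divisor on the Noetherian scheme $Z$. Thus the infimum is a minimum over a finite set, which gives \eqref{eq:13.7-global-min}. The conventions for $\infty$ carry through, since each term is in $[0,\infty)$. If $\Supp(A)$ were empty, the formula would read $\min\varnothing=\infty$, matching $\inf_\varnothing$; in fact $\Supp(A)\neq\varnothing$, because $D_X\neq\varnothing$ and $\mu$ is surjective.

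For the ``in particular'' statement: a minimum of finitely many nonnegative numbers is positive exactly when every term is positive. Each ratio has positive denominator for $E\in\Supp(A)$, so it is positive iff $\ord_E(\mathscr I_Y)>0$.

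The only delicate point I expect is the inclusion $\mu(E)\subseteq D_X$ for $E\in\Supp(A)$. I would argue it via closedness: $\mu(E)$ is closed because $\mu$ is proper, and it is contained in the zero locus of $\mathscr I_X$ because the pullback of $\mathscr I_X$ vanishes along $E$.
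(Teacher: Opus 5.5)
Your proposal is correct and follows essentially the same route as the paper. You reduce to the local finite-minimum formula of \Cref{thm:13.3-local-min}, note that each $E\in\Supp(A)$ satisfies $\mu(E)\subseteq D_X$ and so appears in the inner minimum for some $\xi\in D_X$, and then use finiteness of $\Supp(A)$ to turn the infimum into a minimum. Your extra details (the generic-point argument for $\mu(E)\subseteq D_X$, where continuity of $\mu$ already suffices without properness, and the handling of the empty-set conventions) spell out what the paper asserts in one line.
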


\begin{proof}
	By Definition~\ref{def:local-analytic-loj-infty},
	\[
	\Loj^{\text{an}}_\infty(F)=\inf_{\xi\in D_X}\ \Loj^{\text{an}}_{\infty,\xi}(F).
	\]
	Applying \Cref{thm:13.3-local-min} gives
	\[
	\Loj^{\text{an}}_\infty(F)
	=
	\inf_{\xi\in D_X}\ 
	\min\left\{
	\frac{\ord_E(\mathscr I_Y)}{\ord_E(\mathscr I_X)}
	\ \middle|\
	E\in \Supp(A) \text{ and }\xi\in \mu(E)
	\right\}.
	\]
	Fix $E\in \Supp(A)$. Since $\ord_E(\mathscr I_X)>0$, the image $\mu(E)$ is contained in $D_X$,
	so there exists $\xi\in D_X$ with $\xi\in \mu(E)$. Therefore, each $E\in \Supp(A)$ occurs in the
	inner minimum for some $\xi$. Since $\Supp(A)$ is finite, the outer infimum equals the minimum over
	$E\in \Supp(A)$, proving \eqref{eq:13.7-global-min}. The final equivalence is immediate.
\end{proof}

\smallskip
\noindent\textit{Positivity and attainment at infinity.}
We now specialize the general finite--max and Rees valuations machinery developed in \Cref{sec:valuative} to \L{}ojasiewicz exponents at infinity.

Fix $\xi\in D_X\cap D_Y$.
By the valuative criterion for integral closure (see \Cref{thm:valuative-ic}), for fixed $p,q,t\ge1$ the
containment $\mathscr I_{Y,\xi}^{\,qt}\subseteq \overline{\mathscr I_{X,\xi}^{\,pt}}$
holds if and only if 
$$q\,v(\mathscr I_{Y,\xi})\ \ge\ p\,v(\mathscr I_{X,\xi})
\quad\text{for every divisorial valuation }v\text{ of }R_\xi.$$
Consequently, the optimal exponent is characterized by
\begin{equation}\label{eq:meta-valuative-characterization}
	\Loj^{\text{an}}_{\infty,\xi}(F)
	=
	\inf_{v}\ \frac{v(\mathscr I_{Y,\xi})}{v(\mathscr I_{X,\xi})},
\end{equation}
where the infimum ranges over all divisorial valuations $v$ of $R_\xi$ with $v(\mathscr I_{X,\xi})>0$. In fact, as pointed out in \Cref{thm:valuative-ic}, it is enough to let $v$ range over finitely many Rees valuations of $\mathscr I_{X,\xi}$.

\begin{theorem}[Positivity and attainment]\label{thm:meta-infty-rees}
	Fix $\xi\in D_X\cap D_Y$.
	\begin{enumerate}
		\item
		$\Loj^{\text{an}}_{\infty,\xi}(F)>0$ if and only if there is no divisorial valuation $v$ of $R_\xi$
		such that
		\[
		v(\mathscr I_{X,\xi})>0
		\quad\text{and}\quad
		v(\mathscr I_{Y,\xi})=0.
		\]
		
		\item
		If $\Loj^{\text{an}}_{\infty,\xi}(F)>0$, then the infimum in
		\eqref{eq:meta-valuative-characterization} is a minimum.
		
		\item
		More precisely, if $\Loj^{\text{an}}_{\infty,\xi}(F)>0$, then $\Loj^{\text{an}}_{\infty,\xi}(F)$ is attained by
		one of the finitely many Rees valuations of the ideal
		$\mathscr I_{X,\xi} \subseteq R_\xi$.
	\end{enumerate}
\end{theorem}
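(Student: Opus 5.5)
The plan is to reduce everything to the principal (Cartier) structure of the two boundary ideals at $\xi$, and then read off positivity and attainment from a finite list of Rees valuations. By \Cref{thm:analytic-algebraic-local-infty} we have $\Loj^{\text{an}}_{\infty,\xi}(F)=1/r_\xi$, where $r_\xi=\Loj_{\mathscr I_{Y,\xi}}(\mathscr I_{X,\xi})$. Since $\overline{\Gamma}_F$ is normal, $R_\xi$ is a normal Noetherian local domain. Moreover $\mathscr I_{X,\xi}=(u_X)$ and $\mathscr I_{Y,\xi}=(u_Y)$ are principal, and both are proper because $\xi\in D_X\cap D_Y$.

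The first key step is to identify the Rees valuations. In a normal Noetherian domain, every power $(u_X^p)$ of a principal ideal is integrally closed. It is an intersection of symbolic powers of the finitely many height-one primes $P_1,\dots,P_k$ minimal over $(u_X)$. Hence the Rees valuations of $\mathscr I_{X,\xi}$ are exactly the discrete valuations $v_j:=\ord_{P_j}$ of the DVRs $(R_\xi)_{P_j}$, and each $v_j$ is divisorial. These $v_j$ are the local incarnations of the prime divisors $E\in\Supp(A)$ with $\xi\in\mu(E)$ from \Cref{thm:13.3-local-min}. We then have
\[
(u_Y^{q})\subseteq \overline{(u_X^{p})}=(u_X^p)\iff q\,v_j(u_Y)\ge p\,v_j(u_X)\ \text{ for all } j .
\]
Running the argument of \Cref{thm:13.3-local-min} (equivalently, Steps 1--2 of \Cref{thm:finite-testing-implies-max}), this gives
\[
r_\xi=\max_{j}\frac{v_j(u_X)}{v_j(u_Y)}\in(0,\infty],
\]
with the value $\infty$ exactly when some $v_j(u_Y)=0$. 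Inverting,
\[
\Loj^{\text{an}}_{\infty,\xi}(F)=\min_j \frac{v_j(\mathscr I_{Y,\xi})}{v_j(\mathscr I_{X,\xi})}.
\]
This is a minimum over a finite set attained at a Rees valuation, which proves (3) once we know it agrees with the infimum in \eqref{eq:meta-valuative-characterization}.

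The second step compares this finite minimum with the infimum over all divisorial $v$ having $v(\mathscr I_{X,\xi})>0$. Since every $v_j$ is among these $v$, the infimum is at most the finite minimum. Conversely, set $\lambda$ equal to the finite minimum and take any rational $p/q\le\lambda$ (all of them if $\lambda=\infty$ is excluded, any positive rational below $\lambda$ otherwise). The $v_j$-inequalities give $(u_Y^q)\subseteq\overline{(u_X^p)}$. The valuative criterion \Cref{thm:valuative-ic} then yields $q\,v(u_Y)\ge p\,v(u_X)$ for every valuation $v$, so $v(u_Y)/v(u_X)\ge p/q$. Letting $p/q\uparrow\lambda$ shows the infimum equals $\lambda$. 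This proves (2) and (3).

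For (1), if some divisorial $v$ has $v(\mathscr I_{X,\xi})>0$ and $v(\mathscr I_{Y,\xi})=0$, then the infimum is $0$, so $\Loj^{\text{an}}_{\infty,\xi}(F)=0$. Conversely, if the exponent is $0$, then the finite minimum vanishes, so some Rees valuation $v_j$ satisfies $v_j(u_X)>0$ and $v_j(u_Y)=0$; this $v_j$ is the required divisorial valuation.

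The main obstacle is bookkeeping about which valuations count. The Rees valuations here are centered at height-one primes, not at the maximal ideal of $R_\xi$. Consequently \Cref{thm:finite-max-rees}, which is stated for valuations centered at $\m$, cannot be quoted verbatim. I would therefore argue directly via the Krull-domain description of $(u_X^p)$, as above, rather than through that theorem. I would also check that \Cref{thm:valuative-ic} is used in its form over all valuations of $\Frac(R_\xi)$ that are nonnegative on $R_\xi$.
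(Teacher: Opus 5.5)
Your proof is correct, but it is organized differently from the paper's.

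\emph{The paper's route.} It proves (1) first, reading it off directly from \eqref{eq:meta-valuative-characterization}. It then obtains (2)--(3) by inverting and quoting \Cref{thm:finite-max-rees} for the pair $(\mathscr I_{X,\xi},\mathscr I_{Y,\xi})$.

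\emph{Your route.} You exploit that both ideals are principal in the normal domain $R_\xi$. The Rees valuations of $(u_X)$ are the $\ord_{P_j}$ at the height-one primes over $u_X$, which gives $\Loj^{\text{an}}_{\infty,\xi}(F)=\min_j v_j(u_Y)/v_j(u_X)$ outright. You then match this with the infimum. The lower bound only needs $u_Y^q\in(u_X^p)$, which forces $q\,v(u_Y)\ge p\,v(u_X)$ for every valuation nonnegative on $R_\xi$, so even the valuative criterion is unnecessary. Only afterwards do you deduce (1).

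\emph{What each buys.} The paper's argument is shorter because it reuses the general Rees machinery. Yours is more rigorous at exactly the two places where that reuse is loose.
\begin{itemize}
\item In the converse of (1), the paper passes from ``every ratio is positive'' to ``the infimum is positive.'' That step needs the attainment you establish first.
\item \Cref{thm:finite-max-rees} is stated for valuations centered at $\m$ and proved through the $\m$-primary testing of \Cref{thm:finite-testing-fingen}. The Rees valuations of the principal ideal $\mathscr I_{X,\xi}$, however, are centered at height-one primes.
\end{itemize}

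\emph{Your reading of ``divisorial valuation of $R_\xi$'' is forced.} Allowing arbitrary centers is necessary, not just convenient. Since $\xi\in D_Y$, every valuation centered at the maximal ideal has $v(u_Y)>0$. Under that convention, (1) would therefore assert positivity at every $\xi\in D_X\cap D_Y$. This fails whenever a component of $D_X$ not contained in $D_Y$ passes through $\xi$. For example, take $F=x_1$ on $\CC^2$ at the point where the two components of $D_X$ meet.

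\emph{Two harmless blemishes.}
\begin{itemize}
\item The $v_j$ are not all of the divisors $E$ appearing in \Cref{thm:13.3-local-min}. Exceptional divisors of $\mu$ over $\xi$ also occur there, though they contribute only redundant inequalities.
\item The parenthetical about $\lambda=\infty$ is garbled: $\lambda$ is always finite, and the case $\lambda=0$ is trivial.
\end{itemize}
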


\begin{proof} (1) If there exists a divisorial valuation $v$ with
	$v(\mathscr I_{X,\xi})>0$ and $v(\mathscr I_{Y,\xi})=0$, then the ratio
	$v(\mathscr I_{Y,\xi})/v(\mathscr I_{X,\xi})$ equals $0$, so
	\eqref{eq:meta-valuative-characterization} gives $\Loj^{\text{an}}_{\infty,\xi}(F)=0$.
	Conversely, if no such valuation exists, then every divisorial valuation with
	$v(\mathscr I_{X,\xi})>0$ also satisfies $v(\mathscr I_{Y,\xi})>0$, so all ratios in
	\eqref{eq:meta-valuative-characterization} are positive, hence
	$\Loj^{\text{an}}_{\infty,\xi}(F)>0$.
	
	\smallskip
	(2)--(3)
	Assume $\Loj^{\text{an}}_{\infty,\xi}(F)>0$.
	Taking reciprocals in \eqref{eq:meta-valuative-characterization} gives
	\[
	\frac{1}{\Loj^{\text{an}}_{\infty,\xi}(F)}
	=
	\sup_{v}\ \frac{v(\mathscr I_{X,\xi})}{v(\mathscr I_{Y,\xi})},
	\]
	where the supremum ranges over divisorial valuations with
	$v(\mathscr I_{Y,\xi})>0$.
	By (1) this supremum is finite.
	Applying the finite--maximum theorem for ideal pairs
	(\Cref{thm:finite-max-rees}) to the pair
	$(\mathscr I_{X,\xi},\mathscr I_{Y,\xi})$ in the Noetherian local domain $R_\xi$,
	the supremum is attained by one of the finitely many Rees valuations of the ideal $\mathscr I_{X,\xi}$.
	Taking reciprocals again shows that the infimum in
	\eqref{eq:meta-valuative-characterization} is a minimum attained by the same
	finite set of valuations.
\end{proof}

\subsection{Newton nondegeneracy and toric compactification}

We recall the notion of Newton nondegeneracy for polynomial mappings (or ideals) following Saia \cite{Saia}. This is slightly different than Kouchnirenko's \cite{Kou76}). Particularly, a function $f \in \kk[x_1, \dots, x_n]$ is Newton nondegenerate in Kouchnirenko's sense if and only if the ideal generated by $x_i\frac{\partial f}{\partial x_i}$ is Newton nondegenerate in Saia's sense.

\begin{definition}[Newton polyhedron and nondegeneracy]\label{def:newton-infty}
	Write $F=(f_1,\dots,f_r)$ with $f_j\in \kk[x_1,\dots,x_n]$.
	For $f=\sum_{\alpha\in\NN^n} c_\alpha x^\alpha$ with $c_\alpha\neq 0$, set
	\[
	\Supp(f):=\{\alpha\in\NN^n: c_\alpha\neq 0\}.
	\]
	Define the Newton polyhedron of $F$ (or equivalently, of the ideal generated by $F$) by
	\[
	\Gamma(F):=\mathrm{conv}\Bigl(\bigcup_{j=1}^r\Supp(f_j)\cup\{0\}\Bigr)+\RR_{\ge 0}^n
	\ \subseteq\ \RR^n,
	\]
	and for $w\in\RR_{\ge 0}^n$ define
	\[
	m_w(f):=\min_{\alpha\in\Supp(f)}\langle w,\alpha\rangle,\quad
	f_w:=\sum_{\alpha\in\Supp(f)\,:\,\langle w,\alpha\rangle=m_w(f)} c_\alpha x^\alpha.
	\]
	We say that $F$ is \emph{Newton nondegenerate at infinity} if for every $w\in\RR_{>0}^n$ the system
	\[
	(f_1)_w=\cdots=(f_r)_w=0
	\]
	has no solution in $(\kk^*)^n$.
\end{definition}

We will use a toric compactification adapted to the Newton polyhedron at infinity following Khovanskii \cite{Khovanskii}.  Let $\Sigma_{\Gamma}$ denote the normal fan of $\Gamma(F)$ and let $\Sigma_{\PP^n}$ denote the standard fan of $\PP^n$.  Choose a smooth complete fan $\Sigma$ refining both $\Sigma_{\Gamma}$
and $\Sigma_{\PP^n}$ (such a refinement exists by toric resolution of singularities;
see \cite{Khovanskii,Oka}, \cite[Chapter 11]{CLS} and \cite[Chapter 2]{Fulton}).  Then, the associated toric variety $X_\Sigma$ is a smooth projective toric compactification of the torus $T=(\kk^*)^n$, and the refinement
$\Sigma\succ \Sigma_{\PP^n}$ induces a proper toric morphism $\pi_\Sigma:X_\Sigma\longrightarrow \PP^n$
extending the standard open embedding $T\hookrightarrow \PP^n$ (see \cite[Chapter 3]{CLS} and \cite[Chapters 1--2]{Fulton}).  Moreover, since
$\Sigma$ also refines the normal fan of $\Gamma(F)$, the toric boundary divisors of
$X_\Sigma$ correspond to weight vectors governing the face systems (initial forms)
of $F$, which control the asymptotic behavior at infinity as in Khovanskii's
compactification.

\begin{notation}[Khovanskii compactification]\label{notation:Khovanskii}
	Let $\Sigma$ be a smooth complete fan refining both the normal fan of $\Gamma(F)$
	and the fan of $\PP^n$, and let
	\[
	\pi_\Sigma:X_\Sigma\to \PP^n
	\]
	be the induced toric morphism.  For each cone $\sigma\in\Sigma$, let
	$O(\sigma)\subseteq X_\Sigma$ denote the corresponding torus orbit.
	Let $\Gamma_F^\Sigma\subseteq X_\Sigma\times \PP^r$ denote the normalization of the
	closure of the graph of $F$.  Write
	\[
	\pi_\Sigma:\Gamma_F^\Sigma\to X_\Sigma \text{ and }
	\rho_\Sigma:\Gamma_F^\Sigma\to \PP^r
	\]
	for the two natural projections.
\end{notation}

\begin{lemma}[Values along divisors at infinity]\label{lem:bridge-nnd}
	Assume that $F:\AA^n_\kk\to\AA^r_\kk$ is Newton nondegenerate at infinity in the sense of
	Definition~\ref{def:newton-infty}.
	Let $\xi\in D_X\cap D_Y$ and set $R_\xi:=\cO_{\overline{\Gamma}_F,\xi}$.
	Then, for every divisorial valuation $v$ of $R_\xi$,
	\[
	v(\mathscr I_{X,\xi})>0\quad\Longrightarrow\quad v(\mathscr I_{Y,\xi})>0.
	\]
	Equivalently, there is no divisorial valuation $v$ with $v(\mathscr I_{X,\xi})>0$ and $v(\mathscr I_{Y,\xi})=0$.
\end{lemma}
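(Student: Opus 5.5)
The plan is to reduce the statement to a set-theoretic containment $D_X\subseteq D_Y$ near $\xi$, and then prove that containment with an arc argument fed by Newton nondegeneracy. Let $v$ be a divisorial valuation of $R_\xi$ and let $Z_v\subseteq \Spec R_\xi$ be its center. Since $\mathscr I_{X,\xi}=(u_X)$ and $\mathscr I_{Y,\xi}=(u_Y)$ are principal (\Cref{lem:norms-vs-uX-uY}), $v(\mathscr I_{X,\xi})>0$ holds exactly when $Z_v\subseteq V(u_X)=D_X$. Likewise $v(\mathscr I_{Y,\xi})>0$ holds exactly when $Z_v\subseteq D_Y$. So it suffices to show that every point $\eta\in D_X$ near $\xi$ lies in $D_Y$. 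By \Cref{lem:norms-vs-uX-uY}, this says $\|F(x)\|\to\infty$ whenever $(x,F(x))\to\eta$ with $\|x\|\to\infty$. In other words, we must prove that $F$ is proper at infinity along $D_X$.

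Step 1 (curve selection). Suppose some $\eta\in D_X$ lies outside $D_Y$. Then there is $U\ni\eta$ on which $u_Y$ is a unit, so $\|F\|$ is bounded on $(U\cap\Gamma_F)\setminus D_X$. The torus $T=(\kk^*)^n$ is Zariski dense in $\AA^n$, so the graph over $T$ is dense in $\overline{\Gamma}_F$. By the curve selection lemma we then get a meromorphic arc $x(s)=(a_1s^{w_1}(1+o(1)),\dots,a_ns^{w_n}(1+o(1)))$ with all $a_i\in\kk^*$, $x(s)\in T$ for $s\neq0$, $\|x(s)\|\to\infty$ (so $\min_i w_i<0$), and $F(x(s))$ bounded as $s\to0$.

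Step 2 (initial forms). Expand $f_j(x(s))=s^{m_w(f_j)}\bigl((f_j)_w(a)+o(1)\bigr)$. Boundedness forces $(f_j)_w(a)=0$ for every $j$ with $m_w(f_j)<0$. If instead $m_w(f_j)\ge0$ for some $j$, then every exponent $\alpha\in\Supp(f_j)$ satisfies $\langle w,\alpha\rangle\ge0$, although $w$ has a negative entry. Here the condition $0\in\Gamma(F)$ and the refinement of the fan of $\PP^n$ enter: the ray of $\Sigma$ containing $-w$ indexes a toric boundary divisor lying over $H_X$. On that divisor, the face of $\Gamma(F)$ selected by the weight is the face governing the system in Definition~\ref{def:newton-infty}. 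After rewriting in terms of the inward weight (replacing $w$ by $-w$, so that minimizing becomes the relevant direction), the vanishing $(f_1)_{w}(a)=\dots=(f_r)_{w}(a)=0$ with $a\in(\kk^*)^n$ contradicts nondegeneracy at infinity. Hence $\eta\in D_Y$, and the lemma follows from the reduction in the first paragraph.

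The main obstacle is Step 2. Definition~\ref{def:newton-infty} only tests weights $w\in\RR^n_{>0}$ in its minimizing convention, whereas the arc produces a weight with mixed signs: some coordinates go to infinity and others may tend to $0$. It is also not automatic that every $f_j$ with $m_w(f_j)\ge0$ causes no trouble. I would first try to use the Khovanskii compactification $X_\Sigma$ of Notation~\ref{notation:Khovanskii}. There, each such arc lands in a torus orbit $O(\sigma)$ over $H_X$, and the face system of $F$ attached to $\sigma$ is exactly the system controlled by nondegeneracy. This converts the sign bookkeeping into checking which cones of $\Sigma$ lie over $H_X$, and in particular confirms that the corresponding face of $\Gamma(F)$ does not contain $0$, so that $m<0$ for at least one $f_j$.
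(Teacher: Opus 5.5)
Your first paragraph is sound and matches the reduction in the paper. The paper shows $\Supp(\mathscr I_X\cO_{\Gamma_F^\Sigma})\subseteq\Supp(\mathscr I_Y\cO_{\Gamma_F^\Sigma})$ near $\mu_\Sigma^{-1}(\xi)$ on the Khovanskii model and then descends to valuations; your phrasing through centers on $\overline{\Gamma}_F$ is more direct.

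\textbf{The gap.} The contradiction at the end of Step~2 cannot be obtained from Definition~\ref{def:newton-infty}.
\begin{enumerate}
\item Your arc has weight $w$ with $\min_i w_i<0$. Since $\ord_s x(s)^\alpha=\langle w,\alpha\rangle$, its limit lies in the orbit of the cone of $\Sigma$ containing $w$ itself, so there is no sign to flip.
\item Replacing $w$ by $-w$ does not help. It does not produce a vector in $\RR^n_{>0}$, and it changes the initial forms, since minimizing $\langle -w,\cdot\rangle$ selects the terms maximizing $\langle w,\cdot\rangle$.
\item Definition~\ref{def:newton-infty} only constrains weights in $\RR^n_{>0}$ under the $\min$ convention, i.e.\ lowest-order parts at the origin. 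So it says nothing about your $w$.
\item Your plan to check that the relevant face of $\Gamma(F)$ avoids $0$ also fails. As defined, $\Gamma(F)=\conv(\bigcup_j\Supp(f_j)\cup\{0\})+\RR^n_{\ge0}=\RR^n_{\ge0}$, and every face of it contains $0$.
\item The Khovanskii route you fall back on is exactly the paper's, and it does not close the gap either. The paper asserts that each cone $\sigma$ whose orbit lies over $H_X$ satisfies $\operatorname{relint}(\sigma)\cap\RR^n_{>0}\neq\varnothing$. With the normalization $\ord_E(x^\alpha)=\langle w_E,\alpha\rangle$, these are precisely the cones not contained in $\RR^n_{\ge0}$; for instance the strict transform of $H_X$ has $w_E=(-1,\dots,-1)$. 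Their relative interiors consist entirely of weights with a negative coordinate.
\end{enumerate}

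\textbf{The statement in your reading is false.} Read ``divisorial valuation of $R_\xi$'' as allowing an arbitrary center in $\Spec R_\xi$, as you do. Then the statement fails under Definition~\ref{def:newton-infty}.
\begin{itemize}
\item Take $n=2$, $r=1$, $F=1+x$. For every $w\in\RR^2_{>0}$ one has $(f_1)_w=1$, so $F$ is nondegenerate in the paper's sense.
\item Here $\overline{\Gamma}_F=\{Z_0(W_1-W_0)=W_0Z_1\}\subseteq\PP^2\times\PP^1$, which is smooth.
\item At $\xi=([0:0:1],[0:1])\in D_X\cap D_Y$, the functions $z_1=Z_1/Z_2$ and $w_0=W_0/W_1$ are local coordinates, with $u_X=Z_0/Z_2=w_0z_1/(1-w_0)$ and $u_Y=w_0$.
\item Thus $v=\ord_{(z_1)}$ has $v(\mathscr I_{X,\xi})=1$ and $v(\mathscr I_{Y,\xi})=0$. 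Geometrically, $F$ stays bounded along each line $x=c$ as $y\to\infty$.
\end{itemize}
In the other reading, $v$ is centered at $\m_\xi$, as in the paper's standing convention and in Step~3 of its proof. Then the lemma is immediate from $u_Y\in\m_\xi$, and nondegeneracy plays no role.

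\textbf{What Step~2 actually needs.} You need a genuine condition at infinity, together with convenience:
\begin{itemize}
\item For every $w\in\RR^n$ with $d_w:=\min_j m_w(f_j)<0$, the system $\{(f_j)_w=0 : m_w(f_j)=d_w\}$ has no zero in $(\kk^*)^n$.
\item $\bigcup_j\Supp(f_j)$ meets every coordinate axis away from $0$. This guarantees that $\min_iw_i<0$ forces $d_w<0$.
\end{itemize}
Under these hypotheses your arc argument goes through as written, since boundedness forces $(f_j)_w(a)=0$ for all $j$ with $m_w(f_j)=d_w$.
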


\begin{proof}
	We will work with the notations in \Cref{def:newton-infty} and \Cref{subsec:algebraic-infinity}.
	
	\smallskip
	\noindent\emph{Step 1: toric boundary data and face systems.}
	Let $\Sigma$ and $\pi_\Sigma:X_\Sigma\to \PP^n$ be as above (Khovanskii's compactification).
	Write $D_{\tor}:=X_\Sigma\setminus(\kk^*)^n$ for the toric boundary.
	Each torus--invariant prime divisor $E\subseteq D_{\tor}$ corresponds to a primitive
	ray vector $w_E\in \ZZ_{\ge 0}^n\setminus\{0\}$.
	For every Laurent monomial $x^\alpha$ one has $\ord_E(x^\alpha)=\langle w_E,\alpha\rangle$,
	and for every polynomial $f$ one has $\ord_E(f)=m_{w_E}(f)$ with initial form $f_{w_E}$
	on the orbit $O(E) := O(w_E)$.

	\smallskip
	\noindent\emph{Step 2: boundary intersections of the graph and Newton nondegeneracy.}
	Let $\Gamma_F^\Sigma\subseteq X_\Sigma\times \PP^r$ be as before, and let
	$\mu_\Sigma:\Gamma_F^\Sigma\to \overline{\Gamma}_F$ be the induced proper birational morphism.
	We first claim that near $\mu_\Sigma^{-1}(\xi)$, one has
    \begin{align}
    \Supp (\mathscr{I}_X \cO_{\Gamma_F^\Sigma}) \subseteq \Supp (\mathscr{I}_Y \cO_{\Gamma_F^\Sigma}). \label{eq.supportGamma}
    \end{align}
    
    Indeed, consider any point $z \in \Supp(\mathscr{I}\cO_{\Gamma_F^\Sigma})$. Then, $\mu_\Sigma(z) \in D_X$, so the image of $z$ in $X_\Sigma$ lies over the hyperplane at infinity $H_X$. Let $O(\sigma) \subseteq X_\Sigma$ be the torus orbit containing the image of $z$, where $\sigma \in \Sigma$. Since $O(\sigma)$ maps into $H_X$, the cone $\sigma$ is not contained in any coordinate face
	of $\RR_{\ge0}^n$. Equivalently, $\text{relint}(\sigma)\cap \RR_{>0}^n\neq \varnothing.$ Choose $w\in \text{relint}(\sigma)\cap \RR_{>0}^n.$

We shall work in the affine toric chart $U_\sigma=\Spec k[\sigma^\vee\cap M]\subseteq X_\Sigma$ containing $O(\sigma)$. In toric coordinates on $U_\sigma$, each polynomial $f_j$ has an
	expansion of the form
	\[
	f_j=u^{m_j}\Bigl((f_j)_w(u)+\sum_{\langle w,\alpha\rangle>m_j} c_{j,\alpha}u^\alpha\Bigr),
	\]
where $m_j = m_w(f_j) = \min_{\alpha \in \Gamma(f_j)}\langle w,\alpha\rangle$ and $(f_j)_w$ is the face polynomial consisting of those terms achieving this minimum.
	By the standard initial-form criterion for strict transforms in toric compactifications
	(see, for example, \cite{Khovanskii,Oka}), the graph closure and its normalization in $U_\sigma \times \AA^r$ meet $O(\sigma)\times \{W_0\neq 0\}$ if and only if the face system
	\[
	(f_1)_w=\cdots=(f_r)_w=0
	\]
	has a solution in the torus $O(\sigma)\cong (k^*)^{\,n-\dim\sigma}.$
	Since $w\in \RR_{>0}^n$ and $F$ is Newton nondegenerate at infinity, this system has no
	solution. Therefore, we conclude that
	\[
	\Gamma_F^\Sigma\cap \bigl(O(\sigma)\times\{W_0\neq 0\}\bigr)=\varnothing.
	\]
	
	Now, suppose that $z\notin \Supp (\mathscr{I}_Y\cO_{\Gamma_F^\Sigma}).$
	Then,$\rho_\Sigma(z)\notin H_Y$, equivalently, $W_0\neq 0$ at $z$. Since the image of $z$ in
	$X_\Sigma$ lies in $O(\sigma)$, this would give $z\in \Gamma_F^\Sigma\cap \bigl(O(\sigma)\times\{W_0\neq 0\}\bigr),$ a contradiction. Hence, we necessarily have
	\[
	z\in \Supp (\mathscr{I}_Y\cO_{\Gamma_F^\Sigma}).
	\]
	Since $z$ was arbitrary, this proves (\ref{eq.supportGamma}) near $\mu_\Sigma^{-1}(\xi)$.
	
	\smallskip
	\noindent\emph{Step 3: descend to divisorial valuations over $\xi$.}
	Let $v$ be a divisorial valuation of $R_\xi$ with $v(\mathscr{I}_{X,\xi})>0.$
	Choose a normal modification $\nu:W\to \overline{\Gamma}_F$ and a prime divisor $E'\subseteq W$
	with center $\xi$ such that $v=\ord_{E'}$ on $R_\xi$.
	Take a common normal refinement $\eta: W'\to W$ and $\tau: W'\to \Gamma_F^\Sigma$ dominating both models,
	and let $\widetilde E'\subseteq W'$ be the strict transform of $E'$.
	
Since $\mathscr{I}_X$ is the ideal sheaf of the Cartier divisor $D_X$ on $\overline{\Gamma}_F$, its
	pullback to $W'$ is invertible, and
	\[
	\ord_{\widetilde E'}(\tau^*\mathscr{I}_X)
	=
	\ord_{E'}(\mathscr{I}_{X,\xi})
	=
	v(\mathscr{I}_{X,\xi})
	>0.
	\]
	Therefore, the generic point of $\widetilde E'$ maps into $\Supp (\mathscr{I}_X\cO_{\Gamma_F^\Sigma}).$
	By (\ref{eq.supportGamma}), near $\mu_\Sigma^{-1}(\xi)$ this support is contained in $\Supp (\mathscr{I}_Y\cO_{\Gamma_F^\Sigma}).$ Hence, the generic point of $\widetilde E'$ also maps into $\Supp (\mathscr{I}_Y\cO_{\Gamma_F^\Sigma}).$ This implies that $\ord_{\widetilde E'}(\tau^*\mathscr{I}_Y)>0.$

Moreover, $\mathscr{I}_Y$ is also the ideal sheaf of a Cartier divisor on $\overline{\Gamma}_F$. Thus,
	\[
	\ord_{\widetilde E'}(\tau^*\mathscr{I}_Y)
	=
	\ord_{E'}(\mathscr{I}_{Y,\xi})
	=
	v(\mathscr{I}_{Y,\xi}).
	\]
	It follows that $v(\mathscr{I}_{Y,\xi})>0$, as desired.
The second statement of the lemma is immediate.
\end{proof}

\begin{corollary}\label{cor:meta-infty-nondeg-finite}
	Assume that $F$ is Newton nondegenerate at infinity.
	Then, for every $\xi\in D_X\cap D_Y$ one has
	\[
	\Loj^{\text{an}}_{\infty,\xi}(F) > 0.
	\]
\end{corollary}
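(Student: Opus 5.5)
The plan is to combine \Cref{lem:bridge-nnd} with part (1) of \Cref{thm:meta-infty-rees}. Fix $\xi\in D_X\cap D_Y$ and set $R_\xi=\cO_{\overline{\Gamma}_F,\xi}$. By \Cref{thm:meta-infty-rees}(1), $\Loj^{\text{an}}_{\infty,\xi}(F)>0$ exactly when no divisorial valuation $v$ of $R_\xi$ has
\[
v(\mathscr I_{X,\xi})>0 \quad\text{and}\quad v(\mathscr I_{Y,\xi})=0.
\]
The absence of such a valuation is precisely the second formulation in \Cref{lem:bridge-nnd}, which holds because $F$ is Newton nondegenerate at infinity. Combining the two gives the claim.

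To keep the argument from depending only on the formal chain, I would also check positivity directly. By \eqref{eq:meta-valuative-characterization}, $\Loj^{\text{an}}_{\infty,\xi}(F)$ is an infimum of the ratios $v(\mathscr I_{Y,\xi})/v(\mathscr I_{X,\xi})$. Positivity of each ratio alone does not give a positive infimum, so finiteness is needed. The reduction to the Rees valuations of $\mathscr I_{X,\xi}$ provides it: the infimum becomes a minimum over a finite set. This uses \Cref{thm:valuative-ic}, or the fixed-model description \Cref{thm:13.3-local-min} with finitely many $E\subseteq\Supp(A)$ satisfying $\xi\in\mu(E)$.

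Each Rees valuation $v$ of $\mathscr I_{X,\xi}$ satisfies $v(\mathscr I_{X,\xi})>0$ and is divisorial. \Cref{lem:bridge-nnd} then gives $v(\mathscr I_{Y,\xi})>0$. A minimum of finitely many positive numbers is positive, which gives the claim.

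The only delicate point is making sure the valuations used here are divisorial valuations of $R_\xi$ centered over $\xi$, as required by \Cref{lem:bridge-nnd}. Rees valuations of $\mathscr I_{X,\xi}$ may be centered at non-maximal primes containing $\mathscr I_{X,\xi}$. However, \Cref{lem:bridge-nnd} is stated for every divisorial valuation of $R_\xi$ with $v(\mathscr I_{X,\xi})>0$, and Step~2 of its proof works near all of $\mu_\Sigma^{-1}(\xi)$. I would therefore simply note that the lemma applies to each Rees valuation as stated.
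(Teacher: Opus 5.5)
Your proposal is correct and is essentially the paper's proof, which simply combines \Cref{lem:bridge-nnd} with \Cref{thm:meta-infty-rees}(1). Your added finiteness check is worthwhile. The paper's proof of \Cref{thm:meta-infty-rees}(1) infers positivity of the infimum from positivity of each ratio without invoking the finite Rees-valuation reduction. Your passage to the finitely many Rees valuations of $\mathscr I_{X,\xi}$ is exactly what justifies that step; these valuations are covered by \Cref{lem:bridge-nnd} as stated, even though its Step~3 is written only for valuations centered at $\xi$.
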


\begin{proof}
	Fix $\xi\in D_X\cap D_Y$. By Lemma~\ref{lem:bridge-nnd}, there is no divisorial valuation $v$ of $R_\xi$
	with $v(\mathscr I_{X,\xi})>0$ and $v(\mathscr I_{Y,\xi})=0$. The conclusion follows immediately from
	Theorem~\ref{thm:meta-infty-rees}(1).
\end{proof}

\begin{remark}\label{rem:meta-infty-to-toric}
	At this stage, \Cref{thm:meta-infty-rees} guarantees finiteness and attainment by a finite set of
	Rees valuations of $\mathscr I_{X,\xi}$. In the Newton nondegenerate regime,
	we can do better: we identify the relevant divisors with explicit toric boundary divisors arising
	from a toric compactification adapted to $\Gamma(F)$, leading to concrete Newton polyhedral
	formulas.
\end{remark}

\subsection{Newton nondegenerate at infinity and \L{}ojasiewicz exponent}\label{subsec:newton-recovery}

We will show how classical formulas for \L{}ojasiewicz exponents at infinity in the Newton nondegenerate case are recovered within our algebraic framework. Let $F:\AA^n_\kk\to\AA^r_\kk$ denote a polynomial mapping, that is Newton nondegeneracy at infinity as in Definition~\ref{def:newton-infty}.

\begin{proposition}\label{prop:toric-control-at-infty}
	Assume that $F$ is Newton nondegenerate at infinity. Then, there exists a toric compactification
	$X_\Sigma$ adapted to $\Gamma(F)$ and a proper birational morphism
	\[
	\mu:Z \longrightarrow \overline{\Gamma}_F
	\]
	with $Z$ smooth such that:
	\begin{enumerate}
		\item the reduced divisor $(D_X)_\text{red}+(D_Y)_\text{red}$ pulls back to an SNC divisor
		\[
		E_1+\cdots+E_N
		\quad\text{on } Z;
		\]
		\item for every $\xi\in D_X\cap D_Y$, after base change to $\Spec(R_\xi)$ the ideals
		$\mathscr I_{X,\xi}$ and $\mathscr I_{Y,\xi}$ pull back to invertible ideals supported on the
		pullbacks of those $E_i$ meeting $\mu^{-1}(\xi)$.
	\end{enumerate}
\end{proposition}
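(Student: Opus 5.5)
We will construct $Z$ in two stages: first a toric model dominating the graph, then a log resolution. We begin with Khovanskii's compactification from \Cref{notation:Khovanskii}. Choose a smooth complete fan $\Sigma$ refining both the normal fan of $\Gamma(F)$ and the fan of $\PP^n$. This gives the proper toric morphism $\pi_\Sigma:X_\Sigma\to\PP^n$ and the normalized graph closure $\Gamma_F^\Sigma\subseteq X_\Sigma\times\PP^r$. Since $\Gamma_F^\Sigma$ and $\overline{\Gamma}_F$ are both normal compactifications of the same graph $\Gamma_F$, and since $X_\Sigma\to\PP^n$ is proper and birational, the map $\pi_\Sigma\times\mathrm{id}$ induces a proper birational morphism $\mu_\Sigma:\Gamma_F^\Sigma\to\overline{\Gamma}_F$. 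This is the same morphism already used in the proof of \Cref{lem:bridge-nnd}.

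The second stage is a log resolution. The scheme $\Gamma_F^\Sigma$ is normal and projective over $\kk$ (in characteristic zero, as in the analytic setting). We take a log resolution $\lambda:Z\to\Gamma_F^\Sigma$, chosen to be an isomorphism over the smooth locus away from the relevant boundary, that principalizes the ideal $\mathscr I_X\mathscr I_Y\cO_{\Gamma_F^\Sigma}$. We also require that the reduced total transform of $(D_X)_{\mathrm{red}}+(D_Y)_{\mathrm{red}}$ together with the exceptional locus is SNC. Then we set $\mu:=\mu_\Sigma\circ\lambda$. Part (1) follows directly from Hironaka's theorem (embedded resolution and principalization). We label the prime components of the reduced pullback as $E_1,\dots,E_N$; the exceptional divisors lying over the boundary are included among them, since they lie in the support of the pullback.

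For part (2), note that $D_X$ and $D_Y$ are Cartier on $\overline{\Gamma}_F$, as already used in \Cref{thm:analytic-algebraic-local-infty}. Hence $\mathscr I_X\cO_Z=\cO_Z(-A)$ and $\mathscr I_Y\cO_Z=\cO_Z(-B)$ are invertible, with $\Supp A,\Supp B\subseteq E_1\cup\dots\cup E_N$. Base change along the flat morphism $\Spec R_\xi\to\overline{\Gamma}_F$ preserves invertibility, exactly as in the setup before \Cref{thm:13.3-local-min}. Over $\Spec R_\xi$, the only surviving components are those $E_i$ with $\xi\in\mu(E_i)$, which are precisely the components meeting $\mu^{-1}(\xi)$. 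In addition, \eqref{eq.supportGamma} from \Cref{lem:bridge-nnd} holds near $\mu_\Sigma^{-1}(\xi)$ and pulls back under $\lambda$. This gives $\Supp A\subseteq\Supp B$ near $\mu^{-1}(\xi)$, which is the form of (2) needed later for the toric formulas.

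The main obstacle is making the toric content genuine rather than formal. The point of the construction is that the components $E_i$ mapping onto torus-invariant boundary divisors of $X_\Sigma$ are strict transforms of toric divisors with weights $w_E$. To ensure this, I would choose $\lambda$ to be an isomorphism over the big open set of $\Gamma_F^\Sigma$ that is smooth and on which the boundary is already SNC. I would try to show that Newton nondegeneracy makes $\Gamma_F^\Sigma$ locally a product near generic points of the toric boundary divisors lying over $H_X$. For this I would use the initial-form expansions $f_j=u^{m_j}((f_j)_w+\cdots)$ from Step 2 of \Cref{lem:bridge-nnd}. If this local-product verification fails, the existence statements (1) and (2) still hold from the resolution argument alone. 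Only the identification of the $E_i$ with toric divisors would then need to be postponed to the later theorem.
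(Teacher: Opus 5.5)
Your proposal is correct and follows the paper's own proof. Both choose a smooth fan refining the normal fan of $\Gamma(F)$ and the fan of $\PP^n$, use the induced proper birational map from the normalized graph closure $\Gamma_F^\Sigma$ to $\overline{\Gamma}_F$, and take a log resolution of the reduced boundary for (1). For (2), both use the Cartier property of $D_X$ and $D_Y$ plus base change to $\Spec(R_\xi)$, which keeps exactly the $E_i$ with $\xi\in\mu(E_i)$.

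The extra inclusion $\Supp A\subseteq\Supp B$ and your closing paragraph on identifying the $E_i$ with toric divisors are not part of the statement, so drop them rather than rely on them. That inclusion does not follow from nondegeneracy as defined here. For $F(x,y)=x$ one has $\overline{\Gamma}_F\cong\mathrm{Bl}_{[0:0:1]}\PP^2$, and at the point where the strict transform of $\{Z_0=0\}$ meets the exceptional curve, $D_X$ contains that curve while $D_Y$ does not.
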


\begin{proof}
	Choose a toric compactification $X_\Sigma$ adapted to $\Gamma(F)$.
	Since $X_\Sigma$ is proper over $\PP^n$, the normalization $\Gamma_F^\Sigma$ of the graph closure in
	$X_\Sigma\times \PP^r$ is proper over $\overline{\Gamma}_F$, and so there is a proper birational morphism
	$\Gamma_F^\Sigma\to \overline{\Gamma}_F$.
	Now take a log resolution of the reduced divisor
	\[
	\bigl(\mu_\Sigma^{-1}(D_X)\bigr)_\text{red} + \bigl(\mu_\Sigma^{-1}(D_Y)\bigr)_\text{red}
	\]
	on $\Gamma_F^\Sigma$; composing this resolution with $\Gamma_F^\Sigma\to\overline{\Gamma}_F$ produces
	a proper birational morphism $\mu:Z \to\overline{\Gamma}_F$ with $Z$ smooth, such that the reduced pullback
	$(D_X)_\text{red}+(D_Y)_\text{red}$ has SNC support. Denote its irreducible components by $E_1,\dots,E_N$, proving (1).
	
For (2), fix $\xi\in D_X\cap D_Y$ and set
$Z_\xi:=Z\times_{\overline{\Gamma}_F}\Spec(R_\xi)$.
Since $D_X$ and $D_Y$ are Cartier divisors on $\overline{\Gamma}_F$,
their ideal sheaves $\mathscr I_X$ and $\mathscr I_Y$ are invertible.
Hence, their pullbacks to $Z_\xi$ remain invertible ideal sheaves and define
Cartier divisors supported on the pullback of
$(D_X)_{\mathrm{red}}+(D_Y)_{\mathrm{red}}$.

By (1), the divisor $(D_X)_{\mathrm{red}}+(D_Y)_{\mathrm{red}}$ is simple normal
crossings on $Z$, with irreducible components $E_1,\dots,E_N$.
Thus, locally on $Z$, it is defined by an equation $z_1\cdots z_r=0$, where
$z_j=0$ cuts out one of the components $E_i$. After base change to
$\Spec(R_\xi)$, the pullback divisor is still locally defined by
$z_1\cdots z_r=0$, so its irreducible components are exactly the pullbacks
$E_i\times_{\overline{\Gamma}_F}\Spec(R_\xi)$ for those $i$ with
$E_i\cap \mu^{-1}(\xi)\neq\varnothing$. Therefore the pullbacks of
$\mathscr I_X$ and $\mathscr I_Y$ to $Z_\xi$ are invertible ideal sheaves
whose supports are contained in those components.
\end{proof}

\begin{theorem}\label{thm:newton-finite-infty}
	Assume that $F$ is Newton nondegenerate at infinity. Then, for every $\xi\in D_X\cap D_Y$ one has
	\[
	0<\Loj^{\text{an}}_{\infty,\xi}(F)<\infty.
	\]
	Moreover, on the fixed model $Z$ of Proposition~\ref{prop:toric-control-at-infty}, one has
	\[
	\Loj^{\text{an}}_{\infty,\xi}(F)
	=
	\min\left\{
	\frac{\ord_{E_i}(\mathscr I_Y)}{\ord_{E_i}(\mathscr I_X)}
	\ \middle|\
	1\le i\le N,\ \xi\in \mu(E_i)
	\right\}.
	\]
\end{theorem}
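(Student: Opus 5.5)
The plan is to combine three earlier results: positivity from \Cref{cor:meta-infty-nondeg-finite}, finiteness from the fact that $\xi\in D_Y$, and the explicit formula from \Cref{thm:13.3-local-min} applied to the specific model $Z$ of \Cref{prop:toric-control-at-infty}.

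\textbf{Step 1: positivity.} Fix $\xi\in D_X\cap D_Y$. The strict inequality $\Loj^{\text{an}}_{\infty,\xi}(F)>0$ is exactly \Cref{cor:meta-infty-nondeg-finite}. That corollary in turn rests on \Cref{lem:bridge-nnd} and \Cref{thm:meta-infty-rees}(1).

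\textbf{Step 2: finiteness.} Since $\xi\in D_X$, the ideal $\mathscr I_{X,\xi}$ is a proper principal ideal $(u_X)$. Since $\xi\in D_Y$, the ideal $\mathscr I_{Y,\xi}=(u_Y)$ is also proper, so $u_Y$ vanishes at $\xi$. Take any divisorial valuation $v$ centered at the maximal ideal of $R_\xi$, for instance $\ord$ of an exceptional divisor of the blowup of $\xi$. Then $0<v(\mathscr I_{Y,\xi})<\infty$ and $v(\mathscr I_{X,\xi})>0$. By \eqref{eq:meta-valuative-characterization}, the exponent is bounded above by the finite ratio $v(\mathscr I_{Y,\xi})/v(\mathscr I_{X,\xi})$.

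An alternative route to finiteness uses the model $Z$ directly. It suffices to find one prime divisor $E\subseteq Z$ with $\xi\in\mu(E)$ and $\ord_E(\mathscr I_X)>0$. Because $\mu$ is proper and birational and $\mathscr I_X\cO_Z$ is invertible with $\xi\in D_X$, the fiber $\mu^{-1}(\xi)$ meets $\Supp(A)$. Some component of $A$ through a point of that fiber then does the job.

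\textbf{Step 3: the formula.} Use the model $Z$ from \Cref{prop:toric-control-at-infty}. It is smooth, hence normal, and $\mathscr I_X\cO_Z$, $\mathscr I_Y\cO_Z$ are invertible. Their associated Cartier divisors $A$ and $B$ are supported on $E_1+\cdots+E_N$. Hence $Z$ is an admissible choice of the fixed model \eqref{eq:13.7-fixed-model}, and \Cref{thm:13.3-local-min} applies verbatim. That theorem gives the minimum over prime divisors $E\subseteq\Supp(A)$ with $\xi\in\mu(E)$.

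Every component of $\Supp(A)$ is some $E_i$. Conversely, any $E_i$ with $\ord_{E_i}(\mathscr I_X)=0$ gives a ratio $\ord_{E_i}(\mathscr I_Y)/0$. Under the convention $1/0=\infty$, or when $\ord_{E_i}(\mathscr I_Y)>0$, this ratio does not affect the minimum.

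\textbf{Main obstacle.} The delicate case is an index $i$ with both orders zero. That cannot occur, because $E_i$ is a component of the pullback of $(D_X)_{\mathrm{red}}+(D_Y)_{\mathrm{red}}$, so at least one of the two orders is positive. The cleanest approach is therefore to read the minimum in the statement as ranging over those $i$ with $\ord_{E_i}(\mathscr I_X)>0$; the remaining $i$ contribute $\infty$. With that reading, the displayed formula coincides with \eqref{eq:13.9-local-min}.

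Finally, positivity from Step 1 shows that no $E_i$ with $\ord_{E_i}(\mathscr I_X)>0$ and $\xi\in\mu(E_i)$ has $\ord_{E_i}(\mathscr I_Y)=0$, so every term of the minimum is strictly positive.
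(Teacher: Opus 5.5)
Your proof is correct and follows essentially the same route as the paper: you take $Z$ as an admissible fixed model, apply \Cref{thm:13.3-local-min}, identify the contributing divisors with the $E_i$, and obtain positivity from Newton nondegeneracy via \Cref{lem:bridge-nnd}. The differences are minor. The paper applies \Cref{lem:bridge-nnd} directly to each $\ord_{E_i}$, so positivity is a finite minimum of positive numbers and does not pass through \Cref{thm:meta-infty-rees}(1), whereas you import it globally from \Cref{cor:meta-infty-nondeg-finite}; your nonemptiness argument for finiteness and your handling of indices with $\ord_{E_i}(\mathscr I_X)=0$ make explicit what the paper's proof leaves implicit.
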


\begin{proof}
	Fix $\xi\in D_X\cap D_Y$. By Proposition~\ref{prop:toric-control-at-infty}, on the base change of $Z$
	to $\Spec(R_\xi)$ both $\mathscr I_{X,\xi}$ and $\mathscr I_{Y,\xi}$ pull back to invertible ideals
	supported on the strict transforms of $E_1,\dots,E_N$ meeting $\mu^{-1}(\xi)$. Therefore, the local
	finite--minimum formula Theorem~\ref{thm:13.3-local-min} applies on this fixed model and yields
	\[
	\Loj^{\text{an}}_{\infty,\xi}(F)
	=
	\min\left\{
	\frac{\ord_E(\mathscr I_Y)}{\ord_E(\mathscr I_X)}
	\ \middle|\
	E \subseteq Z \text{ prime divisor with }\ord_E(\mathscr I_X)>0,\ \xi\in\mu(E)
	\right\}.
	\]
	Since every such $E$ is among the components $E_i$, this gives the stated formula.
	Positivity follows from Lemma~\ref{lem:bridge-nnd}: if $\ord_{E_i}(\mathscr I_X)>0$ and
	$\xi\in\mu(E_i)$, then the corresponding divisorial valuation on $R_\xi$ satisfies
	$v(\mathscr I_{X,\xi})>0$, and so $v(\mathscr I_{Y,\xi})>0$, i.e.\ $\ord_{E_i}(\mathscr I_Y)>0$.
	Thus, every ratio in the above minimum is a positive real number. Hence, $0<\Loj^{\text{an}}_{\infty,\xi}(F)<\infty$ as claimed.
\end{proof}

\begin{corollary}[Complex Newton--nondegenerate polynomial mappings]\label{cor:complex-newton-general}
	Let $F : \CC^n \to \CC^m$ be a polynomial mapping that is Newton nondegenerate at infinity in the sense
	of Definition~\ref{def:newton-infty}. Then, the \L{}ojasiewicz exponent at infinity $\Loj^{\text{an}}_\infty(F)$ is
	finite and is computed by finitely many divisorial valuations coming from a toric compactification
	adapted to the Newton polyhedron $\Gamma(F)$.
\end{corollary}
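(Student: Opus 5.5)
The global statement should follow from the local results by taking an infimum over finitely many divisors. By \Cref{def:local-analytic-loj-infty}, $\Loj^{\text{an}}_\infty(F)=\inf_{\xi\in D_X}\Loj^{\text{an}}_{\infty,\xi}(F)$, so I will control each local exponent and then use the global finite--minimum formula.

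\emph{Step 1 (local finiteness).} Points $\xi\in D_X\setminus D_Y$ have local exponent $0$, and they contribute trivially to the infimum. For $\xi\in D_X\cap D_Y$, \Cref{thm:newton-finite-infty} gives
\[
0<\Loj^{\text{an}}_{\infty,\xi}(F)<\infty,
\]
together with the explicit minimum over the components $E_i$ of the SNC model $\mu:Z\to\overline{\Gamma}_F$ from \Cref{prop:toric-control-at-infty} that satisfy $\xi\in\mu(E_i)$.

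\emph{Step 2 (global formula).} On the model $Z$ of \Cref{prop:toric-control-at-infty}, both $\mathscr I_X\cO_Z$ and $\mathscr I_Y\cO_Z$ are invertible. Since $Z$ is smooth, in particular normal, it is an admissible choice for \eqref{eq:13.7-fixed-model}. \Cref{thm:13.2-global-min} then gives
\[
\Loj^{\text{an}}_\infty(F)=\min_{E\in\Supp(A)}\frac{\ord_E(\mathscr I_Y)}{\ord_E(\mathscr I_X)},
\]
where $E$ ranges over the finitely many $E_i$ with $\ord_{E_i}(\mathscr I_X)>0$. Each such $E_i$ maps into $D_X$, and $\ord_{E_i}(\mathscr I_X)>0$, so every ratio is a finite nonnegative real number. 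A minimum over a finite nonempty set of such numbers is finite, which proves the finiteness claim. The set is nonempty because $D_X\neq\varnothing$, so some component lies over it. If every component lying over $D_X$ also lies over $D_Y$, then \Cref{lem:bridge-nnd} makes all these ratios positive as well.

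\emph{Step 3 (toric origin of the divisors).} By construction, $Z$ is obtained from the graph closure $\Gamma_F^\Sigma$ in the Khovanskii compactification $X_\Sigma\times\PP^m$ followed by a log resolution. I will argue that the divisors $E_i$ with $\ord_{E_i}(\mathscr I_X)>0$ can be taken to be strict transforms of toric boundary divisors $D_w$, $w$ a ray of $\Sigma$ not contained in a coordinate face, or at least that their valuations restricted to $\kk[x]$ are the monomial valuations $\ord_{D_w}$. The reason is that the divisorial orders of $x_i$ and $f_j$ along them are $\langle w,e_i\rangle$ and $m_w(f_j)$, by Step 1 in the proof of \Cref{lem:bridge-nnd}. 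Nondegeneracy at infinity then ensures that the ratio along such a divisor depends only on the face data of $\Gamma(F)$ at $w$.

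\emph{Main obstacle.} The delicate point is Step 3. Exceptional divisors of the log resolution could in principle realize the minimum without being toric. I would try to show that, under nondegeneracy, $\Gamma_F^\Sigma$ is already normal crossings near the relevant boundary, so that no extra blowups over $D_X$ are needed. Alternatively, I would show that any such exceptional divisor gives a ratio that is a convex combination of ratios of toric divisors, in the style of \Cref{prop:finite-max-fan}. Either argument would justify the phrase ``coming from a toric compactification.''
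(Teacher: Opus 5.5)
Your Steps 1--2 are essentially the paper's proof: it too works on the model $Z$ of \Cref{prop:toric-control-at-infty} and uses \Cref{thm:newton-finite-infty}, with positivity on $D_X\cap D_Y$ coming from \Cref{lem:bridge-nnd} and \Cref{thm:meta-infty-rees}(1), and it then reads off $\Loj^{\text{an}}_\infty(F)$ as a minimum over the finitely many $E_i$, which your appeal to \Cref{thm:13.2-global-min} makes explicit. Step 3 is not needed and goes beyond what the paper proves: the paper takes ``coming from a toric compactification'' to mean ``lying on the model $Z$ built from $X_\Sigma$'' and never shows that the minimizing divisors are toric boundary divisors, so the obstacle you flag (log-resolution divisors need not be toric, and the formulas $\ord_E(x^\alpha)=\langle w,\alpha\rangle$, $\ord_E(f)=m_w(f)$ from \Cref{lem:bridge-nnd} are only available for toric $E$) concerns a stronger reading that the paper's phrase ``fixed toric model'' glosses over.
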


\begin{proof}
	By Newton nondegeneracy at infinity, Lemma~\ref{lem:bridge-nnd} shows that no divisorial valuation $v$
	of $\cO_{\overline{\Gamma}_F,\xi}$ satisfies $v(\mathscr I_{X,\xi})>0$ and $v(\mathscr I_{Y,\xi})=0$.
	Therefore, $\Loj^{\text{an}}_{\infty,\xi}(F)>0$ for every $\xi\in D_X\cap D_Y$ by
	Theorem~\ref{thm:meta-infty-rees}(1). The finite--minimum formula of
	Theorem~\ref{thm:newton-finite-infty} then expresses $\Loj^{\text{an}}_{\infty,\xi}(F)$, and hence $\Loj^{\text{an}}_\infty(F)$,
	as a minimum over finitely many divisorial valuations on a fixed toric model associated to
	$\Gamma(F)$.
\end{proof}

\smallskip 
When $n=2$, the toric divisorial valuations appearing in Corollary~\ref{cor:complex-newton-general}
correspond bijectively to the edges of the Newton polygon at infinity. 

\begin{corollary}[Plane polynomial mappings]\label{cor:plane-mapping-recovery}
	Let $F : \CC^2 \to \CC^2$ be a polynomial mapping that is Newton nondegenerate at infinity.
	Then, $\Loj^{\text{an}}_\infty(F)$ is finite and can be computed explicitly from the Newton polygon at infinity
	$\Gamma(F)$ by a finite set of rational numbers determined by its edges.
\end{corollary}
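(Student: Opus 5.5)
The plan is to derive the statement from \Cref{cor:complex-newton-general} together with the global finite--minimum formula \Cref{thm:13.2-global-min}, and then make the finite set of divisors explicit in dimension two.

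\textbf{Step 1: finiteness and reduction to toric divisors.} By \Cref{cor:complex-newton-general}, the exponent $\Loj^{\text{an}}_\infty(F)=\inf_{\xi}\Loj^{\text{an}}_{\infty,\xi}(F)$ is a minimum over finitely many ratios $\ord_E(\mathscr I_Y)/\ord_E(\mathscr I_X)$. Moreover, by \Cref{thm:newton-finite-infty} each ratio is positive and finite. Taking the model $Z$ of \Cref{prop:toric-control-at-infty} to dominate the Khovanskii compactification $X_\Sigma$, I would show that only the divisors over $X_\Sigma$ matter. Concretely, I would argue that the minimum is attained on strict transforms of torus-invariant boundary divisors $E_w$ of $X_\Sigma$, with $w\in\ZZ^2$ primitive. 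Such divisors lie over $H_X$, which means $w$ has a negative coordinate after the sign convention at infinity, i.e.\ $w$ points "outward".

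The reason is that, by Step 1 of the proof of \Cref{lem:bridge-nnd}, along $E_w$ one has $\ord_{E_w}(x^\alpha)=\langle w,\alpha\rangle$ and $\ord_{E_w}(f_j)=m_w(f_j)$. Nondegeneracy says the initial system has no common zero on the orbit. Hence
\[
\ord_{E_w}(\mathscr I_Y)=-\min_j m_w(f_j)=-m_w(\Gamma(F)),
\qquad
\ord_{E_w}(\mathscr I_X)=-\min_i w_i ,
\]
where I use homogeneous coordinates to identify $u_X$ and $u_Y$. Any further blowups occur at points of the SNC boundary and produce divisors whose weight vectors are nonnegative combinations of adjacent rays. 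On those divisors both orders are linear, so by the convexity argument of \Cref{prop:finite-max-fan} (applied to reciprocals) their ratios are bounded below by the ratios at the rays.

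\textbf{Step 2: reduction to edges.} The function $w\mapsto m_w(\Gamma(F))$ is piecewise linear on the normal fan of the Newton polygon $\Gamma(F)$, and $w\mapsto\min_i w_i$ is linear on the cones of the fan of $\PP^2$. On the common refinement $\Sigma$, the ratio
\[
\frac{-m_w(\Gamma(F))}{-\min_i w_i}
\]
is a quotient of linear forms on each two-dimensional cone. By \Cref{prop:finite-max-fan}, its minimum is attained at rays of $\Sigma$. These rays are the inward normals of the edges of $\Gamma(F)$ not lying on the coordinate axes, i.e.\ the edges "at infinity", together with the rays of $\PP^2$, namely $(-1,0)$, $(0,-1)$ and $(1,1)$ up to sign convention. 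The ray directions of $\PP^2$ yield values that coincide with those of adjacent edge normals or with the degree of $F$ in $x$ or $y$. Hence the minimum is a minimum of finitely many rational numbers $-m_{w_e}(\Gamma(F))/(-\min(w_e))$, with $e$ ranging over the edges.

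\textbf{Main obstacle.} The delicate step is Step 1. I must justify that ratios on exceptional divisors of the resolution $Z\to X_\Sigma$ cannot be smaller than the toric ones. I would handle this by choosing $\Sigma$ smooth and noting that $\Gamma_F^\Sigma\to X_\Sigma$ is an isomorphism near the boundary. This holds because nondegeneracy makes $F$ extend as a morphism to $\PP^2$ off finitely many torus-fixed points of the toric boundary, so no extra divisors appear over curves at infinity.
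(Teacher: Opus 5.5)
\textbf{Gap: the reduction to toric divisors in Step 1.} The problem is the step you flag as the main obstacle, and your proposed resolution does not work.

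Your claim that nondegeneracy lets $F$ extend to a morphism to $\PP^2$ off the torus-fixed points is false for the notion in \Cref{def:newton-infty}. It is also the wrong kind of claim. On a surface every exceptional curve of $\Gamma_F^\Sigma\to X_\Sigma$ lies over a point, so ruling out divisors ``over curves at infinity'' excludes nothing. Your convexity argument only covers blowups of torus-fixed points, where new weights are combinations of adjacent rays. It says nothing about non-toric curves over other boundary points.

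In fact the situation is the reverse of what you claim. Fix a two-dimensional cone $\sigma\in\Sigma$ whose minimizing vertex of $P=\conv(\Supp f_1\cup\Supp f_2\cup\{0\})$ is $m_\sigma$. On the chart of $\sigma$ the map to $\PP^2$ is given by $x^{-m_\sigma}(1,f_1,f_2)$. At the fixed point one of these entries is a nonzero constant, since $m_\sigma$ lies in some support, so torus-fixed points are never base points. The base points lie in the interior of one-dimensional orbits $O(w)$ with $m_w(P)<0$. They are the common zeros of those $(f_j)_w$ with $m_w(f_j)=m_w(P)$. \Cref{def:newton-infty} only forbids common zeros of \emph{all} the $(f_j)_w$, so such points occur as soon as some $f_k$ has a less negative order. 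Over them $\Gamma_F^\Sigma$ contains non-toric curves in $D_X$, and their ratios can be smaller than the toric ones.

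\textbf{Counterexample.} Take $F=(x^2-y^2+1,\,x)$.
\begin{itemize}
\item It is nondegenerate for every weight $w$, since $(f_2)_w=x$ never vanishes on the torus.
\item Every toric ratio equals $-2\min(w)/(-\min(w))=2$.
\item Yet $\|F\|=|x|$ along $x^2-y^2+1=0$, so $\Loj^{\text{an}}_\infty(F)=1$. This value is carried by the exceptional curves over the two boundary points in the directions $y=\pm x$.
\end{itemize}
The map $(x^2+x,\,y^2)$ is also nondegenerate and has the same polygon, but its exponent is $2$. So no formula in terms of $\Gamma(F)$ alone can hold under the stated hypothesis.

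\textbf{Comparison with the paper.} The paper's proof does not close this gap either. It simply asserts that the divisors supplied by \Cref{cor:complex-newton-general} are the toric divisors attached to the edges.

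\textbf{What makes your route work.} Impose nondegeneracy face by face on the combined polygon: for every face $\Delta$ of $P$ not containing $0$, the parts of $f_1,f_2$ supported on $\Delta$ (some possibly zero) have no common zero in $(\CC^*)^2$. The chart computation above then shows that $X_\Sigma\to\PP^2$ has no base points at all. Hence $\Gamma_F^\Sigma\cong X_\Sigma$, which dominates $\overline{\Gamma}_F$. Since $\mathscr I_X$ and $\mathscr I_Y$ pull back to invertible sheaves, \Cref{thm:13.2-global-min} applies with $Z=X_\Sigma$ and no further blowups. Its relevant divisors are the toric $E_w$ with $\min(0,w_1,w_2)<0$. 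Your order formulas, with denominator $-\min(0,w_1,w_2)$, then finish the proof together with Step 2. Run Step 2 on the coarsest common refinement of the normal fan of $P$ and the fan of $\PP^2$.
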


\begin{proof}
	By Corollary~\ref{cor:complex-newton-general}, $\Loj^{\text{an}}_\infty(F)$ is given by a finite minimum over
	divisorial valuations arising from a toric compactification adapted to $\Gamma(F)$.
	In dimension two, these divisorial valuations are exactly the toric valuations associated to the
	edges of the Newton polygon. By results in \Cref{sec:toric},
	the relevant orders $\ord_E(\mathscr I_X)$ and $\ord_E(\mathscr I_Y)$ are computed by the support
	functions of $\Gamma(F)$ along those edges. Substituting into the finite--minimum formula of
	Theorem~\ref{thm:newton-finite-infty} yields the stated Newton--polygon expression.
\end{proof}

\Cref{cor:complex-newton-general,cor:plane-mapping-recovery} recover classical results on \L{}ojasiewicz exponent at infinity in the Newton nondegenerate case; see, for instance, \cite{Ploski}.

\begin{remark}\label{rem:finite-max-infinity}
	Over $\RR$, the finiteness of \L{}ojasiewicz exponent at infinity is more subtle. This is mainly because we do not have the equivalence in \Cref{thm:analytic-algebraic-local-infty} nor a single birational model which realizes all relevant valuations. Nevertheless, similar results in Newton nondegenerate situations, where divisors at infinity are torus-invariant and relevant valuations are toric valuations associated to finitely many rays/facets (supporting hyperplanes of the corresponding Newton polyhedra), were obtained under various additional hypotheses (see, for example, \cite{VuiThao,OleRoz,Son2008}).
\end{remark}


\part{Necessity and Limits of the Hypotheses} \label{part:necessity}

In this part, we examine verification, uniformity, and sharpness issues for the
finite--max principle and for the Newton--type formulas arising from the
valuative framework.


\section{Uniformity, verification, and sharpness}\label{sec:verification-uniformity-sharpness}

This section addresses when the finite testing hypotheses from \Cref{sec:valuative,sec:toric} can be verified uniformly in families, and why these hypotheses are close to optimal.  On the positive side, in the Newton nondegenerate regime we prove the existence of a single toric modification simultaneously principalizing finitely many ideals, producing a uniform finite divisorial testing set across an entire collection of data (\Cref{thm:toric-simultaneous-principalization}); as consequences we obtain automatic finite testing for broad Newton-controlled families (\Cref{thm:automatic-finite-testing-newton}) and a genuine polyhedral wall--chamber structure governing the variation of $\Loj$ (\Cref{thm:wall-chamber}).  On the sharpness side, we construct monomial examples where successive principalizations introduce new relevant facets, showing that outside verifiable finite-testing regimes one cannot expect any a priori finite reduction of the valuative optimization problem (\Cref{thm:sharpness-monomial}).

\subsection{Newton polyhedra for ideals}\label{subsec:newton-poly-ideal}

Throughout this section we fix a regular local ring $(R,\m)$ of dimension $n$
and a regular system of parameters $x=(x_1,\dots,x_n)$.  When convenient for
notation we write $R=\Bbbk[[x_1,\dots,x_n]]$, but all constructions below depend
only on the chosen parameters; see \cite{HNP25} for more details on Newton nondegenerate ideals with respect to a regular system of parameters in a regular local ring.

\subsection{Uniform verification on a fixed toric model}\label{subsec:uniform-verification}

We now explain how, in Newton nondegenerate situations, the hypotheses of the
log-resolution criterion in Section~\ref{sec:valuative} (and its integral-closure reduction
\Cref{thm:IC-reduction}) are \emph{automatically} satisfied for a large
class of filtrations.

\begin{theorem}[Toric model simultaneously principalizing finitely many Newton nondegenerate ideals]\label{thm:toric-simultaneous-principalization}
	Let $(R,\m)$ be an excellent equicharacteristic regular local ring with chosen parameters
	$x=(x_1,\dots,x_n)$.  Let $I^{(1)},\dots,I^{(r)}\subseteq R$ be ideals such that each
	$I^{(j)}$ is Newton nondegenerate with respect to $x$ (i.e.,\ the relevant face ideals define smooth complete intersections on the torus).
	Then, there exists a proper birational morphism $\pi:Y\to \Spec R$
	such that:
	\begin{enumerate}
		\item $Y$ is regular and the exceptional divisor $E=\sum_{i=1}^N E_i$ has simple normal crossings.
		\item For each $j=1,\dots,r$, the total transform of $I^{(j)}$ is an invertible ideal:
		\[
		I^{(j)}\cO_Y\ =\ \cO_Y(-F_j),
		\quad
		F_j=\sum_{i=1}^N a_{ij}E_i,\ \ a_{ij}\in\NN.
		\]
		\item The morphism $\pi$ can be chosen toric with respect to the chosen parameters $x$,
		coming from a regular subdivision of a fan refining the Newton fans of $\Gamma(I^{(j)})$.
	\end{enumerate}
\end{theorem}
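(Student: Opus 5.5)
The plan is to build $\pi$ as a toric modification adapted to all Newton polyhedra at once. I then check that Newton nondegeneracy forces each total transform $I^{(j)}\cO_Y$ to agree with the total transform of its monomial part, which is invertible on any toric model refining the relevant normal fan.

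\emph{Step 1: the fan.} Let $\Gamma_j:=\Gamma(I^{(j)})\subseteq\RR^n_{\ge0}$ be the Newton polyhedron of $I^{(j)}$ with respect to $x$, and $\Sigma_j$ its normal fan, supported on $\RR^n_{\ge0}$. Since $\RR^n_{\ge0}$ is the support of each $\Sigma_j$, the common refinement $\Sigma_1\wedge\cdots\wedge\Sigma_r$ is a finite rational fan. I take $\Sigma$ to be a regular (unimodular) subdivision of it, which exists by toric resolution of singularities \cite[Chapter 11]{CLS}. Equivalently, $\Sigma$ refines the normal fan of the Minkowski sum $\Gamma_1+\cdots+\Gamma_r$.

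Let $Y:=X_\Sigma\times_{\AA^n}\Spec R$, where $\Spec R\to\AA^n$ is given by the parameters $x$ (pass to $\Bbbk[x]$ and use flatness of $R$ over the localization). Then $\pi\colon Y\to\Spec R$ is proper and birational, because $\Sigma$ is a subdivision of the positive orthant. Each chart $U_\tau$ for $\tau\in\Sigma$ maximal is $\Spec R[y_1,\dots,y_n]$ with $x=y^{B_\tau}$ monomial, and after base change it is regular. The exceptional divisor is a union of torus-invariant divisors $E_\rho$, one for each ray $\rho$ of $\Sigma$ not lying on a coordinate ray, and in each chart these are coordinate hyperplanes. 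This gives (1) and (3).

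\emph{Step 2: the monomial part.} For the monomial ideal $M_j$ generated by the vertices of $\Gamma_j$, the ideal $M_j\cO_{U_\tau}$ is principal, generated by $y^{B_\tau^T m_\tau}$, where $m_\tau$ is the vertex of $\Gamma_j$ minimizing every $w\in\tau$. Such a vertex exists because $\tau$ lies in a cone of $\Sigma_j$. Hence $M_j\cO_Y=\cO_Y(-F_j)$ with $F_j=\sum_\rho h_{\Gamma_j}(u_\rho)E_\rho$ and $a_{\rho j}=h_{\Gamma_j}(u_\rho)\in\NN$.

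\emph{Step 3: main obstacle, $I^{(j)}\cO_Y=M_j\cO_Y$.} The inclusion $\subseteq$ is clear, since every element of $I^{(j)}$ has support in $\Gamma_j$. For the reverse, I will show that $I^{(j)}\cO_Y\cdot\cO_Y(F_j)$ is the unit ideal. Write generators as $g=y^{a}\tilde g$, so the strict parts $\tilde g$ generate $I^{(j)}\cO_Y(F_j)$. At a point $z$ of $U_\tau$, let $\tau'$ be the face of $\tau$ whose orbit contains $z$, and pick $w\in\mathrm{relint}(\tau')$. Restricted to the orbit $O(\tau')$, the $\tilde g$ become the initial forms $\mathrm{in}_w$ of the generators, that is, the face ideal of $\Gamma_j$ in direction $w$ pulled back along a torus fibration. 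For $w$ in the interior of the orthant, nondegeneracy in the sense of Saia \cite{Saia} and \cite{HNP25} says these face ideals have no common zero on the torus. This is the reading of ``face ideals define smooth complete intersections'' that I need to pin down; it is the key use of the hypothesis. I therefore expect to verify, via \cite{HNP25}, that nondegeneracy of an $\m$-primary-type ideal gives emptiness of the face systems on the torus, not merely smoothness. This is also exactly the criterion used in Lemma~\ref{lem:bridge-nnd}.

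Given that emptiness, the $\tilde g$ have no common zero on $O(\tau')$, hence none at $z$. For boundary $w$ lying on coordinate faces, the orbit lies over the non-closed points, where the same argument applies after localizing; alternatively, I can restrict to $\m$-primary $I^{(j)}$. It follows that the ideal is the unit ideal near $z$. If only the weaker ``smooth complete intersection'' condition is available, I would fall back on replacing $I^{(j)}$ by its integral closure. Nondegeneracy gives $\overline{I^{(j)}}=\overline{M_j}$ (cf.\ \cite{Saia}), and then \Cref{thm:IC-reduction}-type arguments make the relevant divisorial data agree, though literal invertibility of $I^{(j)}\cO_Y$ would then require the stronger emptiness. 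This establishes (2).
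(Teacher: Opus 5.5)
Your argument follows the paper's construction (common refinement of the Newton fans, a regular subdivision, principalization of each $I^{(j)}$, and $a_{ij}=h_{\Gamma(I^{(j)})}(u_{\rho_i})$), except that where the paper simply cites Khovanskii's toroidal resolution, your comparison of $I^{(j)}\cO_Y$ with $M_j\cO_Y$ on torus orbits actually proves the principalization, and you are right that this step needs the Saia-type emptiness of the compact-face systems on the torus: for $I=(x-y)\subseteq\Bbbk[[x,y]]$ the face system is smooth but nonempty, and on the blowup of the origin $I\cO_Y=\cO_Y(-E-L)$ with $L$ the strict transform of $\{x=y\}$, so conclusion (2) fails under the ``smooth complete intersection'' reading; your integral-closure fallback also fails on this example, since $\overline{I}=(x-y)\neq\m=\overline{M_j}$. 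Your worry about orbits over non-closed points is unnecessary, and the localization route you suggest would anyway run into non-compact faces, which Saia's condition does not control: the cosupport of $I^{(j)}\cO_Y\cdot\cO_Y(F_j)$ is closed and $\pi$ is proper over the local scheme $\Spec R$, so if it were nonempty it would meet $\pi^{-1}(\m)$, whose points lie on orbits $O(\tau')$ with $\mathrm{relint}(\tau')$ in the open orthant, which is exactly where the compact-face condition applies.
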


\begin{proof} We retain the notation and terminology of \cite{HNP25} for Newton nondegeneracy with respect to a chosen regular system of parameters.
	
	\medskip
	\noindent
	\emph{Step 1: reduction to the complete case.}
	Since $R$ is excellent regular local, the $\widehat R$ is faithfully flat over $R$. All Newton polyhedra and Newton nondegeneracy conditions are expressed in terms of supports with respect to the fixed regular system of parameters $x$, and so pass to $\widehat{R}$ without change. Moreover, the birational toric modifications we construct are defined combinatorially from a fan, hence descend to $R$ when they exist over $\widehat{R}$.
	Thus, we may assume $R=\Bbbk[[x_1,\dots,x_n]]$.
	
	\medskip
	\noindent
	\emph{Step 2: construction of a common toric modification.}
	For each $j$, let $\Gamma(I^{(j)})\subseteq \RR_{\ge0}^n$ be the Newton polyhedron
	with respect to the chosen parameters $x=(x_1,\dots,x_n)$,
	and let $\Sigma^{(j)}$ denote its Newton fan.
	Since we have finitely many such fans,
	there exists a common rational polyhedral refinement $\Sigma'$
	refining all $\Sigma^{(j)}$.
	Choose a regular subdivision $\Sigma$ of $\Sigma'$, and let
	\[
	\pi=\pi_\Sigma:Y=X_\Sigma\longrightarrow \Spec R
	\]
	be the associated toric morphism.
	
	Because $\Sigma$ is regular,
	$Y$ is regular and its toric boundary divisor $E=\sum_{i=1}^N E_i$
	is a simple normal crossings divisor.
	This proves (1).
	Property (3) holds by construction.
	
	\medskip
	\noindent
	\emph{Step 3: principalization of each $I^{(j)}$.}
	Fix $j\in\{1,\dots,r\}$.
	
	Since $\Sigma$ refines the Newton fan of $\Gamma(I^{(j)})$,
	Khovanskii’s toroidal resolution theorem (see \cite{Khovanskii,Oka}) for Newton nondegenerate systems applies.
	Specifically, for Newton nondegenerate ideals with respect to the parameters $x$,
	the toric modification associated to a regular subdivision of the Newton fan
	gives a good resolution and principalization.
	In particular, $I^{(j)}\cO_Y$
	is an invertible ideal sheaf supported on the toric boundary.
	
	\medskip
	\noindent
	\emph{Step 4: computation of the coefficients $a_{ij}$.}
	Let $E_1,\dots,E_N$ be the torus-invariant prime divisors of $Y$
	lying over the closed point,
	and let $w_i\in\ZZ_{\ge0}^n$ be the primitive ray vector corresponding to $E_i$.
	Then, the order of vanishing of a polynomial $f$ along $E_i$ is given by (see \cite{Khovanskii,Oka}):
	\[
	\ord_{E_i}(f)
	=
	\min_{\alpha\in\Gamma(f)}\langle w_i,\alpha\rangle.
	\]
	
	Applying this to generators of $I^{(j)}$ yields
	\[
	\ord_{E_i}\bigl(I^{(j)}\cO_Y\bigr)
	=
	\min_{\alpha\in\Gamma(I^{(j)})}\langle w_i,\alpha\rangle
	=: a_{ij}\in\NN.
	\]
	
	Since $I^{(j)}\cO_Y$ is invertible and determined by these divisorial orders,
	we obtain
	\[
	I^{(j)}\cO_Y=\cO_Y\!\left(-\sum_{i=1}^N a_{ij}E_i\right).
	\]
	
	This proves (2) for each $j$.
\end{proof}

\begin{theorem}[Automatic finite testing for Newton controlled families]\label{thm:automatic-finite-testing-newton}
	Assume the setup of \Cref{thm:toric-simultaneous-principalization}, and fix real numbers
	$\lambda_1,\dots,\lambda_r\ge 0$.  Define a graded family of ideals
	\[
	\a_t\ :=\ \overline{\prod_{j=1}^r \big(I^{(j)}\big)^{\lceil \lambda_j t\rceil}}
	\quad (t\in\RR_{\ge 0}),
	\]
	where $\overline{(\cdot)}$ denotes integral closure.
	Then, on the same model $\pi:Y\to\Spec R$ of \Cref{thm:toric-simultaneous-principalization},
	one has for every $t$:
	\[
	\a_t\ =\ \pi_*\cO_Y\Big(-\sum_{i=1}^N d_i(t)E_i\Big),
	\]
	where $d_i(t) = \sum_{j=1}^r a_{ij} \lceil \lambda_j t\rceil$.
	In particular, for any other family $\bbul$ of the same form (possibly with different coefficients),
	the containment relations $\b_q\subseteq \a_p$ are determined by the finitely many divisorial valuations
	$\{\ord_{E_i}\}_{i=1}^N$, i.e.,\ the hypotheses of the finite-testing criterion
	in \Cref{thm:finite-testing-implies-max} are satisfied with testing set $\{\ord_{E_i}\}$.
\end{theorem}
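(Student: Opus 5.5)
The plan is to reduce everything to the fixed-model integral-closure criterion, \Cref{thm:IC-reduction}, applied to the single toric model $\pi:Y\to\Spec R$ from \Cref{thm:toric-simultaneous-principalization}.

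\emph{Step 1: the product ideal is principalized on $Y$.} Fix $t\ge 0$ and set $n_j:=\lceil\lambda_j t\rceil$ and $P_t:=\prod_j (I^{(j)})^{n_j}$. By part (2) of \Cref{thm:toric-simultaneous-principalization}, each $I^{(j)}\cO_Y=\cO_Y(-F_j)$ is invertible. Products of invertible ideal sheaves are invertible, so
\[
P_t\cO_Y=\cO_Y\Big(-\sum_j n_jF_j\Big)=\cO_Y\Big(-\sum_i d_i(t)E_i\Big),
\]
with $d_i(t)=\sum_j a_{ij}n_j$.

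\emph{Step 2: identify $\a_t$ as a pushforward.} I will use the standard identity from the proof of \Cref{thm:IC-reduction}: for normal $Y$ proper birational over $\Spec R$ with $I\cO_Y$ invertible, one has $\overline{I}=\pi_*(I\cO_Y)$. Applied to $I=P_t$, this gives $\a_t=\overline{P_t}=\pi_*\cO_Y(-\sum_i d_i(t)E_i)$. Concretely, $h\in\a_t$ if and only if $\ord_{E_i}(h)\ge d_i(t)$ for all $i$. This follows because $\pi_*$ of $\cO_Y(-D)$, with $D$ exceptional and effective, is exactly the set of $h\in R$ satisfying these order conditions.

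The main obstacle sits here. Some $E_i$ may be non-exceptional strict transforms of coordinate hyperplanes, since the ideals need not be $\m$-primary. Their orders are nevertheless divisorial conditions on the same model, so the formula still holds with all $E_i$ included. The only care needed is that $Y$ is normal (it is regular) and that Khovanskii's principalization really holds for the total transform, which we take from the cited theorem.

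\emph{Step 3: containments between two such families.} Let $\bbul$ be of the same form, say $\b_s=\overline{\prod_j (I'^{(j)})^{\lceil\mu_j s\rceil}}$. This family uses ideals from the same finite list, or from a list enlarged before constructing $Y$ so that a single model serves both. Then $\b_q\cO_Y$ is integrally closed up to the invertible sheaf $\cO_Y(-\sum_i e_i(q)E_i)$, with $\ord_{E_i}(\b_q)=e_i(q)$; this holds because integral closure does not change divisorial orders. By \Cref{thm:IC-reduction} (3)$\Leftrightarrow$(4), or directly from Step 2, we get
\[
\b_q\subseteq\a_p\iff \ord_{E_i}(\b_q)\ge d_i(p)=\ord_{E_i}(\a_p)\ \text{for all } i.
\]
Since $\a_p$ is integrally closed, this is exactly hypothesis (H1) of \Cref{thm:finite-testing-implies-max} with testing set $\{\ord_{E_i}\}_{i=1}^N$. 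Hypothesis (H2) would be checked separately in applications, for example via linear boundedness of $\bbul$ as in \Cref{rmk:finite-testing-fingen}.
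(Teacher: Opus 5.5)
Your proposal is correct and follows essentially the same route as the paper's proof. The product $\prod_j (I^{(j)})^{\lceil\lambda_j t\rceil}$ becomes $\cO_Y(-\sum_i d_i(t)E_i)$ on $Y$, \Cref{thm:IC-reduction} identifies its integral closure with the pushforward, and $\b_q\subseteq\a_p$ is then the coefficientwise inequality $\ord_{E_i}(\b_q)\ge\ord_{E_i}(\a_p)$ for all $i$. Your remark about (H2) is also fair: the paper's argument, like yours, only establishes (H1), so the positivity condition (H2) of \Cref{thm:finite-testing-implies-max} has to be checked separately.
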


\begin{proof}
	Let $\pi:Y\to \Spec R$ be as in \Cref{thm:toric-simultaneous-principalization}. As constructed, $Y$ is regular, so $Y$ is normal. Write
	\(
	I^{(j)}\cO_Y=\cO_Y(-F_j)
	\)
	with
	\(F_j=\sum_i a_{ij}E_i\).
	Set
	\[
	D_\abul(t)\ :=\ \sum_{j=1}^r \lceil \lambda_j t\rceil\,F_j\ =\ \sum_{i=1}^N d_i(t)E_i, \text{ with } 
	d_i(t):=\sum_{j=1}^r a_{ij} \lceil \lambda_j t\rceil \in\NN.
	\]
	
	\smallskip\noindent
	\emph{Step 1: compute integral closures on a fixed principalization.}
	Since $I^{(j)}\cO_Y$ is invertible, so is $\prod_j (I^{(j)})^{\lceil \lambda_j t\rceil}\cO_Y$,
	and in fact
	\[
	\Big(\prod_{j=1}^r (I^{(j)})^{\lceil \lambda_j t\rceil}\Big)\cO_Y\ =\ \cO_Y\big(-D_\abul(t)\big).
	\]
	By the fixed-model integral-closure reduction (\Cref{thm:IC-reduction}),
	the integral closure of $\prod_j (I^{(j)})^{\lceil \lambda_j t\rceil}$ equals the pushforward
	of the corresponding invertible sheaf:
	\[
	\a_t\ =\ \overline{\prod_{j=1}^r (I^{(j)})^{\lceil \lambda_j t\rceil}}
	\ =\ \pi_*\cO_Y\big(-D_\abul(t)\big).
	\]
	
	\smallskip\noindent
	\emph{Step 2: finite testing.}
	For any $p,q$, the condition $\b_q\subseteq\a_p$ is equivalent to the inequality of divisors
	on $Y$:
	\(
	D_\bbul(q)\ \ge\ D_\abul(p)
	\),
	i.e.,  coefficientwise inequalities along $E_i$.
	Equivalently,
	\(
	\ord_{E_i}(\b_q)\ge \ord_{E_i}(\a_p)
	\)
	for all $i$.
	This is precisely the finite-testing hypothesis in \Cref{thm:finite-testing-implies-max} with testing set
	$\{\ord_{E_i}\}$.
\end{proof}

\subsection{Uniformity in families and wall--chamber decomposition}\label{subsec:wall-chamber}

We now push the previous verification to a genuine structural statement in parameter families.

\begin{theorem}[Wall--chamber stratification]\label{thm:wall-chamber}
	Assume the setup of \Cref{thm:automatic-finite-testing-newton}, and let $S \subseteq \RR^m$ be
	a parameter set.  Suppose that for each $s\in S$ we have two families
	\[
	\abul(s)=\{\a(s)_t\}_{t\ge0}
	\text{ and }
	\bbul(s)=\{\b(s)_t\}_{t\ge0},
	\]
	each of the form in \Cref{thm:automatic-finite-testing-newton}, with coefficients
	$\lambda_j=\lambda_j(s)$ depending \emph{piecewise linearly} on $s$.
	Assume moreover that the Newton polyhedra $\Gamma(I^{(j)}_s)$ are independent of $s$ and the
	nondegeneracy is uniform, so that a single toric model $\pi:Y\to\Spec R$ principalizes all
	$I^{(j)}_s$ simultaneously.
	
	Then, there exists a finite stratification $S=\bigsqcup_{\alpha} C_\alpha$
	by locally closed semialgebraic subsets such that on each stratum $C_\alpha$ one of the following holds:
	\begin{enumerate}
		\item[(i)] $\Loj_{\bbul(s)}(\abul(s))=\infty$ for all $s\in C_\alpha$;
		\item[(ii)] there exists a nonempty subset $J(\alpha)\subseteq \{1,\dots,N\}$
		such that for every $s\in C_\alpha$,
		\[
		\Loj_{\bbul(s)}(\abul(s))
		=
		\max_{i\in J(\alpha)}
		\frac{\ord_{E_i}(\abul(s))}{\ord_{E_i}(\bbul(s))},
		\]
		and the maximizer set
		\[
		\left\{
		i\in J(\alpha)\ \middle|\
		\frac{\ord_{E_i}(\abul(s))}{\ord_{E_i}(\bbul(s))}
		=
		\Loj_{\bbul(s)}(\abul(s))
		\right\}
		\]
		is independent of $s\in C_\alpha$.
	\end{enumerate}
	In particular, on every stratum $C_\alpha$ for which the maximizer set is a singleton
	$\{i(\alpha)\}$, one has
	\[
	\Loj_{\bbul(s)}(\abul(s))
	=
	\frac{\ord_{E_{i(\alpha)}}(\abul(s))}{\ord_{E_{i(\alpha)}}(\bbul(s))}
	\quad\text{for all } s\in C_\alpha.
	\]
	Hence, on such a stratum the function
	\[
	s\longmapsto \Loj_{\bbul(s)}(\abul(s))
	\]
	is given by a single ratio of affine linear functions.
\end{theorem}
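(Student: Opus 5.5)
We first reduce, for each fixed $s\in S$, to a finite maximum over the fixed toric divisors, and then stratify $S$ by sign conditions on finitely many piecewise polynomial functions. By \Cref{thm:automatic-finite-testing-newton}, on the single model $\pi:Y\to\Spec R$ we have $\a(s)_t=\pi_*\cO_Y(-\sum_i d_i^{\a}(s,t)E_i)$ with $d_i^{\a}(s,t)=\sum_j a_{ij}\lceil\lambda_j(s)t\rceil$, and similarly for $\bbul(s)$ with coefficients $\mu_j(s)$. Hence
\[
\alpha_i(s):=\ord_{E_i}(\abul(s))=\sum_j a_{ij}\lambda_j(s)
\quad\text{and}\quad
\beta_i(s):=\ord_{E_i}(\bbul(s))=\sum_j a_{ij}\mu_j(s).
\]
Both are piecewise linear in $s$, because the $a_{ij}$ do not depend on $s$ (the Newton polyhedra are fixed). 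The finite-testing hypothesis (H1) of \Cref{thm:finite-testing-implies-max} holds with testing set $\{\ord_{E_i}\}$.

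Set $J(s):=\{i:\alpha_i(s)>0\}$. Only these $i$ impose conditions on $q/p$. There are two cases.
\begin{enumerate}
\item[(a)] If some $i\in J(s)$ has $\beta_i(s)=0$, then no slope is admissible. To see this, let $qt\beta_i$ be finite, i.e. eventually bounded, while $pt\alpha_i\to\infty$. The rounding errors $\lceil\cdot\rceil$ only contribute $O(1)$, so the containment fails for $t\gg1$, and we get $\Loj=\infty$.
\item[(b)] Otherwise, the argument of \Cref{thm:finite-testing-implies-max} goes through with the maximum taken over $J(s)$, the indices with $\alpha_i(s)=0$ being vacuous. This gives $\Loj_{\bbul(s)}(\abul(s))=\max_{i\in J(s)}\alpha_i(s)/\beta_i(s)$.
\end{enumerate}
Case (b) needs a small adaptation of \Cref{thm:finite-testing-implies-max}, since (H2) may fail for indices outside $J(s)$. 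The cure is to note, as in \Cref{rmk:finite-testing-fingen}(3), that those indices impose no constraint. The ceilings also require rerunning Step 2 of that proof with $O(1)$ error terms; this is routine.

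Next, refine $S$ by the finitely many polyhedral cells on which every $\lambda_j$ and $\mu_j$ is affine. Intersect with the sign conditions on the finitely many functions
\[
\alpha_i,\qquad \beta_i,\qquad h_{ik}:=\alpha_i\beta_k-\alpha_k\beta_i,
\]
each of which is either $>0$, $=0$ or $<0$. Each $h_{ik}$ is a polynomial of degree at most $2$ on each cell. So every nonempty common sign set is a locally closed semialgebraic set, and there are finitely many of them. On each such stratum $C_\alpha$, the set $J(s)$ is constant, call it $J(\alpha)$, and the dichotomy (a)/(b) is constant as well. In case (b), the signs of the $h_{ik}$ fix the total preorder of the ratios $\alpha_i/\beta_i$ for $i\in J(\alpha)$, exactly as in the proof of \Cref{thm:stratification} and \Cref{thm:concavity}. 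Hence the maximizer set is constant on $C_\alpha$. When it is a singleton $\{i(\alpha)\}$, the exponent equals $\alpha_{i(\alpha)}/\beta_{i(\alpha)}$, which is a ratio of functions that are affine on the cell.

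I expect the main obstacle to be the case analysis in the first step. There the general finite-testing theorem assumes positivity of every $v_i(\bbul)$, and the ceiling functions make the ideals only approximately graded. I would handle this by proving the needed two-sided estimate directly from the divisor inequalities $D_{\bbul}(qt)\ge D_{\abul}(pt)$, rather than quoting \Cref{thm:finite-testing-implies-max} verbatim.
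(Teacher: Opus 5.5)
Your proposal is correct and follows essentially the same route as the paper. Like the paper, you compute $\ord_{E_i}(\abul(s))$ and $\ord_{E_i}(\bbul(s))$ as fixed linear combinations of the piecewise linear coefficients, split off the region where some $E_i$ has positive $\abul$-order but zero $\bbul$-order (there $\Loj=\infty$), and on the remaining pieces get constancy of the maximizer set from the signs of $\alpha_i\beta_k-\alpha_k\beta_i$, as in \Cref{thm:stratification}. Your explicit sign conditions on the $\alpha_i$ and your direct treatment of the indices where (H2) fails make precise points the paper leaves implicit; your worry about the ceilings is unnecessary, since $\lceil\lambda(t+t')\rceil\le\lceil\lambda t\rceil+\lceil\lambda t'\rceil$ makes each family an honest filtration and $\ord_{E_i}(\b(s)_{qt})\ge qt\,\ord_{E_i}(\bbul(s))$ holds exactly.
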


\begin{proof}
	Fix the common model $\pi:Y\to\Spec R$ as in \Cref{thm:automatic-finite-testing-newton} and its exceptional prime divisors $E_1,\dots,E_N$.
	
	\smallskip\noindent
	\emph{Step 1: affine linearity of the asymptotic divisor coefficients.}
	By \Cref{thm:automatic-finite-testing-newton}, for each $s\in S$ and each $t\ge0$ we have
	\[
	\a(s)_t=\pi_*\cO_Y\bigl(-D^{(s)}_\a(t)\bigr) \text{ and }
	\b(s)_t=\pi_*\cO_Y\bigl(-D^{(s)}_\b(t)\bigr),
	\]
	where
	\[
	D^{(s)}_\a(t)=\sum_j \lceil \lambda_{\a,j}(s)t\rceil\,F_{\a,j} \text{ and }
	D^{(s)}_\b(t)=\sum_j \lceil \lambda_{\b,j}(s)t\rceil\,F_{\b,j},
	\]
with $F_{\a,j}=\sum_{i=1}^N a_{\a,ij}E_i \text{ and } F_{\b,j}=\sum_{i=1}^N a_{\b,ij}E_i.$
	Hence,
	\[
	\ord_{E_i}\bigl(\a(s)_t\bigr)=\sum_j a_{\a,ij}\,\lceil \lambda_{\a,j}(s)t\rceil \text{ and }
	\ord_{E_i}\bigl(\b(s)_t\bigr)=\sum_j a_{\b,ij}\,\lceil \lambda_{\b,j}(s)t\rceil.
	\]
	Dividing by $t$ and letting $t\to\infty$, we obtain
	\[
	\ord_{E_i}\bigl(\abul(s)\bigr)=\sum_j a_{\a,ij}\lambda_{\a,j}(s) \text{ and }
	\ord_{E_i}\bigl(\bbul(s)\bigr)=\sum_j a_{\b,ij}\lambda_{\b,j}(s).
	\]
	Since the coefficient functions $\lambda_{\a,j}(s)$ and $\lambda_{\b,j}(s)$ are piecewise linear,
	it follows that each function
	\[
	s\longmapsto \ord_{E_i}\bigl(\abul(s)\bigr) \text{ and }
	s\longmapsto \ord_{E_i}\bigl(\bbul(s)\bigr)
	\]
	is piecewise affine linear, and so continuous on each member of a finite decomposition of $S$.
	
	\smallskip\noindent
	\emph{Step 2: separate the infinite-valued region.}
	Refining the above decomposition if necessary, we may assume that for each fixed $i$ the sign of $\ord_{E_i}\bigl(\bbul(s)\bigr)$
	is constant on each piece. Since these quantities are nonnegative, on each piece $P$ there is a well-defined subset
	\[
	J_P:=\bigl\{\,i\in\{1,\dots,N\}\mid \ord_{E_i}\bigl(\bbul(s)\bigr)>0
	\text{ for all } s\in P\,\bigr\}.
	\]
	If on such a piece $P$ there exists $i\notin J_P$ such that $\ord_{E_i}\bigl(\abul(s)\bigr)>0 \text{ for all } s\in P,$
	then for every $s\in P$, one has
	\[
	\ord_{E_i}\bigl(\bbul(s)\bigr)=0 \text{ and }
	\ord_{E_i}\bigl(\abul(s)\bigr)>0.
	\]
	By the finite-testing criterion in \Cref{thm:automatic-finite-testing-newton}, no finite slope can satisfy the containment inequalities along $E_i$. Hence,
	\[
	\Loj_{\bbul(s)}(\abul(s))=\infty \text{ for all } s\in P.
	\]
	
	\smallskip\noindent
	\emph{Step 3: reduction to \Cref{thm:stratification} on the finite-valued region.}
	Now restrict to a piece $P$ on which no such index occurs. Then, for every $i\notin J_P$ one has
	\[
	\ord_{E_i}\bigl(\bbul(s)\bigr)=0 \quad \Longrightarrow \quad \ord_{E_i}\bigl(\abul(s)\bigr)=0 \quad \text{ for all } s\in P.
	\]
	Thus, only the divisors $E_i$ with $i\in J_P$ contribute to the finite-testing formula of
	\Cref{thm:automatic-finite-testing-newton}. 
	
	For each $s\in P$, define
	$
	\alpha_i(s):=\ord_{E_i}\bigl(\abul(s)\bigr) \text{ and }
	\beta_i(s):=\ord_{E_i}\bigl(\bbul(s)\bigr)
	\quad (i\in J_P).
	$
	Then,
	\[
	\Loj_{\bbul(s)}(\abul(s))
	=
	\max_{i\in J_P}\frac{\alpha_i(s)}{\beta_i(s)},
	\]
	and, by construction, each $\alpha_i$ and $\beta_i$ is continuous on $P$, while
	$
	\beta_i(s)\in (0,\infty) \text{ for all } i\in J_P,\ s\in P.
	$
	Therefore, the hypotheses of \Cref{thm:stratification} are satisfied on $P$ with finite candidate set
	$
	V_P:=\{\ord_{E_i}\mid i\in J_P\}.
	$
	Applying \Cref{thm:stratification}, we obtain a finite partition of $P$ into locally closed strata on each of which the maximizer set is constant. In particular, on each such stratum the function
	$
	s\longmapsto \Loj_{\bbul(s)}(\abul(s))
	$
	is given by a single valuative ratio whenever the maximizer set is a singleton.
	
	Taking the common refinement over all pieces $P$ gives the required finite stratification of $S$.
\end{proof}


\subsection{Sharpness: why one needs verifiable hypotheses}\label{subsec:sharpness}

We close with an explicit sharpness statement showing that without a uniform principalization
mechanism, no \emph{a priori} fixed finite testing set can govern all containments in a large class.

\begin{theorem}[Sharpness via monomial ideals with new facets]\label{thm:sharpness-monomial}
	Let $R=\Bbbk[[x,y]]$.  For each integer $N\ge 2$, set
	\[
	J_N\ :=\ \overline{(x^N,\ y^{N^2})}\ \subseteq R.
	\]
	Let $\mathcal V$ be any finite set of real-valued rank-one valuations on $R$ centered at $\m$.
	Then, there exist integers $N\ge 2$ and $p,q\ge 1$ such that:
	\begin{enumerate}
		\item For every $v\in \mathcal V$, one has $v(\m^q)\ge v(J_N^p)$.
		\item Nevertheless, $\m^q\not\subseteq J_N^p = \overline{J_N^p}$.
	\end{enumerate}
	In particular, no single finite set of valuations can serve as a universal testing set for all
	containments of the form $\m^q\subseteq \overline{J_N^p}$ as $N$ varies.
\end{theorem}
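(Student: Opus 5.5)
The plan is to exploit the single Rees valuation of $J_N$ and let its slope outrun any fixed finite set of valuations. First I will record the monomial description of $J_N$. By \Cref{thm:int-closure-monomial}, $J_N$ is the monomial ideal of lattice points of the Newton polygon $\NP(x^N,y^{N^2})$. This polygon has a single compact edge, from $(N,0)$ to $(0,N^2)$, with primitive inward normal $(N,1)$. Hence $\chi^{(i,j)}=x^iy^j\in J_N^p$ if and only if $Ni+j\ge pN^2$.

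For the equality $J_N^p=\overline{J_N^p}$ claimed in (2), I will use Zariski's theorem that products of complete ideals in a two-dimensional regular local ring are complete. Alternatively, for $p=1$, which is all I will actually use, this is just the definition of $J_N$. So I plan to take $p=1$ throughout.

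The key step is a uniform upper bound for the ratio $v(J_N)/v(\m)$ over the finite set $\mathcal V$. Let $v\in\mathcal V$ and put $a:=v(x)>0$ and $b:=v(y)>0$; both are positive because $v$ is centered at $\m$. Since $\m=(x,y)$, we have $v(\m)=\min\{a,b\}$. Since $x^N,y^{N^2}\in J_N$, we get $v(J_N)\le\min\{Na,N^2b\}$. If $a\le b$, then $v(J_N)/v(\m)\le Na/a=N$. If $b<a$, then $v(J_N)/v(\m)\le Na/b$. In either case
\[
\frac{v(J_N)}{v(\m)}\le N\,C_v,\qquad C_v:=\max\{1,v(x)/v(y)\}.
\]
I will set $C:=\max_{v\in\mathcal V}C_v<\infty$; this is finite precisely because $\mathcal V$ is finite.

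Next I fix an integer $N\ge\max\{2,C+1\}$, take $p=1$, and take $q=N^2-1$. Then $NC\le N(N-1)\le N^2-1=q$. So for every $v\in\mathcal V$,
\[
v(\m^q)=q\,v(\m)\ge NC\,v(\m)\ge v(J_N),
\]
which is (1). On the other hand, $y^{q}=y^{N^2-1}\in\m^q$, but it violates $N\cdot0+(N^2-1)\ge N^2$, so $y^q\notin J_N=\overline{J_N}$. This gives (2). The concluding sentence of the theorem then follows at once, since $\mathcal V$ was arbitrary.

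The only delicate point I expect is the identity $v(\m)=\min\{v(x),v(y)\}$ together with the positivity of $v(y)$ for an arbitrary, possibly non-monomial, real valuation. Both follow from the ultrametric inequality and the centering hypothesis. Everything else reduces to Newton polygon bookkeeping. The underlying mechanism is that the true exponent $\Loj_{\m}(J_N)=N^2$ is detected only by the valuation $v_{(N,1)}$, whose weight ratio $N$ eventually exceeds $C$.
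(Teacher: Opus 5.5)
Your proof is correct and follows the same mechanism as the paper's: you bound $v(J_N)/v(\m)$ by $N\max\{1,v(x)/v(y)\}$ uniformly over the finite set $\mathcal V$, take $N$ large, and use $y^q$ with $q<N^2$ as the witness that violates the single edge inequality of the Newton polygon of $J_N$. The differences are cosmetic. You take $p=1$ and $q=N^2-1$, and you use only the upper bound $v(J_N)\le\min\{Nv(x),N^2v(y)\}$. The paper instead computes $v(J_N)=Nv(x)$ exactly and takes $q=pNM$ with $M=\max_{v\in\mathcal V}\lceil v(x)/v(\m)\rceil\le N-1$.
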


\begin{proof}
	Fix a finite set $\mathcal V$ as in the statement.  For each $v\in\mathcal V$, set
	$a_v:=v(x)>0$ and $b_v:=v(y)>0$.
	
	\smallskip\noindent
	\emph{Step 1: choose $N$ so that $v(J_N)=Na_v$ for all $v\in\mathcal V$.}
	For $J_N=\overline{(x^N,y^{N^2})}$, the valuative criterion for integral closure gives
	\[
	v(J_N)\ =\ \min\{N a_v,\ N^2 b_v\}.
	\]
	Choose an integer $N\ge 2$ such that
	\[
	N\ >\ 1+\max_{v\in\mathcal V}\frac{a_v}{b_v}.
	\]
	Then, $N b_v>a_v$ for all $v\in\mathcal V$, hence $N^2 b_v>Na_v$ and, therefore,
	\[
	v(J_N)=Na_v\quad\text{for all }v\in\mathcal V.
	\]
	
	\smallskip\noindent
	\emph{Step 2: choose $q$ so that $v(\m^q)\ge v(J_N^p)$ for all $v\in\mathcal V$.}
	Set
	\[
	M\ :=\ \max_{v\in\mathcal V}\left\lceil \frac{a_v}{\min\{a_v,b_v\}}\right\rceil.
	\]
	Then, $M\ge 1$, and by the choice of $N$ we have $\frac{a_v}{b_v}<N-1$ for all $v$, and so
	\[
	\frac{a_v}{\min\{a_v,b_v\}}\ \le\ \max\left\{1,\frac{a_v}{b_v}\right\}\ <\ N-1,
	\]
	so in particular $M\le N-1$.
	
	Now choose any integer $p\ge 1$ and set $q\ :=\ p N M.$
	Since $v(\m)=\min\{a_v,b_v\}$ and $v(J_N)=Na_v$ for all $v\in\mathcal V$, we obtain
	\[
	v(\m^q)\ =\ q\,\min\{a_v,b_v\}\ \ge\ pN\cdot \frac{a_v}{\min\{a_v,b_v\}}\cdot \min\{a_v,b_v\}
	\ =\ pN a_v\ =\ v(J_N^p)
	\]
	for every $v\in\mathcal V$.  This proves~(1).
	
	\smallskip\noindent
	\emph{Step 3: explicit witness that $\m^q\not\subseteq J_N^p$.}
	We claim that $y^q\in\m^q$ but $y^q\notin J_N^p$.  The first inclusion is clear.
	
	For the second, note that $J_N$ is a monomial integrally closed ideal, and membership in $J_N^p$
	is determined by its Newton polyhedron.  The Newton polyhedron of $J_N$ is given by
	\[
	\frac{u}{N}+\frac{v}{N^2}\ \ge\ 1,
	\]
	hence that of $J_N^p$ is
	\[
	\frac{u}{N}+\frac{v}{N^2}\ \ge\ p.
	\]
	For $y^q$, the exponent vector is $(u,v)=(0,q)$, so
	\[
	\frac{u}{N}+\frac{v}{N^2}\ =\ \frac{q}{N^2}\ =\ \frac{pNM}{N^2}\ =\ p\cdot\frac{M}{N}.
	\]
	Since $M\le N-1$, we have $\frac{M}{N}<1$, and so $\frac{q}{N^2}<p$. Therefore, $y^q\notin J_N^p$.
	Consequently, $\m^q\not\subseteq J_N^p$, proving~(2).
	
	This shows that no fixed finite set of valuations can test all containments of the form
	$\m^q\subseteq J_N^p = \overline{J_N^p}$ uniformly as $N$ varies.
\end{proof}


\part{Outlook and the Appendix} \label{part:outlook}

In this final part, we collect open questions and directions suggested by the
valuative framework developed in this paper.


\section{Concluding remarks and open problems}

This paper develops a valuative framework for understanding the \L{}ojasiewicz exponent, with a focus on finite--max mechanisms.
At the same time, Section~\ref{sec:verification-uniformity-sharpness} shows that finite
testing cannot be expected in full generality: there exist natural filtrations for which no
finite set of valuations suffices to verify containment.  This highlights a fundamental
dichotomy between situations where the normalized Rees algebra provides complete control and
those where the valuative supremum remains intrinsically infinite.

\medskip

\noindent
\textbf{Finite testing and birational models.}
A central question is to identify intrinsic conditions guaranteeing finite verification.

\begin{question}
	Which properties of a graded family or filtration ensure that integral--closure containments
	can be verified on a fixed birational model?  In particular, can finite testing be detected
	purely from the structure of the normalized Rees algebra?
\end{question}

Answering this would clarify when the \L{}ojasiewicz exponent is governed by finitely many
divisorial valuations and when more subtle valuation-theoretic phenomena occur.

\medskip

\noindent
\textbf{Extremal valuations and geometric structure.}
When finite testing holds, the valuations computing the exponent play a distinguished role.

\begin{question}
	What geometric or intrinsic properties characterize valuations that compute the
	\L{}ojasiewicz exponent?  For instance, are such valuations uniquely determined by the
	normalized Rees algebra, or do they satisfy stability or minimality properties within the
	valuation space?
\end{question}

Understanding these extremal valuations would further clarify the stratification and
rigidity phenomena described in Part~\ref{part:structuralFinite-max}.

\medskip

\noindent
\textbf{Variation in families.}
Part \ref{part:structuralFinite-max} discusses rigidity and stratification results for the \L{}ojasiewicz exponents of families.

\begin{question}
	More generally, under what conditions is the \L{}ojasiewicz exponent constructible or
	semicontinuous in algebraic or analytic families?  Can weaker hypotheses than finite testing
	still ensure partial rigidity or controlled variation?
\end{question}

Such results would contribute to a broader algebraic understanding of equisingularity and
asymptotic invariants in families.

\medskip

\noindent
\textbf{Role of non--divisorial valuations.}
While divisorial valuations suffice in many situations, Section~\ref{sec:verification-uniformity-sharpness}
shows that more general valuations may be needed in principle.

\begin{question}
	Are there natural geometric or algebraic settings in which non--divisorial valuations
	necessarily compute the \L{}ojasiewicz exponent?  Conversely, what conditions ensure that
	divisorial valuations always suffice?
\end{question}

Clarifying the role of non--divisorial valuations would help delineate the limits of
birational and Rees--algebraic methods.

\medskip

These questions suggest that the valuative and Rees--algebraic framework developed here
provides not only computational tools, but also a structural foundation for studying
asymptotic containment invariants.  We expect that further development of this perspective
will lead to new insights into singularity theory, equisingularity, and related asymptotic
invariants.

\section*{Appendix} \label{sec:appendix}

\subsection*{A.1 Detailed proof of Lemma \ref{lem:analytic-IC}, following \cite{LT}} \label{app:LT-proof}

\begin{proof}[Proof of Lemma \ref{lem:analytic-IC}]
	Set $I=\a^p$.
	We use the following one--function statement, which is a specialization of
	\cite[Th\'eor\`eme~7.2]{LT} (equivalence of conditions (1),(5),(6)):
	
	\medskip
	\noindent\emph{(LT--S)}
	For $h\in R$,
	\[
	h\in\overline I
	\quad\Longleftrightarrow\quad
	\exists U\ni0,\exists C>0:\ |h(x)|\le C\|f(x)\|^p\text{ on }U.
	\]
	
	For completeness, we reproduce the proof of (LT--S).
	Let $\pi:Y\to U_0$ be the normalized blow--up of $I$ over a sufficiently small
	neighborhood $U_0$ of $0$.
	Then, $Y$ is normal, $\pi$ is proper, and $I\mathcal O_Y$ is invertible.
	Hence, for every $y\in\pi^{-1}(0)$ there exist a neighborhood $V\ni y$ and a
	holomorphic function $s$ on $V$ such that
	\[
	I\mathcal O_Y(V)=(s).
	\]
	Fix such $V$, and choose $V'\Subset V$ a relatively compact subset of $V$.
	
	\medskip
	\noindent\emph{Step 1: Two--sided comparison.}
	Since $(f_i\circ\pi)^p\in(s)$, there exist holomorphic functions $u_i$ on $V$
	such that
	\[
	(f_i\circ\pi)^p=s u_i.
	\]
	Let $A=\max_i\sup_{V'}|u_i|$. Then, for $z\in V'$,
	\[
	\|f(\pi(z))\|^p\le A|s(z)|.
	\]
	
	Since $I=\a^p$ is generated by finitely many degree -- $p$ monomials
	$F_\nu=f_1^{e_{1,\nu}}\cdots f_s^{e_{s,\nu}}$,
	their pullbacks generate $(s)$.
	Hence, there exist holomorphic $v_\nu$ on $V$ such that
	\[
	s=\sum_\nu v_\nu(F_\nu\circ\pi).
	\]
	Let $B=\sum_\nu\sup_{V'}|v_\nu|$. Then, for $z\in V'$,
	\[
	|s(z)|\le B\|f(\pi(z))\|^p.
	\]
	Thus, on $V'$,
	\[
	A^{-1}\|f\circ\pi\|^p\le|s|\le B\|f\circ\pi\|^p.
	\]
	
	\medskip
	\noindent\emph{Step 2: $(LT\text{--}S)$, first direction.}
	Assume $h\in\overline I$. Then, there exists a neighborhood $\tilde W$
	of $\pi^{-1}(0)$ such that
	\[
	h\circ\pi\in I\mathcal O_Y\quad\text{on }\tilde W.
	\]
	On $V \subseteq \tilde W$, write $h\circ\pi=sH$ with $H$ holomorphic.
	Let $M=\sup_{V'}|H|$. Then, for $z\in V'$,
	\[
	|h(\pi(z))|\le M|s(z)|\le MB\|f(\pi(z))\|^p.
	\]
	
	Cover $\pi^{-1}(0)$ by finitely many such $V'_k$ and let $C'$ be the maximum
	of the corresponding constants.
	There exists a neighborhood $W$ of $\pi^{-1}(0)$ such that
	\[
	|h\circ\pi|\le C'\|f\circ\pi\|^p\quad\text{on }W.
	\]
	Since $\pi$ is proper, there exists a neighborhood $U\ni0$ with
	$\pi^{-1}(U)\subseteq W$.
	For $x\in U$,
	\[
	|h(x)|\le C'\|f(x)\|^p.
	\]
	
	\medskip
	\noindent\emph{Step 3: $(LT\text{--}S)$, converse direction.}
 Assume $|h(x)| \le C\|f(x)\|^p$ on a neighborhood $U$ of $0$. Pulling back, we get
 $$
 |h\circ\pi| \le C\|f\circ\pi\|^p
 $$
 on $\pi^{-1}(U)$.
 
 Choose finitely many relatively compact open sets $V_1,\dots,V_r\Subset Y$ covering $\pi^{-1}(0)$ such that for each $\alpha$ the invertible ideal $I\cO_Y|_{V_\alpha}$ is principal, say
 $
 I\cO_Y(V_\alpha)=(s_\alpha),
 $
 with $s_\alpha\in \cO_Y(V_\alpha)$. Shrinking if necessary, we may assume that each $V_\alpha$ is contained in one of the neighborhoods $V$ used in Step 1. Hence, for each $\alpha$, there exists a constant $A_\alpha>0$ such that on $V_\alpha$ one has
 $
 \|f\circ\pi\|^p \le A_\alpha |s_\alpha|.
 $
 Therefore, on $(V_\alpha\cap \pi^{-1}(U))\setminus \{s_\alpha=0\}$,
 $$
 |h\circ\pi| \le C\|f\circ\pi\|^p \le C A_\alpha |s_\alpha|,
 $$
 so
 $$
 \left|\frac{h\circ\pi}{s_\alpha}\right| \le C A_\alpha.
 $$
 
 Since $Y$ is normal and $s_\alpha$ is a local generator of the invertible ideal $I\cO_Y|_{V_\alpha}$, the germ of $s_\alpha$ is a non-zero-divisor at every point of $V_\alpha$. Thus $(h\circ\pi)/s_\alpha$ defines a meromorphic function on $V_\alpha\cap \pi^{-1}(U)$, and the boundedness above implies that it is holomorphic there. Hence
 $$
 h\circ\pi \in s_\alpha \cO_Y(V_\alpha\cap \pi^{-1}(U))
 = I\cO_Y(V_\alpha\cap \pi^{-1}(U))
 $$
 for every $\alpha$.
 
 Since the sets $V_\alpha$ cover $\pi^{-1}(0)$, it follows that
 $
 h\circ\pi \in I\cO_Y
 $
 on a neighborhood of $\pi^{-1}(0)$.
 Let $E$ be any exceptional divisor of the normalized blow-up. Then
 $$
 \ord_E(h\circ\pi)\ge \ord_E(I\cO_Y).
 $$
 These divisors correspond to the Rees valuations of $I$, so the above inequalities imply $v(h)\ge v(I)$ for every such valuation $v$. Hence, $h\in \overline{I}$ by \Cref{thm:valuative-ic}.
 
	\medskip
	\noindent\emph{Step 4: Deduction of the lemma.}
	
	\smallskip
	\noindent$(1)\Rightarrow(2)$.
	If $\b^q\subseteq \overline{\a^p}$, then for each $j$,
	$g_j^q\in\overline{\a^p}$.
	Applying $(LT\text{--}S)$ to $h=g_j^q$ yields
	\[
	\|g_j(x)\|^q\le C_j\|f(x)\|^p
	\]
	on some neighborhood $U_j$.
	Let $U=\cap_jU_j$ and $C=\max_jC_j$.
	Then, for $x\in U$,
	\[
	\|g(x)\|^q=\max_j|g_j(x)|^q\le C\|f(x)\|^p.
	\]
	
	\smallskip
	\noindent$(2)\Rightarrow(1)$.
	Assume $\|g(x)\|^q\le C\|f(x)\|^p$ on a neighborhood $U$.
	For any $q$--tuple $(j_1,\dots,j_q)$,
	\[
	|g_{j_1}(x)\cdots g_{j_q}(x)|
	\le \|g(x)\|^q
	\le C\|f(x)\|^p.
	\]
	Each monomial $g_{j_1}\cdots g_{j_q}$ is a generator of $\b^q$.
	Applying $(LT\text{--}S)$ to each such monomial gives
	$\b^q\subseteq \overline{\a^p}$.
\end{proof}


\end{document}